\def \S {\mathcal{S}}
\def \x {\mathcal{X}}
\def \R {\mathbb{R}}
\def \w {\mathbf{w}}
\def \v {\mathbf{v}}
\def \E {\mathbb{E}}
\def \x {\mathbf{x}}
\def \p {\mathbf{p}}
\def \1 {\mathbf{1}}
\def \s {\mathbf{s}}
\def \u {\mathbf{u}}
\def \I {\mathbb{I}}
\def \D {\mathcal{D}}
\def \B {\mathcal{B}}
\def \prox {\text{prox}}
\def \O {\mathcal{O}}
\DeclareMathOperator*{\argmin}{arg\,min}
\newtheorem{theorem}{Theorem}[section]
\newtheorem{lemma}[theorem]{Lemma}
\newtheorem{proposition}[theorem]{Proposition}
\theoremstyle{definition}
\newtheorem{definition}[theorem]{Definition}
\newtheorem{assumption}[theorem]{Assumption}
\title{Non-Smooth Weakly-Convex Finite-sum Coupled Compositional Optimization}
\author{
  Quanqi Hu \\
  Department of Computer Science\\
  Texas A\&M University\\
  College Station, TX 77843 \\
  \texttt{quanqi-hu@tamu.edu} \\
   \And
    Dixian Zhu\\
    Department of Genetics\\
    Stanford University\\
    Stanford, CA 94305\\
    \texttt{dixian-zhu@stanford.edu} \\
    \And
    Tianbao Yang \\
    Department of Computer Science\\
    Texas A\&M University\\
    College Station, TX 77843 \\
    \texttt{tianbao-yang@tamu.edu} \\
}
\begin{document}

\maketitle

\begin{abstract}
  This paper investigates new families of compositional optimization problems, called \underline{\bf n}on-\underline{\bf s}mooth \underline{\bf w}eakly-\underline{\bf c}onvex \underline{\bf f}inite-sum \underline{\bf c}oupled \underline{\bf c}ompositional \underline{\bf o}ptimization (NSWC FCCO). There has been a growing interest in FCCO due to its wide-ranging applications in machine learning and AI, as well as its ability to address the shortcomings of stochastic algorithms based on empirical risk minimization. However, current research on FCCO presumes that both the inner and outer functions are smooth, limiting their potential to tackle a more diverse set of problems. Our research expands on this area by examining non-smooth weakly-convex FCCO, where the outer function is weakly convex and non-decreasing, and the inner function is weakly-convex. We analyze a single-loop algorithm and establish its complexity for finding an $\epsilon$-stationary point of the Moreau envelop of the objective function. Additionally, we also extend the algorithm to solving novel non-smooth weakly-convex tri-level finite-sum coupled compositional optimization problems, which feature a nested arrangement of three functions. Lastly, we explore the applications of our algorithms in deep learning for two-way partial AUC maximization and multi-instance two-way partial AUC maximization, using empirical studies to showcase the effectiveness of the proposed algorithms.
\end{abstract}

\setlength{\abovedisplayskip}{2pt}
\setlength{\belowdisplayskip}{2pt}
% ChatGPT
% In this work, we investigate non-smooth coupled compositional optimization problems, which arise in various fields, including image processing, machine learning, and computer vision. The optimization problems we consider are characterized by non-smooth, non-convex, and non-separable objective functions, as well as coupled constraints that impose dependencies among the variables. To tackle these challenging optimization problems, we propose a novel algorithm based on the proximal augmented Lagrangian method, which leverages the strengths of both proximal and Lagrangian methods. We demonstrate the effectiveness of our algorithm through numerical experiments on synthetic and real-world datasets, showing that it outperforms state-of-the-art methods in terms of convergence speed and solution quality. Our work provides a valuable contribution to the field of non-smooth optimization, with potential applications in various areas such as computer vision, machine learning, and signal processing.

% \vspace*{-0.05in}
\section{Introduction}
% \vspace*{-0.05in}
%Stochastic compositional optimization (SCO) has been receiving increasing amount of attention in recent years, as it finds broad applications in machine learning. A standard SCO problem has the following formulation \cite{DBLP:journals/mp/WangFL17},
%\begin{equation*}
%    \min_{\w\in \Omega}f\circ g (\w),
%\end{equation*}
%where the outer objective $f$ and inner objective $g$ are both expectation functions. 
In this paper, we consider two classes of  non-convex compositional optimization problems. The first class is formulated as following:
\begin{equation}\label{prob:5}
    \min_{\w\in \R^d}F(\w):=\frac{1}{n}\sum\nolimits_{i\in \S} f_i(\E_{\xi\sim \D_i}[g_i(\w;\xi)]), 
\end{equation}
where $\S$ denotes a finite set of $n$ items and $\D_i$ denotes a distribution that could depend on $i$. The second class is given by:
\begin{equation}\label{prob:4}
    \min_{\w\in \R^d}F(\w):=\frac{1}{n_1}\sum\nolimits_{i\in \S_1}f_i\left(\frac{1}{n_2}\sum\nolimits_{j\in \S_2}g_i(\E_{\xi\sim \D_{i,j}}[h_{i,j}(\w;\xi)])\right),
\end{equation}
where $\S_1$ denotes a finite set of $n_1$ items and $\S_2$ denotes a finite set of $n_2$ items and $\D_{ij}$ denotes a distribution that could depend on $(i, j)$.
 For simplicity of discussion, we denote by $g_i(\w)=\E_{\xi\sim \D_i}[g_i(\w;\xi)]: \R^d\rightarrow \R^{d_1}$ and by $h_{i,j}(\w)=\E_{\xi\sim \D_{i,j}}[h_{i,j}(\w;\xi)]: \R^{d}\to\R^{d_2}$. For both classes of problems, we focus our attention on {\bf non-convex $F$ with non-smooth non-convex functions $f_i$ and $g_i$}, which, to the best of our knowledge, has not been studied in any prior works.

 The first problem~(\ref{prob:5}) with smooth functions $f_i$ and $g_i$ has been explored in previous works~\cite{pmlr-v162-wang22ak,jiang2022multiblocksingleprobe,pmlr-v162-qiu22a,DBLP:journals/corr/abs-2206-00439}, which is known as finite-sum coupled compositional optimization (FCCO). It is subtly different from standard stochastic compositional optimization (SCO)~\cite{DBLP:journals/mp/WangFL17} and conditional stochastic optimization (CSO)~\cite{hu2020biased}.  FCCO has been successfully applied to optimizing a wide range of X-risks~\cite{DBLP:journals/corr/abs-2206-00439} with convergence guarantee, including smooth surrogate losses of areas under the curves~\cite{DBLP:journals/corr/abs-2104-08736} and ranking measures~\cite{pmlr-v162-qiu22a}, listwise losses~\cite{pmlr-v162-qiu22a}, and contrastive losses~\cite{pmlr-v162-yuan22b}.  The second problem~(\ref{prob:4}) is a novel class and is referred to as tri-level finite-sum coupled compositional optimization (TCCO). Both problems differ from traditional two-level or multi-level compositional optimization due to the coupling of variables $i, \xi$ in~(\ref{prob:5}) or the coupling of variables $i, j, \xi$ in~(\ref{prob:4}) at the inner most level. 

One limitation of prior works about non-convex FCCO is that their convergence analysis heavily rely on the smoothness conditions of  $f_i$ and $g_i$~\cite{pmlr-v162-wang22ak,jiang2022multiblocksingleprobe}. This raises a concern about whether existing techniques can be leveraged for solving {\it  non-smooth non-convex FCCO problems} with non-asymptotic convergence guarantee. Non-smooth non-convex  FCCO and TCCO problems have important applications in ML and AI, e.g., group distributionally robust optimization~\cite{NEURIPS2020_0b6ace9e} and two-way partial AUC maximization for deep learning~\cite{pmlr-v162-zhu22g}. We defer discussions and formulations of these problems to Section~\ref{sec:app}. The difficulty for solving smooth FCCO lies at high costs of computing a stochastic gradient $\nabla g_i(\w)\nabla f_i(g_i(\w))$ for a randomly sampled $i$ and the overall gradient $\nabla F(\w)$. To approximate the stochastic gradient, a variance-reduced estimator of $g_i(\w_t)$ denoted by $u_{i,t}$ is usually maintained and updated for sampled data in the mini-batch $i\in\B_t$. As a result, the stochastic gradient can be approximated by $\nabla g_i(\w_t; \xi_t)\nabla f_i(u_{i,t})$, where $\xi_t\sim\D_i$ is a random sample. The overall gradient can be estimated by averaging the stochastic gradient estimator over the mini-batch or using variance-reduction techniques.  A key insight of the convergence analysis for smooth FCCO is to bound the following error using the $L$-smoothness of $f_i$, which reduces to bounding the error of $u_{i,t}$ for estimating $g_i(\w_t)$: 
\begin{align*}\|\nabla g_i(\w_t; \xi_t)\nabla f_i(u_{i,t}) - \nabla g_i(\w_t; \xi_t)\nabla f_i(g_i(\w_t))\|^2\leq \|\nabla g_i(\w_t; \xi_t)\|^2 L \|u_{i,t} - g_i(\w_t)\|^2. \end{align*}

A central question to be addressed in this paper is \textit{``Can these gradient estimators be used in stochastic optimization for solving non-smooth non-convex FCCO with provable convergence guarantee"?}  To address this question we focus our attention on a specific class of FCCO/TCCO called {\bf non-smooth weakly-convex (NSWC) FCCO/TCCO}. This approach aligns with many established works on NSWC optimization~\cite{davis2018stochastic,doi:10.1137/17M1151031,DBLP:journals/mp/DrusvyatskiyP19, doi:10.1137/17M1135086}. Nevertheless, NSWC FCCO/TCCO is more complex than a standard weakly-convex optimization problem because an unbiased stochastic subgradient is not readily accessible. In addition, the convergence measure in terms of the gradient norm of smooth non-convex objectives is not applicable to weakly convex optimization, which will complicate the analysis involving the biased stochastic gradient estimator  $\partial g_i(\w_t; \xi_t)\partial f_i(u_i^t)$~\footnote{We use $\nabla$ to denote gradient of a differentiable function and $\partial$ to denote a subgradient of a non-smooth function.}. 

%Addressing general non-smooth, non-convex FCCO problems presents significant difficulties because, firstly, even if an unbiased stochastic subgradient is readily accessible, the non-smooth, non-convex nature of stochastic optimization not only complicates the analysis but also diminishes the practical applicability of the algorithms in deep learning scenarios (cite source). In an effort to retain the utility of existing methods for FCCO, our focus is confined to a specific class of FCCO known as non-smooth weakly-convex FCCO, and similarly non-smooth weakly-convex TCCO. 

%General non-smooth non-convex FCCO are challenging because (i) even an unbiased stochastic subgradient is readily available, non-smooth non-convex stochastic optimization will not only complicates the analysis but also makes the algorithm less practical for deep learning applications~\cite{}. In order to maintain the practicability of existing techniques for FCCO, we restrict out attention to a family of non-smooth non-convex FCCO namely non-smooth weakly-convex FCCO. This is in lieu with many existing works for non-smooth weakly convex optimization~\cite{}. 

\textbf{Contributions.} A major contribution of this paper is to present {\it novel convergence analysis} of single-loop stochastic algorithms for solving NSWC FCCO/TCCO problems, respectively. In particular, %We summarize our contributions as follows.
\begin{itemize}[leftmargin=*]
\vspace*{-0.1in}     
\item For non-smooth FCCO, we analyze the following single-loop updates: 
     \begin{align}\label{eqn:alg1}
     \w_{t+1} = \w_t - \eta \frac{1}{B}\sum\nolimits_{i\in\mathcal B_t}\partial g_i(\w_t; \xi_t)\partial f_i(u_{i,t}),
     \end{align}
     where $\mathcal B_t$ is a random mini-batch of $B$ items, and $u_{i,t}$ is an appropriate variance-reduced estimator of $g_i(\w_t)$ that is updated only for $i\in\mathcal B_t$ at the $t$-th iteration. To overcome the non-smoothness, we adopt the tool of  Moreau envelop of the objective as in previous works~\cite{davis2018stochastic,doi:10.1137/17M1151031}.  The key difference of our convergence analysis from previous ones for smooth FCCO is that  we bound the inner product $\langle \E_{i}{\partial} g_i(\w){\partial} f_i(u_{i,t}), \widehat{\w}_t-\w_t \rangle$, where $\widehat\w_t$ is the solution of the proximal mapping of the objective at $\w_t$.  To this end, specific conditions of $f_i, g_i$ are imposed, i.e.,  $f_i$ is  weakly convex and non-decreasing and $g_i(\w)$ is weakly convex, under which we establish an iteration complexity of $T=\O(\epsilon^{-6})$ for finding an $\epsilon$-stationary point of the Moreau envelope of $F(\cdot)$.
    \item For non-smooth TCCO, we analyze the following single-loop updates: 
     \begin{align}\label{eqn:alg1}
     \w_{t+1} = \w_t - \eta \frac{1}{B_1}\sum\nolimits_{i\in \B_1^t}\left[\frac{1}{B_2}\sum\nolimits_{j\in \B_2^t}\partial h_{i,j}(\w_t;\xi_t){\partial} g_i(v_{i,j,t})\right]\partial f_i(u_{i,t}),
     \end{align}
     where $\mathcal B^1_t$ and $\mathcal B^2_t$ are random mini-batches of $B_1$ and $B_2$ items, respectively, and $u_{i, t}$ is an appropriate variance-reduced estimator of $\frac{1}{n_2}\sum_{j\in\S_2}g_i(h_{ij}(\w_t))$ that is updated only for $i\in\mathcal B^1_t$, and $v_{i, j, t}$ is an appropriate variance-reduced estimator of $h_{i,j}(\w_t)$ that is updated only for $i\in\mathcal B^1_t, j\in\mathcal B^2_t$.  To prove the convergence, we impose conditions of $f_i, g_i, h_{i,j}$, i.e., $f_i$ is weakly convex and non-decreasing and $g_i(\cdot)$ is weakly convex and  non-decreasing (or monotonic), $h_{ij}$ is weakly convex (or smooth),  and establish an iteration complexity of $T=\O\left(\epsilon^{-6}\right)$ for finding an $\epsilon$-stationary point of the Moreau envelope of $F(\cdot)$.
    \item We extend the above algorithms to solving (multi-instance) two-way partial AUC maximization for deep learning, and conduct extensive experiments  to verify the effectiveness of the both algorithms.
\end{itemize}

% \begin{table}[t] 
% 	\caption{Comparison with prior works for solving  (\ref{prob:5}) and  (\ref{prob:4}). In TCCO, if $h_{ij}$ is non-smooth weakly convex, we require $g_i$ to be non-decreasing. $\,^*$ Method SONT for solving Problem (\ref{prob:4}) requires $h_i$ to be weakly convex.}\label{tab:1} 
% 	\centering
%  \vspace*{-0.1in}
% 	\label{tab:2}
% 	\scalebox{0.9}{\begin{tabular}{lcccccc}
% 			\toprule
% 			Method & Objective& Smoothness &Convexity $(f_i,g_i)$ &Monotonicity& Loop& Complexity\\
%         \midrule
%         SOX~\cite{pmlr-v162-wang22ak} & (\ref{prob:5}) & $f_i, g_i$ & none&none&single & $\O(\epsilon^{-4})$ \\
%         MSVR~\cite{jiang2022multiblocksingleprobe} & (\ref{prob:5}) & $f_i, g_i$  &none&none&single & $\O(\epsilon^{-3})$ \\
%         \hline
%         \cite{pmlr-v119-lin20a} & Dual of (\ref{prob:5}) & none  &C, WC&none& single &$\O(\epsilon^{-8})$ \\
%         \cite{doi:10.1080/10556788.2021.1895152, yan2020sharp, zhang2023sapd,zhao2022primaldual} & Dual of (\ref{prob:5}) & none  &C, WC&none& double &$\O(\epsilon^{-6})$ \\
%         nSSD\cite{zhang2022optimal} & (\ref{prob:5}) & none  &C, C&$f_i$& single &$\O(\epsilon^{-2})$ \\
%         \hline
%         SONX (Ours) &  (\ref{prob:5}) & none  & WC,WC & $f_i$& single &$\O(\epsilon^{-6})$ \\
%         SONT (Ours) &  (\ref{prob:4}) & none &WC,WC^*  &$f_i, g_i$&  single &$\O(\epsilon^{-6})$ \\
% 		\bottomrule
% 	\end{tabular}}
% 	\vspace*{-0.2in} 
% \end{table}

\begin{table}[t] 
	\caption{Comparison with prior works for solving  (\ref{prob:5}) and  (\ref{prob:4}). In the monotonicity column, notation $\uparrow$ means the given function is required to be non-decreasing. If not specified, the given function is only required to be monotone.}\label{tab:1} 
	\centering
 % \vspace*{-0.1in}
	\label{tab:2}
	\begin{tabular}{lcccccc}
			\toprule
			Method & Objective& Smoothness &Weak Convexity &Monotonicity& Complexity\\
        \midrule
        SOX~\cite{pmlr-v162-wang22ak} & (\ref{prob:5}) & $f_i, g_i$ & none&none&  $\O(\epsilon^{-4})$ \\
        MSVR~\cite{jiang2022multiblocksingleprobe} & (\ref{prob:5}) & $f_i, g_i$  &none&none& $\O(\epsilon^{-3})$ \\
        \hline
        SONX (Ours) &  (\ref{prob:5}) & none  & $f_i, g_i$ & $f_i \uparrow$& $\O(\epsilon^{-6})$ \\
        SONT (Ours) &  (\ref{prob:4}) & none  &$f_i, g_i, h_{i,j}$  &$f_i \uparrow, g_i \uparrow$&  $\O(\epsilon^{-6})$ \\
        SONT (Ours) &  (\ref{prob:4}) & $h_{i,j}$ &$f_i, g_i$  &$f_i \uparrow, g_i$&  $\O(\epsilon^{-6})$ \\
		\bottomrule
	\end{tabular}
	\vspace*{-0.2in} 
\end{table}
%To ensure TCCO to be weakly-convex, we additionally assume $g_i$ to be monotone and $h_{i,j}$ to be smooth and Lipschitz continuous for all $i\in \S_1$ and $j\in \S_2$. We refer to TCCO with the above conditions as non-smooth weakly-convex TCCO. As we discussed for non-smooth weakly-convex FCCO, Solving non-smooth weakly-convex TCCO faces similar difficulties. Moreover, due to the additional level of composition, TCCO can no longer be reformulated into a min-max problem with existing methods.

% For non-smooth weakly-convex FCCO, \cite{pmlr-v162-zhu22g} utilized the convexity assumption on the outer objective to transform the problem into its conjugate form, a weakly-convex concave min-max optimization problem, and solve it using the double-loop algorithm proposed in \cite{doi:10.1080/10556788.2021.1895152}. In this way, they achieved a sample complexity of $\O(1/\epsilon^6)$. This method is applicable to the non-smooth weakly-convex FCCO problem we aim to solve, but it faces three issues, i) the algorithm is not efficient with the double-loop structure; ii) the analysis in \cite{pmlr-v162-zhu22g} lacks the dependency of $n$; iii) this method is tailored for two-level FCCO, thus does not generalize to TCCO problem. 

% The goal of this work is to propose efficient single loop methods for non-smooth weakly-convex FCCO and non-smooth weakly-convex TCCO with sample complexity $\O(1/\epsilon^6)$.

% \subsection{Our Contributions}

\vspace*{-0.05in}
\section{Related work}
\vspace*{-0.05in}
% \textbf{Two-level stochastic compositional optimization.} The standard SCO problems have been studied extensively in various settings. 
% For two-level SCO problems, \cite{DBLP:journals/mp/WangFL17} provided the first few benchmarks of convergence rates for convex/strongly convex smooth/non-smooth SCO with the proposed SCGD algorithm. \cite{Ghadimi2020AST,wang2016accelerating} further improved the convergence rates for the case where the outer objective function is smooth. 
% \cite{chen2021tighter}
% By fully utilizing the convexity assumption on both inner and outer functions, \cite{zhang2022optimal} proposed a stochastic sequential dual (SSD) method and achieved the optimal sample complexity $\O(1/\epsilon^2)$ for SCO with smooth outer function. For the non-smooth convex case, \cite{zhang2022optimal} proposed a non-smooth SSD method with an sample complexity of $\O(1/\epsilon^2)$. 
%In this section, we review the literature in some closely related areas in compositional optimization.
{\bf Smooth SCO.} There are many studies about two-level smooth SCO~\cite{DBLP:journals/mp/WangFL17,zhang2019stochastic,Ghadimi2020AST,qi2021online,Chen_2021,wang2016accelerating} and multi-level smooth SCO~\cite{yang2018multilevel, yang2018multilevel, doi:10.1137/21M1406222, DBLP:journals/siamjo/ZhangX21}. The complexities of finding an $\epsilon$-stationary point for two-level smooth SCO have been improved from $O(\epsilon^{-5})$~\cite{DBLP:journals/mp/WangFL17} to $O(\epsilon^{-3})$~\cite{qi2021online}, and that for multi-level smooth SCO have been improved from a level-dependent complexity of $O(\epsilon^{-(7+K)/2})$~\cite{yang2018multilevel} to a level-independent complexity of $O(\epsilon^{-3})$~\cite{yang2018multilevel}, where $K$ is the number of levels. The improvements mostly come from using advanced variance reduction techniques for estimating each level function or its Jacobian and for estimating the overall gradient. Two stochastic algorithms have been developed in~\cite{hu2020biased} for CSO but suffer a limitation of requiring large batch sizes.% Hence, it is important to study if these advanced variance reduction techniques can be levearged for solving non-smooth SCO problems. 

{\bf Smooth FCCO.} %FCCO is different from the conventional SCO problems of the form $\E_{\zeta}[f_\zeta(\E_{\xi}[g_\xi(\w)])]$~\cite{DBLP:journals/mp/WangFL17} in two ways, i) the overall objective is of a finite-sum form instead of an expectational form, and ii) the inner function $g_i(\w)$ is not just a random function but also depends on  each $i\in \S$, which involves the coupling between the inner random variable $\xi$ and the outer variable $i$. FCCO can be considered as a special case of conditional stochastic optimization (CSO) of the form $\E_{\zeta}[f_\zeta(\E_{\xi|\zeta}[g_\xi(\w, \zeta)])]$~\cite{hu2020biased}. However, the algorithms in~\cite{hu2020biased} require a large batch size, rendering them less practical in resource constrained computing environments.  
FCCO was first introduced in~\cite{DBLP:journals/corr/abs-2104-08736} for optimizing average precision. Its algorithm and convergence analysis was improved in~\cite{pmlr-v162-wang22ak} and \cite{jiang2022multiblocksingleprobe}.  The former work \cite{pmlr-v162-wang22ak} proposed an algorithm named SOX by using moving average (MA) to estimate the inner function values and the overall gradient. In the smooth non-convex setting, SOX is proved to achieve an iteration complexity of $\O(\epsilon^{-4})$.  The latter work~\cite{jiang2022multiblocksingleprobe} proposed a novel multi-block-single-probe variance reduced (MSVR) estimator for estimating the inner function values, which helps achieve a lower iteration complexity $\O(\epsilon^{-3})$. Recently, \cite{he2023debiasing} proposed an extrapolation based estimator for the inner function, which yields a method with a complexity that matches MSVR when $n\leq \epsilon^{2/3}$. These techniques have been employed for optimizing various X-risks, including contrastive losses~\cite{pmlr-v162-yuan22b}, ranking measures and listwise losses~\cite{pmlr-v162-qiu22a}, and other objectives~\cite{pmlr-v162-wang22ak,jiang2022multiblocksingleprobe}. However, all of these prior works assume the smoothness of $f_i$ and $g_i$. Hence, their analysis is not applicable to NSWC FCCO problems. Our novel analysis of a simple algorithm for NSWC FCCO problems yields an iteration complexity of $O(\epsilon^{-6})$ for using the MSVR estimators of the inner functions.  The comparison with~\cite{pmlr-v162-wang22ak,jiang2022multiblocksingleprobe} is shown in Table~\ref{tab:1}. 

{\bf Non-smooth Weakly Convex Optimization.} Analysis of weakly convex optimization with unbiased stochastic subgradients was pioneered by~\cite{davis2018stochastic,doi:10.1137/17M1151031}. Optimization of compositional functions that are weakly convex have been tackled in earlier works~\cite{DBLP:journals/mp/DrusvyatskiyP19, doi:10.1137/17M1135086}, where the inner function is deterministic or does not involve coupling between two random variables.  A closely related work to our NSWC FCCO is weakly-convex concave minimax optimization~\cite{doi:10.1080/10556788.2021.1895152}. Assuming $f_i$ is convex, (\ref{prob:5}) can be written as:
%\begin{equation}\label{prob:7}
   $ \min_{\w}\max_{\pi\in \R^n}\frac{1}{n}\sum\nolimits_{i\in \S}\langle \pi_i, g_i(\w)\rangle -f_i^*(\pi_i)$,
%\end{equation}
where $f_i^*(\cdot)$ is the convex conjugate of $f_i$. It can be solved using existing methods \cite{doi:10.1080/10556788.2021.1895152, yan2020sharp, zhang2023sapd,zhao2022primaldual,pmlr-v119-lin20a} but with several limitations: (i) the algorithms in~\cite{doi:10.1080/10556788.2021.1895152, yan2020sharp,  zhang2023sapd,zhao2022primaldual} have a comparable complexity of $O(1/\epsilon^6)$ but have unnecessary double loops which require setting the number of iterations for the inner loop; (ii) the algorithm in~\cite{pmlr-v119-lin20a} is single loop but has a worse complexity of  $\O(1/\epsilon^8)$; (iii) these existing algorithms and analysis does not account for complexity of updating all coordinates of $\pi$, which could be prohibitive in many applications; iv) these approaches are not applicable to NSWC FCCO/TCCO  with weakly convex $f_i$. In fact, the double loop algorithm has been leveraged and extended to solving the two-way partial AUC maximization problem, a special case of NSWC FCCO~\cite{pmlr-v162-zhu22g}, by sampling and updating a batch of coordinates of $\pi$ at each iteration. However, it is less practical thus not implemented and its analysis did not explicitly show the convergence rate dependency on $n_+, n_-$ and the block batch size. 

A special case of NSWC SCO problem was considered in~\cite{zhu2023distributionally}, which is given by
\begin{equation*} 
    \min\nolimits_{x\in\mathcal X} f(x,g(x)), \text{ with } f(x,u)=\E_\zeta[u+\varkappa\max(0,g(x;\zeta)-u)],\quad  g(x)=\E_{\xi}[g(x;\xi)].
\end{equation*}
%It can be shown that the outer function $f$ in problem~\ref{prob:6} is non-smooth and convex, and the inner function $g$ is only assumed to be weakly-convex. 
They proposed two methods, SCS for smooth $g(x)$ and SCS with SPIDER for non-smooth $g(x)$. For both proposed methods, they proved a sample complexity of $\O(1/\epsilon^6)$ for achieving an $\epsilon$-stationary point of the objective's Moreau envelope~\footnote{It is notable that we use a slightly different definition of $\epsilon$-stationary point with $\|\nabla F_\rho(\w)\|^2\leq \epsilon^2$.}. We would like to remark that the above problem~with a non-smooth $g(x)$ is a special case of NSWC FCCO  with only a convex outer function, one block and no coupled structure. Nevertheless, their algorithm for non-smooth $g(\cdot)$ suffers a limitation of requiring a large batch size in the order of $O(1/\epsilon^2)$ for achieving the same convergence. %finding $\epsilon$-level stationary point of the objective's Moreau envelope. 

Finally, we would like to mention that non-smooth convex or strongly convex SCO problems have been considered in~\cite{DBLP:journals/mp/WangFL17,zhang2022optimal,pmlr-v162-wang22ak}, which,  however, are out of scope of the present work. 

\vspace*{-0.05in}
\section{Preliminaries}\label{sec:pre}
\vspace*{-0.05in}
Let $\|\cdot\|$ be the Euclidean norm of a vector and spectral norm of a matrix. We use $\Pi_{C}[\cdot]$ to denote the Euclidean projection onto $\{v\in\R^m:\|v\|\leq C\}$. For vectors, inequality notations including $\leq,\geq,>,<$ are used to denote element-wise inequality. For an expectation function $f(\cdot)=\E_{\xi}[f(\cdot; \xi)]$, let $f(\cdot;\B)=\frac{1}{|\B|}\sum_{\xi\in\B}f(\cdot; \xi)$ be its stochastic unbiased estimator evaluated on a sample batch $\B$. A stochastic unbiased estimator is said to have bounded variance $\sigma^2$ if $\E_\xi[\|f(\cdot)-f(\cdot;\xi)\|^2]\leq \sigma^2$. The Jacobian matrix of function $f:\R^{m_1}\to\R^{m_2}$ is in dimension $\R^{m_1\times m_2}$. % For a differientiable function $f:X\times Y\to \R$, let $\nabla_x f(x,y)$ denote its partial derivative taken with respect to (w.r.t.) $x$. Similar notation goes to partial derivative w.r.t. $y$. 
% For a subdifferientiable function $f:X\to \R$, let $\partial f(x)$ denote the set of subgradients of $f(x)$. 
We recall the definition of general subgradient and subdifferential following \cite{davis2018stochastic,rockafellar2009variational}.
\begin{definition}[subgradient and subdifferential]
    Consider a function $f:\R^n\to \R\cup \{\infty\}$ and a point with $f(x)$ finite. A vector $v\in \R^n$ is a general subgradient of $f$ at $x$, if 
    \begin{equation*}
        f(y)\geq f(x)+\langle v, y-x\rangle + o(\|y-x\|),\quad \text{as } y\to x.
    \end{equation*}
    The subdifferential $\partial f(x)$ is the set of subgradients of $f$ at point $x$.
\end{definition}
For simplicity, we abuse the notation and also use $\partial f(x)$ to denote one subgradient from the corresponding subgradient set when no confusion could be caused. We use $\partial f(x;\B)$ to represent a stochastic unbiased estimator of the subgradient $\partial f(x)$ that is evaluated on a sample batch $\B$. A function is called $C^1$-smooth if it is continuously differentiable. 
A function $f=(f_1,\dots,f_{m_2}):\R^{m_1}\to \R^{m_2}$ is called monotone if $\forall i\in \{1,\dots,m_2\}$, $f_i:\R^{m_1}\to \R$ is monotone with respect to each element of the input. Note that if a Lipschitz continuous function $f:O\to \R^{m_2}$ is assumed to be non-increasing (resp. non-decreasing), where the domain $O\subset \R^{m_1}$ is open, then all subgradients of $f$ are element-wise non-positive (resp. non-negative). We refer the details to Appendix~\ref{app:monotone}.
%Below, we recall some basic definitions.
%\begin{definition}
%    Consider a differentiable function $f:X\to \R$. A point $x$ is an \textit{$\epsilon$-stationary point} of $f$ %if $\|\nabla f(x)\|\leq \epsilon$.
%\end{definition}
%\begin{definition}

A function $f$ is \textit{$C$-Lipschitz continuous} if $\|f(x)-f(y)\|\leq C\|x-y\|$. A differentiable function $f$ is \textit{$L$-smooth} if $\|\nabla f(x)-\nabla f(y)\|\leq L\|x-y\|$. 
%\end{definition}
%\begin{definition}[\cite{davis2018stochastic}]
    A function $f:\R^d\to \R\cup\{\infty\}$ is \textit{$\rho$-weakly-convex} if the function $f(\cdot)+\frac{\rho}{2}\|\cdot\|^2$ is convex. A vector-valued function $f:\R^d\to \{\R\cup\{\infty\}\}^{m}$ is called $\rho$-weakly-convex if it is $\rho$-weakly-convex for each output.
%\end{definition}
%\vspace*{-0.1in}
 It is difficult sometimes impossible to find an $\epsilon$-stationary point of a non-smooth weakly-convex function $F$, i.e., $\text{dist}(0, \partial F(\w))\leq \epsilon$.  %  Consider a non-smooth weakly-convex objective $f$. With the commonly used convergence metric for smooth optimization, i.e.,, achieve finite time convergence in the worst scenario. 
 For example, an $\epsilon$-stationary point of function $f(x)=|x|$ does not exist for $0\leq \epsilon<1$ unless it is the optimal solution.  %Even if we consider a generalized $\epsilon$-stationary point, i.e., $\exists v\in \partial f(x)$ such that $\|v\|\leq \epsilon$, the only $\epsilon$-stationary point of $f(x)=|x|$ is the exact unique solution $x=0$ for $\epsilon<1$. 
 To tackle this issue, \cite{davis2018stochastic} proposed to use the stationarity of the problem's Moreau envelope as the convergence metric, which has become a standard metric for solving weakly-convex problems \cite{doi:10.1137/17M1151031,doi:10.1080/10556788.2021.1895152, yan2020sharp, zhang2023sapd,zhao2022primaldual,pmlr-v119-lin20a}. Given a weakly-convex function $\varphi:\R^m\to \R$, its Moreau envelope and proximal map with $\lambda>0$ are constructed as
\begin{equation*}
\begin{aligned}
    &\varphi_\lambda(x):=\min_y\{\varphi(y)+\frac{1}{2\lambda}\|y-x\|^2\}, \quad \prox_{\lambda \varphi}(x):=\argmin_y\{\varphi(y)+\frac{1}{2\lambda}\|y-x\|^2\}.
\end{aligned}
\end{equation*}
The Moreau envelope is an implicit smoothing of the original problem. Thus it attains a continuous differentiation. As a formal statement, the following lemma follows from standard results \cite{davis2018stochastic, Moreau1965}.
\begin{lemma}\label{lem:4}
Given a $\rho$-weakly-convex function $\varphi$ and $\lambda<\rho^{-1}$, the envelope $\varphi_\lambda$ is $C^1$-smooth with gradient given by
%\begin{equation*}
    $\nabla \varphi_\lambda(x)=\lambda^{-1}(x-\prox_{\lambda \varphi}(x))$.
%\end{equation*}
\end{lemma}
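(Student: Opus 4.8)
The plan is to reduce everything to one fact: since $\lambda<\rho^{-1}$, the inner problem defining $\prox_{\lambda\varphi}$ is not merely convex but \emph{strongly} convex, and then to read off differentiability of $\varphi_\lambda$ by squeezing it between two quadratics. \textbf{Step 1 (the proximal subproblem is strongly convex).} Fix $x$ and split the objective as $\Phi_x(y):=\varphi(y)+\frac{1}{2\lambda}\|y-x\|^2=\bigl(\varphi(y)+\frac{\rho}{2}\|y\|^2\bigr)+\bigl(\frac{1}{2\lambda}\|y-x\|^2-\frac{\rho}{2}\|y\|^2\bigr)$. The first bracket is convex by $\rho$-weak convexity of $\varphi$, and the second is a quadratic in $y$ with Hessian $(\lambda^{-1}-\rho)I\succ 0$ because $\lambda<\rho^{-1}$; hence $\Phi_x$ is $\mu$-strongly convex with $\mu:=\lambda^{-1}-\rho>0$. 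Therefore its minimizer is unique, so $\prox_{\lambda\varphi}(x)$ is well defined and single-valued and $\varphi_\lambda(x)=\Phi_x(\bar y(x))$, where I write $\bar y(x):=\prox_{\lambda\varphi}(x)$.

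\textbf{Step 2 ($\prox_{\lambda\varphi}$ is Lipschitz).} For any $x,x'$ I would apply the $\mu$-strong-convexity inequality at each of the two minimizers, obtaining $\Phi_x(\bar y(x'))\ge\Phi_x(\bar y(x))+\tfrac{\mu}{2}\|\bar y(x')-\bar y(x)\|^2$ and the symmetric one with $x,x'$ swapped. Adding the two and using that $y\mapsto\Phi_x(y)-\Phi_{x'}(y)$ is affine — so the four terms collapse to $\tfrac1\lambda\langle x'-x,\bar y(x')-\bar y(x)\rangle$ — and then Cauchy--Schwarz gives $\|\bar y(x)-\bar y(x')\|\le(\lambda\mu)^{-1}\|x-x'\|$; in particular $\bar y(\cdot)$ is continuous.

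\textbf{Step 3 (differentiability and the gradient formula).} Set $d(x):=\lambda^{-1}(x-\bar y(x))$. Using the feasible (sub-optimal) point $\bar y(x)$ in the minimization defining $\varphi_\lambda(x')$ and expanding $\|\bar y(x)-x'\|^2$ about $x$ yields the one-sided quadratic support $\varphi_\lambda(x')\le\varphi_\lambda(x)+\langle d(x),x'-x\rangle+\tfrac{1}{2\lambda}\|x'-x\|^2$, valid for all $x,x'$. Swapping $x$ and $x'$ in this same inequality and rearranging gives the matching lower bound $\varphi_\lambda(x')\ge\varphi_\lambda(x)+\langle d(x'),x'-x\rangle-\tfrac{1}{2\lambda}\|x'-x\|^2$. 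Subtracting, $|\varphi_\lambda(x')-\varphi_\lambda(x)-\langle d(x),x'-x\rangle|\le\|d(x')-d(x)\|\,\|x'-x\|+\tfrac{1}{2\lambda}\|x'-x\|^2$, which is $o(\|x'-x\|)$ as $x'\to x$ since $d$ is continuous by Step 2. Hence $\varphi_\lambda$ is differentiable everywhere with $\nabla\varphi_\lambda(x)=d(x)=\lambda^{-1}(x-\prox_{\lambda\varphi}(x))$, and since $d$ is in fact Lipschitz, $\varphi_\lambda\in C^1$.

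\textbf{Main obstacle.} The only genuine subtlety is Step 3: the upper quadratic support comes for free, but by itself it does not force differentiability — one needs the matching lower support, and that in turn relies on continuity of $x\mapsto\prox_{\lambda\varphi}(x)$, which is available \emph{only} because $\lambda<\rho^{-1}$ delivers the strong convexity exploited in Steps 1--2. An alternative is to invoke a parametric (Danskin-type) envelope theorem for minimization of a parameter-$C^1$ function with a unique, continuously varying minimizer, but establishing that auxiliary result costs essentially the same work. As the paper notes, this is a standard argument; I would simply cite \cite{Moreau1965,davis2018stochastic}.
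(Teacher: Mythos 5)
Your proof is correct and complete: the strong-convexity reduction for $\lambda<\rho^{-1}$, the Lipschitz bound on the prox map via the two growth inequalities, and the two-sided quadratic envelopes yielding differentiability with gradient $\lambda^{-1}(x-\prox_{\lambda\varphi}(x))$ are exactly the standard argument. The paper does not prove this lemma itself but defers to \cite{davis2018stochastic,Moreau1965}, and your write-up is a faithful reconstruction of the proof in those references (modulo the implicit assumption that $\varphi$ is proper and lower semicontinuous so the minimizer exists), so there is nothing to correct.
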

% \vspace*{-0.1in}
Moreover, for any point $x\in \R^m$, the proximal point $\hat{x}:=\prox_{\lambda \varphi}(x)$ satisfies \cite{davis2018stochastic}
\begin{equation*}
    \|\hat{x}-x\|=\lambda \|\nabla \varphi_\lambda(x)\|,\quad \varphi(\hat{x})\leq \varphi(x),\quad \text{dist}(0,\partial \varphi (\hat{x}))\leq \|\nabla \varphi_\lambda(x)\|.
\end{equation*}
Thus if $\|\nabla \varphi_\lambda(x)\|\leq \epsilon$, we can say $x$ is close to a point $\hat x$ that is $\epsilon$-stationary, which is called nearly $\epsilon$-stationary solution of $\varphi(x)$. % This leads to the following definition of nearly $\epsilon$-stationary point. % the distances  $\|\hat{x}-x\|$ and $\text{dist}(0,\partial \varphi (\hat{x}))$ are also small. The nearly $\epsilon$-stationary point can be defined as following:
%\begin{definition}[Nearly $\epsilon$-stationary point]
%    Consider a weakly-convex function $\varphi:X\to \R$, a point $x\in X$ is called a \textit{nearly $\epsilon$-stationary point} if $x$ is an $\epsilon$-stationary point of the Moreau envelope $\varphi_\lambda$ for some $\lambda\in \R^+$, i.e. $\|\nabla \varphi_\lambda(x)\|\leq \epsilon$.
%\end{definition} 

% \vspace*{-0.05in}
\setlength{\textfloatsep}{2pt}% Remove \textfloatsep
\vspace*{-0.05in}
\section{Algorithms and Convergence}
% \vspace*{-0.1in}%In this section, we present two algorithms for solving NSWC FCCO/TCCO by using advanced variance-reduction techniques for estimating the inner function values, and their convergence results. %We would like to emphasize that the main contribution of this paper is the convergence analysis for non-smooth weakly convex FCCO and TCCO problems. 

\vspace*{-0.05in}
\subsection{Non-Smooth Weakly-Convex FCCO}
\vspace*{-0.05in}
In this section, we assume the following conditions hold for the FCCO problem (\ref{prob:5}).
\begin{assumption}\label{ass:1}
    For all $i\in \S$, we assume that
    \begin{itemize}[leftmargin=*]
    \item $f_i$ is $\rho_f$-weakly-convex, $C_f$-Lipschitz continuous and non-decreasing;
    \item $g_i(\cdot)$ is $\rho_g$-weakly-convex and $g_i(\cdot;\xi)$ is $C_g$-Lipschitz continuous;
    \item Stochastic gradient estimators $g_i(\w;\xi)$ and $\partial g_i(\w;\xi)$ have bounded variance $\sigma^2$.
    \end{itemize}
\end{assumption}

% Note that
% \begin{equation*}
%     \begin{aligned}
%         &\E[\|\frac{1}{B}\sum_{i\in \B_1}\partial f_i(u_i)\partial g_i(\w;\B_{2,i})\|^2]\\
%         &\leq \E[2\|\frac{1}{B}\sum_{i\in \B_1}\partial f_i(u_i)\partial g_i(\w;\B_{2,i})-\partial f_i(u_i)\partial g_i(\w)\|^2+2\|\frac{1}{B}\sum_{i\in \B_1}\partial f_i(u_i)\partial g_i(\w)-\frac{1}{n}\sum_{i\in \S}\partial f_i(u_i)\partial g_i(\w)\|^2]\\
%         &\leq 2C_f^2\sigma^2+\frac{1}{B}C_f^2C_g^2
%     \end{aligned}
% \end{equation*}

%Now we analyze the structure of FCCO 
%Under Assumption~\ref{ass:1} we can show that $f_i\circ g_i$ is weakly-convex for all $i\in\S$.
% \begin{proposition}\label{prop:1}
%     Assume $f$ is convex, $C$-Lipschitz continuous, non-decreasing, and $g$ is $\rho$-weakly-convex, then $f\circ g$ is $C\rho$-weakly-convex.
% \end{proposition}
\begin{proposition}\label{prop:1}
    Under Assumption~\ref{ass:1}, $F(\w)$ in~(\ref{prob:5}) is $\rho_F$ weakly convex with $\rho_F= \sqrt{d_1}\rho_gC_f+\rho_f C_g^2$. %Assume $f(\cdot)$ is $\rho_1$-weakly-convex and $C_1$-Lipschitz continuous and non-decreasing, and $g(\cdot)$ is $C_2$ Lipschitz continuous and $\rho_2$-weakly-convex,
    % \begin{enumerate}
    %     \item $f(\cdot)$ is monotone and $g(\cdot)$ is $L_2$-smooth;
    %     \item $f(\cdot)$ is non-decreasing and $g(\cdot)$ is $L_2$-weakly-convex,
    % \end{enumerate}
    %then $f\circ g$ is $\tilde{\rho}$-weakly-convex with $\tilde{\rho}=\rho_2C_1+\rho_1 C_2^2$. 
\end{proposition}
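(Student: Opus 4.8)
The plan is to show weak convexity of $F$ by establishing it for a single summand $\w\mapsto f_i(g_i(\w))$ and then averaging, since a convex combination of $\rho_F$-weakly-convex functions is $\rho_F$-weakly-convex. For a fixed $i$, I would use the variational characterization: $\phi_i := f_i\circ g_i$ is $\rho_F$-weakly-convex iff for all $\w,\w'$ and all $v\in\partial\phi_i(\w)$ (a chain-rule subgradient $v = \partial g_i(\w)\,\partial f_i(g_i(\w))$, valid because $f_i$ is Lipschitz and $g_i$ is Lipschitz so the composition is well-behaved) one has $\phi_i(\w')\ge \phi_i(\w)+\langle v,\w'-\w\rangle-\frac{\rho_F}{2}\|\w'-\w\|^2$. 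The key quantitative inputs are: (a) $f_i$ is $\rho_f$-weakly-convex, so $f_i(y')\ge f_i(y)+\langle s,y'-y\rangle-\frac{\rho_f}{2}\|y'-y\|^2$ for $s\in\partial f_i(y)$; (b) $f_i$ is non-decreasing, so $s\ge 0$ componentwise (as noted in the Preliminaries); (c) $g_i$ is $\rho_g$-weakly-convex componentwise, so $(g_i)_k(\w')\ge (g_i)_k(\w)+\langle \nabla_{\w}\text{(subgrad of }(g_i)_k),\w'-\w\rangle-\frac{\rho_g}{2}\|\w'-\w\|^2$; and (d) $g_i(\cdot;\xi)$ is $C_g$-Lipschitz, hence (after taking expectations) $g_i$ is $C_g$-Lipschitz, giving $\|g_i(\w')-g_i(\w)\|\le C_g\|\w'-\w\|$, and $f_i$ is $C_f$-Lipschitz so $\|s\|\le C_f$ and $\|\partial g_i\|\le C_g$.

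The main computation chains these together. Starting from $\phi_i(\w')=f_i(g_i(\w'))$, apply the weak-convexity inequality for $f_i$ at the point $g_i(\w)$ with subgradient $s=\partial f_i(g_i(\w))$:
\begin{equation*}
\phi_i(\w')\ge f_i(g_i(\w)) + \langle s,\ g_i(\w')-g_i(\w)\rangle - \tfrac{\rho_f}{2}\|g_i(\w')-g_i(\w)\|^2.
\end{equation*}
The last term is bounded below by $-\tfrac{\rho_f}{2}C_g^2\|\w'-\w\|^2$ using Lipschitzness of $g_i$. For the middle term, write $\langle s, g_i(\w')-g_i(\w)\rangle = \sum_{k=1}^{d_1} s_k\big((g_i)_k(\w')-(g_i)_k(\w)\big)$; since $s_k\ge 0$ and each component $(g_i)_k$ is $\rho_g$-weakly-convex, $(g_i)_k(\w')-(g_i)_k(\w)\ge \langle \partial (g_i)_k(\w),\w'-\w\rangle-\tfrac{\rho_g}{2}\|\w'-\w\|^2$, so multiplying by $s_k\ge0$ and summing gives
\begin{equation*}
\langle s, g_i(\w')-g_i(\w)\rangle \ge \big\langle \partial g_i(\w)\, s,\ \w'-\w\big\rangle - \tfrac{\rho_g}{2}\Big(\sum_k s_k\Big)\|\w'-\w\|^2.
\end{equation*}
Finally bound $\sum_k s_k \le \sqrt{d_1}\,\|s\| \le \sqrt{d_1}\,C_f$ by Cauchy–Schwarz. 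Combining, $\phi_i(\w')\ge \phi_i(\w)+\langle \partial g_i(\w)\partial f_i(g_i(\w)),\w'-\w\rangle - \tfrac{1}{2}\big(\sqrt{d_1}\rho_g C_f+\rho_f C_g^2\big)\|\w'-\w\|^2$, which is exactly $\rho_F$-weak convexity with the claimed constant; averaging over $i$ finishes the proof.

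The step I expect to be the main obstacle — or at least the one requiring care — is the sign argument: the whole bound hinges on $s=\partial f_i(g_i(\w))\ge 0$ componentwise, which is where the "non-decreasing" hypothesis on $f_i$ is essential (without it the term $-\tfrac{\rho_g}{2}\sum_k s_k\|\w'-\w\|^2$ could have the wrong sign and weak convexity of $f_i\circ g_i$ could genuinely fail). I would want to be careful that this componentwise nonnegativity of general subgradients of a non-decreasing Lipschitz function is exactly the fact quoted in the Preliminaries (with the reference to Appendix~\ref{app:monotone}), and that the chain rule for subdifferentials of $f_i\circ g_i$ applies in this non-smooth setting — I would invoke the standard calculus for compositions of a Lipschitz outer function with a Lipschitz inner map, or simply verify the defining inequality directly as above, which sidesteps needing a general chain rule and only uses the three structural inequalities plus the two Lipschitz bounds. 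A secondary point to check is that expectation preserves both weak convexity and the Lipschitz constant of $g_i$, which is immediate by linearity and Jensen.
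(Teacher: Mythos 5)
Your proposal is correct and follows essentially the same route as the paper: the paper proves a general composition lemma (Proposition~\ref{prop:4} in Appendix~\ref{app:LP}) that chains the weak-convexity inequality of the outer function with the componentwise weak-convexity of the inner map, uses the sign of the subgradient of the non-decreasing outer function to keep the error term on the right side, and bounds $\sum_k s_k \le \sqrt{d_1}\,\|s\|\le \sqrt{d_1}C_f$ by Cauchy--Schwarz, yielding exactly $\rho_F=\sqrt{d_1}\rho_g C_f+\rho_f C_g^2$ before averaging over $i$. Your direct verification of the defining inequality is the same computation written per component rather than with the paper's vector notation.
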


One challenge in solving FCCO is the lack of access to unbiased estimation of the subgradients $\frac{1}{n}\sum_{i\in \S}\partial g_i(\w)\partial f_i(g_i(\w))$ due to the expectation form of $g_i(\w)$ inside a non-linear function $f_i$. A common solution in existing works for solving smooth FCCO is to maintain function value estimators $\{u_{i}:i\in \S\}$ for $\{g_i(\w):i\in \S\}$, and approximate the true gradient by a stochastic version $\frac{1}{B_1}\sum_{i\in \B_1}\partial g_i(\w; \B_2)\partial f_i(u_i)$ \cite{pmlr-v162-wang22ak,jiang2022multiblocksingleprobe}, where $\B_1$, $\B_2$ are  sampled mini-batches. Simply using a mini-batch estimator of $g_i$ inside $f_i$ does not ensure convergence if mini-batch size is small.

Inspired by existing algorithms of smooth FCCO, a simple method for solving non-smooth FCCO is presented in Algorithm~\ref{alg:1} referred to as SONX. %Notice that a na\"ive update rule leads to updating all $n$ blocks of $u$ at each iteration. To avoid sampling data from all $i\in \S$ and reduce the number of updates per iteration, the algorithm is often designed to only perform updates for $u_{i}$ with $i$ in a sampled batch $\B_1$, of which the size $B_1=|\B_1|$ can be as small as $1$. In this approach, since the Lipschitz continuity of $g_i(\w;\xi)$ can assure a good estimation of $\{g_i(\w):i\in \S\}$, the potential non-smooth assumption does not prevent us adopting this strategy. 
A key step is the step 4, which uses the multi-block-single-probe variance reduced (MSVR) estimator proposed in \cite{jiang2022multiblocksingleprobe} to update $\{u_{i}:i\in \S\}$ in a block-wise manner. It is an advanced variance reduced update strategy for multi-block variable inspired by STORM \cite{NEURIPS2019_b8002139}. 
% The MSVR estimator is constructed as following.
% \begin{equation*}
%      u_{i,t+1}= \begin{cases}(1-\tau)u_{i,t}+\tau  g_i(\w_t;\B_{2,i}^t)+\underbrace{\gamma(g_i(\w_t;\B_{2,i}^t)-g_i(\w_{t-1};\B_{2,i}^t))}_{\text{error correction}},\quad i\in \B_1^t\\ u_{i,t},\quad i\not\in \B_1^t\end{cases}.
% \end{equation*}
In the update of MSVR estimator, for each sampled $i\in \B_1^t$, $u_{i,t}$ is updated following a STORM-like rule with a specialized parameter $\gamma=\frac{n-B_1}{B_1(1-\tau)}+(1-\tau)$ for the error correction term. For the unsampled $i\not\in \B_1^t$, no update for $u_{i,t}$ is needed. When $\gamma=0$, the estimator becomes the moving average estimator analyzed in~\cite{pmlr-v162-wang22ak} for smooth FCCO, which is also analyzed in the Appendix.  With the function values of $\{g_i(\w_t):i\in \S\}$ well-estimated, the gradient can be approximated by $G_t$ in step 5. 
%\begin{equation*}
%    G_t=\frac{1}{B_1}\sum_{i\in \B_1^t} \partial g_i(\w_t; \B_{2,i}^t)\partial f_i(u_{i,t}).
%\end{equation*}
Next, we directly update $\w_t$ by subgradient descent using the stochastic gradient estimator $G_t$. Note that unlike existing works on smooth FCCO that often maintain a moving average estimator \cite{pmlr-v162-wang22ak} or a STORM estimator \cite{jiang2022multiblocksingleprobe} for the overall gradient to attain better rates,  this is not possible in the non-smooth case as those variance reduction techniques for the overall gradient critically rely on the Lipschitz continuity of $\nabla F$, i.e.,  the smoothness of  $F$. % are essentially linear approximations based on the first order Taylor expansion of the subgradient $\frac{1}{n}\sum_{i\in \S}\partial g_i(\w)\partial f_i(u_i)$. Thus, without the Lipschitz continuity of the subgradients of $f_i$ and $g_i$, the tangent in the linear approximation may be infinity. 
%Hence, the variance reduction techniques for the overall gradient is not applicable to non-smooth objectives. %In other words, to theoretically prove a moving average or STORM estimator estimates the subgradient approximation $\frac{1}{n}\sum_{i\in \S}\partial g_i(\w)\partial f_i(u_i)$ well, one must assume the smoothness of $f_i$ and $g_i$. 

% With the function values of $g_i(\w_t)$ well-estimated, the gradient can be approximated by
% \begin{equation*}
%     G_t=\frac{1}{B_1}\sum_{i\in \B_1^t} \partial f_i(u_{i,t})\partial g_i(\w_t; \B_{2,i}^t).
% \end{equation*}
% Here we use the same sample batches $\B_1^t, \B_{2,i}^t$ to the ones used in $\{u_{i,t}:i\in \S\}$ updates. This is possible because $G_t$ uses $u_{i,t}$ from the previous iteration, so that the dependency on the sample batches between $G_t$ and $u_{i,t}$ is removed \cite{pmlr-v162-wang22ak,jiang2022multiblocksingleprobe,NEURIPS2022_be76ca29}. Finally we update $\w_t$ by gradient descent using the gradient estimation $G_t$. Our proposed method in presented in Algorithm~\ref{alg:1}.

\begin{algorithm}[t]
\caption {Stochastic Optimization algorithm for Non-smooth FCCO (SONX)}\label{alg:1}
\begin{algorithmic}[1]
\STATE{Initialization: $\w_0$, $\{u_{i,0}:i\in \S\}$. }
\FOR{$t=0,\dots, T-1$}
\STATE Draw sample batches $\B_1^t\sim \S$, and $\B_{2,i}^t\sim \D_i$ for each $i\in \B_1^t$.
\STATE $  u_{i,t+1}= \begin{cases}(1-\tau)u_{i,t}+\tau  g_i(\w_t;\B_{2,i}^t)+\gamma(g_i(\w_t;\B_{2,i}^t)-g_i(\w_{t-1};\B_{2,i}^t)),\quad i\in \B_1^t\\ u_{i,t},\quad i\not\in \B_1^t\end{cases}$
\STATE Compute $G_t=\frac{1}{B_1}\sum_{i\in \B_1^t} \partial g_i(\w_t; \B_{2,i}^t)\partial f_i(u_{i,t})$
\STATE Update $ \w_{t+1}= \w_t-\eta G_t$
\ENDFOR
%\STATE \textbf{return} $\w_{\bar{t}}$ with uniformly sampled $\bar{t}\in \{0,T-1\}$.
\end{algorithmic}
\end{algorithm}
\begin{algorithm}[t]
\caption {Stochastic Optimization algorithm for Non-smooth TCCO (SONT)}\label{alg:3}
\begin{algorithmic}[1]
\STATE{Initialization: $\w_0$, $\{u_{i,0}:i\in \S_1\}$, $v_{i,j,0}=h_{i,j}(\w_0;\B_{3,i,j}^0)$ for all $(i,j)\in \S_1\times \S_2$.}
\FOR{$t=0,\dots, T-1$}
\STATE Sample batches $\B_1^t\subset \S_1$, $\B_2^t\subset \S_2$, and $\B_{3,i,j}^t\subset \D_{i,j}$ for $i\in \B_1^t$ and $j\in \B_2^t$.
\STATE $v_{i,j,t+1}=\begin{cases}\Pi_{\tilde{C}_{h}}[(1-\tau_1)v_{i,j,t}+\tau_1 h_{i,j}(\w_t;\B_{3,i,j}^t)+\gamma_1 (h_{i,j}(\w_t;\B_{3,i,j}^t)-h_{i,j}(\w_{t-1};\B_{3,i,j}^t))],\\
\quad\quad\quad\quad\quad\quad\quad\quad\quad\quad\quad\quad\quad\quad\quad\quad\quad\quad\quad\quad\quad\quad\quad\quad (i,j)\in \B_1^t\times \B_2^t\\ v_{i,j,t},\quad (i,j)\not\in \B_1^t\times \B_2^t\end{cases}$
\STATE $u_{i,t+1}=\begin{cases}
    (1-\tau_2)u_{i,t}+ \frac{1}{B_2}\sum_{j\in \B_2^t} [\tau_2g_i(v_{i,j,t})+\gamma_2(g_i(v_{i,j,t})-g_i(v_{i,j,t-1})],\quad i\in \B_1^t\\
    u_{i,t},\quad i\not\in \B_1^t
\end{cases}$
\STATE $G_t = \frac{1}{B_1}\sum_{i\in \B_1^t}\left[\left(\frac{1}{B_2}\sum_{i\in \B_2^t}\nabla h_{i,j}(\w_t;\B_{3,i,j}^t)\partial g_i(v_{i,j,t})\right)\partial f_i(u_{i,t})\right]$
\STATE Update $ \w_{t+1}= \w_t-\eta G_t$
\ENDFOR
%\STATE{\textbf{return} $\w_{\bar{t}}$ with uniformly sampled $\bar{t}\in \{0,T-1\}$.}
\end{algorithmic}
\end{algorithm}
\vspace*{-0.05in}
\subsection{Non-Smooth Weakly-Convex TCCO}
\vspace*{-0.05in}
In this section, we consider non-smooth TCCO problem and aim to extend Algorithm~\ref{alg:1} to solve it. First of all, for convergence analysis and to ensure the weak convexity of $F(\w)$ in (\ref{prob:4}), we make the following assumptions.
\begin{assumption}\label{ass:2}
    For all $(i,j)\in \S_1\times \S_2$, we assume that
    \begin{itemize}[leftmargin=*]
% \vspace*{-0.05in}        
\item $f_i$ is $C_f$-Lipschitz continuous, $\rho_f$-weakly-convex and non-decreasing;
 % \vspace*{-0.05in}       
 \item $g_i$ is $\rho_g$-weakly-convex and $C_g$-Lipschitz continuous. $h_{i,j}(\cdot;\xi)$ is $C_h$-Lipschitz continuous.
   % \vspace*{-0.05in}   
   \item Either $g_i$ is non-decreasing, $h_{i,j}$ is $L_h$-weakly-convex or $g_i$ is monotone,  $h_{i,j}$ is $L_h$-smooth. 
% \vspace*{-0.05in}       
\item Stochastic estimators $h_{i,j}(\w,\xi)$ and $\partial h_{i,j}(\w,\xi)$ have bounded variance $\sigma^2$, and $\|h_{i,j}(\w)\|\leq \tilde{C}_h$. $\E_{i}\|g_i(v) - \frac{1}{n_2}\sum_{j\in \S_2}g_i(v)\|^2\leq \sigma^2$ for any $v$.
    \end{itemize}
\end{assumption}
% \vspace*{-0.1in}
% \begin{equation*}
%     \begin{aligned}
%         &\E[\|\frac{1}{B_1}\sum_{i\in \B_1}\partial f_i(u_{i})(\frac{1}{B_2}\sum_{i\in \B_2}\partial g_i(v_{i,j})\nabla h_{i,j}(\w;\B_{3,i,j})\|^2\\
%         &\leq \E[3\|\frac{1}{B_1}\sum_{i\in \B_1}\partial f_i(u_{i})(\frac{1}{B_2}\sum_{i\in \B_2}\partial g_i(v_{i,j})[\nabla h_{i,j}(\w;\B_{3,i,j})-\nabla h_{i,j}(\w)]\|^2\\
%         &\quad +3\|\frac{1}{B_1}\sum_{i\in \B_1}\partial f_i(u_{i})[\frac{1}{B_2}\sum_{i\in \B_2}\partial g_i(v_{i,j})\nabla h_{i,j}(\w)-\frac{1}{n_2}\sum_{i\in \S_2}\partial g_i(v_{i,j})\nabla h_{i,j}(\w)]\|^2\\
%         &\quad +3\|\frac{1}{B_1}\sum_{i\in \B_1}\partial f_i(u_{i})\frac{1}{n_2}\sum_{i\in \S_2}\partial g_i(v_{i,j})\nabla h_{i,j}(\w)-\frac{1}{B_1}\sum_{i\in \B_1}\partial f_i(u_{i})\frac{1}{n_2}\sum_{i\in \S_2}\partial g_i(v_{i,j})\nabla h_{i,j}(\w)\|^2\|^2]\\
%         &\leq \frac{3C_f^2C_g^2\sigma^2}{B_3}+\frac{3C_f^2C_g^2C_h^2}{B_2}+\frac{3C_f^2C_g^2C_h^2}{B_1}
%     \end{aligned}
% \end{equation*}

The weak convexity of $F(\w)$ in (\ref{prob:4})  is guaranteed by the following Proposition.
\begin{proposition}\label{prop:2}
Under Assumption~\ref{ass:2}, $F(\w)$ in (\ref{prob:4})  is $\rho_F$-weakly-convex with $\rho_F = \sqrt{d_1}(\sqrt{d_2}L_hC_g+\rho_g C_h^2)C_f+\rho_f C_g^2C_h^2$. 
    %Assume $g(\cdot)$ is $\rho_1$-weakly-convex, $C_1$-Lipschitz continuous and monotone, and $h(\cdot)$ is $L_2$-smooth, $C_2$ Lipschitz continuous, then $g\circ h$ is $\tilde{\rho}$-weakly-convex with $\tilde{\rho}=L_2C_1+\rho_1 C_2^2$.
\end{proposition}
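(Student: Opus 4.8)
The plan is to show that each summand $\w \mapsto f_i\bigl(\frac{1}{n_2}\sum_{j\in\S_2} g_i(h_{i,j}(\w))\bigr)$ is $\rho_F$-weakly-convex, and then average over $i\in\S_1$; since a convex combination of $\rho_F$-weakly-convex functions is $\rho_F$-weakly-convex, the bound on $F$ follows. So fix $i$ and $j$ and analyze the composition layer by layer, from the inside out, at each stage tracking (a) the weak-convexity modulus, (b) the Lipschitz constant, and (c) the monotonicity, since all three propagate through compositions. This is the same bookkeeping that underlies Proposition~\ref{prop:1}, just with one extra layer.

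First I would recall the elementary composition rules. If $\phi:\R^m\to\R$ is convex, $C$-Lipschitz and non-decreasing in each coordinate, and $\psi=(\psi_1,\dots,\psi_m):\R^d\to\R^m$ has each component $\rho$-weakly-convex, then $\phi\circ\psi$ is $(C\sqrt{m}\,\rho)$-weakly-convex: writing $\widetilde\psi_k = \psi_k + \frac{\rho}{2}\|\cdot\|^2$ (convex), one has $\phi(\psi(\w)) = \phi(\widetilde\psi(\w) - \frac{\rho}{2}\|\w\|^2\mathbf 1)$, and since $\phi$ is non-decreasing and $C$-Lipschitz, $\phi(\psi(\w)) + \frac{C\sqrt m\rho}{2}\|\w\|^2$ is convex (the shift $\frac{\rho}{2}\|\w\|^2$ in each of the $m$ arguments is "absorbed" at a cost controlled by the Lipschitz constant along the all-ones direction, whose norm is $\sqrt m$). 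Separately, if $\phi$ is $\rho_\phi$-weakly-convex and $C_\phi$-Lipschitz and $\psi$ is $C_\psi$-Lipschitz, then $\phi\circ\psi$ has an additional $\rho_\phi C_\psi^2$ contribution to its weak-convexity modulus — this is the term that produced $\rho_f C_g^2$ in Proposition~\ref{prop:1}. Combining, for a two-stage composition where the outer function is weakly convex one gets modulus $C_\phi\sqrt m\,\rho_\psi + \rho_\phi C_\psi^2$. I would also need the Lipschitz chain rule ($\mathrm{Lip}(\phi\circ\psi)\le C_\phi C_\psi$) and the fact that a composition of non-decreasing (or appropriately monotone) maps is monotone in the right sense, so that the "non-decreasing outer function" hypothesis is available at each application.

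Now apply this inside-out to $f_i(g_i(h_{i,j}(\w)))$, absorbing the $\frac1{n_2}\sum_j$ average at the end. Innermost: $h_{i,j}$ is $L_h$-weakly-convex and $C_h$-Lipschitz (in the first sub-case). Middle layer: $g_i$ is $\rho_g$-weakly-convex, $C_g$-Lipschitz, and non-decreasing, with $h_{i,j}:\R^d\to\R^{d_2}$; so $g_i\circ h_{i,j}$ is weakly-convex with modulus $C_g\sqrt{d_2}\,L_h + \rho_g C_h^2$, is $C_gC_h$-Lipschitz, and is still monotone in the right sense. Outermost: $f_i$ is $\rho_f$-weakly-convex, $C_f$-Lipschitz, non-decreasing, with the middle map $\R^d\to\R^{d_1}$; so $f_i\circ(g_i\circ h_{i,j})$ is weakly-convex with modulus
\[
  C_f\sqrt{d_1}\bigl(\sqrt{d_2}L_h C_g + \rho_g C_h^2\bigr) + \rho_f\,(C_gC_h)^2,
\]
which is exactly the claimed $\rho_F$. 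Finally, $\frac1{n_2}\sum_{j\in\S_2}g_i(h_{i,j}(\w))$ is an average of $(\sqrt{d_2}L_hC_g+\rho_g C_h^2)$-weakly-convex maps hence has the same modulus; one must check that averaging before applying $f_i$ does not change anything, which is fine since the Lipschitz constant $C_g C_h$ of each averaged component is preserved under averaging and the composition rule only sees these two constants. The alternative sub-case ($g_i$ monotone, $h_{i,j}$ $L_h$-smooth) is handled the same way: an $L_h$-smooth function is in particular $L_h$-weakly-convex, and the monotonicity of $g_i$ still lets the sign bookkeeping in the "non-decreasing outer" rule go through — here one uses that $g_i$ monotone means each coordinate is either non-decreasing or non-increasing, and $h_{i,j}$ being smooth (hence both $L_h$-weakly-convex and $(-L_h)$-weakly-convex, i.e. $L_h$-weakly-concave) means the sign of $L_h$ can be flipped per coordinate as needed, so the same modulus bound holds.

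The main obstacle is getting the sign/monotonicity bookkeeping in the composition lemma exactly right — specifically, making precise why the extra weak-convexity cost of composing with a non-decreasing $C$-Lipschitz function scales like $C$ times the square root of the output dimension of the inner map (the $\sqrt{d_1}$ and $\sqrt{d_2}$ factors), and why in the "monotone but not necessarily non-decreasing" sub-case one genuinely needs $h_{i,j}$ to be smooth rather than merely weakly-convex. I expect the paper handles these elementary composition facts in an appendix (indeed it points to Appendix~\ref{app:monotone} for the monotonicity/subgradient-sign facts), and the proof of the proposition itself is then just the three-line inside-out assembly above.
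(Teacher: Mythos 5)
Your proposal is correct and follows essentially the same route as the paper: the paper proves a general two-stage composition lemma (Proposition~\ref{prop:4}, giving modulus $\sqrt{d}L_2C_1+\rho_1C_2^2$ with exactly the non-decreasing/weakly-convex versus monotone/smooth dichotomy you describe) and applies it twice, first to $g_i\circ h_{i,j}$ to get modulus $\sqrt{d_2}L_hC_g+\rho_gC_h^2$ and Lipschitz constant $C_gC_h$, then to $f_i$ composed with that, yielding the stated $\rho_F$. Your inside-out bookkeeping, the $\sqrt{d_1},\sqrt{d_2}$ factors from the all-ones direction, and the explanation of why the monotone sub-case needs smoothness of $h_{i,j}$ all match the paper's argument.
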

%\vspace*{-0.05in}Moreover, Proposition~\ref{prop:1} implies $\tilde{\rho}_g$-weak convexity of $g_i(h_{i,j}(\w))$ with $\tilde{\rho}_g=L_hC_g+\rho_g C_h^2$ under the condition that $g_i$ is non-decreasing and $h_{i,j}(\cdot)$ is $L_h$-weakly-convex. Using  Proposition~\ref{prop:1} again, one may conclude that $F(\w)$  (\ref{prob:4})  is $\rho_F$-weakly-convex with $\rho_F = C_f \tilde{\rho}_g$ under Assumption~\ref{ass:2}.% Hence,  (\ref{prob:4}) is a NSWC TCCO problem. %Similarly to our analysis for FCCO, we use the stationarity of the Moreau envelope as the convergence metric.

We extend SONX to Algorithm~\ref{alg:3} for  (\ref{prob:4}), which is referred to as SONT. For dealing with the extra layer of compositional problem, we maintain another multi-block variable to track the extra layer of function value estimation. To understand this, we first write down the true subgradient:
\begin{equation*}
    \partial F(\w) =  \frac{1}{n_1}\sum\nolimits_{i\in \S_1}\left[\left(\frac{1}{n_2}\sum\nolimits_{j\in\S_2}\nabla h_{i,j}(\w)\partial g_i(h_{i,j}(\w)) \right)\partial f_i\left(\frac{1}{n_2}\sum\nolimits_{j\in\S_2}g_i(h_{i,j}(\w))\right)\right].
\end{equation*}
To approximate this subgradient, we need the estimations of $\frac{1}{n_2}\sum_{j\in\S_2}g_i(h_{i,j}(\w))$ and $h_{i,j}(\w)$, which can be tracked by using MSVR estimators denoted by $\{u_{i,t}:i\in \S_1\}$ and $\{v_{i,j,t}:(i,j)\in \S_1\times \S_2\}$, respectively. As a result, a stochastic estimation of $\partial F(\w_t)$ is computed in step 6 of Algorithm~\ref{alg:3}, and the model parameter is updated similarly as before.  
%\begin{equation*}
%    G_t = \frac{1}{B_1}\sum_{i\in \B_1^t}\left[\left(\frac{1}{B_2}\sum_{i\in \B_2^t}\nabla h_{i,j}%(\w_t;\B_{3,i,j}^t)\partial g_i(v_{i,j,t})\right)\partial f_i(u_{i,t})\right].%
%\end{equation*}
%The full details of the algorithm are presented in Algorithm~\ref{alg:3}. % Similarly to the analysis of Algorithm~\ref{alg:1}, here we use estimators $u_{i,t}$ and $v_{i,j,t}$ from the previous iteration to avoid sampling multiple batches from the same distribution.
%Similar to the analysis of Algorithm~\ref{alg:1}, due to the potential non-smoothness of $f_i$ and $g_i$, we directly update $\w_t$ by subgradient descent using the stochastic subgradient estimator $G_t$.

\vspace*{-0.05in}
\subsection{Convergence Analysis}
\vspace*{-0.05in}
In this section, we present the proof sketch of the convergence guarantee for Algorithm~\ref{alg:1}. The analysis for Algorithm~\ref{alg:3} follows in a similar manner. The detailed proofs can be found in Appendix~\ref{app:thm} (please refer to the supplement). Before starting the proof, we define a constant $M^2\geq C_f^2C_g^2$ so that under Assumption~\ref{ass:1} we have $\E_t[\|G_t\|^2]\leq M^2$. Then we start by giving the error bound of the MSVR estimator in Algorithm~\ref{alg:1}. The following norm bound of the estimation error follows from the squared-norm error bound in Lemma 1 from \cite{jiang2022multiblocksingleprobe}, whose proof is given in Appendix~\ref{sec:c3}.
\begin{lemma}\label{lem:16}
Consider the update for $\{u_{i,t}:i\in \S\}$ in Algorithm~\ref{alg:1}. Assume $g_i$ is $C_g$-Lipshitz for all $i\in \S$. With $\gamma=\frac{n-B_1}{B_1(1-\tau)}+(1-\tau)$, $\tau\leq \frac{1}{2}$, we have
\begin{equation*}
    \begin{aligned}
        \E\bigg[\frac{1}{n}\sum_{i\in \S}\|u_{i,t+1}-g_i(\w_{t+1})\|\bigg]
        &\leq (1-\frac{B_1\tau}{2n})^{t+1} \frac{1}{n}\sum_{i\in \S}\|u_{i,0}-g_i(\w_0)\|+ \frac{2 \tau^{1/2}\sigma}{B_2^{1/2}} +\frac{4 nC_gM\eta}{B_1 \tau^{1/2}}.
    \end{aligned}
\end{equation*}
\end{lemma}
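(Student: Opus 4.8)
\textbf{Proof proposal for Lemma~\ref{lem:16}.}

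The plan is to start from the squared-norm error bound of the MSVR estimator established in Lemma 1 of~\cite{jiang2022multiblocksingleprobe}, which in the present notation reads roughly
\begin{equation*}
\E\Big[\tfrac{1}{n}\sum_{i\in\S}\|u_{i,t+1}-g_i(\w_{t+1})\|^2\Big]
\leq \big(1-\tfrac{B_1\tau}{2n}\big)\,\E\Big[\tfrac{1}{n}\sum_{i\in\S}\|u_{i,t}-g_i(\w_t)\|^2\Big]
+ \tfrac{c_1\tau^2\sigma^2}{B_2} + \tfrac{c_2 n^2 C_g^2}{B_1^2}\,\E\|\w_{t+1}-\w_t\|^2,
\end{equation*}
for suitable absolute constants $c_1,c_2$, with the specific choice $\gamma=\frac{n-B_1}{B_1(1-\tau)}+(1-\tau)$ and $\tau\le\frac12$ being exactly what makes the contraction factor $1-\frac{B_1\tau}{2n}$ appear. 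First I would recall that under Algorithm~\ref{alg:1} the update is $\w_{t+1}=\w_t-\eta G_t$, so $\|\w_{t+1}-\w_t\|^2=\eta^2\|G_t\|^2\le\eta^2 M^2$ by the definition of $M^2\ge C_f^2C_g^2$ together with Assumption~\ref{ass:1} (Lipschitzness of $f_i$ and of $g_i(\cdot;\xi)$ bounds each summand in $G_t$). Substituting this deterministic bound turns the recursion into a scalar linear recursion
\begin{equation*}
a_{t+1}\le \big(1-\tfrac{B_1\tau}{2n}\big)a_t + \tfrac{c_1\tau^2\sigma^2}{B_2} + \tfrac{c_2 n^2 C_g^2 M^2\eta^2}{B_1^2},
\qquad a_t:=\E\Big[\tfrac{1}{n}\sum_{i\in\S}\|u_{i,t}-g_i(\w_t)\|^2\Big].
\end{equation*}

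Next I would unroll this recursion. Writing $\beta:=\frac{B_1\tau}{2n}\in(0,1]$ and $D:=\frac{c_1\tau^2\sigma^2}{B_2}+\frac{c_2 n^2 C_g^2 M^2\eta^2}{B_1^2}$, we get $a_{t+1}\le(1-\beta)^{t+1}a_0 + D\sum_{k=0}^{t}(1-\beta)^k \le (1-\beta)^{t+1}a_0 + D/\beta$. Then $D/\beta = \frac{2n}{B_1\tau}\big(\frac{c_1\tau^2\sigma^2}{B_2}+\frac{c_2 n^2 C_g^2 M^2\eta^2}{B_1^2}\big) = \frac{2c_1 n\tau\sigma^2}{B_1 B_2}+\frac{2c_2 n^3 C_g^2 M^2\eta^2}{B_1^3\tau}$. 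Since $B_1\le n$, the first term is at most $\frac{2c_1\tau\sigma^2}{B_2}$, which is of order $\frac{\tau\sigma^2}{B_2}$; and the second term is of order $\frac{n^3 C_g^2M^2\eta^2}{B_1^3\tau}$. Taking square roots (using $\sqrt{x+y+z}\le\sqrt{x}+\sqrt{y}+\sqrt{z}$) and then invoking Jensen's inequality, $\E[\frac1n\sum_i\|u_{i,t+1}-g_i(\w_{t+1})\|]\le\sqrt{a_{t+1}}$, yields a bound of the form $(1-\beta)^{(t+1)/2}\sqrt{a_0}+O(\tau^{1/2}\sigma/B_2^{1/2})+O(n^{3/2}C_gM\eta/(B_1^{3/2}\tau^{1/2}))$. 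There is a mild discrepancy in the stated powers of $n$ and $B_1$ relative to what this crude bookkeeping gives; I would reconcile this by tracking the constants in~\cite{jiang2022multiblocksingleprobe} more carefully (their drift term likely carries an $n/B_1$ rather than $(n/B_1)^2$ because the MSVR correction is designed precisely to kill one power of the block ratio), which brings $D/\beta$'s second term down to order $n C_g^2 M^2\eta^2/(B_1\tau)$ and hence the claimed $\frac{4nC_gM\eta}{B_1\tau^{1/2}}$; similarly $(1-\beta)^{(t+1)/2}\le(1-\beta)^{t+1}$ would need the sharper bookkeeping that keeps $(1-\frac{B_1\tau}{2n})^{t+1}$ directly, which one gets by summing the one-step norm (not squared-norm) inequality instead, or by noting $\sqrt{1-\beta}\le 1-\beta/2$ and absorbing. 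The cleanest route is in fact to derive a one-step inequality directly in the unsquared norm and unroll that; either way the structure is identical.

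The main obstacle is bookkeeping the constants so that the final coefficients match the stated $2\tau^{1/2}\sigma/B_2^{1/2}$, $4nC_gM\eta/(B_1\tau^{1/2})$, and the contraction $(1-\frac{B_1\tau}{2n})^{t+1}$ exactly — in particular getting the right powers of $n/B_1$ out of the MSVR drift term and handling the square-root of the contraction factor without losing a factor of $2$ in the rate. This is why the reference to Lemma 1 of~\cite{jiang2022multiblocksingleprobe} is essential: the delicate cancellation that produces the $n/B_1$ (rather than $(n/B_1)^2$) drift scaling and the tight contraction is exactly the content of that lemma, obtained via the specialized choice of $\gamma$. Beyond that, the remaining steps — bounding $\|G_t\|\le M$, plugging in $\w_{t+1}-\w_t=-\eta G_t$, unrolling a geometric series, and applying Jensen — are routine. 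I would also note for completeness that when $\gamma=0$ the same argument with the moving-average analysis of~\cite{pmlr-v162-wang22ak} in place of MSVR gives the analogous (slightly weaker) bound used in the appendix.
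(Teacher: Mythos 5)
Your approach is essentially the paper's: start from the squared-norm one-step recursion of Lemma 1 in \cite{jiang2022multiblocksingleprobe}, bound $\E\|\w_{t+1}-\w_t\|^2\le\eta^2M^2$ via the update rule and the definition of $M$, unroll the geometric recursion, and convert to the unsquared norm by Jensen together with $\sqrt{a+b+c}\le\sqrt{a}+\sqrt{b}+\sqrt{c}$. The paper resolves the square-root-of-the-contraction issue exactly by the device you mention: it first upper-bounds the one-step factor as $1-\frac{B_1\tau}{n}\le\bigl(1-\frac{B_1\tau}{2n}\bigr)^2$, so that unrolling produces $\bigl(1-\frac{B_1\tau}{2n}\bigr)^{2(t+1)}$ and the final square root yields $\bigl(1-\frac{B_1\tau}{2n}\bigr)^{t+1}$ with no loss.

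One bookkeeping point in your write-up is actually wrong rather than merely imprecise. The noise term in the one-step recursion is not $c_1\tau^2\sigma^2/B_2$ but $\frac{2\tau^2B_1\sigma^2}{nB_2}$ --- it carries an extra factor $B_1/n$ because only a $B_1/n$ fraction of the blocks is refreshed at each step. Dividing by $\beta=\frac{B_1\tau}{2n}$ then gives exactly $\frac{4\tau\sigma^2}{B_2}$, whose square root is the claimed $\frac{2\tau^{1/2}\sigma}{B_2^{1/2}}$. Your attempted patch via ``$B_1\le n$'' goes the wrong direction: $B_1\le n$ makes $\frac{2c_1n\tau\sigma^2}{B_1B_2}$ \emph{at least} $\frac{2c_1\tau\sigma^2}{B_2}$, not at most, so that route cannot recover the stated constant. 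On the drift side your conjecture is correct --- the coefficient is $\frac{8nC_g^2}{B_1}$, a single power of $n/B_1$ --- but note that after dividing by $\beta$ the resulting term is of order $\frac{n^2C_g^2M^2\eta^2}{B_1^2\tau}$ (not $\frac{nC_g^2M^2\eta^2}{B_1\tau}$ as written), and it is the square root of that quantity which gives the claimed $\frac{4nC_gM\eta}{B_1\tau^{1/2}}$. With those two constants corrected, the rest of your argument goes through verbatim and matches the paper's proof.
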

% \vspace*{-0.15in}
For simplicity, denote by $\hat{\w}_t:=\prox_{F/\bar{\rho}}(\w_t)$. Then using the definition of Moreau envelope and the update rule of $\w_t$, we can obtain a bound for the change in the Moreau envelope,
\begin{equation}\label{ineq:38}
    \begin{aligned}
        \E_t[F_{1/\bar{\rho}}(\w_{t+1})]
        &\leq F_{1/\bar{\rho}}(\w_t) +\bar{\rho}\eta\langle\hat{\w}_t-\w_t, \E_t[G_t]\rangle+\frac{\eta^2\bar{\rho}M^2}{2}.
    \end{aligned}
\end{equation}
where $\E_t[G_t] = \frac{1}{n}\sum_{i\in S_1}\partial g_i(\w_t)\partial f_i(u_{i,t})$ is the subgradient approximation based on the MSVR estimator $u_{i,t}$ of the inner function value. This is a standard result in weakly-convex optimization \cite{davis2018stochastic}. To bound the inner product $\langle\hat{\w}_t-\w_t, \E_t[G_t]\rangle$ on the right-hand-side of (\ref{ineq:38}), we apply the assumptions that $f_i$ is weakly-convex, Lipschitz continuous and non-decreasing, and $g_i$ is weakly-convex. Its upper bound is given as follows.
% \begin{equation}\label{ineq:39}
%     \begin{aligned}
%         \langle \hat{\w}_t-\w_t, \E_t[G_t]\rangle
%         &\leq \frac{1}{n}\sum_{i\in \S} \bigg[F_i(\hat{\w}_t) -F_i(\w_t)+f_i(g_i(\w_t))- f(u_{i,t}) \\
%         &\quad- \partial f(u_{i,t})^\top (g_i(\w_t)-u_{i,t}) + \frac{\rho_gC_f}{2}\|\hat{\w}_t-\w_t\|^2\bigg].
%     \end{aligned}
% \end{equation}
% \vspace*{-0.1in}
    \begin{align}
        (\hat{\w}_t-\w_t)^{\top} \E_t[G_t]
        &\leq F(\hat{\w}_t)-F(\w_t)+ \frac{1}{n}\sum_{i\in \S} [ f_i(g_i(\w_t))- f(u_{i,t}) - \partial f(u_{i,t})^\top (g_i(\w_t)-u_{i,t})\notag\\
        &\quad+\rho_f\|g_i(\w_t)-u_{i,t}\|^2  + (\frac{\rho_gC_f}{2}+\rho_fC_g^2)\|\hat{\w}_t-\w_t\|^2]\label{ineq:39}.
    \end{align}
Due to the $\rho_F$-weak convexity of $F(\w)$, we have ($\bar{\rho}-\rho_F$)-strong convexity of $\w\mapsto F(\w)+\frac{\bar{\rho}}{2}\|\w_t-\w\|^2$. Then it follows
%\begin{equation}\label{ineq:40}
 %   \begin{aligned}
        $F(\hat{\w}_t)-F(\w_t)\leq (\frac{\rho_F}{2}-\bar{\rho})\|\w_t-\hat{\w}_t\|^2$.
  %  \end{aligned}
%\end{equation}
Combining this with inequalities~(\ref{ineq:38}), (\ref{ineq:39}), and setting $\bar{\rho}$ sufficiently large we have
\begin{equation}\label{ineq:41}
    \begin{aligned}
        &\E_t[F_{1/\bar{\rho}}(\w_{t+1})]
        \leq F_{1/\bar{\rho}}(\w_t) +\frac{\eta^2\bar{\rho}M^2}{2}+\frac{\bar{\rho}\eta}{n}\sum\nolimits_{i\in \S} [ -\frac{\bar{\rho}}{2}\|\w_t-\hat{\w}_t\|^2\\
        &\quad  +f_i(g_i(\w_t)) - f(u_{i,t})- \partial f_i(u_{i,t})^\top (g_i(\w_t)-u_{i,t})+\rho_f\|g_i(\w_t)-u_{i,t}\|^2].
    \end{aligned}
\end{equation}
Recall Lemma~\ref{lem:4}, we have $\|\w_t-\hat{\w}_t\|^2=\frac{1}{\bar{\rho}^2}\|\nabla F_{1/\bar{\rho}}(\w_t)\|^2$. Moreover, the last three terms on the R.H.S of inequality~(\ref{ineq:41}) can be bounded using the Lipschitz continuity of $f_i$ and the error bound given in Lemma~\ref{lem:16}. Then we can conclude the complexity of SONX with the following theorem.
\begin{theorem}\label{thm:1}
Under Assumption~\ref{ass:1} with $\gamma=\frac{n-B_1}{B_1(1-\tau)}+(1-\tau)$, $\tau = \O(B_2\epsilon^4)\leq \frac{1}{2}$, $\eta=\O(\frac{B_1B_2^{1/2}\epsilon^4}{n})$, and $\bar{\rho}=\rho_F+\rho_g C_f+2\rho_fC_g^2$, Algorithm~\ref{alg:1} converges to an $\epsilon$-stationary point of the Moreau envelope $F_{1/\bar{\rho}}$ in $T=\O(\frac{n}{B_1B_2^{1/2}}\epsilon^{-6})$ iterations.
\end{theorem}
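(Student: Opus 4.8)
The plan is to pick up from inequality~(\ref{ineq:41}), which already encodes the structural facts we need (weak convexity and monotonicity of $f_i$, weak convexity of $g_i$, and $\bar\rho$ chosen large enough to absorb the $\rho_gC_f/2+\rho_fC_g^2$ term), and convert it into a one-step descent inequality on the Moreau envelope with a genuine negative drift in $\|\nabla F_{1/\bar\rho}(\w_t)\|^2$. First I would apply Lemma~\ref{lem:4} to replace $\|\w_t-\hat\w_t\|^2$ by $\bar\rho^{-2}\|\nabla F_{1/\bar\rho}(\w_t)\|^2$ in the $-\tfrac{\bar\rho\eta}{n}\sum_i\tfrac{\bar\rho}{2}\|\w_t-\hat\w_t\|^2$ term of~(\ref{ineq:41}), which produces the drift $-\tfrac{\eta}{2}\|\nabla F_{1/\bar\rho}(\w_t)\|^2$. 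Next I would bound the residual bias terms $f_i(g_i(\w_t))-f_i(u_{i,t})-\partial f_i(u_{i,t})^\top(g_i(\w_t)-u_{i,t})+\rho_f\|g_i(\w_t)-u_{i,t}\|^2$: the first three are at most $2C_f\|g_i(\w_t)-u_{i,t}\|$ because $f_i$ is $C_f$-Lipschitz and hence has subgradients of norm at most $C_f$, while $\rho_f\|g_i(\w_t)-u_{i,t}\|^2$ is handled either by a uniform bound on the estimation error times $\|g_i(\w_t)-u_{i,t}\|$ or, more cleanly, by the squared-norm counterpart of Lemma~\ref{lem:16}. This yields, after taking $\E_t$, an inequality of the form $\E_t[F_{1/\bar\rho}(\w_{t+1})]\le F_{1/\bar\rho}(\w_t)-\tfrac{\eta}{2}\|\nabla F_{1/\bar\rho}(\w_t)\|^2+\tfrac{\eta^2\bar\rho M^2}{2}+\O(\bar\rho\eta(C_f+\rho_f))\cdot\tfrac1n\sum_i\E_t\|g_i(\w_t)-u_{i,t}\|$.

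Then I would take full expectations, sum over $t=0,\dots,T-1$, and telescope; since $F_{1/\bar\rho}(\w_T)\ge\inf F$ (assuming, as usual, that $F$ is bounded below), the telescoped left side is at most $\Delta_0:=F_{1/\bar\rho}(\w_0)-\inf F$. Substituting the error bound of Lemma~\ref{lem:16} and using $\sum_{t=0}^{T-1}(1-\tfrac{B_1\tau}{2n})^t\le\tfrac{2n}{B_1\tau}$ for the geometric part, then dividing by $\tfrac{\eta T}{2}$, I would arrive at a bound of the shape $\tfrac1T\sum_{t=0}^{T-1}\E\|\nabla F_{1/\bar\rho}(\w_t)\|^2\lesssim \tfrac{\Delta_0}{\eta T}+\bar\rho M^2\eta+\bar\rho(C_f+\rho_f)\big(\tfrac{n}{B_1\tau T}+\tfrac{\tau^{1/2}\sigma}{B_2^{1/2}}+\tfrac{nC_gM\eta}{B_1\tau^{1/2}}\big)$. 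Plugging in $\tau=\Theta(B_2\epsilon^4)$ (so $\tau\le\tfrac12$ for small $\epsilon$), $\eta=\Theta(B_1B_2^{1/2}\epsilon^4/n)$, and $T=\Theta(\tfrac{n}{B_1B_2^{1/2}}\epsilon^{-6})$, a direct check shows every one of the five terms is $\O(\epsilon^2)$ — e.g.\ $\eta T=\Theta(\epsilon^{-2})$ kills the first, $\tau^{1/2}/B_2^{1/2}=\Theta(\epsilon^2)$ kills the third, and $n\eta/(B_1\tau^{1/2})=\Theta(\epsilon^2)$ kills the last — so choosing $t$ uniformly at random from $\{0,\dots,T-1\}$ gives $\E\|\nabla F_{1/\bar\rho}(\w_{t})\|^2\le\epsilon^2$, i.e.\ an $\epsilon$-stationary point of the Moreau envelope $F_{1/\bar\rho}$.

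I expect the main difficulty to be the simultaneous tuning of $\eta$ and $\tau$ rather than any single estimate. The MSVR error bound (Lemma~\ref{lem:16}) carries a term $\tfrac{nC_gM\eta}{B_1\tau^{1/2}}$ that forces the stepsize $\eta$ to be small relative to $\tau^{1/2}$, while the irreducible sampling-noise term $\tfrac{\tau^{1/2}\sigma}{B_2^{1/2}}$ forces $\tau=\O(B_2\epsilon^4)$; reconciling these with the optimization term $\Delta_0/(\eta T)$ is precisely what drives the iteration count up to $\O(\epsilon^{-6})$ instead of the $\O(\epsilon^{-4})$ available in the smooth case. A crucial point making this work — and the reason no overall-gradient variance reduction is used — is that $F$ is non-smooth, so the usual trick of maintaining a STORM/moving-average estimator of $\nabla F$, which hinges on Lipschitzness of $\nabla F$, is unavailable; all variance reduction must be confined to the inner estimators $u_{i,t}$. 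A minor technical wrinkle is the $\rho_f\|g_i(\w_t)-u_{i,t}\|^2$ term, which is cleanest to control with the squared-norm MSVR estimate, though a crude uniform bound on the estimation error also suffices.
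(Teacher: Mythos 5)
Your proposal is correct and follows essentially the same route as the paper's proof: it starts from the descent inequality on the Moreau envelope (the paper's inequality~(\ref{ineq:41})), invokes Lemma~\ref{lem:4} to extract the $-\tfrac{\eta}{2}\|\nabla F_{1/\bar\rho}(\w_t)\|^2$ drift, bounds the bias terms via the $C_f$-Lipschitzness of $f_i$ together with the norm and squared-norm versions of Lemma~\ref{lem:16}, and telescopes with the same parameter choices. The final accounting of the five error terms matches the paper's calculation exactly.
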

\vspace*{-0.05in}
\textbf{Remark.} Similar to the complexity for smooth FCCO problems~\cite{pmlr-v162-wang22ak,jiang2022multiblocksingleprobe}, Theorem~\ref{thm:1} guarantees that SONX for NSWC FCCO has a parallel speed-up in terms of the batch size $B_1$ and linear dependency on $n$. The dependency of the complexity on the batch size $B_2$ is due to the use of MSVR estimator, which matches the results in \cite{jiang2022multiblocksingleprobe}. If the MSVR estimator in SONX is replaced by moving average estimator, the complexity becomes $\O(\frac{n}{B_1B_2}\epsilon^{-8})$ (cf. Appendix~\ref{sec:MA}). 

Following a similar proof strategy, the convergence guarantee of Algorithm~\ref{alg:3} is given below.

\begin{theorem}(Informal)\label{thm:3if}
    Under Assumption~\ref{ass:2}, with appropriate values of $\gamma_1, \gamma_2, \tau_1, \tau_2, \eta$ and a proper constant $\bar{\rho}$, %$\gamma_1 = \frac{n_1n_2-B_1B_2}{B_1B_2(1-\tau_1)}+(1-\tau_1)$, $\gamma_2 = \frac{n_1-B_1}{B_1(1-\tau_2)}+(1-\tau_2)$,  $\tau_1 = \O\left(\min\{B_3,\frac{B_1^{1/2}n_2^{1/2}}{n_1^{1/2}}\}\epsilon^4\right)\leq \frac{1}{2}$, $\tau_2 = \O(B_2\epsilon^4)\leq \frac{1}{2}$, $\eta = \O\left(\min\left\{B_3^{1/2},\frac{B_1^{1/4}n_2^{1/4}}{n_1^{1/4}},\frac{B_1^{1/2}n_2^{1/2}}{n_1^{1/2}}\right\}\frac{B_1B_2}{n_1n_2}\epsilon^4\right)$, and $\bar{\rho} = \rho_F+4\rho_fC_g^2+2\rho_g C_fC_h^2+ C_fC_g L_h$, 
    Algorithm~\ref{alg:3} converges to an $\epsilon$-stationary point of the Moreau envelope $F_{1/\bar{\rho}}$ in $T=\O\left(\max\left\{\frac{1}{B_3^{1/2}},\frac{n_1^{1/4}}{B_1^{1/4}n_2^{1/4}},\frac{n_1^{1/2}}{B_1^{1/2}n_2^{1/2}}\right\}\frac{n_1n_2}{B_1B_2}\epsilon^{-6}\right)$ iterations.
\end{theorem}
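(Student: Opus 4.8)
The plan is to mirror the structure of the proof of Theorem~\ref{thm:1} but now with two layers of variance-reduced estimators, $\{u_{i,t}\}$ for the outer composition $\frac1{n_2}\sum_{j\in\S_2}g_i(h_{i,j}(\w_t))$ and $\{v_{i,j,t}\}$ for the inner maps $h_{i,j}(\w_t)$. First I would record the analogue of inequality~(\ref{ineq:38}): using the definition of the Moreau envelope with parameter $1/\bar\rho$ and the update $\w_{t+1}=\w_t-\eta G_t$, one gets $\E_t[F_{1/\bar\rho}(\w_{t+1})]\le F_{1/\bar\rho}(\w_t)+\bar\rho\eta\langle\hat\w_t-\w_t,\E_t[G_t]\rangle+\tfrac{\eta^2\bar\rho M^2}{2}$, where now $M^2\ge C_f^2C_g^2C_h^2$ bounds $\E_t\|G_t\|^2$ and $\E_t[G_t]=\frac1{n_1}\sum_{i}\big(\frac1{n_2}\sum_j\nabla h_{i,j}(\w_t)\partial g_i(v_{i,j,t})\big)\partial f_i(u_{i,t})$.

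The heart of the argument is bounding $\langle\hat\w_t-\w_t,\E_t[G_t]\rangle$, peeling the three layers one at a time. Working from outside in: weak convexity plus monotonicity (non-decreasingness) of $f_i$ lets me replace $\partial f_i(u_{i,t})$ applied to $\frac1{n_2}\sum_j\nabla h_{i,j}\partial g_i(v_{i,j,t})$ by $f_i$ evaluated at the true outer argument, up to a subgradient-inequality slack term in $\|u_{i,t}-\frac1{n_2}\sum_j g_i(h_{i,j}(\w_t))\|$ and a $\rho_f$-weak-convexity quadratic term in $\|\hat\w_t-\w_t\|$. Then, because $f_i$ is non-decreasing, its subgradients are elementwise nonnegative, so I can pass the $\partial f_i$ factor inside and apply the same maneuver to $g_i$: here I need $g_i$ non-decreasing (so $\partial g_i\ge 0$ elementwise, allowing the $h_{i,j}$ layer to be handled with $h_{i,j}$ merely weakly convex) or $g_i$ monotone together with $h_{i,j}$ smooth (so the composition $g_i\circ h_{i,j}$ is handled via a smoothness/Lipschitz argument on $v_{i,j,t}$ directly). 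Each peel produces (i) a telescoping functional-difference term that collapses to $F(\hat\w_t)-F(\w_t)$, (ii) estimation-error terms in $\|u_{i,t}-\cdot\|$ and $\|v_{i,j,t}-h_{i,j}(\w_t)\|$, and (iii) a fixed multiple of $\|\hat\w_t-\w_t\|^2$ whose coefficient is exactly what Proposition~\ref{prop:2} accounts for. Combining with $(\bar\rho-\rho_F)$-strong convexity of $\w\mapsto F(\w)+\tfrac{\bar\rho}{2}\|\w_t-\w\|^2$, i.e. $F(\hat\w_t)-F(\w_t)\le(\tfrac{\rho_F}{2}-\bar\rho)\|\w_t-\hat\w_t\|^2$, and choosing $\bar\rho$ large enough, the quadratic terms combine into a negative $-\tfrac{\bar\rho}{2}\|\w_t-\hat\w_t\|^2 = -\tfrac1{2\bar\rho}\|\nabla F_{1/\bar\rho}(\w_t)\|^2$ term (via Lemma~\ref{lem:4}), leaving only the Lipschitz-weighted estimation errors to control.

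Next I would invoke the two-level MSVR error bounds: an analogue of Lemma~\ref{lem:16} for $\frac1{n_1n_2}\sum_{i,j}\E\|v_{i,j,t}-h_{i,j}(\w_t)\|$ with rate governed by $\tau_1,\gamma_1,B_1,B_2,B_3$, and a second one for $\frac1{n_1}\sum_i\E\|u_{i,t}-\frac1{n_2}\sum_j g_i(h_{i,j}(\w_t))\|$ whose drift term now includes both the $\w$-movement $\eta M$ and the propagated $v$-error (since $g_i$ is $C_g$-Lipschitz, an error in $v_{i,j,t}$ feeds into $u_{i,t}$). Summing (\ref{ineq:38})-type inequalities over $t=0,\dots,T-1$, telescoping $F_{1/\bar\rho}$, dividing by $\bar\rho\eta T$, and substituting the averaged error bounds yields $\frac1T\sum_t\E\|\nabla F_{1/\bar\rho}(\w_t)\|^2\lesssim \frac{F_{1/\bar\rho}(\w_0)-\min F}{\eta T}+\eta M^2+(\text{tracking errors})$. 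The tracking errors decompose into transient terms (geometric in $t$, summable to $O(1/T)$), variance terms of order $\tau_1^{1/2}/B_3^{1/2}$ and $\tau_2^{1/2}/B_2^{1/2}$, and drift terms of order $n_1\eta/(B_1\tau_2^{1/2})$, $n_1n_2\eta/(B_1B_2\tau_1^{1/2})$, plus the cross term from $v$-error into $u$. Balancing $\eta$, $\tau_1$, $\tau_2$ against $\epsilon^2$ — roughly $\eta\sim\epsilon^4/(\text{batch factors})$, $\tau_1\sim B_3\epsilon^4$, $\tau_2\sim B_2\epsilon^4$ adjusted by the nested factors — and solving for $T$ produces the stated $T=\O\big(\max\{B_3^{-1/2}, n_1^{1/4}/(B_1^{1/4}n_2^{1/4}), n_1^{1/2}/(B_1^{1/2}n_2^{1/2})\}\cdot\frac{n_1n_2}{B_1B_2}\epsilon^{-6}\big)$, where the $\max$ precisely captures which of the three error sources (inner-sample variance, cross-propagation, outer drift) dominates.

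The main obstacle I anticipate is the layer-peeling inner-product bound under the two alternative monotonicity/smoothness regimes: one must be careful that the sign conditions on $\partial f_i$ and $\partial g_i$ (elementwise nonnegativity from non-decreasingness, as noted in the Preliminaries) are exactly what licenses moving the Jacobian factors inside the subgradient inequalities, and in the "$g_i$ monotone, $h_{i,j}$ smooth" case the argument instead relies on a quadratic-upper-bound (descent-lemma) style expansion of $g_i\circ h_{i,j}$ that produces an $L_h$-dependent weak-convexity constant matching $\rho_F$ in Proposition~\ref{prop:2}. Getting the bookkeeping of these slack terms consistent across both regimes — so that a single choice of $\bar\rho = \Theta(\rho_F + \text{lower-order})$ absorbs all the quadratic-in-$\|\hat\w_t-\w_t\|$ contributions — is the delicate part; the subsequent stepsize balancing is then routine, if tedious.
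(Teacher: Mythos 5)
Your proposal follows essentially the same route as the paper's proof: the Moreau-envelope descent inequality, the three-layer peeling of $\langle\hat\w_t-\w_t,\E_t[G_t]\rangle$ using the non-negativity of $\partial f_i$ (and the two regimes for $g_i$/$h_{i,j}$), the $(\bar\rho-\rho_F)$-strong-convexity step, and the two coupled MSVR error bounds in which the $u$-tracking error absorbs the propagated $v$-error before the final parameter balancing. The structure, the source of each error term, and the origin of the three-way $\max$ in the complexity all match the paper's argument.
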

\vspace*{-0.05in}
%     &\eta = \O\left(\min\left\{\frac{B_1B_2}{n_1n_2}\tau_1^{1/2}\epsilon^2,\frac{B_1^{3/2}B_2^{1/2}}{n_1^{3/2}n_2^{1/2}}\tau_2^{1/2}\right\}\right) \\
%     &\quad = \O\left(\min\left\{B_3^{1/2},\frac{B_1^{1/4}n_2^{1/4}}{n_1^{1/4}},\frac{B_1^{1/2}n_2^{1/2}}{n_1^{1/2}}\right\}\frac{B_1B_2}{n_1n_2}\epsilon^4\right),
% \end{aligned}
% \end{equation*}
% then with 
% \begin{equation*}
%     T=\O\left(\max\left\{\frac{1}{B_3^{1/2}},\frac{n_1^{1/4}}{B_1^{1/4}n_2^{1/4}},\frac{n_1^{1/2}}{B_1^{1/2}n_2^{1/2}}\right\}\frac{n_1n_2}{B_1B_2}\epsilon^{-6}\right),
% \end{equation*}

% \textbf{Remark.} {\color{red}In the worst case when $\frac{n_1}{B_1}>\frac{n_2}{B_2}$, the complexity becomes $T=\O\left(\frac{n_1^{3/2}n_2^{1/2}}{B_1^{3/2}B_2^{1/2}B_3^{1/2}}\epsilon^{-6}\right)$. The worse dependency on $n_1/B_1$ is caused by the two layers of block-sampling update for $\{u_{i,t},i\in \S_1\}$ and $\{v_{i,j,t}:(i,j)\in \S_1\times\S_2\}$. When $n_1=B_1=1$, the complexity of Algorithm~\ref{alg:3} becomes similar as Algorithm~\ref{alg:1}, which is understandable as the inner two levels in TCCO is the same as FCCO.} 
\textbf{Remark.} {In the worst case, the complexity has a worse dependency on $n_1/B_1$, i.e.,  $\O(n_1^{3/2}/B_1^{3/2})$. This is caused by the two layers of block-sampling update for $\{u_{i,t},i\in \S_1\}$ and $\{v_{i,j,t}:(i,j)\in \S_1\times\S_2\}$. When $n_1=B_1=1$ and $B_3\leq \sqrt{n_2}$, the complexity of SONT becomes similar as SONX, which is understandable as the inner two levels in TCCO is the same as FCCO.}

%Since the update of $u_{i,t}$ depends on $\{v_{i,j,t},j\in \S_2\}$, the $n_1n_2$ dependency aggregates and thus no longer be linear. In fact, if the block-sampling of either $\{u_{i,t},i\in \S_1\}$ or $\{v_{i,j,t}:(i,j)\in \S_1\times\S_2\}$ is removed, i.e., replacing the MSVR update by the standard STORM update, the convergence rate will achieve linear dependency on $n_1n_2$ (if $\{u_{i,t},i\in \S_1\}$ is full-block-update) or $n_1$ (if $\{v_{i,j,t}:(i,j)\in \S_1\times\S_2\}$ is full-block-update).

\vspace*{-0.05in}
\section{Applications}\label{sec:app}
\vspace*{-0.05in}
NSWC FCCO finds important applications in group distributionally robust optimization (group DRO) and two-way partial AUC (TPAUC) maximization. 

Consider $N$ groups with different distributions. Each group $k$ has an averaged loss $L_k(w)=\frac{1}{n_k}\sum_{i=1}^{n_k}\ell (f_w(x^k_i),y^k_i)$, where $w$ is the the model parameter and $(x^k_i, y^k_i)$ is a data point. It has been shown in previous study~\cite{1999Optimization} that the group DRO problem can be formulated into
 \begin{equation*}
\min_w \min_{s} F(w,s)=\frac{1}{K}\sum_{k=1}^N [L_k(w)-s]_+ + s.
\end{equation*}
This formulation can be mapped into non-smooth weakly-convex FCCO under certain assumptions. Due to space limitation, we defer the comprehensive discussion of group DRO to Appendix~\ref{app:gdro}. The rest of this section focuses on TPAUC maximization. 

Let $X$ denote an input example and $h_\w(X)$ denote a prediction of a parameterized deep net on data $X$.  Denote by $\S_+$ the set of $n_+$ positive examples and by $\S_-$ the set of $n_-$ negative examples. TPAUC measures the area under ROC curve where the true positive rate (TPR) is higher than $\alpha$  and the false positive rate (FPR) is lower than an upper bound $\beta$.    %Few works have explored stochastic algorithm for deep learning with TPAUC maximization with convergence guarantee except for~\cite{pmlr-v162-zhu22g}.   %With ROC curve defined as the curve of true positive rate (TPR) vs false positive rate (FPR) for all possible threshold, area under the ROC curve (AUC) is a widely used performance measure for imbalanced data classification model. Studies on deep AUC maximization methods have achieved great success in practical applications including medical image classification \cite{yuan2021largescale} and molecular property prediction \cite{10.1093/bioinformatics/btac112}. On the other hand, to adapt to application scenarios where there is low tolerance on high FPR and low TPR due to, for example, large monetary costs in medical diagnosis, partial AUC (pAUC) was proposed \cite{doi:10.1177/0272989X8900900307, doi:10.1148/radiology.201.3.8939225, https://doi.org/10.1002/sim.4780081011}. pAUC maximization focuses on optimizing the AUC within some restriction on either FPR, i.e. one-way pAUC (OPAUC) or FPR and TPR, i.e. two-way pAUC (TPAUC). In recent years, pAUC in deep learning setting attracts much interest due to the demand on large datasets and deep learning models. However, the non-smooth constrained pair-wise structure makes the problem challenging to optimize directly. Existing studies have proposed to cast OPAUC into a non-smooth difference-of-convex (DC) program \cite{NEURIPS2022_ca799866} or formulate surrogate objectives using DRO for OPAUC and TPAUC \cite{pmlr-v162-zhu22g}. Our proposed method for non-smooth FCCO is applicable to solving the TPAUC surrogate objective proposed in \cite{pmlr-v162-zhu22g}. Here we give a brief introduction to this TPAUC reformulation.
A surrogate loss for optimizing TPAUC with FPR$\leq \beta$, TPR$\geq \alpha$ is given by~\cite{DBLP:journals/csur/YangY23}:
\begin{equation}\label{prob:8}
    \min_{\w} \frac{1}{n_+}\frac{1}{n_-}\sum\nolimits_{X_i\in \S_+^{\uparrow}[1,k_1]}\sum\nolimits_{X_j\in \S_-^{\downarrow}[1,k_2]} \ell(h_{\w}(X_j)-h_{\w}(X_i)),
\end{equation}
where $\ell(\cdot)$ is a convex, monotonically non-decreasing surrogate loss of the indicator function $\I(h_\w(X_j)\geq h_\w(X_i))$, $\S_+^{\uparrow}[1,k_1]$ is the set of positive examples with $k_1 = \lfloor n_+\alpha \rfloor$ smallest scores, and $\S_-^{\downarrow}[1,k_2]$ is the set of negative examples with  $k_2 = \lfloor n_-\beta \rfloor$ largest scores. %Then challenge of optimizing problem~\ref{prob:8} with the indicator function replaced by some surrogate loss, e.g., squared hinge loss or logistic loss, lies at finding the top ranked sets $\S_+^{\uparrow}[1,k_1]$ and $\S_-^{\downarrow}[1,k_2]$, especially for large datasets. 
To tackle the challenge of selecting examples from  $\S_+^{\uparrow}[1,k_1]$ and $\S_-^{\downarrow}[1,k_2]$, the above problem is cast into the following~\cite{pmlr-v162-zhu22g}: % when $\ell(\cdot)$ is monotonically non-decreasing function~\cite{pmlr-v162-zhu22g}:
% to define a robust loss for each positive data by
% \begin{equation*}
%     \hat{L}_{\phi}(\w;x_i) = \max_{\p\in \Delta} \sum_{j\in \S_-}p_j L(\w;x_i,x_j)-\lambda D_{\phi}(\p,\frac{1}{n_-}),
% \end{equation*}
% and apply another level of DRO on top of $\hat{L}_{\phi}(\w;x_i)$ to obtain the following estimator of TPAUC:
% \begin{equation*}
%    \max_{\p\in \Delta}\sum_{i\in \S_+}p_i \hat{L}_{\phi}(\w;x_i)-\lambda' D_{\phi'}(\p,\frac{1}{n_+}),
% \end{equation*}
% where $D_{\phi}(\p,\frac{1}{n})=\frac{1}{n}\sum_i \phi(np_i)$ is a divergence measure and $\lambda,\lambda'$ are two parameters. Consider the CVaR divergence $\phi(t)=\phi'(t) = \I(0<t\leq 1/\gamma)$ with a parameter $\gamma\in (0,1)$ \cite{rockafellarCVaR}, then \cite{pmlr-v162-zhu22g} have shown that we have the following exact estimator of TPAUC:
% \begin{equation*}
%     \min_{s'\in \R,\s\in \R^{n_+}} s'+\frac{1}{n_+\alpha}\sum_{i\in \S_+}(s_i+\frac{1}{\beta}\psi_i(\w;s_i)-s')_+
% \end{equation*}
% where $\psi_i(\w,s_i) = \frac{1}{n_-}\sum_{j\in \S_-}(L(\w;x_i,x_j)-s_i)_+$, and $L(\w;x_i,x_j) = \ell(h_{\w}(x_i)-h_{\w}(x_j))$ is a convex differentiable and decreasing surrogate loss. Denote $\bar{\w}=(\w,s',\s)$, then the TPAUC maximization can be formulated into 
\begin{align}
    &\min_{\w, \s', \s} \frac{1}{n_+}\sum\nolimits_{X_i\in \S_+} f_i(\psi_i(\w,s_i), s'),\label{prob:9}\\
    &\text{where } f_i(g, s') = s'+\frac{(g-s')_+}{\alpha},\; \psi_i(\w, s_i) =   \frac{1}{n_-}\sum\nolimits_{X_j\in \S_-}s_i+\frac{(\ell(h_{\w}(X_j)-h_{\w}(X_i))-s_i)_+}{\beta}\notag,
\end{align}
where $\s=(s_1, \ldots, s_{n_+})$. We will consider two scenarios, namely regular learning scenario where $X_i\in\R^{d_0}$ is an instance, and multi-instance learning (MIL) scenario  where $X_i=\{\x_i^1, \ldots, \x_i^{m_i}\in\R^{d_0}\}$ contains multiple instances (e.g., one patient has hundreds of high-resolution CT images). A challenge in MIL is that the number of instances $m_i$ for each data might be large such that it is difficult to load all instances into the memory for mini-batch training. It becomes more nuanced especially because MIL involves a pooling operation that aggregates the predicted information of individual instances into a single prediction,  %Hence, the prediction score $h_\w(X_i)= \E_{\x^j_i\sim X_i}[h_\w(\x^j_i)]$ is an expectation form~\cite{xxx}. For example, when using the soft-max pooling, we have %$h_\w(X) = \sum_{\x\in X}\frac{\exp(\w_a^{\top}\text{tanh}(V e(\w_e; \x))) \w_c^{\top}e(\w_e; \x)}{\sum_{\x'\in X}\exp(\w_a^{\top}\text{tanh}(V e(\w_e; \x)))}$, 
which can be usually written as a compositional function with the inner function being  an average over instances from $X$. 
 For simplicity of exposition, below we consider the mean pooling $h_\w (X) = \frac{1}{|X|}\sum_{\x\in X}e(\w_e; \x)^{\top}\w_c$, where  $e(\w_e, \x)$ is the encoded feature representation of instance $\x$ with a parameter $\w_e$, and  $\w_c$ is the parameter of the classifier. %, $(\w_a, V)$ are  parameters of attention-based pooling layer,  
 We will map the regular learning problem as NSWC FCCO and the MIL problem as NSWC TCCO.

The problem~(\ref{prob:9}) is slightly more complicated than~(\ref{prob:5}) or ~(\ref{prob:4}) due to the presence of $s', \s$. In order to understand the applicability of our analysis and results to~(\ref{prob:9}), we ignore $s', \s$ for a moment. In the regular learning setting when $h_\w(X)=e(\w_e, X)^{\top}\w_c$ can be directly computed, we can map the problem into NSWC FCCO, where $f_i(g, s')$ is non-smooth, convex, and non-decreasing in terms of $g$, and $g_i(\w, s_i) = \psi_i(\w, s_i)$ is non-smooth, and is proved to be weakly when $\ell(\cdot)$ is convex and $h_\w(X)$ is smooth in terms of $\w$. In the MIL  setting with mean pooling, we can map the problem into NSWC TCCO by defining $h_{i}(\w) = \frac{1}{|X_i|}\sum_{\x\in X_i}e(\w_e; \x)^{\top}\w_c$, $h_{ij}(\w) = h_j(\w) - h_i(\w)$ and $g_i(h_{i,j}(\w), s_i) = s_i+\frac{(\ell(h_{i,j}(\w))-s_i)_+}{\beta}$, and $f_i(g_i, s') = s'+\frac{(g_i-s')_+}{\alpha}$, where $f_i$ is non-smooth, convex, and non-decreasing in terms of $g_i$, and $g_i(h_{ij}(\w), s_i)$ is non-smooth, convex, monotonic in terms of $h_{ij}(\w)$ when $\ell(\cdot)$ is convex and monotonically non-decreasing, and  $g_i(h_{ij}(\w), s_i)$ is weakly convex in terms of $\w$ when  $h_{ij}(\w)$ is smooth and Lipchitz continuous in terms of $\w$. Hence, the problem (\ref{prob:9}) satisfies the conditions in Assumption~\ref{ass:1} for the regular learning setting and that in Assumption~\ref{ass:2} for the MIL with mean pooling under mild regularity conditions of the neural network. We present full details in Appendix~\ref{sec:tpaucass} for interested readers. 

To compute the gradient estimator w.r.t $\w$, $u_{i,t}$ will be maintained for tracking $g_i(\w, s_i)$ in the regular setting or $\frac{1}{n_-}\sum_{X_j\in\S_-}g_i(h_{i,j}(\w), s_i)$ in the MIL setting, $v_{i, t}$ will be maintained for tracking $h_{i}(\w)$ in the MIL setting, which are updated similar to that in SONX and SONT. One difference from SONT is that $v_{i,j,t}$ is decoupled into $v_{i,t}$ and $v_{j, t}$  due to that $h_{i,j}$ can be decoupled. In terms of the extra variable $s',\s$, the objective function is convex w.r.t both $s'$ and $\s$, which allows us to simply update $s'$ by SGD using the stochastic gradient estimator $\frac{1}{B_1}\sum_{i\in \B_1^t} \partial_{s'} f_i(u_{i,t}, s'_t)$ and we update $s_i$ by SGD using the stochastic gradient estimator $\left[\frac{1}{B_2}\sum_{j\in \B_2^t}\partial_{s_i} g_i(v_{j,t}-v_{i,t},s_{i,t})\right]\partial_u f_i(u_{i,t},s'_t)$. Detailed updates are presented in Algorithm~\ref{alg:2} and Algorithm~\ref{alg:4} in Appendix~\ref{subsec:TPAUC}. We can extend the convergence analysis of SONX and SONT to the two learning settings of TPAUC maximization, which is included in Appendix~\ref{subsec:TPAUC-ana}. Finally, it is worth mentioning that we can also extend the results to other pooling operations, including smoothed max pooling and attention-based pooling~\cite{zhu2023mil}. Due to limit of space, we include discussions in Appendix~\ref{sec:sftatt} as well.

\vspace*{-0.05in}
\section{Experimental Results}
\vspace*{-0.05in}
We justify the effectiveness of the proposed SONX and SONT algorithms for TPAUC Maximization in the regular learning setting and MIL setting~\cite{pmlr-v80-ilse18a,zhu2023mil}.

{\bf Baselines.} For {\it regular TPAUC maximization}, we compare SONX with the following competitive methods: 1) Cross Entropy (CE) loss minimization; 2) AUC maximization with squared hinge loss (AUC-SH); 3) AUC maximization with min-max margin loss (AUC-M)~\cite{yuan2021largescale}; 4) Mini-Batch based heuristic loss (MB)~\cite{10.5555/2968826.2968904}; 5) Adhoc-Weighting based method with polynomial function (AW-poly)~\cite{pmlr-v139-yang21k}; 5) a single-loop algorithm (SOTAs) for optimizing a smooth surrogate for TPAUC~\cite{pmlr-v162-zhu22g}. % 6) The proposed optimization algorithm (SONX) for two levels of DRO with CVaR as the exact estimator for TPAUC.
For {\it MIL TPAUC maximization}, we consider the following baselines: 1) AUC-M with attention-based pooling (AUC-M [att]); 2) SOTAs with  attention-based pooling, which is a natural combination between advanced TPAUC optimization and MIL pooling technique; 3) the recently proposed provable multi-instance deep AUC maximization methods with stochastic  smoothed-max pooling and attention-based pooling (MIDAM [smx] and MIDAM [att])~\cite{zhu2023mil}. The first two baselines use naive mini-batch pooling for computing the loss function in AUC-M and SOTAs. We implement SONT for MIL TPAUC maximization with attention-based pooling, which is referred to as SONT (att).

%4) The proposed optimization algorithm (SONT) for two levels of DRO with CVaR as the exact estimator for TPAUC with unified attention pooling, where the SONT algorithm enjoys the theoretical guarantee.

{\bf Datasets.} For regular TPAUC maximization, we use three molecule datasets as in~\cite{pmlr-v162-zhu22g}, namely moltox21 (the No.0 target), molmuv (the No.1 target) and molpcba (the No.0 target)~\cite{wu2018moleculenet}. For MIL TPAUC maximization, we use four MIL datasets,  including two tabular datasets MUSK2 and Fox, and two medical image datasets Colon and Lung. MUSK2 and Fox are two tabular datasets that have been widely adopted for MIL benchmark study~\cite{pmlr-v80-ilse18a}. Colon and Lung are two histopathology (medical image) datasets that have large image size (512$\times$512) but local interests for classification~\cite{borkowski2019lung}. For Colon dataset, the adenocarcinoma is regarded as positive label and benign is negative; for Lung dataset, we treat adenocarcinoma as positive and squamous cell carcinoma as negative~\footnote{Data available: \url{https://www.kaggle.com/datasets/biplobdey/lung-and-colon-cancer}}. For both of the histopathology datasets, we uniformly randomly sample 100 positive and 1000 negative data for experiments.  For all MIL datasets, we uniformly randomly split 10\% as the testing and the remaining as the training and validation. The statistics for all used datasets are summarized in Table~\ref{tab:molecule-data-stats}and Table~\ref{tab:mil-data-stats}  in Appendix~\ref{sec:mexp}. %We include the demonstration for the Colon and Lung datasets in Figure~\ref{fig:medical-image} at Appendix.

 \begin{table}[t]
  \centering
  \caption{Testing TPAUC on molecule datasets (top) and on MIL datasets (bottom). The two numbers in  parentheses of the second line refers to the lower bound of TPR and the upper bound of FPR for evaluating TPAUC. The two numbers of each method refers to the mean TPAUC and its std. }
  \resizebox{0.9\columnwidth}{!}{
    \begin{tabular}{l|ll|ll|ll}
    \toprule
     & \multicolumn{2}{c}{moltox21 (t0)} & \multicolumn{2}{c}{molmuv (t1)} & \multicolumn{2}{c}{molpcba (t0)} \\
    \hline
    Method & (0.6, 0.4) & (0.5, 0.5) &  (0.6, 0.4) &  (0.5, 0.5) &  (0.6, 0.4) &  (0.5, 0.5) \\
    \hline
    CE    & 0.067 (0.001) & 0.208 (0.001) & 0.161 (0.034) & 0.469 (0.018) & 0.095 (0.001) & 0.264 (0.001) \\
    AUC-SH & 0.064 (0.008) & 0.217 (0.014) & 0.260 (0.130) & 0.444 (0.128) & 0.140 (0.003) & 0.312 (0.003) \\
    AUC-M & 0.066 (0.009) & 0.209 (0.01) & 0.114 (0.079) & 0.433 (0.053) & 0.142 (0.009) & 0.313 (0.003) \\
    MB    & 0.067 (0.015) & 0.215 (0.023) & 0.173 (0.153) & 0.426 (0.118) & 0.095 (0.002) & 0.262 (0.003) \\
    AW-poly & 0.064 (0.01) & 0.206 (0.025) & 0.172 (0.144) & 0.393 (0.123) & 0.110 (0.001) & 0.281 (0.002) \\
    SOTA-s & 0.068 (0.018) & 0.23 (0.021) & 0.327 (0.164) & 0.526 (0.122) & 0.143 (0.001) & 0.314 (0.002) \\
    SONX  & \textbf{0.07 (0.035)} & \textbf{0.252 (0.025)} & \textbf{0.347 (0.175)} & \textbf{0.575 (0.122)} & \textbf{0.158 (0.006)} & \textbf{0.335 (0.006)} \\
    \bottomrule
    \end{tabular}}
  \label{tab:pauc-results}%
  % \vspace*{0.1in}
%\end{table}%
%\begin{table}[t]
  \centering
%  \caption{Testing TPAUC on MIL datasets. The two numbers in  parentheses of second line refers to the lower bound of TPR and the upper bound of FPR for evaluating TPAUC. The two numbers of each method refers to the mean TPAUC and its std. }
  \resizebox{0.9\columnwidth}{!}{
    \begin{tabular}{l|lll|lll}
    \toprule
     & \multicolumn{3}{c}{MUSK2} & \multicolumn{3}{c}{Fox} \\
    \hline
    Method & (0.5, 0.5) & (0.3, 0.7) & (0.1, 0.9) & (0.5, 0.5)& (0.3, 0.7) & (0.1, 0.9) \\
     \hline
    %AUC-M-mean & 0.575(0.187) & 0.717(0.125) & 0.838(0.071) & 0.072(0.064) & 0.278(0.097) & 0.491(0.067) \\
    AUC-M (att) &  0.675 (0.1) &  0.783 (0.067) & \textbf{0.867 (0.036)}  &  0.032 (0.03) & 0.253 (0.098)  & 0.444 (0.118)  \\
    MIDAM (smx) & 0.525 (0.2) & 0.667 (0.149) & 0.8 (0.097) & 0.048 (0.059) & 0.265 (0.119) & 0.449 (0.113) \\
    MIDAM (att) & 0.6 (0.215) & 0.717 (0.135) & 0.819 (0.092) & 0.016 (0.032) & 0.249 (0.125) & 0.509 (0.065) \\
    SOTAs (att) & 0.6 (0.267) & 0.683 (0.178) & 0.819 (0.097) & 0.024 (0.032) & 0.278 (0.059) & 0.477 (0.046) \\
    SONT (att)& \textbf{0.7 (0.1)} & \textbf{0.8 (0.067)} & \textbf{0.867 (0.036)} & \textbf{0.12 (0.131)} & \textbf{0.343 (0.176)} & \textbf{0.578 (0.119)} \\
     \hline
    & \multicolumn{3}{c}{Colon} & \multicolumn{3}{c}{Lung} \\
     \hline
    Method & (0.5, 0.5) & (0.3, 0.7) & (0.1, 0.9) & (0.5, 0.5)& (0.3, 0.7) & (0.1, 0.9) \\
     \hline
   % AUC-M-mean & 0.534(0.044) & 0.711(0.012) & 0.8(0.012) & 0.571(0.175) & 0.754(0.105) & 0.844(0.068) \\
    AUC-M (att) & 0.576 (0.1)  & 0.739 (0.061)  & 0.803 (0.038)  &  0.32 (0.181)	 &  0.609 (0.113) &  0.744 (0.082)  \\
    MIDAM (smx) & 0.646 (0.083) & 0.787 (0.04) & 0.863 (0.026) & 0.43 (0.195) & 0.68 (0.128) & 0.824 (0.055) \\
    MIDAM (att) & 0.548 (0.253) & 0.738 (0.149) & 0.826 (0.102) & 0.544 (0.261) & 0.716 (0.189) & 0.815 (0.129) \\
    SOTAs (att) & 0.772 (0.124) & 0.862 (0.073) & 0.911 (0.045) & 0.539 (0.153) & 0.745 (0.077) & 0.841 (0.049) \\
    SONT (att)& \textbf{0.8 (0.166)} & \textbf{0.875 (0.099)} & \textbf{0.916 (0.065)} & \textbf{0.639 (0.137)} & \textbf{0.779 (0.041)} & \textbf{0.865 (0.028)} \\
    \bottomrule
    \end{tabular}}
  \label{tab:mil-results}%
  % \vspace*{0.1in}
\end{table}%

{\bf Experiment Settings.} %we follow the prior work for generic TPAUC experiments on molecular datasets~\cite{pmlr-v162-zhu22g}, namely moltox21 (the No.0 target), molmuv (the No.1 target) and molpcba (the No.0 target)~\cite{wu2018moleculenet}. The data statistics are summarized in Table~\ref{tab:molecule-data-stats} at appendix. 
For regular TPAUC maximization, we use the same setting as in~\cite{pmlr-v162-zhu22g}. The adopted backbone Graph Nueral Network (GNN) model is Graph Isomorphism Network (GIN), which has 5 mean-pooling layers with 64 number of hidden units and dropout rate 0.5~\cite{xu2018how}. We utilize the sigmoid function for the final output layer to generate the prediction score, and set the surrogate loss $\ell(\cdot)$ as squared hinge loss with a margin parameter. We follow the setups for model training and tuning exactly the same as the prior work~\cite{pmlr-v162-zhu22g}. Essentially, the model is trained by 60 epochs and the learning rate is decreased by 10-fold after every 20 epochs. The model is initialized as a pretrained model from CE loss on the training datasets. %For more details about the parameter tuning, please refer to the experiments section in their paper. 
We fix the learning rate of SONX as 1e-2 and moving average parameter $\tau$ as 0.9; tune the parameter $\gamma$ in \{0, 1e-1,1e-2,1e-3\}, the parameter $\alpha, \beta$ in \{0.1,0.3,0.5\} and fix the margin parameter of the surrogate loss $\ell$ as 1.0, which cost the same tuning effort as the other baselines. The weight decay is set as the same value (2e-4) with the other baselines. For baselines, we directly use the results reported in~\cite{pmlr-v162-zhu22g} since we use the same setting.  

For MIL TPAUC maximization,  we train a simple Feed Forward Neural Network (FFNN) with one hidden layer (the number of neurons equals to data dimension) for the two tabular datasets and ResNet20 for the two medical image datasets. Sigmoid transformation is adopted for the output layer to generate prediction score. The training epoch number is fixed as 100 epochs for all methods; the bag batch size is fixed as 16 (resp. 8) and the number of sampled instances per bag is fixed as 4 (resp. 128) for tabular (resp. medical image) datasets; the learning rate is tuned in \{1e-2, 1e-3, 1e-4\} and decreased by 10 folds at the end of 50-th and 75-th epoch for all baselines.  For SONT (att), we set moving average parameter $\tau_1=\tau_2$ as 0.9; tune the parameter $\gamma_1=\gamma_2=\gamma$ in \{0, 1e-1,1e-2,1e-3\} %(\textcolor{red}{we don't tune $\gamma$ here for the sake of fairness. Because we already tune both learning rate and $\alpha=\beta$. But I am not sure if it is better to write in this way? We fix $\gamma=0$ for medical image data and $\gamma=0.1$ for tabular data}) 
and fix the margin parameter of the surrogate loss $\ell$ as 0.5, and the parameter $\alpha, \beta$ in \{0.1,0.5,0.9\}. Similar parameters in baselines are set the same or tuned similarly. For all experiments, we utilize 5-fold-cross-validation to evaluate the testing performance based on the best validation performance with possible early stopping choice. 

%For AUC-M (att), MIDAM (smx) and MIDAM (att), we tune the margin parameter in \{0.1, 0.5, 1.0\}; for SOTAs (att), we tune the DRO (KL divergence) parameters $\lambda$ and $\lambda'$ simultaneously in \{0.1,1,10\} and fix the margin parameter as 1.0 for the sake of fairness; for SONT (att), we fix the $\tau$ as 0.1 for the tabular datasets and 0.0 for the medical image datasets, while tune the DRO (CVaR) parameters $\alpha$ and $\beta$ simultaneously in \{0.9,0.5,0.1\}. We utilize 5-fold cross validation and early stopping to select the evaluation for testing performance based on the best validation results. The testing results for mean and standard deviation of TPAUC on ($\alpha$,$\beta$) are summarized in the Table~\ref{tab:mil-results}. From the results, although the prior TPAUC optimization or MIL AUC optimization methods are competitive, SONT achieves better performance due to it unifies TPAUC and MIL optimizations.
\begin{figure}[t]
    \centering
    % \subfigure[MIDAM-smx, MUSK1]{\includegraphics[scale=0.16]{Figures/MUSK1-AUCMmax-s-Tr-AUC.pdf}}
%    \subfigure[moltox21 (t0) (0.4,0.6)]{\includegraphics[scale=0.1]{figures/SONT-tox21-TP-Tr-AUC(0.4)-cmp.pdf}}
 %   \subfigure[molmuv (t1) (0.4,0.6)]{\includegraphics[scale=0.1]{figures/SONT-muv-TP-Tr-AUC(0.4)-cmp.pdf}}
  %  \subfigure[molpcba (t0) (0.4,0.6)]{\includegraphics[scale=0.1]{figures/SONT-pcba-TP-Tr-AUC(0.4)-cmp.pdf}}
\hspace*{-0.1in}
   % \subfigure[moltox21 (t0) (0.5,0.5)]{\includegraphics[scale=0.1]{figures/SONT-tox21-TP-Tr-AUC(0.5)-cmp.pdf}}
    \subfigure[molmuv (t1) ]{\includegraphics[scale=0.09]{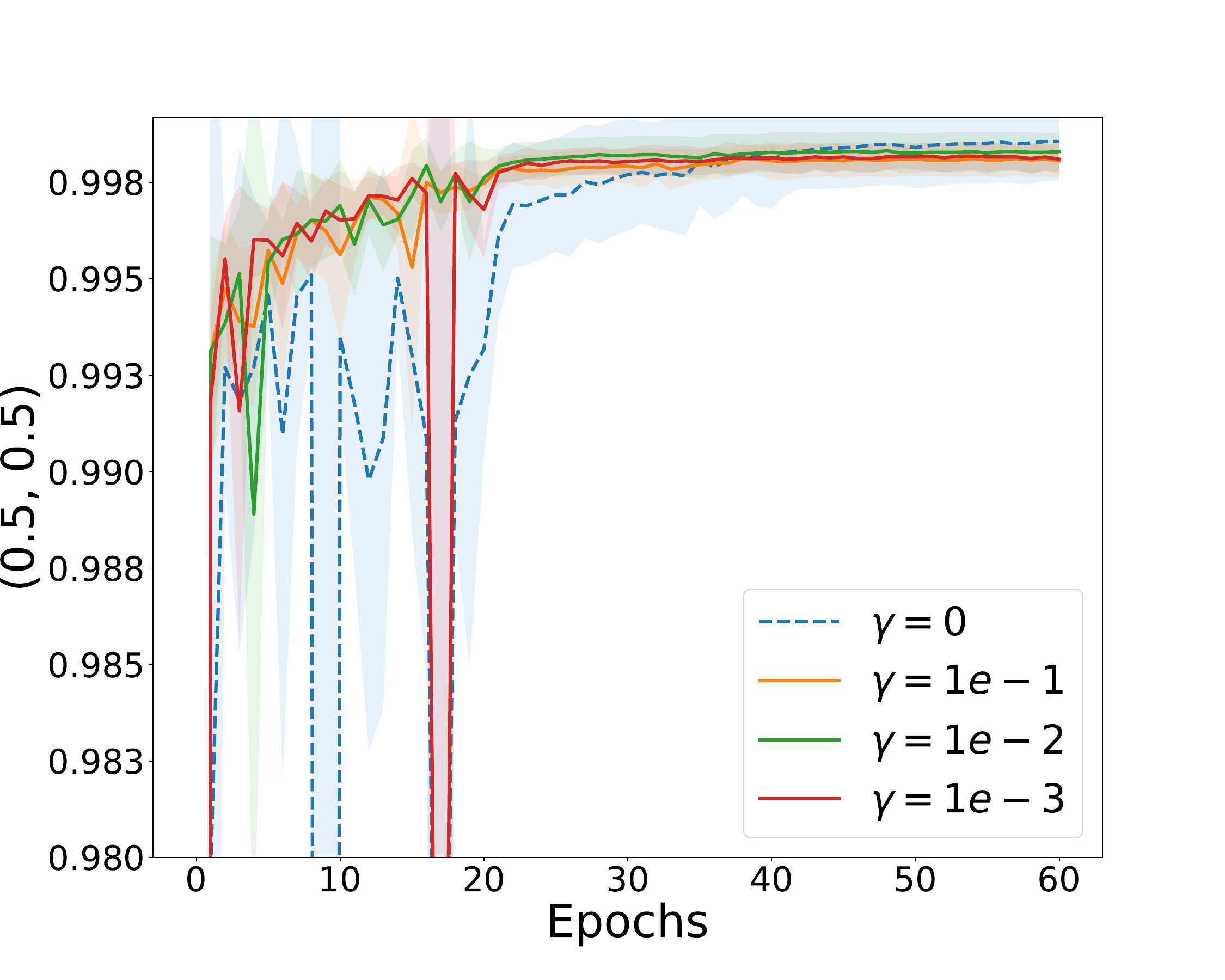}}
   \hspace*{-0.1in} \subfigure[molpcba (t0) ]{\includegraphics[scale=0.09]{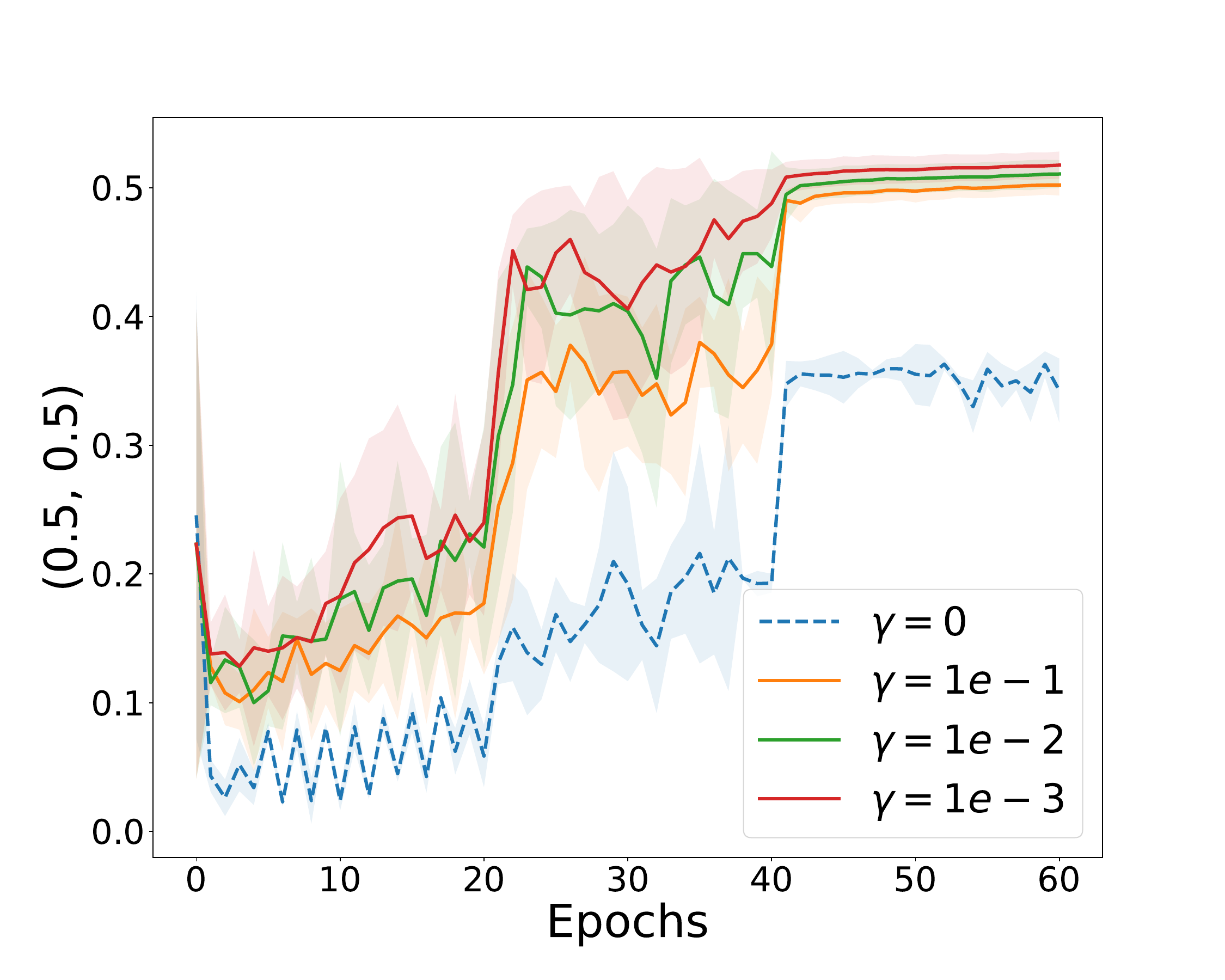}}
    \hspace*{-0.1in} \subfigure[MUSK2]{\includegraphics[scale=0.09]{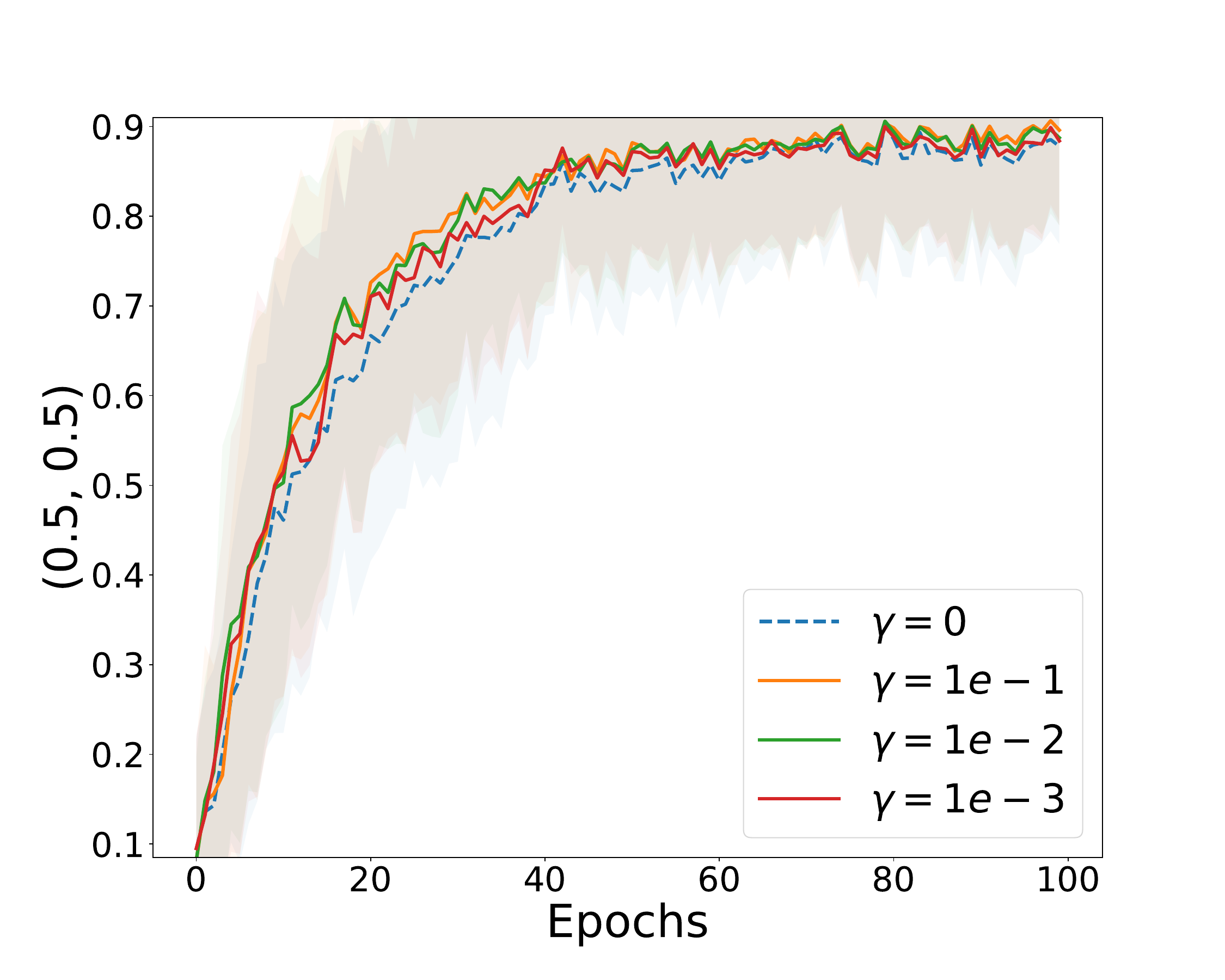}}
   \hspace*{-0.1in} \subfigure[Fox ]{\includegraphics[scale=0.09]{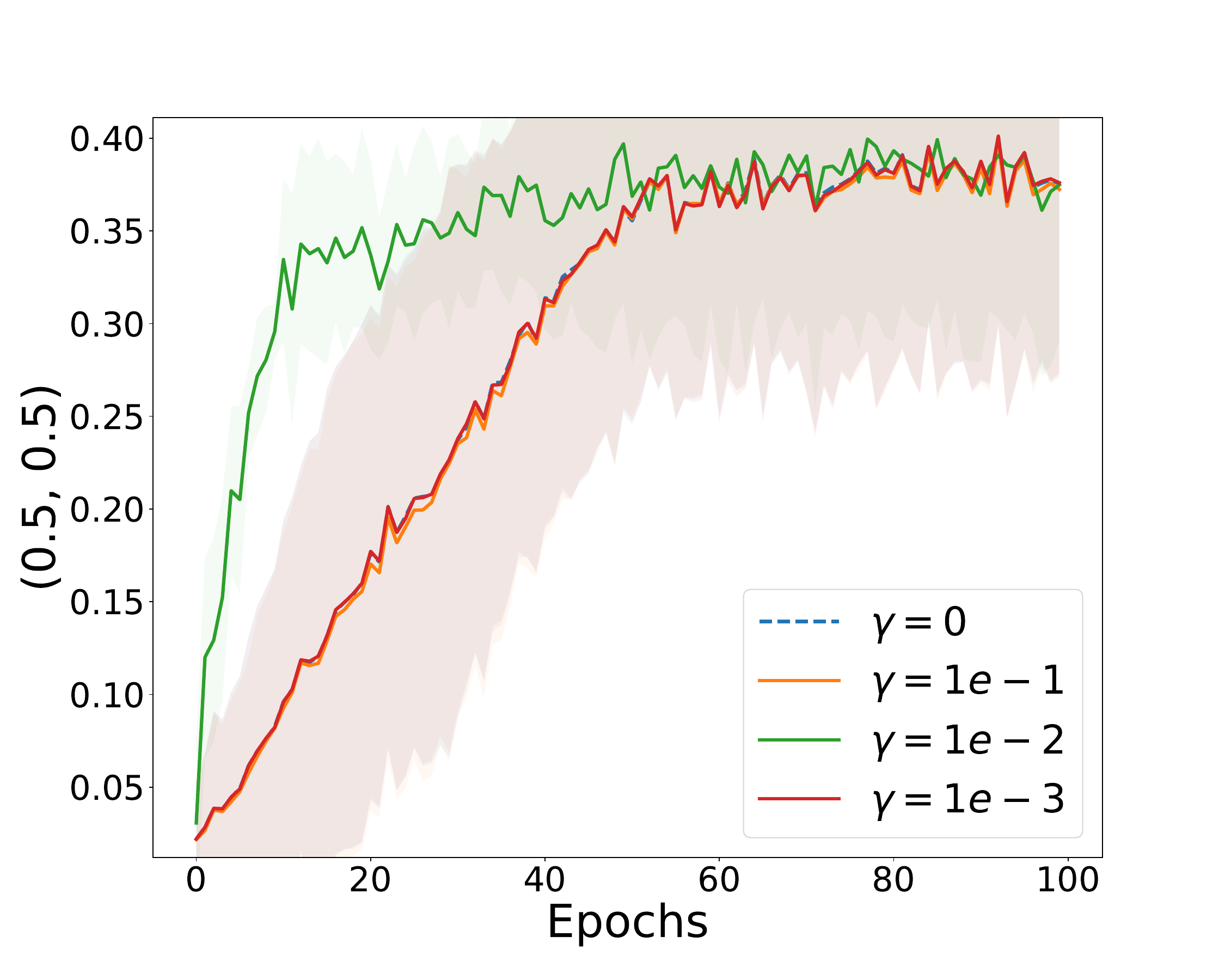}}
        \caption{Training Curves of SONX (left two) and SONT (right two) for TPAUC maximization with different $\gamma$. The y-axis is the TPAUC (0.5, 0.5). } \label{fig:ablation-sonx-gamma}
\end{figure}
{\bf Results.} The testing results for the regular and MIL TPUAC maximization with different TPAUC measures are summarized in the Table~\ref{tab:pauc-results}.  From Table~\ref{tab:pauc-results}, we observe that our method SONX achieves the best performance for regular TPAUC maximization. It is better than the state-of-the-art method SOTAs for TPAUC maximization. We attribute the better performance of SONX to the fact that the objective of SONX is an exact estimator of TPAUC while the smoothed objective of SOTAs is an inexact estimator of TPAUC.  We also observe that SONT (att) achieves the best performance in all cases, which is not surprising since it is the only one that directly optimizes the TPAUC surrogate. In contrast, other baselines either optimizes a different objective (MIDAM) or does not ensure convergence  due to the use of mini-batch pooling (AUC-M, SOTAs).  %From the results, although the prior TPAUC optimization or MIL AUC optimization methods are competitive, SONT achieves better performance due to it unifies TPAUC and MIL optimizations.

{\bf Ablation Study.} We conduct ablation studies to demonstrate the effect of the error correction term on the training convergence by varying the $\gamma$ value for SONX and SONT, where $\gamma_1=\gamma_2=\gamma$ is set as the same value in SONT. % We study the different $\gamma$ for the proposed SONX method, where we keep the learning rate as 1e-3 and tune the $\alpha=\beta$ parameter in \{0.1,0.3,0.5\}. 
The training convergence results are presented in Figure~\ref{fig:ablation-sonx-gamma}. We can see that an appropriate value of $\gamma>0$ can yield a faster convergence than $\gamma=0$, which verifies the faster convergence of using MSVR estimators than using moving average estimators. However, we do observe a gap between theory and practice, as setting a large value of $\gamma>1$ as in the theory might not yield convergence. This phenomenon is also observed in~\cite{quan23icml}. We conjecture that the gap could be fixed by considering convex objectives~\cite{Zhang2021RobustAP}, which is left as future work.  %We observe that for the molpcba dataset, tuning $\gamma$ can improving training convergence obviously. However, the other two datasets don't show the pattern, which may due to smaller data size. we study the different $\gamma$ and $\tau_1=\tau_2=\tau$ for the proposed SONT method on MUSK2 and Fox datasets, where we keep the initial learning rate as 1e-3 and tune the $\alpha=\beta$ parameter in \{0.1,0.5,0.9\}. The training convergence results are presented in Figure~\ref{fig:ablation-sont-gamma} and Figure~\ref{fig:ablation-sont-tau}. Overall, the $\gamma$  and $\tau$ are not sensitive based on the observation, but positive $\gamma$ can provide acceleration on MUSK2 dataset. 

\vspace*{-0.05in}
\section{Conclusions}
\vspace*{-0.05in}
In this paper, we have considered non-smooth weakly-convex two-level and tri-level finite-sum coupled compositional optimization problems. We presented novel convergence analysis of  two stochastic algorithms and established their complexity. Applications in deep learning for two-way partial AUC maximization was considered and great performance of proposed algorithms were demonstrated through experiments on multiple datasets. A future work is to prove the convergence of both algorithms for convex objectives.

\vspace*{-0.05in}
\section*{Acknowledgements}
\vspace*{-0.05in}
We thank anonymous reviewers for constructive comments. Q. Hu, D. Zhu and T. Yang were partially supported by NSF Career Award 2246753, NSF Grant 2246757, 2246756 and 2306572.

\newpage
\bibliography{myref,AUC}

\begin{thebibliography}{10}

\bibitem{doi:10.1137/21M1406222}
Krishnakumar Balasubramanian, Saeed Ghadimi, and Anthony Nguyen.
\newblock Stochastic multilevel composition optimization algorithms with level-independent convergence rates.
\newblock {\em SIAM Journal on Optimization}, 32(2):519--544, 2022.

\bibitem{borkowski2019lung}
Andrew~A Borkowski, Marilyn~M Bui, L~Brannon Thomas, Catherine~P Wilson, Lauren~A DeLand, and Stephen~M Mastorides.
\newblock Lung and colon cancer histopathological image dataset (lc25000).
\newblock {\em arXiv preprint arXiv:1912.12142}, 2019.

\bibitem{Chen_2021}
Tianyi Chen, Yuejiao Sun, and Wotao Yin.
\newblock Solving stochastic compositional optimization is nearly as easy as solving stochastic optimization.
\newblock {\em {IEEE} Transactions on Signal Processing}, 69:4937--4948, 2021.

\bibitem{NEURIPS2020_0b6ace9e}
Sebastian Curi, Kfir~Y. Levy, Stefanie Jegelka, and Andreas Krause.
\newblock Adaptive sampling for stochastic risk-averse learning.
\newblock In H.~Larochelle, M.~Ranzato, R.~Hadsell, M.F. Balcan, and H.~Lin, editors, {\em Advances in Neural Information Processing Systems}, volume~33, pages 1036--1047. Curran Associates, Inc., 2020.

\bibitem{NEURIPS2019_b8002139}
Ashok Cutkosky and Francesco Orabona.
\newblock Momentum-based variance reduction in non-convex sgd.
\newblock In H.~Wallach, H.~Larochelle, A.~Beygelzimer, F.~d\textquotesingle Alch\'{e}-Buc, E.~Fox, and R.~Garnett, editors, {\em Advances in Neural Information Processing Systems}, volume~32. Curran Associates, Inc., 2019.

\bibitem{davis2018stochastic}
Damek Davis and Dmitriy Drusvyatskiy.
\newblock Stochastic model-based minimization of weakly convex functions, 2018.

\bibitem{doi:10.1137/17M1151031}
Damek Davis and Benjamin Grimmer.
\newblock Proximally guided stochastic subgradient method for nonsmooth, nonconvex problems.
\newblock {\em SIAM Journal on Optimization}, 29(3):1908--1930, 2019.

\bibitem{DBLP:journals/mp/DrusvyatskiyP19}
Dmitriy Drusvyatskiy and Courtney Paquette.
\newblock Efficiency of minimizing compositions of convex functions and smooth maps.
\newblock {\em Math. Program.}, 178(1-2):503--558, 2019.

\bibitem{doi:10.1137/17M1135086}
John~C. Duchi and Feng Ruan.
\newblock Stochastic methods for composite and weakly convex optimization problems.
\newblock {\em SIAM Journal on Optimization}, 28(4):3229--3259, 2018.

\bibitem{Ghadimi2020AST}
S.~Ghadimi, Andrzej Ruszczy'nski, and Mengdi Wang.
\newblock A single timescale stochastic approximation method for nested stochastic optimization.
\newblock {\em SIAM J. Optim.}, 30:960--979, 2020.

\bibitem{he2023debiasing}
Lie He and Shiva~Prasad Kasiviswanathan.
\newblock Debiasing conditional stochastic optimization, 2023.

\bibitem{quan23icml}
Quanqi Hu, Zi-Hao Qiu, Zhishuai Guo, Lijun Zhang, and Tianbao Yang.
\newblock Blockwise stochastic variance-reduced methods with parallel speedup for multi-block bilevel optimization.
\newblock In {\em Proceedings of the 39th International Conference on Machine Learning}, 2023.

\bibitem{hu2020biased}
Yifan Hu, Siqi Zhang, Xin Chen, and Niao He.
\newblock Biased stochastic first-order methods for conditional stochastic optimization and applications in meta learning.
\newblock {\em Advances in Neural Information Processing Systems}, 33, 2020.

\bibitem{pmlr-v80-ilse18a}
Maximilian Ilse, Jakub Tomczak, and Max Welling.
\newblock Attention-based deep multiple instance learning.
\newblock In Jennifer Dy and Andreas Krause, editors, {\em Proceedings of the 35th International Conference on Machine Learning}, volume~80 of {\em Proceedings of Machine Learning Research}, pages 2127--2136. PMLR, 10--15 Jul 2018.

\bibitem{jiang2022multiblocksingleprobe}
Wei Jiang, Gang Li, Yibo Wang, Lijun Zhang, and Tianbao Yang.
\newblock Multi-block-single-probe variance reduced estimator for coupled compositional optimization, 2022.

\bibitem{10.5555/2968826.2968904}
Purushottam Kar, Harikrishna Narasimhan, and Prateek Jain.
\newblock Online and stochastic gradient methods for non-decomposable loss functions.
\newblock In {\em Proceedings of the 27th International Conference on Neural Information Processing Systems - Volume 1}, NIPS'14, page 694–702, Cambridge, MA, USA, 2014. MIT Press.

\bibitem{pmlr-v119-lin20a}
Tianyi Lin, Chi Jin, and Michael Jordan.
\newblock On gradient descent ascent for nonconvex-concave minimax problems.
\newblock In Hal~Daumé III and Aarti Singh, editors, {\em Proceedings of the 37th International Conference on Machine Learning}, volume 119 of {\em Proceedings of Machine Learning Research}, pages 6083--6093. PMLR, 13--18 Jul 2020.

\bibitem{Moreau1965}
J.J. Moreau.
\newblock Proximité et dualité dans un espace hilbertien.
\newblock {\em Bulletin de la Société Mathématique de France}, 93:273--299, 1965.

\bibitem{qi2021online}
Qi~Qi, Zhishuai Guo, Yi~Xu, Rong Jin, and Tianbao Yang.
\newblock An online method for a class of distributionally robust optimization with non-convex objectives.
\newblock {\em Advances in Neural Information Processing Systems}, 34, 2021.

\bibitem{DBLP:journals/corr/abs-2104-08736}
Qi~Qi, Youzhi Luo, Zhao Xu, Shuiwang Ji, and Tianbao Yang.
\newblock Stochastic optimization of area under precision-recall curve for deep learning with provable convergence.
\newblock In {\em Advances in neural information processing systems}, volume abs/2104.08736, 2021.

\bibitem{pmlr-v162-qiu22a}
Zi-Hao Qiu, Quanqi Hu, Yongjian Zhong, Lijun Zhang, and Tianbao Yang.
\newblock Large-scale stochastic optimization of {NDCG} surrogates for deep learning with provable convergence.
\newblock In Kamalika Chaudhuri, Stefanie Jegelka, Le~Song, Csaba Szepesvari, Gang Niu, and Sivan Sabato, editors, {\em Proceedings of the 39th International Conference on Machine Learning}, volume 162 of {\em Proceedings of Machine Learning Research}, pages 18122--18152. PMLR, 17--23 Jul 2022.

\bibitem{doi:10.1080/10556788.2021.1895152}
Hassan Rafique, Mingrui Liu, Qihang Lin, and Tianbao Yang.
\newblock Weakly-convex–concave min–max optimization: provable algorithms and applications in machine learning.
\newblock {\em Optimization Methods and Software}, 37(3):1087--1121, 2022.

\bibitem{1999Optimization}
R.~T. Rockafellar and S.~Uryasev.
\newblock Optimization of conditional valueat-risk.
\newblock {\em Journal of Risk}, 2(3), 1999.

\bibitem{rockafellar2009variational}
R.T. Rockafellar, M.~Wets, and R.J.B. Wets.
\newblock {\em Variational Analysis}.
\newblock Grundlehren der mathematischen Wissenschaften. Springer Berlin Heidelberg, 2009.

\bibitem{sagawa2020distributionally}
Shiori Sagawa, Pang~Wei Koh, Tatsunori~B. Hashimoto, and Percy Liang.
\newblock Distributionally robust neural networks for group shifts: On the importance of regularization for worst-case generalization, 2020.

\bibitem{pmlr-v162-wang22ak}
Bokun Wang and Tianbao Yang.
\newblock Finite-sum coupled compositional stochastic optimization: Theory and applications.
\newblock In Kamalika Chaudhuri, Stefanie Jegelka, Le~Song, Csaba Szepesvari, Gang Niu, and Sivan Sabato, editors, {\em Proceedings of the 39th International Conference on Machine Learning}, volume 162 of {\em Proceedings of Machine Learning Research}, pages 23292--23317. PMLR, 17--23 Jul 2022.

\bibitem{DBLP:journals/mp/WangFL17}
Mengdi Wang, Ethan~X Fang, and Han Liu.
\newblock Stochastic compositional gradient descent: algorithms for minimizing compositions of expected-value functions.
\newblock {\em Mathematical Programming}, 161(1-2):419--449, 2017.

\bibitem{wang2016accelerating}
Mengdi Wang, Ji~Liu, and Ethan~X. Fang.
\newblock Accelerating stochastic composition optimization, 2016.

\bibitem{wu2018moleculenet}
Zhenqin Wu, Bharath Ramsundar, Evan~N Feinberg, Joseph Gomes, Caleb Geniesse, Aneesh~S Pappu, Karl Leswing, and Vijay Pande.
\newblock {MoleculeNet}: a benchmark for molecular machine learning.
\newblock {\em Chemical science}, 9(2):513--530, 2018.

\bibitem{xu2018how}
Keyulu Xu, Weihua Hu, Jure Leskovec, and Stefanie Jegelka.
\newblock How powerful are graph neural networks?
\newblock In {\em 7th International Conference on Learning Representations}, 2019.

\bibitem{yan2020sharp}
Yan Yan, Yi~Xu, Qihang Lin, Wei Liu, and Tianbao Yang.
\newblock Optimal epoch stochastic gradient descent ascent methods for min-max optimization.
\newblock In {\em Advances in Neural Information Processing Systems 33 (NeurIPS)}, 2020.

\bibitem{yang2018multilevel}
Shuoguang Yang, Mengdi Wang, and Ethan~X. Fang.
\newblock Multi-level stochastic gradient methods for nested composition optimization, 2018.

\bibitem{DBLP:journals/corr/abs-2206-00439}
Tianbao Yang.
\newblock Algorithmic foundation of deep x-risk optimization.
\newblock {\em CoRR}, abs/2206.00439, 2022.

\bibitem{DBLP:journals/csur/YangY23}
Tianbao Yang and Yiming Ying.
\newblock {AUC} maximization in the era of big data and {AI:} {A} survey.
\newblock {\em {ACM} Comput. Surv.}, 55(8):172:1--172:37, 2023.

\bibitem{pmlr-v139-yang21k}
Zhiyong Yang, Qianqian Xu, Shilong Bao, Yuan He, Xiaochun Cao, and Qingming Huang.
\newblock When all we need is a piece of the pie: A generic framework for optimizing two-way partial auc.
\newblock In Marina Meila and Tong Zhang, editors, {\em Proceedings of the 38th International Conference on Machine Learning}, volume 139 of {\em Proceedings of Machine Learning Research}, pages 11820--11829. PMLR, 18--24 Jul 2021.

\bibitem{pmlr-v162-yuan22b}
Zhuoning Yuan, Yuexin Wu, Zi-Hao Qiu, Xianzhi Du, Lijun Zhang, Denny Zhou, and Tianbao Yang.
\newblock Provable stochastic optimization for global contrastive learning: Small batch does not harm performance.
\newblock In Kamalika Chaudhuri, Stefanie Jegelka, Le~Song, Csaba Szepesvari, Gang Niu, and Sivan Sabato, editors, {\em Proceedings of the 39th International Conference on Machine Learning}, volume 162 of {\em Proceedings of Machine Learning Research}, pages 25760--25782. PMLR, 17--23 Jul 2022.

\bibitem{yuan2021largescale}
Zhuoning Yuan, Yan Yan, Milan Sonka, and Tianbao Yang.
\newblock Large-scale robust deep auc maximization: A new surrogate loss and empirical studies on medical image classification, 2021.

\bibitem{zhang2019stochastic}
Junyu Zhang and Lin Xiao.
\newblock A stochastic composite gradient method with incremental variance reduction.
\newblock In {\em Advances in Neural Information Processing Systems}, pages 9075--9085, 2019.

\bibitem{DBLP:journals/siamjo/ZhangX21}
Junyu Zhang and Lin Xiao.
\newblock Multilevel composite stochastic optimization via nested variance reduction.
\newblock {\em {SIAM} J. Optim.}, 31(2):1131--1157, 2021.

\bibitem{Zhang2021RobustAP}
Xuan Zhang, Necdet~Serhat Aybat, and Mert Gurbuzbalaban.
\newblock Robust accelerated primal-dual methods for computing saddle points.
\newblock 2021.

\bibitem{zhang2023sapd}
Xuan Zhang, Necdet~Serhat Aybat, and Mert Gurbuzbalaban.
\newblock Sapd+: An accelerated stochastic method for nonconvex-concave minimax problems, 2023.

\bibitem{zhang2022optimal}
Zhe Zhang and Guanghui Lan.
\newblock Optimal algorithms for convex nested stochastic composite optimization, 2022.

\bibitem{zhao2022primaldual}
Renbo Zhao.
\newblock A primal-dual smoothing framework for max-structured non-convex optimization, 2022.

\bibitem{pmlr-v162-zhu22g}
Dixian Zhu, Gang Li, Bokun Wang, Xiaodong Wu, and Tianbao Yang.
\newblock When {AUC} meets {DRO}: Optimizing partial {AUC} for deep learning with non-convex convergence guarantee.
\newblock In Kamalika Chaudhuri, Stefanie Jegelka, Le~Song, Csaba Szepesvari, Gang Niu, and Sivan Sabato, editors, {\em Proceedings of the 39th International Conference on Machine Learning}, volume 162 of {\em Proceedings of Machine Learning Research}, pages 27548--27573. PMLR, 17--23 Jul 2022.

\bibitem{zhu2023mil}
Dixian Zhu, Bokun Wang, Zhi Chen, Yaxing Wang, Milan Sonka, Xiaodong Wu, and Tianbao Yang.
\newblock Provable multi-instance deep auc maximization with stochastic pooling.
\newblock In {\em International Conference on Machine Learning}. PMLR, 2023.

\bibitem{zhu2023distributionally}
Landi Zhu, Mert Gürbüzbalaban, and Andrzej Ruszczyński.
\newblock Distributionally robust learning with weakly convex losses: Convergence rates and finite-sample guarantees, 2023.

\end{thebibliography}

\newpage
\appendix

\section{Proofs of Theorem~\ref{thm:1} and Theorem~\ref{thm:3if}}\label{app:thm}
In this section, we provide the detailed proofs for Theorem~\ref{thm:1} and Theorem~\ref{thm:3if}. We first give a basic property for weakly-convex functions.
\begin{proposition}[Proposition 2.1 in \cite{doi:10.1137/17M1151031}]
    Suppose function $g:\R^d \to \R\cup \{\infty\}$ is lower-semicontinuous. Then $g$ is $\rho$-weakly-convex if and only if 
    \begin{equation}
        g(y)\geq g(x)+\langle v, y-x\rangle -\frac{\rho}{2}\|y-x\|^2
    \end{equation}
    holds for all vectors $v\in \partial g(x)$ and $x,y\in \R^d$.
\end{proposition}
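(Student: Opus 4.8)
The plan is to reduce the statement to the familiar subgradient characterization of convexity by peeling off the quadratic term. Set $h := g + \frac{\rho}{2}\|\cdot\|^2$, so that by definition $g$ is $\rho$-weakly-convex precisely when $h$ is convex, and $h$ inherits lower semicontinuity from $g$. Since $\frac{\rho}{2}\|\cdot\|^2$ is $C^1$-smooth with gradient $\rho x$, the exact sum rule for (regular, limiting, and Clarke) subdifferentials gives $\partial h(x) = \partial g(x) + \rho x$ at every $x$; this dictionary between $\partial g$ and $\partial h$ is the workhorse of both directions.

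For the ``only if'' direction I would argue as follows. Assume $h$ is convex. Fix any $x$ and $v \in \partial g(x)$, and put $w := v + \rho x \in \partial h(x)$. The classical subgradient inequality for a convex lower-semicontinuous function yields $h(y) \ge h(x) + \langle w, y - x\rangle$ for all $y$. Substituting $h = g + \frac{\rho}{2}\|\cdot\|^2$, cancelling, and using the elementary identity $\frac{\rho}{2}\|y\|^2 - \frac{\rho}{2}\|x\|^2 - \rho\langle x, y-x\rangle = \frac{\rho}{2}\|y-x\|^2$ produces exactly $g(y) \ge g(x) + \langle v, y-x\rangle - \frac{\rho}{2}\|y-x\|^2$, as desired.

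For the ``if'' direction I would run the same computation in reverse: assuming the displayed inequality for every $v \in \partial g(x)$ and all $x,y$, every $w \in \partial h(x)$ has the form $v + \rho x$ with $v \in \partial g(x)$, and the reverse algebra gives $h(y) \ge h(x) + \langle w, y-x\rangle$ for all $y$. Adding this inequality at two points $x_1, x_2$ (with $w_i \in \partial h(x_i)$) shows $\partial h$ is a monotone operator; since $h$ is proper and lower-semicontinuous, a standard result of variational analysis --- monotonicity of the limiting subdifferential is equivalent to convexity of a lower-semicontinuous function, see, e.g., \cite{rockafellar2009variational} --- then forces $h$ to be convex, hence $g$ to be $\rho$-weakly-convex.

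The main obstacle is this last implication. The hypothesis only supplies the subgradient inequality at points where $\partial g$ (equivalently $\partial h$) is nonempty, and upgrading ``the subgradient inequality holds wherever a subgradient exists'' to global convexity of an extended-real-valued l.s.c.\ function is not a one-line convex-combination estimate: it genuinely needs a nonsmooth mean-value / subdifferential-monotonicity theorem. Everything else --- the sum rule, the quadratic identity, the convex subgradient inequality --- is routine, so in writing the full proof I would either invoke that characterization from the cited reference or, if one is content to assume $g$ locally Lipschitz (which covers the use cases in this paper, where the relevant $f_i$ are Lipschitz), appeal directly to a nonsmooth mean value theorem, since then the subdifferential is nonempty everywhere.
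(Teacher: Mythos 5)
Your proof is essentially correct, but note that the paper itself offers no proof of this statement to compare against: it is imported verbatim as Proposition~2.1 of the cited reference \cite{doi:10.1137/17M1151031} and used as a black box. Your reconstruction is sound. The reduction $h := g + \frac{\rho}{2}\|\cdot\|^2$, the exact sum rule $\partial h(x) = \partial g(x) + \rho x$ (valid for the regular and limiting subdifferentials since the quadratic is $C^1$), the quadratic identity, and the convex subgradient inequality for the ``only if'' direction are all routine and correctly executed. You also correctly identify the one genuinely nontrivial step: in the ``if'' direction, passing from ``the global subgradient inequality holds at every point where a subgradient exists'' to convexity of the lower-semicontinuous function $h$ cannot be done by elementary convex-combination arguments, because $\partial h$ may be empty off a dense subset of the domain. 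Your route---derive monotonicity of $\partial h$ by adding the inequality at two points, then invoke the equivalence between monotonicity of the limiting subdifferential and convexity for proper lower-semicontinuous functions (Theorem~12.17 in \cite{rockafellar2009variational})---is exactly the standard way this is closed, and your fallback via a nonsmooth mean value theorem in the locally Lipschitz case is also valid and covers every use of the proposition in this paper. The only caveat is that the argument is not self-contained: its weight rests entirely on that cited characterization, which is itself a nontrivial theorem; but since the paper treats the whole proposition as a citation, this is an acceptable level of rigor.
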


\subsection{Proof of Theorem~\ref{thm:1}}

Note that the proof of Lemma~\ref{lem:16} also implies the following squared-norm error bound,
\begin{equation*}
    \begin{aligned}
        \E\bigg[\frac{1}{n}\sum_{i\in \S}\|u_{i,t+1}-g_i(\w_{t+1})\|^2\bigg]
        &\leq (1-\frac{B_1\tau}{2n})^{t+1} \frac{1}{n}\sum_{i\in \S}\|u_{i,0}-g_i(\w_0)\|^2+ \frac{4 \tau\sigma^2}{B_2} +\frac{16 n^2C_g^2M^2\eta^2}{B_1^2 \tau}.
    \end{aligned}
\end{equation*}

\begin{proof}[Proof of Theorem~\ref{thm:1}]
Define $\hat{\w}_t:=\prox_{F/\bar{\rho}}(\w_t)$. For a given $i\in \S$, we have
\begin{equation*}
\begin{aligned}
    &f_i(g_i(\hat{\w}_t)) - f_i(u_{i,t})\\
    &\stackrel{(a)}{\geq}  \partial f_i(u_{i,t})^\top (g_i(\hat{\w}_t)-u_{i,t}) -\frac{\rho_f}{2}\|g_i(\hat{\w}_t)-u_{i,t}\|^2\\
    &\geq  \partial f_i(u_{i,t})^\top (g_i(\hat{\w}_t)-u_{i,t}) -\rho_f\|g_i(\hat{\w}_t)-g_i(\w_t)\|^2-\rho_f\|g_i(\w_t)-u_{i,t}\|^2\\
    &\geq  \partial f_i(u_{i,t})^\top (g_i(\hat{\w}_t)-u_{i,t}) -\rho_fC_g^2\|\hat{\w}_t-\w_t\|^2-\rho_f\|g_i(\w_t)-u_{i,t}\|^2\\
    &\stackrel{(b)}{\geq} \partial f_i(u_{i,t})^\top \bigg[g_i(\w_t)-u_{i,t}+ \partial g_i(\w_t)^\top(\hat{\w}_t-\w_t)-\frac{\rho_g}{2}\|\hat{\w}_t-\w_t\|^2\bigg] \\
    &\quad -\rho_fC_g^2\|\hat{\w}_t-\w_t\|^2-\rho_f\|g_i(\w_t)-u_{i,t}\|^2\\
    &\stackrel{(c)}{\geq} \partial f_i(u_{i,t})^\top (g_i(\w_t)-u_{i,t})+  \partial f_i(u_{i,t})^\top\partial g_i(\w_t)^\top(\hat{\w}_t-\w_t) -(\frac{\rho_gC_f}{2}+\rho_fC_g^2)\|\hat{\w}_t-\w_t\|^2\\
    &\quad -\rho_f\|g_i(\w_t)-u_{i,t}\|^2
\end{aligned}
\end{equation*}
where (a) follows from the $\rho_f$-weak-convexity of $f_i$, (b) follows from that $f_i(\cdot)$ is non-decreasing and the weak convexity of $g_i$, (c) is due to $0\leq  \partial f_i(u_{i,t})\leq C_f$. Then it follows
\begin{equation}\label{ineq:43} 
    \begin{aligned}
        &\frac{1}{n}\sum_{i\in \S}   \partial f_i(u_{i,t})^\top\partial g_i(\w_t)^\top(\hat{\w}_t-\w_t)\\
        &\leq \frac{1}{n}\sum_{i\in \S} \bigg[ f_i(g_i(\hat{\w}_t))- f_i(u_{i,t}) - \partial f_i(u_{i,t})^\top (g_i(\w_t)-u_{i,t}) + (\frac{\rho_gC_f}{2}+\rho_fC_g^2)\|\hat{\w}_t-\w_t\|^2\\
        &\quad +\rho_f\|g_i(\w_t)-u_{i,t}\|^2\bigg]
    \end{aligned}
\end{equation}

Now we consider the change in the Moreau envelope:
\begin{equation}\label{ineq:44}
    \begin{aligned}
        \E_t[F_{1/\bar{\rho}}(\w_{t+1})]
        &=\E_t\left[\min_{\tilde{\w}}F(\tilde{\w})+\frac{\bar{\rho}}{2}\|\tilde{\w}-\w_{t+1}\|^2\right]\\
        &\leq \E_t\left[F(\hat{\w}_t)+\frac{\bar{\rho}}{2}\|\hat{\w}_t-\w_{t+1}\|^2\right]\\
        &=F(\hat{\w}_t)+\E_t\bigg[\frac{\bar{\rho}}{2}\|\hat{\w}_t-(\w_t-\eta G_t)\|^2\bigg]\\
        &\leq F(\hat{\w}_t)+\frac{\bar{\rho}}{2}\|\hat{\w}_t-\w_t\|^2 +\bar{\rho}\E_t[\eta\langle\hat{\w}_t-\w_t, G_t\rangle]+\frac{\eta^2\bar{\rho}M^2}{2}\\
        &= F_{1/\bar{\rho}}(\w_t) +\bar{\rho}\eta\langle\hat{\w}_t-\w_t, \E_t[G_t]\rangle+\frac{\eta^2\bar{\rho}M^2}{2}\\
    \end{aligned}
\end{equation}

where
\begin{equation*}
    \begin{aligned}
        &\E_t[G_t] = \frac{1}{n}\sum_{i\in \S}\partial g_i(\w_t)\partial f_i(u_{i,t}),
    \end{aligned}
\end{equation*}
and the second inequality uses the bound of $\E[\|G_t\|^2]$, which follows from the Lipschitz continuity and bounded variance assumptions and is denoted by $M$.

Combining inequality~(\ref{ineq:43}) and (\ref{ineq:44}) yields
\begin{equation}\label{ineq:45}
    \begin{aligned}
        &\E_t[F_{1/\bar{\rho}}(\w_{t+1})]\\
        &\leq F_{1/\bar{\rho}}(\w_t) +\frac{\eta^2\bar{\rho}M^2}{2}+\frac{\bar{\rho}\eta}{n}\sum_{i\in \S} \bigg[ f_i(g_i(\hat{\w}_t))- f_i(u_{i,t})\\
        &\quad - \partial f_i(u_{i,t})^\top (g_i(\w_t)-u_{i,t})+ (\frac{\rho_gC_f}{2}+\rho_fC_g^2)\|\hat{\w}_t-\w_t\|^2+\rho_f\|g_i(\w_t)-u_{i,t}\|^2\bigg]\\
        &= F_{1/\bar{\rho}}(\w_t) +\frac{\eta^2\bar{\rho}M^2}{2}+\bar{\rho}\eta\left[F(\hat{\w}_t)-F(\w_t)\right] +\frac{\bar{\rho}\eta}{n}\sum_{i\in \S} \bigg[ f_i(g_i(\w_t)) - f_i(u_{i,t})\\
        &\quad -\partial f_i(u_{i,t})^\top (g_i(\w_t)-u_{i,t})+ (\frac{\rho_gC_f}{2}+\rho_fC_g^2)\|\hat{\w}_t-\w_t\|^2+\rho_f\|g_i(\w_t)-u_{i,t}\|^2\bigg]
    \end{aligned}
\end{equation}
Due to the $\rho_F$-weak convexity of $F(\w)$, we have ($\bar{\rho}-\rho_F$)-strong convexity of $\w\mapsto F(\w)+\frac{\bar{\rho}}{2}\|\w_t-\w\|^2$. Then it follows
\begin{equation}\label{ineq:46}
    \begin{aligned}
        F(\hat{\w}_t)-F(\w_t)
        & = \bigg[F(\hat{\w}_t)+\frac{\bar{\rho}}{2}\|\w_t-\hat{\w}_t\|^2\bigg] -\bigg[F(\w_t)+\frac{\bar{\rho}}{2}\|\w_t-\w_t\|^2\bigg]-\frac{\bar{\rho}}{2}\|\w_t-\hat{\w}_t\|^2\\
        &\stackrel{(a)}{\leq} (\frac{\rho_F}{2}-\bar{\rho})\|\w_t-\hat{\w}_t\|^2
    \end{aligned}
\end{equation}
where (a) uses the definition of $\hat{\w}_t$. Plugging inequality~(\ref{ineq:46}) into inequality~(\ref{ineq:45}) yields
\begin{equation}
    \begin{aligned}
        \E_t[F_{1/\bar{\rho}}(\w_{t+1})]
        &\leq F_{1/\bar{\rho}}(\w_t)+\frac{\eta^2\bar{\rho}M^2}{2}+ \bar{\rho}\eta(\frac{\rho_F}{2}-\bar{\rho})\|\w_t-\hat{\w}_t\|^2\\
        &\quad +\frac{\bar{\rho}\eta}{n}\sum_{i\in \S} \bigg[f_i(g_i(\w_t)) - f_i(u_{i,t}) - \partial f_i(u_{i,t})^\top (g_i(\w_t)-u_{i,t})\\
        &\quad+ (\frac{\rho_gC_f}{2}+\rho_fC_g^2)\|\hat{\w}_t-\w_t\|^2 +\rho_f\|g_i(\w_t)-u_{i,t}\|^2\bigg]
    \end{aligned}
\end{equation}

Set $\bar{\rho}=\rho_F+\rho_g C_f+2\rho_fC_g^2$. We have

\begin{equation*}
    \begin{aligned}
        \E_t[F_{1/\bar{\rho}}(\w_{t+1})]
        &\leq F_{1/\bar{\rho}}(\w_t) +\frac{\eta^2\bar{\rho}M^2}{2}-\frac{\bar{\rho}^2\eta}{2}\|\w_t-\hat{\w}_t\|^2+\frac{\bar{\rho}\eta}{n}\sum_{i\in \S} \bigg[f_i(g_i(\w_t)) - f_i(u_{i,t})\\
        &\quad - \partial f_i(u_{i,t})^\top (g_i(\w_t)-u_{i,t})+\rho_f\|g_i(\w_t)-u_{i,t}\|^2\bigg]\\
        % &= F_{1/\bar{\rho}}(\w_t,\s_t,s'_t) +\frac{3\eta^2\bar{\rho}M^2}{2} -\frac{\bar{\rho}^2\eta}{2}\left(\|\w_t-\hat{\w}_t\|^2+\|s'_t-\hat{s}'_t\|^2+\frac{1}{n_+}\|\s_t-\hat{\s}_t\|^2\right)\\
        % &\quad +\frac{\bar{\rho}\eta}{n_+}\sum_{X_i\in S_+} \bigg[f_i(s'_t,g_i(\w_t,s_{i,t})) - f_i(u_{i,t},s'_t)- \partial_u f_i(u_{i,t},s'_t) \big[g_i(\w_t,s_{i,t})-u_{i,t}\big]\bigg]\\
        % &\leq F_{1/\bar{\rho}}(\w_t,\s_t,s'_t) +\frac{(\eta^2+\eta_1^2+\eta_2^2)\bar{\rho}M^2}{2} -\frac{\bar{\rho}^2\eta}{2n_+}\left(\|\w_t-\hat{\w}_t\|^2+\|s'_t-\hat{s}'_t)\|^2+\|\s_t-\hat{\s}_t\|^2\right)\\
        % &\quad +\frac{\bar{\rho}\eta}{n_+}\sum_{X_i\in S_+} \bigg[f_i(s'_t,g_i(\w_t,s_{i,t})) - f_i(s'_t,u_{i,t})- \partial_u f_i(s'_t,u_{i,t}) \big[g_i(\w_t,s_{i,t})-u_{i,t}\big]\bigg]\\
        &\stackrel{(a)}{\leq} F_{1/\bar{\rho}}(\w_t) +\frac{\eta^2\bar{\rho}M^2}{2} -\frac{\eta}{2}\|\nabla F_{1/\bar{\rho}}(\w_t)\|^2 +\frac{\bar{\rho}\eta}{n}\sum_{i\in \S} \bigg[f_i(g_i(\w_t)) - f_i(u_{i,t})\\
        &\quad- \partial f_i(u_{i,t})^\top (g_i(\w_t)-u_{i,t})+\rho_f\|g_i(\w_t)-u_{i,t}\|^2\bigg]\\
    \end{aligned}
\end{equation*}

where inequality (a) follows from Lemma~\ref{lem:4}.

Using the Lipschitz continuity of $f_i$, we have
\begin{equation*}
    \begin{aligned}
        \E_t[F_{1/\bar{\rho}}(\w_{t+1})]&\leq F_{1/\bar{\rho}}(\w_t) +\frac{\eta^2\bar{\rho}M^2}{2}-\frac{\eta}{2}\|\nabla F_{1/\bar{\rho}}(\w_t)\|^2 +\frac{\bar{\rho}\eta}{n}\sum_{i\in \S}2C_f\|g_i(\w_t)- u_{i,t}\|\\
        &\quad +\frac{\bar{\rho}\eta}{n}\sum_{i\in \S}\rho_f\|g_i(\w_t)-u_{i,t}\|^2
    \end{aligned}
\end{equation*}

By Lemma~\ref{lem:16}, the error bound of the MSVR update gives
\begin{equation*}
    \begin{aligned}
        &\E\bigg[\frac{1}{n}\sum_{i\in \S}\|u_{i,t}-g_i(\w_{t})\|\bigg]
        \leq (1-\mu)^{t} \frac{1}{n}\sum_{i\in \S}\|u_{i,0}-g_i(\w_0)\|+R_1,\\
        &\E\bigg[\frac{1}{n}\sum_{i\in \S}\|u_{i,t}-g_i(\w_{t})\|^2\bigg]
        \leq (1-\mu)^{t} \frac{1}{n}\sum_{i\in \S}\|u_{i,0}-g_i(\w_0)\|^2+R_2,
    \end{aligned}
\end{equation*}
where
\begin{equation*}
    \mu = \frac{B_1\tau}{2n},\quad  R_1=\frac{2 \tau^{1/2}\sigma}{B_2^{1/2}} +\frac{4 nC_gM\eta}{B_1 \tau^{1/2}}, \quad R_2=\frac{4 \tau\sigma^2}{B_2} +\frac{16 n^2C_g^2M^2\eta^2}{B_1^2 \tau}
\end{equation*}
Then
\begin{equation}\label{ineq:42}
    \begin{aligned}
        \E[F_{1/\bar{\rho}}(\w_{t+1})]
        &\leq F_{1/\bar{\rho}}(\w_t) +\frac{\eta^2\bar{\rho}M^2}{2}-\frac{\eta}{2}\E[\|\nabla F_{1/\bar{\rho}}(\w_t)\|^2]\\
        &\quad +2C_f\bar{\rho}\eta\left((1-\mu)^{t}\frac{1}{n}\sum_{i\in \S}\|g_i(\w_0)- u_{i,0}\|+R_1\right)\\
        &\quad +C\rho_f\bar{\rho}\eta\left((1-\mu)^{t}\frac{1}{n}\sum_{i\in \S}\|g_i(\w_0)- u_{i,0}\|^2+R_2\right)
    \end{aligned}
\end{equation}
Taking summation from $t=0$ to $T-1$ yields
\begin{equation}
    \begin{aligned}
        &\E[F_{1/\bar{\rho}}(\w_T)]\\
        &\leq F_{1/\bar{\rho}}(\w_0) +\frac{\eta^2\bar{\rho}M^2T}{2}-\frac{\eta}{2}\sum_{t=0}^{T-1}\E[\|\nabla F_{1/\bar{\rho}}(\w_t)\|^2]\\
        &\quad +2C_f\bar{\rho}\eta\left(\sum_{t=0}^{T-1}(1-\mu)^{t}\frac{1}{n}\sum_{i\in \S}\|g_i(\w_0)- u_{i,0}\|+R_1T\right)\\
        &\quad +C\rho_f\bar{\rho}\eta\left((1-\mu)^{t}\frac{1}{n}\sum_{i\in \S}\|g_i(\w_0)- u_{i,0}\|^2+R_2T\right)\\
        &\stackrel{(a)}{\leq} F_{1/\bar{\rho}}(\w_0) +\frac{\eta^2\bar{\rho}M^2T}{2}-\frac{\eta}{2}\sum_{t=0}^{T-1}\E[\|\nabla F_{1/\bar{\rho}}(\w_t)\|^2]\\
        &\quad +\frac{2C_f\bar{\rho}\eta}{ n\mu}\sum_{i\in \S}\|g_i(\w_0)- u_{i,0}\|+2C_f\bar{\rho}\eta R_1T+\frac{\rho_f\bar{\rho}\eta}{ n\mu}\sum_{i\in \S}\|g_i(\w_0)- u_{i,0}\|^2+2\rho_f\bar{\rho}\eta R_2T,
    \end{aligned}
\end{equation}
where (a) uses $\sum_{t=0}^{T-1}(1-\mu)^t\leq \frac{1}{\mu}$.

Lower bounding the left-hand-side by $\min_{\w}F(\w)$, we obtain
\begin{equation*}
    \begin{aligned}
        &\frac{1}{T}\sum_{t=0}^{T-1}\E[\|\nabla F_{1/\bar{\rho}}(\w_t)\|^2]\\
        &\leq \frac{2}{\eta T}\bigg[F_{1/\bar{\rho}}(\w_0) -\min_{\w}F(\w)+\frac{\eta^2\bar{\rho}M^2T}{2} +\frac{2C_f\bar{\rho}\eta}{n}\sum_{i\in \S}\|g_i(\w_0)- u_{i,0}\|+2C_f\bar{\rho}\eta R_1T\\
        &\quad +\frac{\rho_f\bar{\rho}\eta}{n}\sum_{i\in \S}\|g_i(\w_0)- u_{i,0}\|^2+\rho_f\bar{\rho}\eta R_2T\bigg]\\
        &\leq \frac{2\Delta}{\eta T}+\eta\bar{\rho}M^2+\frac{4C_f\bar{\rho}}{\mu Tn}\sum_{i\in \S}\|g_i(\w_0)- u_{i,0}\|+4C_f\bar{\rho} R_1+\frac{2\rho_f\bar{\rho}}{\mu Tn}\sum_{i\in \S}\|g_i(\w_0)- u_{i,0}\|^2+2\rho_f\bar{\rho} R_2\\
        &\leq \frac{C}{T}(\frac{1}{\eta}+\frac{1}{\mu })+C(\eta+R_1+R_2)
    \end{aligned}
\end{equation*}
where we assume $F_{1/\bar{\rho}}(\w_0,\s_0,s'_0) -\min_{\w,\s,s'}F(\w,\s,s')\leq \Delta$ and 
\begin{equation*}
    C=\max\{8\Delta, 12\bar{\rho}M^2, \frac{16C_f\bar{\rho}}{n}\sum_{i\in \S}\|g_i(\w_0)- u_{i,0}\|,\frac{8\rho_f\bar{\rho}}{n}\sum_{i\in \S}\|g_i(\w_0)- u_{i,0}\|^2, 16C_f\bar{\rho},8\rho_f\bar{\rho} \}.
\end{equation*}

Thus
\begin{equation*}
    \begin{aligned}
        &\frac{1}{T}\sum_{t=0}^{T-1}\E[\|\nabla F_{1/\bar{\rho}}(\w_t)\|^2]\\
        &\leq \frac{C}{T}(\frac{1}{\eta}+\frac{2n}{B_1\tau})+C(\eta+\frac{2 \tau^{1/2}\sigma}{B_2^{1/2}} +\frac{4 nC_gM\eta}{B_1 \tau^{1/2}}+\frac{4 \tau\sigma^2}{B_2} +\frac{16 n^2C_g^2M^2\eta^2}{B_1^2 \tau})\\
        &=\O\left(\frac{1}{T}(\frac{1}{\eta}+\frac{n}{B_1\tau})+(\eta+\frac{\tau^{1/2}\sigma}{B_2^{1/2}} +\frac{n\eta}{B_1 \tau^{1/2}}+\frac{\tau\sigma^2}{B_2} +\frac{n^2\eta^2}{B_1^2 \tau})\right)
    \end{aligned}
\end{equation*}
Setting
\begin{equation*}
    \tau = \O(B_2\epsilon^4),\quad \eta=\O\left(\frac{B_1B_2^{1/2}\epsilon^4}{n}\right)
\end{equation*}
To reach an $\epsilon$-stationary point, we need
\begin{equation*}
    T =\O\left(\frac{n}{B_1B_2^{1/2}\epsilon^6}\right)
\end{equation*}
\end{proof}

\subsection{Proof of Theorem~\ref{thm:3if}}
A formal statement in given below. 
\begin{theorem}\label{thm:3}
    Under Assumption~\ref{ass:2}, with $\gamma_1 = \frac{n_1n_2-B_1B_2}{B_1B_2(1-\tau_1)}+(1-\tau_1)$, $\gamma_2 = \frac{n_1-B_1}{B_1(1-\tau_2)}+(1-\tau_2)$,  $\tau_1 = \O\left(\min\{B_3,\frac{B_1^{1/2}n_2^{1/2}}{n_1^{1/2}}\}\epsilon^4\right)\leq \frac{1}{2}$, $\tau_2 = \O(B_2\epsilon^4)\leq \frac{1}{2}$, $\eta = \O\left(\min\left\{B_3^{1/2},\frac{B_1^{1/4}n_2^{1/4}}{n_1^{1/4}},\frac{B_1^{1/2}n_2^{1/2}}{n_1^{1/2}}\right\}\frac{B_1B_2}{n_1n_2}\epsilon^4\right)$, and $\bar{\rho} = \rho_F+4\rho_fC_g^2+2\rho_g C_fC_h^2+ C_fC_g L_h$, Algorithm~\ref{alg:3} converges to an $\epsilon$-stationary point of the Moreau envelope $F_{1/\bar{\rho}}$ in $T=\O\left(\max\left\{\frac{1}{B_3^{1/2}},\frac{n_1^{1/4}}{B_1^{1/4}n_2^{1/4}},\frac{n_1^{1/2}}{B_1^{1/2}n_2^{1/2}}\right\}\frac{n_1n_2}{B_1B_2}\epsilon^{-6}\right)$ iterations.
\end{theorem}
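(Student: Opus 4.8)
The plan is to mirror the proof of Theorem~\ref{thm:1}, the new ingredient being that two nested variance-reduced sequences must be controlled simultaneously and their errors are coupled. Write $G_i(\w):=\frac1{n_2}\sum_{j\in\S_2}g_i(h_{i,j}(\w))$, so that $F(\w)=\frac1{n_1}\sum_{i\in\S_1}f_i(G_i(\w))$; then $u_{i,t}$ is designed to track $G_i(\w_t)$ and $v_{i,j,t}$ to track $h_{i,j}(\w_t)$. First I would record the uniform bound $\E_t[\|G_t\|^2]\le M^2$ with $M^2=\O(C_f^2C_g^2C_h^2)$, which follows from $0\le\partial f_i\le C_f$, $\|\partial g_i\|\le C_g$, $\|\nabla h_{i,j}\|\le C_h$ and the bounded-variance assumption, giving $\|\w_{t+1}-\w_t\|\le\eta M$.

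Second, I would establish two estimator-error bounds. The $v$-update is exactly an MSVR update over the product block set $\S_1\times\S_2$ with block batch $B_1B_2$ and inner sample batch $B_3$, so Lemma~\ref{lem:16} and its squared-norm counterpart, applied with $n\leftarrow n_1n_2$, $B_1\leftarrow B_1B_2$, $B_2\leftarrow B_3$, $C_g\leftarrow C_h$ — and using non-expansiveness of $\Pi_{\tilde{C}_h}$ together with $\|h_{i,j}(\w)\|\le\tilde{C}_h$ — yield a contraction factor $1-\frac{B_1B_2\tau_1}{2n_1n_2}$ plus error terms $\O(\frac{\tau_1^{1/2}\sigma}{B_3^{1/2}}+\frac{n_1n_2C_hM\eta}{B_1B_2\tau_1^{1/2}})$. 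For $u$, the delicate point is that the quantity averaged inside its update, $g_i(v_{i,j,t})$, is a \emph{biased} surrogate for $g_i(h_{i,j}(\w_t))$; I would run the MSVR recursion of \cite{jiang2022multiblocksingleprobe} with moving target $G_i(\w_t)$ (per-step change $\le C_gC_h\eta M$) and absorb two extra perturbations into its noise term: the $B_2$-sampling variance, controlled by the last line of Assumption~\ref{ass:2}, and the bias $\frac1{n_2}\sum_j\|g_i(v_{i,j,t})-g_i(h_{i,j}(\w_t))\|\le C_g\cdot\frac1{n_2}\sum_j\|v_{i,j,t}-h_{i,j}(\w_t)\|$, which is the $v$-error just bounded. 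This gives a contraction $1-\frac{B_1\tau_2}{2n_1}$ plus $\O(\frac{\tau_2^{1/2}\sigma}{B_2^{1/2}}+\frac{n_1C_gC_hM\eta}{B_1\tau_2^{1/2}}+(v\text{-error terms}))$, and an analogous mean-square bound.

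Third, I would reproduce the Moreau-envelope descent step exactly as in the proof of Theorem~\ref{thm:1}, reducing the task to upper bounding $\langle\hat{\w}_t-\w_t,\E_t[G_t]\rangle$ where, after conditional expectation over the sampling, $\E_t[G_t]=\frac1{n_1}\sum_i(\frac1{n_2}\sum_j\nabla h_{i,j}(\w_t)\partial g_i(v_{i,j,t}))\partial f_i(u_{i,t})$. The core new step is a three-layer peeling: from $\rho_f$-weak convexity of $f_i$ at $u_{i,t}$ with displacement to $G_i(\hat{\w}_t)$, split $G_i(\hat{\w}_t)-u_{i,t}=(G_i(\hat{\w}_t)-G_i(\w_t))+(G_i(\w_t)-u_{i,t})$; on the first piece use $\partial f_i(u_{i,t})\ge 0$ (non-decreasing $f_i$) together with $\rho_g$-weak convexity of $g_i$ at $v_{i,j,t}$, splitting $h_{i,j}(\hat{\w}_t)-v_{i,j,t}$ once more through $h_{i,j}(\w_t)$; on the innermost piece use, in the first regime, $\partial g_i(v_{i,j,t})\ge 0$ (non-decreasing $g_i$) and $L_h$-weak convexity of $h_{i,j}$, or, in the second regime, $L_h$-smoothness of $h_{i,j}$ (which removes the need for a sign on $\partial g_i$). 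Each split produces either a term reassembling into $\langle\hat{\w}_t-\w_t,\nabla h_{i,j}(\w_t)\partial g_i(v_{i,j,t})\partial f_i(u_{i,t})\rangle$, or a $\|\hat{\w}_t-\w_t\|^2$ term whose coefficient collects $\sqrt{d_1},\sqrt{d_2},L_h,C_g,C_h,\rho_g,\rho_f$ (the constant $\rho_F$ of Proposition~\ref{prop:2} plus lower-order terms), or an error term linear or quadratic in $\|u_{i,t}-G_i(\w_t)\|$ and $\frac1{n_2}\sum_j\|v_{i,j,t}-h_{i,j}(\w_t)\|$, controlled by $C_f,C_g$-Lipschitzness. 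Averaging over $i$, using $F(\hat{\w}_t)-F(\w_t)\le(\frac{\rho_F}{2}-\bar\rho)\|\w_t-\hat{\w}_t\|^2$ from $(\bar\rho-\rho_F)$-strong convexity of $\w\mapsto F(\w)+\frac{\bar\rho}2\|\w_t-\w\|^2$, and choosing $\bar\rho$ as in the statement so the $\|\hat{\w}_t-\w_t\|^2=\bar\rho^{-2}\|\nabla F_{1/\bar\rho}(\w_t)\|^2$ terms net to $-\frac12\|\nabla F_{1/\bar\rho}(\w_t)\|^2$, yields $\E[F_{1/\bar\rho}(\w_{t+1})]\le F_{1/\bar\rho}(\w_t)+\frac{\eta^2\bar\rho M^2}{2}-\frac\eta2\E\|\nabla F_{1/\bar\rho}(\w_t)\|^2+\bar\rho\eta\cdot(\text{MSVR error bounds})$.

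Finally, telescoping from $0$ to $T-1$, lower-bounding by $\min_\w F(\w)$, dividing by $\eta T$, and substituting the two error bounds (with $\sum_t(1-\mu)^t\le\mu^{-1}$) gives $\frac1T\sum_t\E\|\nabla F_{1/\bar\rho}(\w_t)\|^2=\O(\frac1T(\frac1\eta+\frac{n_1}{B_1\tau_2}+\frac{n_1n_2}{B_1B_2\tau_1})+\eta+\frac{\tau_2^{1/2}\sigma}{B_2^{1/2}}+\frac{\tau_1^{1/2}\sigma}{B_3^{1/2}}+\frac{n_1\eta}{B_1\tau_2^{1/2}}+\frac{n_1n_2\eta}{B_1B_2\tau_1^{1/2}}+\cdots)$, and optimizing $\tau_1,\tau_2,\eta$ as in the statement produces the claimed $T=\O(\max\{B_3^{-1/2},(n_1/(B_1n_2))^{1/4},(n_1/(B_1n_2))^{1/2}\}\frac{n_1n_2}{B_1B_2}\epsilon^{-6})$. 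I expect the main obstacle to be the $u$-error recursion: because $u$ is block-sampled over $\S_1$ while its increment carries the $v$-bias, which is itself block-sampled over $\S_1\times\S_2$, the two contractions and their STORM-type correction terms interact, and reconciling them is what forces the $\min\{B_3^{1/2},(B_1n_2/n_1)^{1/4},(B_1n_2/n_1)^{1/2}\}$ cap on $\eta$ and produces the worst-case $(n_1/B_1)^{3/2}$ dependence; the peeling argument itself is essentially mechanical once the monotonicity and smoothness sign conditions of Assumption~\ref{ass:2} are in place.
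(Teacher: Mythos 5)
Your proposal follows essentially the same route as the paper's proof: the same Moreau-envelope descent inequality, the same three-layer peeling that uses $0\le \partial f_i\le C_f$, the weak convexity of $g_i$ at $v_{i,j,t}$, and either $L_h$-smoothness or $L_h$-weak-convexity of $h_{i,j}$ depending on the monotonicity regime, the same nested MSVR error lemmas (the $v$-estimator being an MSVR update over the product block set $\S_1\times\S_2$), the same strong-convexity step for $F(\hat\w_t)-F(\w_t)$, and the same telescoping and parameter tuning. The only organizational difference is that the paper lets $u_{i,t}$ track the moving target $\frac{1}{n_2}\sum_j g_i(v_{i,j,t})$ — so only the increment $\|v_{i,j,t+1}-v_{i,j,t}\|$ enters the $u$-recursion, and the $v$-error is accounted for separately inside the peeling — whereas you track $G_i(\w_t)$ and fold the $v$-error into the $u$-recursion as a bias; these are equivalent by the triangle inequality provided the bias is handled as a systematic offset of the target (contributing $\O(\text{bias})$ to the stationary error) rather than as a generic additive perturbation amplified by the inverse contraction rate $n_1/(B_1\tau_2)$, which would degrade the final rate.
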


We first define constant $M^2\geq \max\{\frac{3C_f^2C_g^2\sigma^2}{B_3}+\frac{3C_f^2C_g^2C_h^2}{B_2}+\frac{3C_f^2C_g^2C_h^2}{B_1},\tilde{C}_h^2+\sigma^2\}$ so that $\E_t[\|G_t\|^2]\leq M^2$ and $\|v_{i,j,t}\|^2\leq M^2$ for all $i\in \S_1,j\in \S_2$ and $t$. Then to prove Theorem~\ref{thm:3}, we need the following Lemmas.

\begin{lemma}\label{lem:13}
Consider MSVR update for $v$. Assume $h_{i,j}(\w;\xi)$ is $C_h$-Lipshitz for all $(i,j)\in S_1\times S_2$ , and $\E[\|G_t\|^2]\leq M^2$. With $\gamma_1 = \frac{n_1n_2-B_1B_2}{B_1B_2(1-\tau_1)}+(1-\tau_1)$, and $\tau_1\leq \frac{1}{2}$, we have
\begin{equation*}
    \begin{aligned}
        &\E\bigg[\frac{1}{n_1}\sum_{i\in S_1}\frac{1}{n_2}\sum_{j\in S_2}\|v_{i,j,t+1}-h_{i,j}(\w_{t+1})\|\bigg]\\
        &\leq (1-\frac{B_1B_2\tau_1}{2n_1n_2})^{t+1} \frac{1}{n_1}\sum_{i\in S_1}\frac{1}{n_2}\sum_{j\in S_2}\|v_{i,j,0}-h_{i,j}(\w_0)\|+ \frac{2\tau_1^{1/2}\sigma}{B_3^{1/2}} +\frac{4 n_1n_2C_h M\eta}{B_1B_2 \tau_1^{1/2}}\\
        &\E\bigg[\frac{1}{n_1}\sum_{i\in S_1}\frac{1}{n_2}\sum_{j\in S_2}\|v_{i,j,t+1}-h_{i,j}(\w_{t+1})\|^2\bigg]\\
        &\leq (1-\frac{B_1B_2\tau_1}{2n_1n_2})^{2(t+1)} \frac{1}{n_1}\sum_{i\in S_1}\frac{1}{n_2}\sum_{j\in S_2}\|v_{i,j,0}-h_{i,j}(\w_0)\|^2+ \frac{4\tau_1 \sigma^2}{B_3} +\frac{16 n_1^2n_2^2C_h^2M^2\eta^2}{B_1^2B_2^2 \tau_1}\\
    \end{aligned}
\end{equation*}
\end{lemma}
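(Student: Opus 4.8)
\textbf{Proof proposal for Lemma~\ref{lem:13}.}

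The plan is to instantiate the general MSVR error bound (Lemma~1 of~\cite{jiang2022multiblocksingleprobe}) for the specific block structure of the variable $v$, where the ``blocks'' are indexed by pairs $(i,j)\in\S_1\times\S_2$, the total number of blocks is $n_1n_2$, and the batch sampled at each step has size $B_1B_2$ (namely $\B_1^t\times\B_2^t$). First I would identify the relevant quantities: the per-step drift of the target $h_{i,j}(\w_t)$ is controlled by $\|h_{i,j}(\w_{t})-h_{i,j}(\w_{t-1})\|\le C_h\|\w_t-\w_{t-1}\|=C_h\eta\|G_{t-1}\|$, so in expectation $\E\|\w_t-\w_{t-1}\|^2\le\eta^2M^2$; the variance of the stochastic estimator $h_{i,j}(\w;\B_{3,i,j}^t)$ is $\sigma^2/B_3$ (bounded-variance assumption plus mini-batch averaging over $\B_{3,i,j}^t$ of size $B_3$); and the projection onto $\{\|\cdot\|\le\tilde C_h\}$ is nonexpansive and does not increase the distance to $h_{i,j}(\w_{t+1})$ since $\|h_{i,j}(\w_{t+1})\|\le\tilde C_h$ by assumption, so the projection step can only help. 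With the MSVR parameter choice $\gamma_1=\frac{n_1n_2-B_1B_2}{B_1B_2(1-\tau_1)}+(1-\tau_1)$ and $\tau_1\le\tfrac12$, Lemma~1 of~\cite{jiang2022multiblocksingleprobe} gives a squared-norm recursion of the form
\begin{equation*}
\E\Big[\tfrac{1}{n_1n_2}\sum_{i,j}\|v_{i,j,t+1}-h_{i,j}(\w_{t+1})\|^2\Big]\le\Big(1-\tfrac{B_1B_2\tau_1}{n_1n_2}\Big)\E\Big[\tfrac{1}{n_1n_2}\sum_{i,j}\|v_{i,j,t}-h_{i,j}(\w_t)\|^2\Big]+c_1\tau_1^2\tfrac{\sigma^2}{B_3}+c_2\tfrac{n_1^2n_2^2}{B_1^2B_2^2}C_h^2\eta^2M^2,
\end{equation*}
for absolute constants $c_1,c_2$; this is exactly the shape already used for Lemma~\ref{lem:16} in the two-level case, just with $n\mapsto n_1n_2$, $B_1\mapsto B_1B_2$, $B_2\mapsto B_3$.

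Next I would unroll this geometric recursion. Summing the geometric series $\sum_{s\ge0}(1-\tfrac{B_1B_2\tau_1}{n_1n_2})^s\le\tfrac{n_1n_2}{B_1B_2\tau_1}$ turns the additive error $c_1\tau_1^2\sigma^2/B_3$ into $O(\tau_1\sigma^2/B_3)$ and the term $c_2\tfrac{n_1^2n_2^2}{B_1^2B_2^2}C_h^2\eta^2M^2$ into $O\big(\tfrac{n_1^3n_2^3}{B_1^3B_2^3\tau_1}C_h^2\eta^2M^2\big)$; taking square roots (and using $\sqrt{a+b+c}\le\sqrt a+\sqrt b+\sqrt c$, or rather bounding $\E\|\cdot\|\le(\E\|\cdot\|^2)^{1/2}$ by Jensen) produces the $\ell_1$-type bound with the contraction factor $(1-\tfrac{B_1B_2\tau_1}{2n_1n_2})^{t+1}$ and residual terms $\tfrac{2\tau_1^{1/2}\sigma}{B_3^{1/2}}+\tfrac{4n_1n_2C_hM\eta}{B_1B_2\tau_1^{1/2}}$ — here the halving of the rate ($B_1B_2\tau_1/n_1n_2\to B_1B_2\tau_1/2n_1n_2$) is the standard slack absorbed when passing from the squared recursion to the $\sqrt{\cdot}$ bound, and is the reason the squared-norm statement carries the exponent $2(t+1)$. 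The constants in the lemma statement should match after tracking them carefully, but the proof only needs the orders, so I would not grind through the exact constants.

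The main obstacle, and the only genuinely non-mechanical point, is verifying that the hypotheses of Lemma~1 of~\cite{jiang2022multiblocksingleprobe} are met \emph{despite} the extra Euclidean projection $\Pi_{\tilde C_h}[\cdot]$ in the update of $v_{i,j,t+1}$ — the original MSVR lemma is stated for the unprojected STORM-like update. The resolution is that since $\|h_{i,j}(\w_{t+1})\|\le\tilde C_h$, the true target lies in the feasible ball, so $\|\Pi_{\tilde C_h}[z]-h_{i,j}(\w_{t+1})\|\le\|z-h_{i,j}(\w_{t+1})\|$ for any $z$; hence the projection can be inserted into the argument at the step where the post-update error is related to the pre-update quantity without weakening any inequality. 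A secondary point is confirming that the per-coordinate-block sampling probability is indeed $B_1B_2/(n_1n_2)$ (uniform sampling of $\B_1^t$ of size $B_1$ and $\B_2^t$ of size $B_2$ independently makes each pair $(i,j)$ appear with probability exactly $\tfrac{B_1}{n_1}\tfrac{B_2}{n_2}$), so that the block-sampling bias-correction parameter $\gamma_1$ has precisely the form quoted. Once these two checks are in place, the rest is a direct transcription of the Lemma~\ref{lem:16} argument with the substitutions noted above.
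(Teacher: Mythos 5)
Your proposal matches the paper's proof essentially step for step: the paper likewise instantiates the MSVR recursion of Lemma~1 in \cite{jiang2022multiblocksingleprobe} with blocks indexed by $(i,j)$ (so $n\mapsto n_1n_2$, block batch $B_1B_2$, inner batch $B_3$), bounds the drift via $C_h$-Lipschitzness and $\E[\|G_t\|^2]\le M^2$, unrolls the geometric recursion using $(1-x)\le(1-x/2)^2$ and $\sum_s(1-x/2)^{2s}\le 2/x$, and passes to the norm bound via $\E\|\cdot\|\le(\E\|\cdot\|^2)^{1/2}$ together with $a^2+b^2+c^2\le(a+b+c)^2$. Your explicit check that the projection $\Pi_{\tilde{C}_h}$ is harmless (nonexpansiveness plus $\|h_{i,j}(\w_{t+1})\|\le\tilde{C}_h$) is a detail the paper's own proof silently omits, and it is resolved exactly as you describe.
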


\begin{lemma}\label{lem:14}
Consider MSVR update for $u$. Assume $g_i(\cdot)$ is $C_g$-Lipshitz for all $i\in S_1$. With $\gamma_2 = \frac{n_+-B_1}{B_1(1-\tau_2)}+(1-\tau_2)$ and $\tau_2\leq \frac{1}{2}$, we have
\begin{equation*}
    \begin{aligned}
        &\E\left[\frac{1}{n_1}\sum_{i\in S_1}\|u_{i,t+1}-\frac{1}{n_2}\sum_{j\in S_2}g_i(v_{i,j,t+1})\|\right]\\
        &\leq (1-\frac{B_1\tau_2}{2n_1})^{t+1} \frac{1}{n_1}\sum_{i\in S_1}\|u_{i,0}-\frac{1}{n_2}\sum_{j\in S_2}g_i(v_{i,j,0})\|+ \frac{2\tau_2^{1/2}\sigma}{B_2^{1/2}}+\frac{C_2n_1^{1/2}B_2^{1/2}\tau_1}{B_1^{1/2}n_2^{1/2}\tau_2^{1/2}}+\frac{C_2 n_1^{3/2}n_2^{1/2} \eta }{B_1^{3/2}B_2^{1/2} \tau_2^{1/2}}
    \end{aligned}
\end{equation*}
where $C_2$ is a constant defined in the proof.
\end{lemma}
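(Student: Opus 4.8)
The plan is to read step~5 of Algorithm~\ref{alg:3} as an MSVR update (in the sense of~\cite{jiang2022multiblocksingleprobe}) applied to the $n_1$-block variable $\{u_{i,t}\}$ with block mini-batch $\B_1^t$, the only twist being that block $i$ now tracks the \emph{time-varying} target $\bar G_{i,t}:=\frac1{n_2}\sum_{j\in\S_2}g_i(v_{i,j,t})$, whose per-step estimator (over the draw of $\B_2^t$) is $\frac1{B_2}\sum_{j\in\B_2^t}g_i(v_{i,j,t})$. With the prescribed $\gamma_2=\frac{n_1-B_1}{B_1(1-\tau_2)}+(1-\tau_2)$ and $\tau_2\le\frac12$, the same argument that yields the squared-norm version of Lemma~\ref{lem:16} gives, for absolute constants $c_1,c_2$,
\begin{equation*}
\E\Big[\tfrac1{n_1}\sum_{i\in\S_1}\|u_{i,t+1}-\bar G_{i,t+1}\|^2\Big]\le\Big(1-\tfrac{B_1\tau_2}{n_1}\Big)\E\Big[\tfrac1{n_1}\sum_{i\in\S_1}\|u_{i,t}-\bar G_{i,t}\|^2\Big]+\tfrac{c_1B_1\tau_2^2\sigma^2}{n_1B_2}+\tfrac{c_2n_1}{B_1\tau_2}\E\Big[\tfrac1{n_1}\sum_{i\in\S_1}\|\bar G_{i,t+1}-\bar G_{i,t}\|^2\Big],
\end{equation*}
where the variance term uses the bound $\E_{\B_2^t}\|\frac1{B_2}\sum_{j\in\B_2^t}g_i(v_{i,j,t})-\bar G_{i,t}\|^2\le\sigma^2/B_2$ guaranteed by Assumption~\ref{ass:2}.

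The crux is bounding the target-drift term $\E[\frac1{n_1}\sum_i\|\bar G_{i,t+1}-\bar G_{i,t}\|^2]$, which is new relative to the single-level FCCO analysis because $\bar G_{i,t}$ evolves through the second-level MSVR iterates. By $C_g$-Lipschitz continuity of $g_i$ and Jensen's inequality, $\|\bar G_{i,t+1}-\bar G_{i,t}\|^2\le\frac{C_g^2}{n_2}\sum_{j\in\S_2}\|v_{i,j,t+1}-v_{i,j,t}\|^2$, and $v_{i,j,t+1}\ne v_{i,j,t}$ only when $(i,j)\in\B_1^t\times\B_2^t$, an event of probability $\frac{B_1B_2}{n_1n_2}$. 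On that event, non-expansiveness of $\Pi_{\tilde C_h}[\cdot]$ and the explicit form of the $v$-update give $\|v_{i,j,t+1}-v_{i,j,t}\|\le\tau_1\|h_{i,j}(\w_t;\B_{3,i,j}^t)-v_{i,j,t}\|+\gamma_1C_h\eta\|G_{t-1}\|$; squaring, taking expectations, and splitting $h_{i,j}(\w_t;\B_{3,i,j}^t)-v_{i,j,t}$ into mini-batch noise (variance $\le\sigma^2/B_3$) plus the $v$-estimation error, I reduce the drift to $O(\tau_1^2)\big(\tfrac{\sigma^2}{B_3}+\E[\tfrac1{n_1n_2}\sum_{i,j}\|h_{i,j}(\w_t)-v_{i,j,t}\|^2]\big)+O(\gamma_1^2C_h^2)\,\eta^2M^2$ times the refresh probability $\tfrac{B_1B_2}{n_1n_2}$. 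Feeding in Lemma~\ref{lem:13} for the uniform-in-$t$ bound on the $v$-error (its geometrically decaying initialization piece is simply upper bounded by its value at $t=0$), using $\gamma_1=\Theta(n_1n_2/(B_1B_2))$, and noting that the $\tau_1^2\cdot\tfrac{n_1^2n_2^2\eta^2}{B_1^2B_2^2\tau_1}$ contribution is dominated by the $\gamma_1^2$ term, I obtain a uniform drift bound $\E[\frac1{n_1}\sum_i\|\bar G_{i,t+1}-\bar G_{i,t}\|^2]\lesssim\tfrac{B_1B_2\tau_1^2\sigma^2}{n_1n_2B_3}+\tfrac{n_1n_2C_h^2M^2\eta^2}{B_1B_2}$.

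It then remains to substitute this into the recursion, unroll from $0$ to $t$ using $\sum_{s=0}^{t}(1-\tfrac{B_1\tau_2}{n_1})^s\le\tfrac{n_1}{B_1\tau_2}$, and convert the squared-norm estimate to the claimed $\ell_1$-type bound via $\sqrt{1-\mu}\le1-\mu/2$ (which turns the contraction into $(1-\tfrac{B_1\tau_2}{2n_1})^{t+1}$), $\sqrt{a+b}\le\sqrt a+\sqrt b$, and $\frac1{n_1}\sum_i\|\cdot\|\le(\frac1{n_1}\sum_i\|\cdot\|^2)^{1/2}$; collecting all constants (including $C_g,C_h,M$ and the absorbed $v$-initialization error) into a single $C_2$ reproduces exactly the residuals $\tfrac{2\tau_2^{1/2}\sigma}{B_2^{1/2}}$, $\tfrac{C_2n_1^{1/2}B_2^{1/2}\tau_1}{B_1^{1/2}n_2^{1/2}\tau_2^{1/2}}$, and $\tfrac{C_2n_1^{3/2}n_2^{1/2}\eta}{B_1^{3/2}B_2^{1/2}\tau_2^{1/2}}$. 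The main obstacle is precisely the middle paragraph: the first-level target drift is driven by the second-level increments, whose MSVR update carries the large correction coefficient $\gamma_1\sim n_1n_2/(B_1B_2)$, so one must carefully pair this coefficient with the small refresh probability $\sim B_1B_2/(n_1n_2)$ and simultaneously propagate Lemma~\ref{lem:13}—whose own $\eta^2/\tau_1$ term must survive multiplication by the recursion's $n_1/(B_1\tau_2)$ transfer factor without degrading the final rate—so that all the exponents of $n_1,n_2,B_1,B_2,B_3,\tau_1,\tau_2,\eta$ come out as stated.
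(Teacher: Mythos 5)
Your overall strategy is the same as the paper's: write step~5 as an MSVR update whose block-$i$ target is the time-varying quantity $\bar G_{i,t}=\frac{1}{n_2}\sum_{j\in\S_2}g_i(v_{i,j,t})$, invoke the MSVR one-step recursion, bound the target drift through the $v$-update (pairing the refresh probability $\tfrac{B_1B_2}{n_1n_2}$ against $\gamma_1\le\tfrac{2n_1n_2}{B_1B_2}$), unroll, and pass from the squared-norm bound to the norm bound via $\sqrt{a+b}\le\sqrt a+\sqrt b$. Your treatment of the $\tau_1$-part of the drift is more refined than the paper's (the paper simply bounds $\|v_{i,j,t}-h_{i,j}(\w_t;\B_{3,i,j}^t)\|$ by $O(M)$ using the projection $\Pi_{\tilde C_h}$ and the boundedness of $h_{i,j}$, whereas you split it into mini-batch noise plus the $v$-estimation error and feed in Lemma~\ref{lem:13}); both routes produce an $O(\tau_1^2)\cdot\tfrac{B_1B_2}{n_1n_2}$ contribution, so this difference is harmless.

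There is, however, a genuine error in your key recursion. You write the drift coefficient as $\tfrac{c_2 n_1}{B_1\tau_2}$, but "the same argument that yields the squared-norm version of Lemma~\ref{lem:16}" gives the coefficient $\tfrac{8 n_1 C_g^2}{B_1}$ \emph{without} the $\tau_2^{-1}$ factor; the extra $\tau_2^{-1}$ is characteristic of the plain moving-average recursion (compare the proof of Lemma~\ref{lem:19}), and removing it is precisely what the MSVR correction term $\gamma_2$ buys. With your coefficient, unrolling multiplies the drift by $\tfrac{n_1}{B_1\tau_2}\cdot\tfrac{n_1}{B_1\tau_2}=\tfrac{n_1^2}{B_1^2\tau_2^2}$ rather than $\tfrac{n_1^2}{B_1^2\tau_2}$, and after the square root the residuals come out as $\tfrac{n_1^{1/2}B_2^{1/2}\tau_1}{B_1^{1/2}n_2^{1/2}\tau_2}$ and $\tfrac{n_1^{3/2}n_2^{1/2}\eta}{B_1^{3/2}B_2^{1/2}\tau_2}$, i.e.\ worse by a factor $\tau_2^{-1/2}$ than the $\tau_2^{-1/2}$-denominators claimed in the lemma. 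So the proof as written establishes only a strictly weaker bound, and the claim that your exponents "come out as stated" does not hold; since $\tau_2=\O(B_2\epsilon^4)$ in Theorem~\ref{thm:3}, this loss would also degrade the final complexity. The fix is purely local — use the correct MSVR recursion with drift coefficient $\tfrac{c_2 n_1 C_g^2}{B_1}$ — after which the rest of your argument goes through.
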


\begin{proof}[Proof of Theorem~\ref{thm:3}]
Consider the change in the Moreau envelope:
\begin{equation}\label{ineq:32}
    \begin{aligned}
        \E_t[F_{1/\bar{\rho}}(\w_{t+1})]
        &=\E_t\left[\min_{\tilde{\w}}F(\tilde{\w})+\frac{\bar{\rho}}{2}\|\tilde{\w}-\w_{t+1}\|^2\right]\\
        &\leq \E_t\left[F(\hat{\w}_t)+\frac{\bar{\rho}}{2}\|\hat{\w}_t-\w_{t+1}\|^2\right]\\
        &=F(\hat{\w}_t)+\E_t\bigg[\frac{\bar{\rho}}{2}\|\hat{\w}_t-(\w_t-\eta G_t)\|^2\bigg]\\
        &\leq F(\hat{\w}_t)+\frac{\bar{\rho}}{2}\left(\|\hat{\w}_t-\w_t\|^2\right) +\bar{\rho}\E_t[\eta\langle\hat{\w}_t-\w_t, G_t\rangle]+\frac{\eta^2\bar{\rho}M^2}{2}\\
        &= F_{1/\bar{\rho}}(\w_t) +\bar{\rho}\E_t[\eta\langle\hat{\w}_t-\w_t, G_t\rangle]+\frac{\eta^2\bar{\rho}M^2}{2}\\
    \end{aligned}
\end{equation}

Note that
\begin{equation*}
\begin{aligned}
    &\E_t[G_t]=  \frac{1}{n_1}\sum_{i=1}^{n_1} \bigg[\frac{1}{n_2}\sum_{j=1}^{n_2} \nabla h_{i,j}(\w_t)\partial g_i(v_{i,j,t})\bigg]\partial f_i\left(u_{i,t}\right) ,
\end{aligned}
\end{equation*}
and the second inequality uses the bound of $\E[\|G_t\|^2]$, which follows from the Lipschitz continuity and bounded variance assumptions and is denoted by $M$.

Define $\hat{\w}_t:=\prox_{F/\bar{\rho}}(\w_t)$. For a given $i\in\{1,\dots,m\}$, we have
\begin{equation}\label{ineq:30}
\begin{aligned}
&\frac{1}{n_1}\sum_{i\in S_1}f_i(\frac{1}{n_2}\sum_{j\in S_2}g_i(h_{i,j}(\hat{\w}_t)))-\frac{1}{n_1}\sum_{i\in S_1}f_i(u_{i,t})\\
&\stackrel{(a)}{\geq}  \frac{1}{n_1}\sum_{i\in S_1}\partial f_i(u_{i,t})^\top (\frac{1}{n_2}\sum_{j\in S_2}g_i(h_{i,j}(\hat{\w}_t))-u_{i,t})-\frac{1}{n_1}\sum_{i\in S_1}\frac{\rho_f}{2}\|\frac{1}{n_2}\sum_{j\in S_2}g_i(h_{i,j}(\hat{\w}_t))-u_{i,t}\|^2\\
&\geq \frac{1}{n_1}\sum_{i\in S_1}\partial f_i(u_{i,t})^\top (\frac{1}{n_2}\sum_{j\in S_2}g_i(h_{i,j}(\hat{\w}_t))-u_{i,t})\\
&\quad-\frac{1}{n_1}\sum_{i\in S_1}\rho_f\|\frac{1}{n_2}\sum_{j\in S_2}g_i(h_{i,j}(\hat{\w}_t))-\frac{1}{n_2}\sum_{j\in S_2}g_i(v_{i,j,t})\|^2 -\frac{1}{n_1}\sum_{i\in S_1}\rho_f\|\frac{1}{n_2}\sum_{j\in S_2}g_i(v_{i,j,t})-u_{i,t}\|^2\\
&\geq \frac{1}{n_1}\sum_{i\in S_1}\partial f_i(u_{i,t})^\top (\frac{1}{n_2}\sum_{j\in S_2}g_i(h_{i,j}(\hat{\w}_t))-u_{i,t})-\frac{1}{n_1}\sum_{i\in S_1}\frac{1}{n_2}\sum_{j\in S_2}\rho_fC_g^2\|h_{i,j}(\hat{\w}_t)-v_{i,j,t}\|^2\\
&\quad -\frac{1}{n_1}\sum_{i\in S_1}\rho_f\|\frac{1}{n_2}\sum_{j\in S_2}g_i(v_{i,j,t})-u_{i,t}\|^2\\
&\stackrel{(b)}{\geq}  \frac{1}{n_1}\sum_{i\in S_1}\partial f_i(u_{i,t})^\top \bigg[\frac{1}{n_2}\sum_{j\in S_2}g_i(v_{i,j,t})-u_{i,t}+\frac{1}{n_2}\sum_{j\in S_2} \partial g_i(v_{i,j,t})^\top  (h_{i,j}(\hat{\w}_t)-v_{i,j,t})\\
&\quad -\frac{1}{n_2}\sum_{j\in S_2} \frac{\rho_g}{2}\|h_{i,j}(\hat{\w}_t)-v_{i,j,t}\|^2\bigg]-\frac{1}{n_1}\sum_{i\in S_1}\frac{1}{n_2}\sum_{j\in S_2}2\rho_fC_g^2\|h_{i,j}(\w_t)-v_{i,j,t}\|^2\\
&\quad-2\rho_fC_g^2\|\hat{\w}_t-\w_t\|^2 -\frac{1}{n_1}\sum_{i\in S_1}\rho_f\|\frac{1}{n_2}\sum_{j\in S_2}g_i(v_{i,j,t})-u_{i,t}\|^2\\
&\stackrel{(c)}{\geq}  \frac{1}{n_1}\sum_{i\in S_1}\partial f_i(u_{i,t})^\top \bigg[\frac{1}{n_2}\sum_{j\in S_2}g_i(v_{i,j,t})-u_{i,t}\bigg]\\
&\quad + \frac{1}{n_1}\sum_{i\in S_1}\frac{1}{n_2}\sum_{j\in S_2}\underbrace{\langle \partial f_i(u_{i,t})^\top\partial g_i(v_{i,j,t})^\top  (h_{i,j}(\hat{\w}_t)-v_{i,j,t})}_{A_1}\\
&\quad - \frac{1}{n_1}\sum_{i\in S_1}\frac{1}{n_2}\sum_{j\in S_2} \frac{\rho_g C_f}{2}\|h_{i,j}(\hat{\w}_t)-v_{i,j,t}\|^2-\frac{1}{n_1}\sum_{i\in S_1}\frac{1}{n_2}\sum_{j\in S_2}2\rho_fC_g^2\|h_{i,j}(\w_t)-v_{i,j,t}\|^2\\
&\quad-2\rho_fC_g^2\|\hat{\w}_t-\w_t\|^2 -\frac{1}{n_1}\sum_{i\in S_1}\rho_f\|\frac{1}{n_2}\sum_{j\in S_2}g_i(v_{i,j,t})-u_{i,t}\|^2\\
&\geq \frac{1}{n_1}\sum_{i\in S_1}\partial f_i(u_{i,t})^\top \bigg[\frac{1}{n_2}\sum_{j\in S_2}g_i(v_{i,j,t})-u_{i,t}\bigg]\\
&\quad + \frac{1}{n_1}\sum_{i\in S_1}\frac{1}{n_2}\sum_{j\in S_2}\underbrace{\langle \partial f_i(u_{i,t})^\top\partial g_i(v_{i,j,t})^\top  (h_{i,j}(\hat{\w}_t)-v_{i,j,t})}_{A_1}\\
&\quad -\frac{1}{n_1}\sum_{i\in S_1}\frac{1}{n_2}\sum_{j\in S_2}(2\rho_fC_g^2+\rho_g C_f)\|h_{i,j}(\w_t)-v_{i,j,t}\|^2\\
&\quad-(2\rho_fC_g^2+\rho_g C_fC_h^2)\|\hat{\w}_t-\w_t\|^2 -\frac{1}{n_1}\sum_{i\in S_1}\rho_f\|\frac{1}{n_2}\sum_{j\in S_2}g_i(v_{i,j,t})-u_{i,t}\|^2
\end{aligned}
\end{equation}
where (a) follows from the convexity of $f_i$, (b) uses the assumption that $f_i(\cdot)$ is non-decreasing and $g_i$ is weak convex, (c) is due to $0\leq  \partial f_i(u_{i,t})\leq C_f$.

The $L_h$-smoothness assumption of $h_{i,j}(\w)$  (or weakly-convexity of $h_{i,j}(\w)$, then only the second inequality holds) for all $i,\w$ implies
\begin{equation}\label{ineq:29}
\begin{aligned}
    &h_{i,j}(\hat{\w}_t) \leq h_{i,j}(\w_t) +  \nabla h_{i,j}(\w_t)^\top (\hat{\w}_t-\w_t)+\frac{L_h}{2}\|\hat{\w}_t-\w_t\|^2,\\
    &h_{i,j}(\hat{\w}_t) \geq h_{i,j}(\w_t) + \nabla h_{i,j}(\w_t)^\top (\hat{\w}_t-\w_t) -\frac{L_h}{2}\|\hat{\w}_t-\w_t\|^2.
\end{aligned}
\end{equation}
We first assume that $g_i(\cdot)$ is non-increasing. Since $ \partial f_i(u_{i,t})\geq 0$ and $\partial g_i(v_{i,j,t})\leq 0$, we bound $A_1$ as following
\begin{equation}\label{ineq:36}
    \begin{aligned}
        A_1&= \partial f_i(u_{i,t})\partial^\top g_i(v_{i,j,t})^\top  (h_{i,j}(\hat{\w}_t)-v_{i,j,t})\\
        &\stackrel{(a)}{\geq} \langle \partial f_i(u_{i,t})^\top\partial g_i(v_{i,j,t})^\top  (h_{i,j}(\w_t)-v_{i,j,t}) +\partial f_i(u_{i,t})^\top\partial g_i(v_{i,j,t})^\top \nabla h_{i,j}(\w_t)^\top (\hat{\w}_t-\w_t)\\
        &\quad +\partial f_i(u_{i,t})^\top\partial g_i(v_{i,j,t})^\top  \frac{L_h}{2}\|\hat{\w}_t-\w_t\|^2\rangle\\
        &\stackrel{(b)}{\geq} -C_fC_g\|h_{i,j}(\w_t)-v_{i,j,t}\|+\partial f_i(u_{i,t})^\top\partial g_i(v_{i,j,t})^\top\nabla h_{i,j}(\w_t)^\top(\hat{\w}_t-\w_t)\\
        &\quad - \frac{C_fC_g L_h}{2}\|\hat{\w}_t-\w_t\|^2
    \end{aligned}
\end{equation}
where inequality (a) follows from the first inequality in (\ref{ineq:29}), (b) follows from the Lipschitz continuity and monotone assumptions on $f_i,g_i,h_{i,j}$. On the other hand, if we assume $g_i(\cdot)$ is non-decreasing, we may use the second inequality in (\ref{ineq:29}) and obtain the same result as (\ref{ineq:36}). Now plugging the new formulation of $A_1$ back to inequality (\ref{ineq:30}) yields
\begin{equation*}
\begin{aligned}
&\frac{1}{n_1}\sum_{i\in S_1}f_i(\frac{1}{n_2}\sum_{j\in S_2}g_i(h_{i,j}(\hat{\w}_t)))-\frac{1}{n_1}\sum_{i\in S_1}f_i(u_{i,t})\\
&\geq  \frac{1}{n_1}\sum_{i\in S_1}\partial f_i(u_{i,t})^\top \bigg[\frac{1}{n_2}\sum_{j\in S_2}g_i(v_{i,j,t})-u_{i,t}\bigg]+ \frac{1}{n_1}\sum_{i\in S_1}\frac{1}{n_2}\sum_{j\in S_2}-C_fC_g\|h_{i,j}(\w_t)-v_{i,j,t}\|\\
&\quad +\frac{1}{n_1}\sum_{i\in S_1}\frac{1}{n_2}\sum_{j\in S_2}\partial f_i(u_{i,t})^\top\partial g_i(v_{i,j,t})^\top\nabla h_{i,j}(\w_t)^\top(\hat{\w}_t-\w_t)- \frac{C_fC_g L_h}{2}\|\hat{\w}_t-\w_t\|^2\\
&\quad -\frac{1}{n_1}\sum_{i\in S_1}\frac{1}{n_2}\sum_{j\in S_2}(2\rho_fC_g^2+\rho_g C_f)\|h_{i,j}(\w_t)-v_{i,j,t}\|^2\\
&\quad-(2\rho_fC_g^2+\rho_g C_fC_h^2)\|\hat{\w}_t-\w_t\|^2 -\frac{1}{n_1}\sum_{i\in S_1}\rho_f\bigg\|\frac{1}{n_2}\sum_{j\in S_2}g_i(v_{i,j,t})-u_{i,t}\bigg\|^2\\
&\geq  \frac{1}{n_1}\sum_{i\in S_1}-C_f\bigg\|\frac{1}{n_2}\sum_{j\in S_2}g_i(v_{i,j,t})-u_{i,t}\bigg\|+ \frac{1}{n_1}\sum_{i\in S_1}\frac{1}{n_2}\sum_{j\in S_2}-C_fC_g\|h_{i,j}(\w_t)-v_{i,j,t}\|\\
&\quad +\langle \E_t[G_t],\hat{\w}_t-\w_t\rangle -\frac{1}{n_1}\sum_{i\in S_1}\frac{1}{n_2}\sum_{j\in S_2}(2\rho_fC_g^2+\rho_g C_f)\|h_{i,j}(\w_t)-v_{i,j,t}\|^2\\
&\quad-(2\rho_fC_g^2+\rho_g C_fC_h^2+ \frac{C_fC_g L_h}{2})\|\hat{\w}_t-\w_t\|^2 -\frac{1}{n_1}\sum_{i\in S_1}\rho_f\bigg\|\frac{1}{n_2}\sum_{j\in S_2}g_i(v_{i,j,t})-u_{i,t}\bigg\|^2
\end{aligned}
\end{equation*}
It follows
\begin{equation}\label{ineq:31}
\begin{aligned}
&\langle \E_t[G_t],\hat{\w}_t-\w_t\rangle\\
&\leq \frac{1}{n_1}\sum_{i\in S_1}f_i(\frac{1}{n_2}\sum_{j\in S_2}g_i(h_{i,j}(\hat{\w}_t)))-\frac{1}{n_1}\sum_{i\in S_1}f_i(u_{i,t})+\frac{1}{n_1}\sum_{i\in S_1}C_f\bigg\|\frac{1}{n_2}\sum_{j\in S_2}g_i(v_{i,j,t})-u_{i,t}\bigg\|\\
&\quad + \frac{1}{n_1}\sum_{i\in S_1}\frac{1}{n_2}\sum_{j\in S_2}C_fC_g\|h_{i,j}(\w_t)-v_{i,j,t}\| + \frac{1}{n_1}\sum_{i\in S_1}\frac{1}{n_2}\sum_{j\in S_2}(2\rho_fC_g^2+\rho_g C_f)\|h_{i,j}(\w_t)-v_{i,j,t}\|^2\\
&\quad+(2\rho_fC_g^2+\rho_g C_fC_h^2+ \frac{C_fC_g L_h}{2})\|\hat{\w}_t-\w_t\|^2 +\frac{1}{n_1}\sum_{i\in S_1}\rho_f\bigg\|\frac{1}{n_2}\sum_{j\in S_2}g_i(v_{i,j,t})-u_{i,t}\bigg\|^2
\end{aligned}
\end{equation}

Combining inequality (\ref{ineq:31}) and (\ref{ineq:32}) yields
\begin{equation}\label{ineq:33}
    \begin{aligned}
        &\E_t[F_{1/\bar{\rho}}(\w_{t+1})]\\
        &\leq  F_{1/\bar{\rho}}(\w_t) +\frac{\eta^2\bar{\rho}M^2}{2} +\bar{\rho}\eta \bigg\{\frac{1}{n_1}\sum_{i\in S_1}\bigg[f_i(\frac{1}{n_2}\sum_{j\in S_2}g_i(h_{i,j}(\hat{\w}_t)))-f_i(u_{i,t})\\
        &\quad+C_f\bigg\|\frac{1}{n_2}\sum_{j\in S_2}g_i(v_{i,j,t})-u_{i,t}\bigg\| + \frac{1}{n_2}\sum_{j\in S_2}C_fC_g\|h_{i,j}(\w_t)-v_{i,j,t}\| \\
        &\quad+\frac{1}{n_2}\sum_{j\in S_2}(2\rho_fC_g^2+\rho_g C_f)\|h_{i,j}(\w_t)-v_{i,j,t}\|^2\\
&\quad+(2\rho_fC_g^2+\rho_g C_fC_h^2+ \frac{C_fC_g L_h}{2})\|\hat{\w}_t-\w_t\|^2 +\rho_f\bigg\|\frac{1}{n_2}\sum_{j\in S_2}g_i(v_{i,j,t})-u_{i,t}\bigg\|^2\bigg]\bigg\}\\
        &\leq  F_{1/\bar{\rho}}(\w_t) +\frac{\eta^2\bar{\rho}M^2}{2} +\bar{\rho}\eta \bigg\{\frac{1}{n_1}\sum_{i\in S_1}\bigg[F_i(\hat{\w}_t)-F_i(\w_t)+F_i(\w_t)-f_i(\frac{1}{n_2}\sum_{j\in S_2}g_i(v_{i,j,t}))\\
        &\quad +f_i(\frac{1}{n_2}\sum_{j\in S_2}g_i(v_{i,j,t}))-f_i(u_{i,t})+C_f\bigg\|\frac{1}{n_2}\sum_{j\in S_2}g_i(v_{i,j,t})-u_{i,t}\bigg\|\\
        &\quad + \frac{1}{n_2}\sum_{j\in S_2}C_fC_g\|h_{i,j}(\w_t)-v_{i,j,t}\|+ \frac{1}{n_2}\sum_{j\in S_2}(2\rho_fC_g^2+\rho_g C_f)\|h_{i,j}(\w_t)-v_{i,j,t}\|^2\\
&\quad+(2\rho_fC_g^2+\rho_g C_fC_h^2+ \frac{C_fC_g L_h}{2})\|\hat{\w}_t-\w_t\|^2 +\rho_f\bigg\|\frac{1}{n_2}\sum_{j\in S_2}g_i(v_{i,j,t})-u_{i,t}\bigg\|^2\bigg]\bigg\}\\
        &\stackrel{(a)}{\leq}  F_{1/\bar{\rho}}(\w_t) +\frac{\eta^2\bar{\rho}M^2}{2} +\bar{\rho}\eta \bigg\{F(\hat{\w}_t)-F(\w_t)+\frac{1}{n_1}\sum_{i\in S_1}\bigg[2C_f\bigg\|\frac{1}{n_2}\sum_{j\in S_2}g_i(v_{i,j,t})-u_{i,t}\bigg\|\\
        &\quad + \frac{1}{n_2}\sum_{j\in S_2}2C_fC_g\|h_{i,j}(\w_t)-v_{i,j,t}\|+ \frac{1}{n_2}\sum_{j\in S_2}(2\rho_fC_g^2+\rho_g C_f)\|h_{i,j}(\w_t)-v_{i,j,t}\|^2\\
&\quad+(2\rho_fC_g^2+\rho_g C_fC_h^2+ \frac{C_fC_g L_h}{2})\|\hat{\w}_t-\w_t\|^2 +\rho_f\bigg\|\frac{1}{n_2}\sum_{j\in S_2}g_i(v_{i,j,t})-u_{i,t}\bigg\|^2\bigg]\bigg\}\\
    \end{aligned}
\end{equation}
where (a) follows from the Lipschitz continuity of $f_i,g_i,h_{i,j}$.

Due to the $\rho_F$-weak convexity of $F(\w)$, we have ($\bar{\rho}-\rho_F$)-strong convexity of $\w \mapsto F(\w)+\frac{\bar{\rho}}{2}\|\w_t-\w\|^2$. Then it follows
\begin{equation}\label{ineq:34}
    \begin{aligned}
        F(\hat{\w}_t)-F(\w_t)
        & = \bigg[F(\hat{\w}_t)+\frac{\bar{\rho}}{2}\|\w_t-\hat{\w}_t\|^2\bigg]-\bigg[F(\w_t)+\frac{\bar{\rho}}{2}\|\w_t-\w_t\|^2\bigg] -\frac{\bar{\rho}}{2}\|\w_t-\hat{\w}_t\|^2\\
        &\leq (\frac{\rho_F}{2}-\bar{\rho})\|\w_t-\hat{\w}_t\|^2
    \end{aligned}
\end{equation}
Plugging inequality (\ref{ineq:34}) back into (\ref{ineq:33}), we obtain
\begin{equation*}
    \begin{aligned}
        &\E_t[F_{1/\bar{\rho}}(\w_{t+1})]\\
        &\leq  F_{1/\bar{\rho}}(\w_t) +\frac{\eta^2\bar{\rho}M^2}{2} +\bar{\rho}\eta \bigg\{(\frac{\rho_F}{2}-\bar{\rho})\|\w_t-\hat{\w}_t\|^2+\frac{1}{n_1}\sum_{i\in S_1}\bigg[ 2C_f\bigg\|\frac{1}{n_2}\sum_{j\in S_2}g_i(v_{i,j,t})-u_{i,t}\bigg\|\\
        &\quad + \frac{1}{n_2}\sum_{j\in S_2}2C_fC_g\|h_{i,j}(\w_t)-v_{i,j,t}\|+ \frac{1}{n_2}\sum_{j\in S_2}(2\rho_fC_g^2+\rho_g C_f)\|h_{i,j}(\w_t)-v_{i,j,t}\|^2\\
&\quad+(2\rho_fC_g^2+\rho_g C_fC_h^2+ \frac{C_fC_g L_h}{2})\|\hat{\w}_t-\w_t\|^2 +\rho_f\bigg\|\frac{1}{n_2}\sum_{j\in S_2}g_i(v_{i,j,t})-u_{i,t}\bigg\|^2\bigg]\bigg\}\\
        &\stackrel{(a)}{\leq}  F_{1/\bar{\rho}}(\w_t) +\frac{\eta^2\bar{\rho}M^2}{2} +\bar{\rho}\eta \bigg\{-\frac{\bar{\rho}}{2}\|\w_t-\hat{\w}_t\|^2+\frac{1}{n_1}\sum_{i\in S_1}\bigg[C_1\bigg\|\frac{1}{n_2}\sum_{j\in S_2}g_i(v_{i,j,t})-u_{i,t}\bigg\|\\
        &\quad + \frac{1}{n_2}\sum_{j\in S_2}C_1\|h_{i,j}(\w_t)-v_{i,j,t}\|+  \frac{1}{n_2}\sum_{j\in S_2}C_1\|h_{i,j}(\w_t)-v_{i,j,t}\|^2\\
&\quad +C_1\bigg\|\frac{1}{n_2}\sum_{j\in S_2}g_i(v_{i,j,t})-u_{i,t}\bigg\|^2 \bigg]\bigg\}\\
        &\stackrel{(b)}{=}  F_{1/\bar{\rho}}(\w_t) +\frac{\eta^2\bar{\rho}M^2}{2}-\frac{\eta}{2}\|\nabla F_{1/\bar{\rho}}(\w_t)\|^2+C_1\bar{\rho}\eta \frac{1}{n_1}\sum_{i\in S_1}\bigg\|\frac{1}{n_2}\sum_{j\in S_2}g_i(v_{i,j,t})-u_{i,t}\bigg\|\\
        &\quad + C_1\bar{\rho}\eta \frac{1}{n_1}\sum_{i\in S_1}\frac{1}{n_2}\sum_{j\in S_2}\|h_{i,j}(\w_t)-v_{i,j,t}\|+ C_1\bar{\rho}\eta \frac{1}{n_1}\sum_{i\in S_1}\frac{1}{n_2}\sum_{j\in S_2}\|h_{i,j}(\w_t)-v_{i,j,t}\|^2 \\
        &\quad +C_1\bar{\rho}\eta\frac{1}{n_1}\sum_{i\in S_1}\bigg\|\frac{1}{n_2}\sum_{j\in S_2}g_i(v_{i,j,t})-u_{i,t}\bigg\|^2
    \end{aligned}
\end{equation*}
where in inequality (a) we use $\bar{\rho} = \rho_F+4\rho_fC_g^2+2\rho_g C_fC_h^2+ C_fC_g L_h$ and $C_1 = \max\{2C_fC_g, 2C_f,(2\rho_fC_g^2+\rho_g C_f),\rho_f\}$, and equality (b) uses Lemma~\ref{lem:4}.

With general error bounds
\begin{equation*}
    \begin{aligned}
        &\frac{1}{n_1}\sum_{i\in S_1}\frac{1}{n_2}\sum_{j\in S_2}\E[\|h_{i,j}(\w_t)-v_{i,j,t}\|]\leq (1-\mu_1)^t\frac{1}{n_1}\sum_{i\in S_1}\frac{1}{n_2}\sum_{j\in S_2}\|h_{i,j}(\w_0)-v_{i,j,0}\|+R_1,\\
        &\frac{1}{n_1}\sum_{i\in S_1}\frac{1}{n_2}\sum_{j\in S_2}\E[\|h_{i,j}(\w_t)-v_{i,j,t}\|^2]\leq (1-\mu_1)^t\frac{1}{n_1}\sum_{i\in S_1}\frac{1}{n_2}\sum_{j\in S_2}\|h_{i,j}(\w_0)-v_{i,j,0}\|^2+R_2,\\
        &\frac{1}{n_1}\sum_{i\in S_1}\E\bigg[\bigg\|\frac{1}{n_2}\sum_{j\in S_2}g_i(v_{i,j,t})-u_{i,t}\bigg\|\bigg] \leq (1-\mu_2)^t\frac{1}{n_+}\sum_{i\in S_+}\bigg\|\frac{1}{n_2}\sum_{j\in S_2}g_i(v_{i,j,0})-u_{i,0}\bigg\|+R_3,\\
        &\frac{1}{n_1}\sum_{i\in S_1}\E\bigg[\bigg\|\frac{1}{n_2}\sum_{j\in S_2}g_i(v_{i,j,t})-u_{i,t}\bigg\|^2\bigg] \leq (1-\mu_2)^t\frac{1}{n_+}\sum_{i\in S_+}\bigg\|\frac{1}{n_2}\sum_{j\in S_2}g_i(v_{i,j,0})-u_{i,0}\bigg\|^2+R_4,
    \end{aligned}
\end{equation*}
we have
\begin{equation*}
    \begin{aligned}
        &\E[F_{1/\bar{\rho}}(\w_{t+1})]\\
        &\leq  \E[F_{1/\bar{\rho}}(\w_t)] +\frac{\eta^2\bar{\rho}M^2}{2}-\frac{\eta}{2}\E[\|\nabla F_{1/\bar{\rho}}(\w_t)\|^2]+C_1\bar{\rho}\eta(1-\mu_{min})^t\bigg[ \frac{1}{n_1}\sum_{i\in S_1}\bigg\|\frac{1}{n_2}\sum_{j\in S_2}g_i(v_{i,j,0})-u_{i,0}\bigg\|\\
        &\quad +\frac{1}{n_1}\sum_{i\in S_1}\bigg\|\frac{1}{n_2}\sum_{j\in S_2}g_i(v_{i,j,0})-u_{i,0}\bigg\|^2 +  \frac{1}{n_1}\sum_{i\in S_1}\frac{1}{n_2}\sum_{j\in S_2}\|h_{i,j}(\w_0)-v_{i,j,0}\|\\
        &\quad+  \frac{1}{n_1}\sum_{i\in S_1}\frac{1}{n_2}\sum_{j\in S_2}\|h_{i,j}(\w_0)-v_{i,j,0}\|^2\bigg] +C_1\bar{\rho}\eta(R_1+R_2+R_3+R_4),
    \end{aligned}
\end{equation*}
where $\mu_{min} = \min\{\mu_1,\mu_2\}$.

Taking summation from $t=0$ to $T-1$ yields
\begin{equation*}
    \begin{aligned}
        &\E[F_{1/\bar{\rho}}(\w_T)]\\
        &\leq  \E[F_{1/\bar{\rho}}(\w_0)] +\frac{\eta^2\bar{\rho}M^2T}{2}-\frac{\eta}{2}\sum_{t=0}^{T-1}\E[\|\nabla F_{1/\bar{\rho}}(\w_t)\|^2]+C_1\bar{\rho}\eta\sum_{t=0}^{T-1}(1-\mu_{min})^t\Delta_0\\
        &\quad +TC_1\bar{\rho}\eta(R_1+R_2+R_3+R_4)\\
        &\leq  \E[F_{1/\bar{\rho}}(\w_0)] +\frac{\eta^2\bar{\rho}M^2T}{2}-\frac{\eta}{2}\sum_{t=0}^{T-1}\E[\|\nabla F_{1/\bar{\rho}}(\w_t)\|^2]+\frac{C_1\bar{\rho}\eta\Delta_0}{\mu_{min}} +TC_1\bar{\rho}\eta(R_1+R_2+R_3+R_4)
    \end{aligned}
\end{equation*}
where we use $\sum_{t=0}^{T-1}(1-\mu_{min})^t\leq \frac{1}{\mu_{min}}$ and define constant $\Delta_0$ such that
\begin{equation*}
\begin{aligned}
    &\bigg[ \frac{1}{n_1}\sum_{i\in S_1}\bigg\|\frac{1}{n_2}\sum_{j\in S_2}g_i(v_{i,j,0})-u_{i,0}\bigg\|+\frac{1}{n_1}\sum_{i\in S_1}\bigg\|\frac{1}{n_2}\sum_{j\in S_2}g_i(v_{i,j,0})-u_{i,0}\bigg\|^2\\
    &+  \frac{1}{n_1}\sum_{i\in S_1}\frac{1}{n_2}\sum_{j\in S_2}\|h_{i,j}(\w_0)-v_{i,j,0}\|+  \frac{1}{n_1}\sum_{i\in S_1}\frac{1}{n_2}\sum_{j\in S_2}\|h_{i,j}(\w_0)-v_{i,j,0}\|^2\bigg] \leq \Delta_0.
\end{aligned}
\end{equation*}
Then it follows
\begin{equation*}
    \begin{aligned}
        &\frac{1}{T}\sum_{t=0}^{T-1}\E[\|\nabla F_{1/\bar{\rho}}(\w_t)\|^2]\\
        &\leq \frac{2}{\eta T}\bigg[ F_{1/\bar{\rho}}(\w_0)-\E[F_{1/\bar{\rho}}(\w_T)] +\frac{\eta^2\bar{\rho}M^2T}{2}+\frac{C_1\bar{\rho}\eta\Delta_0}{\mu_{min}}  +TC_1\bar{\rho}\eta(R_1+R_2+R_3+R_4)\bigg]\\
        &\leq \frac{2\Delta}{\eta T} +\eta\bar{\rho}M^2+\frac{2C_1\bar{\rho}\Delta_0}{\mu_{min}T}  +2C_1\bar{\rho}(R_1+R_2+R_3)\\
        &=\O(\frac{1}{T}(\frac{1}{\eta}+\frac{1}{\mu_{min}})+\eta+R_1+R_2+R_3+R_4)
    \end{aligned}
\end{equation*}
where we define constant $\Delta$ such that $F_{1/\bar{\rho}}(\w_0,\s_0,s'_0)-\E[F_{1/\bar{\rho}}(\w_T,\s_T,s'_T)]\leq \Delta$.

With MSVR updates for $v_{i,j,t}$ and $u_{i,t}$, following from Lemma~\ref{lem:13} and Lemma~\ref{lem:14}, we have
% \begin{equation*}
%     \begin{aligned}
%         &\E\bigg[\frac{1}{n_1}\sum_{i\in S_+}\frac{1}{n_2}\sum_{j\in S_1}\|v_{i,j,t+1}-h_{i,j}(\w_{t+1})\|\bigg]\\
%         &\leq (1-\frac{B_1B_2\tau_1}{2n_1n_2})^{t+1} \frac{1}{n_1}\sum_{i\in S_1}\frac{1}{n_2}\sum_{j\in S_2}\|v_{i,j,0}-h_{i,j}(\w_0)\|+ 2\tau_1^{1/2}\sigma +\frac{4 n_1n_2\sqrt{C_h}M\eta}{B_1B_2 \tau_1^{1/2}}\\
%         &\E\bigg[\frac{1}{n_1}\sum_{i\in S_+}\frac{1}{n_2}\sum_{j\in S_1}\|v_{i,j,t+1}-h_{i,j}(\w_{t+1})\|^2\bigg]\\
%         &\leq (1-\frac{B_1B_2\tau_1}{2n_1n_2})^{2(t+1)} \frac{1}{n_1}\sum_{i\in S_+}\frac{1}{n_2}\sum_{j\in S_1}\|v_{i,j,0}-h_{i,j}(\w_0)\|^2+ 4\tau_1 \sigma^2 +\frac{16 n_1^2n_2^2C_hM^2\eta^2}{B_1^2B_2^2 \tau_1}\\
%     \end{aligned}
% \end{equation*}
% \begin{equation*}
%     \begin{aligned}
%         &\E\left[\frac{1}{n_1}\sum_{i\in S_1}\|u_{i,t+1}-\frac{1}{n_2}\sum_{j\in S_2}g_i(v_{i,j,t+1})\|\right]\\
%         &\leq (1-\frac{B_1\tau_2}{2n_1})^{t+1} \frac{1}{n_1}\sum_{i\in S_1}\|u_{i,0}-\frac{1}{n_2}\sum_{j\in S_2}g_i(v_{i,j,0})\|+ 2\tau_2^{1/2}\sigma+\frac{C_2n_1^{1/2}\tau_1}{B_1^{1/2}\tau_2^{1/2}}+\frac{C_2 n_1^{3/2}n_2 \eta }{B_1^{3/2}B_2 \tau_2^{1/2}}
%     \end{aligned}
% \end{equation*}

\begin{equation*}
    \begin{aligned}
        &\mu_1 = \frac{B_1B_2\tau_1}{2n_1n_2}, \quad \mu_2 = \frac{B_1\tau_2}{2n_1},\quad R_1 = \frac{2\tau_1^{1/2}\sigma}{B_3^{1/2}} +\frac{4 n_1n_2\sqrt{C_h}M\eta}{B_1B_2 \tau_1^{1/2}}\\
        &R_2 = \frac{4\tau_1 \sigma^2}{B_3} +\frac{16 n_1^2n_2^2C_hM^2\eta^2}{B_1^2B_2^2 \tau_1},\quad R_3 = \frac{2\tau_2^{1/2}\sigma}{B_2^{1/2}}+\frac{C_2n_1^{1/2}B_2^{1/2}\tau_1}{B_1^{1/2}n_2^{1/2}\tau_2^{1/2}}+\frac{C_2 n_1^{3/2}n_2^{1/2} \eta }{B_1^{3/2}B_2^{1/2} \tau_2^{1/2}},\\
        &R_4 = \frac{4\tau_2\sigma^2}{B_2}+\frac{C_2^2n_1B_2\tau_1^2}{B_1n_2\tau_2}+\frac{C_2^2 n_1^{3}n_2 \eta^2 }{B_1^{3}B_2 \tau_2}.
    \end{aligned}
\end{equation*}
Then
\begin{equation*}
    \begin{aligned}
        &\frac{1}{T}\sum_{t=0}^{T-1}\E[\|\nabla F_{1/\bar{\rho}}(\w_t)\|^2]\\
        &\leq \O\bigg(\frac{1}{T}(\frac{1}{\eta}+\frac{1}{\mu_{min}})+\eta+\frac{\tau_1^{1/2}}{B_3^{1/2}}+\frac{\tau_1}{B_3}+\frac{\tau_2^{1/2}}{B_2^{1/2}}+\frac{\tau_2}{B_2} \\
        &\quad + \frac{n_1n_2\eta}{B_1B_2 \tau_1^{1/2}} +\frac{n_1^2n_2^2\eta^2}{B_1^2B_2^2 \tau_1}+\frac{n_1^{1/2}B_2^{1/2}\tau_1}{B_1^{1/2}n_2^{1/2}\tau_2^{1/2}}+\frac{n_1^{3/2}n_2^{1/2} \eta }{B_1^{3/2}B_2^{1/2} \tau_2^{1/2}}+\frac{n_1B_2\tau_1^2}{B_1n_2\tau_2}+\frac{n_1^{3}n_2 \eta^2 }{B_1^{3}B_2 \tau_2}\bigg)\\
        &\leq \O\bigg(\frac{1}{T}(\frac{1}{\eta}+\frac{1}{\mu_{min}})+\frac{\tau_1^{1/2}}{B_3^{1/2}}+\frac{\tau_2^{1/2}}{B_2^{1/2}} + \frac{n_1n_2\eta}{B_1B_2 \tau_1^{1/2}} +\frac{n_1^{1/2}B_2^{1/2}\tau_1}{B_1^{1/2}n_2^{1/2}\tau_2^{1/2}}+\frac{n_1^{3/2}n_2^{1/2} \eta }{B_1^{3/2}B_2^{1/2} \tau_2^{1/2}}\bigg).
    \end{aligned}
\end{equation*}
Setting 
\begin{equation*}
\begin{aligned}
    &\tau_1 = \O\left(\min\{B_3,\frac{B_1^{1/2}n_2^{1/2}}{n_1^{1/2}}\}\epsilon^4\right),\quad \tau_2 = \O(B_2\epsilon^4),\\
    &\eta = \O\left(\min\left\{\frac{B_1B_2}{n_1n_2}\tau_1^{1/2}\epsilon^2,\frac{B_1^{3/2}B_2^{1/2}}{n_1^{3/2}n_2^{1/2}}\tau_2^{1/2}\right\}\right) \\
    &\quad = \O\left(\min\left\{B_3^{1/2},\frac{B_1^{1/4}n_2^{1/4}}{n_1^{1/4}},\frac{B_1^{1/2}n_2^{1/2}}{n_1^{1/2}}\right\}\frac{B_1B_2}{n_1n_2}\epsilon^4\right),
\end{aligned}
\end{equation*}
then with 
\begin{equation*}
    T=\O\left(\max\left\{\frac{1}{B_3^{1/2}},\frac{n_1^{1/4}}{B_1^{1/4}n_2^{1/4}},\frac{n_1^{1/2}}{B_1^{1/2}n_2^{1/2}}\right\}\frac{n_1n_2}{B_1B_2}\epsilon^{-6}\right),
\end{equation*}
we have
\begin{equation*}
    \frac{1}{T}\sum_{t=0}^{T-1}\E[\|\nabla F_{1/\bar{\rho}}(\w_t)\|^2]\leq \epsilon^2
\end{equation*}

\end{proof}

\section{Solving Non-smooth FCCO and TCCO with Coordinate Moving Average}\label{sec:MA}
In this section we consider solving non-smooth weakly-convex FCCO and TCCO without variance reduction method. To be specific, we use coordinate moving average updates for function values estimations instead of MSVR. This allows us to weaken the assumption on the Lipschitz continuity, i.e. the Lipschitz continuity of the stochastic function value estimation is not required, and can be replaced by the Lipschitz continuity of the function value. Moreover, compared with MSVR, coordinate moving average update does not need the stochastic evaluation from the previous iteration, and thus has a simpler implementation. However, as a result of not using variance reduction technique, the algorithms suffer from worse convergence rates in terms of $\epsilon$.

\subsection{Solving Non-smooth FCCO with Coordinate Moving Average}

\begin{algorithm}[t]
\caption {Stochastic Optimization algorithm for Non-smooth FCCO with coordinate moving average}\label{alg:6}
\begin{algorithmic}[1]
\STATE{Initialization: $\w_0$, $\{u_{i,0}:i\in \S\}$. }
\FOR{$t=0,\dots, T-1$}
\STATE Draw sample batches $\B_1^t\sim \S$, and $\B_{2,i}^t\sim \D_i$ for each $i\in \B_1^t$.
\STATE $  u_{i,t+1}= \begin{cases}(1-\tau)u_{i,t}+\tau  g_i(\w_t;\B_{2,i}^t),\quad i\in \B_1^t\\ u_{i,t},\quad i\not\in \B_1^t\end{cases}$
\STATE Compute $G_t=\frac{1}{B_1}\sum_{i\in \B_1^t} \partial g_i(\w_t; \B_{2,i}^t)\partial f_i(u_{i,t})$
\STATE Update $ \w_{t+1}= \w_t-\eta G_t$
\ENDFOR
\STATE \textbf{return} $\w_{\bar{t}}$ with uniformly sampled $\bar{t}\in \{0,T-1\}$.
\end{algorithmic}
\end{algorithm}

We first assume the followings assumptions hold.
\begin{assumption}\label{ass:3}
    For all $i\in \S$, we assume that
    \begin{itemize}[leftmargin=*]
        \item $f_i(\cdot)$ is $\rho_f$-weakly-convex, $C_f$-Lipschitz continuous and non-decreasing;
        \item $g_i(\cdot)$ is $\rho_g$-weakly-convex and $C_g$-Lipschitz continuous;
        \item Stochastic gradient estimators $g_i(\w;\xi)$ and $\partial g_i(\w;\xi)$ have bounded variance $\sigma^2$.
    \end{itemize}
\end{assumption}

With coordinate moving average update, we present the following lemma of error bound.
\begin{lemma}\label{lem:19}
    Consider the coordinate moving average update for $\{u_{i,t}:i\in \S_1\}$ in Algorithm~\ref{alg:6}, assume $g_i(\w)$ is $C_g$-Lipschitz continuous for all $i\in \S_1$ and $\tau\leq 1$, then we have
    \begin{equation*}
    \begin{aligned}
        &\E[\|u_{i,t+1}-g_i(\w_{t+1})\|]
        \leq (1-\frac{B_1\tau}{4n_1})^{t+1} \|u_{i,0}-g_i(\w_0)\|+\frac{2\sqrt{2}\tau^{1/2}\sigma}{B_2^{1/2}}+\frac{4\sqrt{2}n_1C_g M \eta}{B_1\tau},\\
        &\E[\|u_{i,t+1}-g_i(\w_{t+1})\|^2]
        \leq (1-\frac{B_1\tau}{4n_1})^{2(t+1)} \|u_{i,0}-g_i(\w_0)\|^2+\frac{8\tau\sigma^2}{B_2}+\frac{32n_1^2C_g^2 M^2 \eta^2}{B_1^2\tau^2}.
    \end{aligned}
\end{equation*}
\end{lemma}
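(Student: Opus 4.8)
The plan is to prove, for each fixed index $i$, a one-step geometric contraction for the \emph{squared} error $\E[\|u_{i,t+1}-g_i(\w_{t+1})\|^2]$, unroll it into a geometric series, and then recover the norm bound by Jensen's inequality; this is the block-sampling analogue of the standard moving-average (SEMA) error estimate used for smooth FCCO in \cite{pmlr-v162-wang22ak}. Fix $i\in\S$ and write $\Delta_{i,t}:=u_{i,t}-g_i(\w_t)$. Conditioning on the sampling event $\{i\in\B_1^t\}$, which has probability $B_1/n_1$, the update of Algorithm~\ref{alg:6} gives
\[
\Delta_{i,t+1}=(1-\tau)\Delta_{i,t}+\tau\,\underbrace{\big(g_i(\w_t;\B_{2,i}^t)-g_i(\w_t)\big)}_{=:\,e_{i,t}}+\underbrace{\big(g_i(\w_t)-g_i(\w_{t+1})\big)}_{=:\,d_{i,t}}
\]
when $i\in\B_1^t$, and $\Delta_{i,t+1}=\Delta_{i,t}+d_{i,t}$ when $i\notin\B_1^t$. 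By the $C_g$-Lipschitz continuity of $g_i$ and the update rule, $\|d_{i,t}\|\le C_g\eta\|G_t\|$, so $\E_t\|d_{i,t}\|^2\le C_g^2\eta^2M^2$; and conditionally on $\mathcal F_t$ (together with the sampling event) $e_{i,t}$ is mean-zero with $\E_t\|e_{i,t}\|^2\le\sigma^2/B_2$.

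Next I would expand $\|\Delta_{i,t+1}\|^2$ in each case and take conditional expectations. In the sampled case the cross term $\langle\Delta_{i,t},e_{i,t}\rangle$ vanishes in expectation because $\Delta_{i,t}$ is $\mathcal F_t$-measurable and $e_{i,t}$ is mean-zero, while the remaining cross terms $\langle\Delta_{i,t},d_{i,t}\rangle$ and $\langle e_{i,t},d_{i,t}\rangle$ are controlled by Young's inequality; the key bookkeeping point is to split $2\tau\langle e_{i,t},d_{i,t}\rangle$ as $\tau^2\|e_{i,t}\|^2+\|d_{i,t}\|^2$ (rather than $\tau\|e_{i,t}\|^2+\tau\|d_{i,t}\|^2$) so that every stochastic contribution stays at order $\tau^2\sigma^2/B_2$. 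Using $(1+\tau)(1-\tau)^2\le 1-\tau$ for $\tau\le1$, this yields, conditionally on $i\in\B_1^t$, $\E_t\|\Delta_{i,t+1}\|^2\le(1-\tau)\|\Delta_{i,t}\|^2+O(\tau^2\sigma^2/B_2)+O(C_g^2\eta^2M^2/\tau)$; the unsampled case gives $\E_t\|\Delta_{i,t+1}\|^2\le(1+\theta)\|\Delta_{i,t}\|^2+O(C_g^2\eta^2M^2/\theta)$ for any $\theta>0$. Averaging the two bounds with weights $B_1/n_1$ and $1-B_1/n_1$ and taking $\theta=\tfrac{B_1\tau}{2n_1}$, the coefficient of $\|\Delta_{i,t}\|^2$ becomes at most $1-\tfrac{B_1\tau}{2n_1}\le(1-\tfrac{B_1\tau}{4n_1})^2$, and the additive term is $O\!\big(\tfrac{B_1}{n_1}\cdot\tfrac{\tau^2\sigma^2}{B_2}\big)+O\!\big(\tfrac{n_1}{B_1\tau}C_g^2\eta^2M^2\big)$. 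Unrolling the resulting recursion and bounding the geometric sum by $\tfrac{1}{1-(1-a)^2}\le\tfrac1a=\tfrac{4n_1}{B_1\tau}$ with $a=\tfrac{B_1\tau}{4n_1}$ gives precisely the stated squared bound, with contraction factor $(1-\tfrac{B_1\tau}{4n_1})^{2(t+1)}$ and additive terms of order $\tau\sigma^2/B_2$ and $n_1^2C_g^2M^2\eta^2/(B_1^2\tau^2)$; tracking the constants carefully produces the stated $\tfrac{8\tau\sigma^2}{B_2}$ and $\tfrac{32n_1^2C_g^2M^2\eta^2}{B_1^2\tau^2}$.

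The first (non-squared) inequality then follows immediately from Jensen's inequality $\E\|\Delta_{i,t+1}\|\le(\E\|\Delta_{i,t+1}\|^2)^{1/2}$ combined with $\sqrt{a+b+c}\le\sqrt a+\sqrt b+\sqrt c$: the square root of $(1-\tfrac{B_1\tau}{4n_1})^{2(t+1)}\|\Delta_{i,0}\|^2$ is $(1-\tfrac{B_1\tau}{4n_1})^{t+1}\|\Delta_{i,0}\|$, that of $\tfrac{8\tau\sigma^2}{B_2}$ is $\tfrac{2\sqrt2\,\tau^{1/2}\sigma}{B_2^{1/2}}$, and that of $\tfrac{32n_1^2C_g^2M^2\eta^2}{B_1^2\tau^2}$ is $\tfrac{4\sqrt2\,n_1C_gM\eta}{B_1\tau}$ — which is exactly why the squared statement is phrased with exponent $2(t+1)$. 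I expect the main obstacle to be extracting the $\tau^{1/2}$ dependence (instead of the crude $\sigma/\sqrt{B_2}$ that a direct triangle-inequality recursion on $\|\Delta_{i,t}\|$ would give): this is what forces the analysis through the squared error and demands that each cross term touching the fresh noise $e_{i,t}$ be assigned a $\tau^2$ weight, mirroring the variance floor $\Theta(\tau\sigma^2/B_2)$ of an exponential moving average. A minor additional check is that $\E_t[\|G_t\|^2]\le M^2$ still holds after conditioning on $\{i\in\B_1^t\}$, which is true since $\|G_t\|^2\le\tfrac1{B_1}\sum_{j\in\B_1^t}C_f^2\|\partial g_j(\w_t;\B_{2,j}^t)\|^2$ and the stochastic Jacobians have bounded second moment.
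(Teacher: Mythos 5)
Your proposal is correct and follows essentially the same route as the paper's proof: a one-step contraction for the squared error using the conditionally mean-zero mini-batch noise, a Young-inequality absorption of the drift $g_i(\w_t)-g_i(\w_{t+1})$ with parameter of order $B_1\tau/n_1$, the averaged contraction factor $1-\tfrac{B_1\tau}{2n_1}\le(1-\tfrac{B_1\tau}{4n_1})^2$, geometric unrolling via $\sum_j(1-\tfrac{B_1\tau}{4n_1})^{2(t-j)}\le\tfrac{4n_1}{B_1\tau}$, and Jensen plus subadditivity of the square root for the norm bound. The only difference is bookkeeping: the paper first bounds $\E\|u_{i,t+1}-g_i(\w_t)\|^2$ (error against the old iterate, averaging the sampled and unsampled cases) and then adds the drift in a single Young step, whereas you fold the drift into the case-split decomposition from the start, which is why you need the (correctly noted) conditional bound on $\E_t[\|G_t\|^2]$ given $\{i\in\B_1^t\}$.
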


Then we have a convergence analysis similar to Theorem~\ref{thm:1}.

\begin{theorem}\label{thm:4}
Consider non-smooth weakly-convex FCCO problem, under Assumption~\ref{ass:3},  setting $\tau = \O(B_2\epsilon^4)\leq 1$, $ \eta=\O(\frac{B_1B_2}{n_1}\epsilon^6)$, Algorithm~\ref{alg:6} converges to an $\epsilon$-stationary point of the Moreau envelope $F_{1/\bar{\rho}}$ in $T=\O(\frac{n_1}{B_1B_2}\epsilon^{-8})$ iterations.
\end{theorem}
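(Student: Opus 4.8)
The plan is to replay the proof of Theorem~\ref{thm:1} almost verbatim, making the single substitution of the MSVR error bound (Lemma~\ref{lem:16}) by the coordinate moving-average error bound (Lemma~\ref{lem:19}), and then re-tuning $\tau$ and $\eta$ to absorb the extra iterate drift that a non-variance-reduced estimator incurs. First I fix a constant $M^2\ge C_f^2(C_g^2+\sigma^2)$; since $0\le\partial f_i(u_{i,t})\le C_f$, $g_i$ is $C_g$-Lipschitz (so $\|\partial g_i(\w)\|\le C_g$), and $\partial g_i(\w;\xi)$ has variance $\sigma^2$, this gives $\E_t[\|G_t\|^2]\le M^2$. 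Writing $\hat\w_t:=\prox_{F/\bar\rho}(\w_t)$ and expanding the Moreau envelope with its $\bar\rho$-weak convexity yields, exactly as in \eqref{ineq:38}, the one-step inequality $\E_t[F_{1/\bar\rho}(\w_{t+1})]\le F_{1/\bar\rho}(\w_t)+\bar\rho\eta\langle\hat\w_t-\w_t,\E_t[G_t]\rangle+\tfrac{\eta^2\bar\rho M^2}{2}$, with $\E_t[G_t]=\tfrac1n\sum_{i\in\S}\partial g_i(\w_t)\partial f_i(u_{i,t})$.

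Next I bound the inner product by the same chain that produces \eqref{ineq:43}: apply the $\rho_f$-weak convexity of $f_i$, split $\|g_i(\hat\w_t)-u_{i,t}\|^2$ via Young's inequality into a piece $\le C_g^2\|\hat\w_t-\w_t\|^2$ and a piece $\|g_i(\w_t)-u_{i,t}\|^2$, use the non-decreasing property of $f_i$ together with the $\rho_g$-weak convexity of $g_i$ to replace $g_i(\hat\w_t)$ by $g_i(\w_t)+\partial g_i(\w_t)^\top(\hat\w_t-\w_t)$, and use $0\le\partial f_i(u_{i,t})\le C_f$; this gives $(\hat\w_t-\w_t)^\top\E_t[G_t]\le F(\hat\w_t)-F(\w_t)+\tfrac1n\sum_{i\in\S}[f_i(g_i(\w_t))-f_i(u_{i,t})-\partial f_i(u_{i,t})^\top(g_i(\w_t)-u_{i,t})+\rho_f\|g_i(\w_t)-u_{i,t}\|^2+(\tfrac{\rho_gC_f}{2}+\rho_fC_g^2)\|\hat\w_t-\w_t\|^2]$. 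Then I invoke the $(\bar\rho-\rho_F)$-strong convexity of $\w\mapsto F(\w)+\tfrac{\bar\rho}{2}\|\w_t-\w\|^2$ (with $\rho_F$ from Proposition~\ref{prop:1}) to get $F(\hat\w_t)-F(\w_t)\le(\tfrac{\rho_F}{2}-\bar\rho)\|\w_t-\hat\w_t\|^2$; the choice $\bar\rho=\rho_F+\rho_gC_f+2\rho_fC_g^2$ makes the net coefficient of $\|\w_t-\hat\w_t\|^2$ equal $-\bar\rho/2$, and Lemma~\ref{lem:4} converts $-\tfrac{\bar\rho^2\eta}{2}\|\w_t-\hat\w_t\|^2$ into $-\tfrac{\eta}{2}\|\nabla F_{1/\bar\rho}(\w_t)\|^2$. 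Using the $C_f$-Lipschitz continuity of $f_i$ to bound the Bregman-type remainder $f_i(g_i(\w_t))-f_i(u_{i,t})-\partial f_i(u_{i,t})^\top(g_i(\w_t)-u_{i,t})$ by $2C_f\|g_i(\w_t)-u_{i,t}\|$ leaves only $\|g_i(\w_t)-u_{i,t}\|$ and $\|g_i(\w_t)-u_{i,t}\|^2$ on the right.

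Now comes the only real departure from Theorem~\ref{thm:1}: substituting Lemma~\ref{lem:19}, the persistent errors become $R_1=\O(\tau^{1/2}\sigma/B_2^{1/2}+n_1C_gM\eta/(B_1\tau))$ and $R_2=\O(\tau\sigma^2/B_2+n_1^2C_g^2M^2\eta^2/(B_1^2\tau^2))$ with geometric rate $\mu=B_1\tau/(4n_1)$ — note the drift prefactors $\eta/\tau$ and $\eta^2/\tau^2$ in place of $\eta/\tau^{1/2}$ and $\eta^2/\tau$. Telescoping the one-step inequality over $t=0,\dots,T-1$, lower-bounding the left side by $\min_\w F(\w)$, using $\sum_t(1-\mu)^t\le1/\mu$, and dividing by $\eta T$ gives
\[
\tfrac1T\textstyle\sum_{t=0}^{T-1}\E[\|\nabla F_{1/\bar\rho}(\w_t)\|^2]\le\O\Bigl(\tfrac1T(\tfrac1\eta+\tfrac{n_1}{B_1\tau})+\eta+\tfrac{\tau^{1/2}}{B_2^{1/2}}+\tfrac{\tau}{B_2}+\tfrac{n_1\eta}{B_1\tau}+\tfrac{n_1^2\eta^2}{B_1^2\tau^2}\Bigr).
\]
Taking $\tau=\O(B_2\epsilon^4)$ and $\eta=\O(B_1B_2\epsilon^6/n_1)$ forces every non-$T$ term to $\O(\epsilon^2)$ (in particular $n_1^2\eta^2/(B_1^2\tau^2)=\O(\epsilon^4)$), and then $T=\O(n_1\epsilon^{-8}/(B_1B_2))$ kills the remaining term, giving the claim.

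The only point needing care beyond copying the Theorem~\ref{thm:1} argument is that the coordinate moving-average estimator has no variance reduction, so its error absorbs the iterate drift $\|\w_{t+1}-\w_t\|\le\eta M$ with a $1/\tau$ prefactor rather than $1/\tau^{1/2}$; balancing the resulting $n_1\eta/(B_1\tau)$ term against $\epsilon^2$ forces $\eta$ smaller by a factor $\epsilon^{2}$, which is exactly what degrades the complexity from $\epsilon^{-6}$ to $\epsilon^{-8}$. One should also verify that Lemma~\ref{lem:19} uses only the deterministic $C_g$-Lipschitz continuity of $g_i$ (not that of $g_i(\cdot;\xi)$), so that the weaker Assumption~\ref{ass:3} suffices, and that $\E_t[\|G_t\|^2]\le M^2$ still holds under it thanks to the bounded-variance condition on $\partial g_i(\w;\xi)$.
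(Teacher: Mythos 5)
Your proposal is correct and follows essentially the same route as the paper: the paper's proof of this theorem likewise replays the argument of Theorem~\ref{thm:1} verbatim, swaps in the moving-average error bound of Lemma~\ref{lem:19} with $\mu=B_1\tau/(4n_1)$ and the drift terms $n_1\eta/(B_1\tau)$ and $n_1^2\eta^2/(B_1^2\tau^2)$, and arrives at the identical aggregate bound and parameter choices $\tau=\O(B_2\epsilon^4)$, $\eta=\O(B_1B_2\epsilon^6/n_1)$, $T=\O(n_1\epsilon^{-8}/(B_1B_2))$. Your closing observations — that the $1/\tau$ (rather than $1/\tau^{1/2}$) drift prefactor is what forces $\eta$ down by a factor of $\epsilon^2$ and degrades the rate, and that only the deterministic Lipschitz continuity of $g_i$ is needed here — are accurate and consistent with the paper.
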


\begin{proof}[Proof of Theorem~\ref{thm:4}]
    Since the only difference between SONX and Algorithm~\ref{alg:6} is the update for $\{u_{i,t}:i\in \S_1\}$, the proof of Theorem~\ref{thm:1} still holds with the error bound replaced by Lemma~\ref{lem:19}, i.e., 
    \begin{equation*}
    \begin{aligned}
        &\E\bigg[\frac{1}{n}\sum_{i\in \S}\|u_{i,t+1}-g_i(\w_{t+1})\|\bigg]
        \leq (1-\mu)^{t+1} \frac{1}{n}\sum_{i\in \S}\|u_{i,0}-g_i(\w_0)\|+R_1,\\
        &\E\bigg[\frac{1}{n}\sum_{i\in \S}\|u_{i,t+1}-g_i(\w_{t+1})\|^2\bigg]
        \leq (1-\mu)^{t+1} \frac{1}{n}\sum_{i\in \S}\|u_{i,0}-g_i(\w_0)\|^2+R_2,\\
        &\mu = \frac{B_1\tau}{4n_1},\quad R_1 = \frac{2\sqrt{2}\tau^{1/2}\sigma}{B_2^{1/2}}+\frac{4\sqrt{2}n_1C_g M \eta}{B_1\tau}, \quad R_2 = \frac{8\tau\sigma^2}{B_2}+\frac{32n_1^2C_g^2 M^2 \eta^2}{B_1^2\tau^2}.
    \end{aligned}
    \end{equation*}
    Then proof proceeds to 
    \begin{equation*}
    \begin{aligned}
        \frac{1}{T}\sum_{t=0}^{T-1}\E[\|\nabla F_{1/\bar{\rho}}(\w_t)\|^2]
        &\leq \O\left(\frac{1}{T}(\frac{1}{\eta}+\frac{1}{\mu })+\eta+R_1+R_2\right)\\
        % &\leq  \O\left( \frac{1}{T}(\frac{1}{\eta}+\frac{4n_1}{B_1\tau})+\eta+\frac{2\sqrt{2}\tau^{1/2}\sigma}{B_2^{1/2}}+\frac{4\sqrt{2}n_1C_g M \eta}{B_1\tau}+\frac{8\tau\sigma^2}{B_2}+\frac{32n_1^2C_g^2 M^2 \eta^2}{B_1^2\tau^2}\right)
        &=  \O\left( \frac{1}{T}(\frac{1}{\eta}+\frac{n_1}{B_1\tau})+\eta+\frac{\tau^{1/2}\sigma}{B_2^{1/2}}+\frac{n_1\eta}{B_1\tau}+\frac{\tau\sigma^2}{B_2}+\frac{n_1^2 \eta^2}{B_1^2\tau^2}\right).
    \end{aligned}
\end{equation*}
Setting
\begin{equation*}
    \tau = \O(B_2\epsilon^4),\quad \eta=\O(\frac{B_1B_2}{n_1}\epsilon^6),
\end{equation*}
then to reach a nearly $\epsilon$-stationary point, Algorithm~\ref{alg:6} needs
\begin{equation*}
    T=\O(\frac{n_1}{B_1B_2}\epsilon^{-8})
\end{equation*}
iterations.
\end{proof}

\subsection{Solving Non-smooth TCCO with Coordinate Moving Average}

\begin{algorithm}[t]
\caption {Stochastic Optimization algorithm for Non-smooth TCCO with coordinate moving average}\label{alg:7}
\begin{algorithmic}[1]
\STATE{Initialization: $\w_0$, $\{u_{i,0}:i\in \S_1\}$, $v_{i,j,0}=h_{i,j}(\w_0;\B_{3,i,j}^0)$ for all $(i,j)\in \S_1\times \S_2$.}
\FOR{$t=0,\dots, T-1$}
\STATE Sample batches $\B_1^t\subset \S_1$, $\B_2^t\subset \S_2$, and $\B_{3,i,j}^t\subset \D_{i,j}$ for $i\in \B_1^t$ and $j\in \B_2^t$.
\STATE $v_{i,j,t+1}=\begin{cases}(1-\tau_1)v_{i,j,t}+\tau_1 h_{i,j}(\w_t;\B_{3,i,j}^t),\quad (i,j)\in \B_1^t\times \B_2^t\\ v_{i,j,t},\quad (i,j)\not\in \B_1^t\times \B_2^t\end{cases}$
\STATE $u_{i,t+1}=\begin{cases}
    (1-\tau_2)u_{i,t}+ \frac{1}{B_2}\sum_{j\in \B_2^t} \tau_2g_i(v_{i,j,t}),\quad i\in \B_1^t\\
    u_{i,t},\quad i\not\in \B_1^t
\end{cases}$
\STATE $G_t = \frac{1}{B_1}\sum_{i\in \B_1^t}\left[\left(\frac{1}{B_2}\sum_{i\in \B_2^t}\nabla h_{i,j}(\w_t;\B_{3,i,j}^t)\partial g_i(v_{i,j,t})\right)\partial f_i(u_{i,t})\right]$
\STATE Update $ \w_{t+1}= \w_t-\eta G_t$
\ENDFOR
\STATE{\textbf{return} $\w_{\bar{t}}$ with uniformly sampled $\bar{t}\in \{0,T-1\}$.}
\end{algorithmic}
\end{algorithm}

We first assume the following assumptions hold.
\begin{assumption}\label{ass:4}
    For all $(i,j)\in \S_1\times \S_2$, we assume that
    \begin{itemize}[leftmargin=*]
        \item $f_i(\cdot)$ is $\rho_f$-weakly-convex, $C_f$-Lipschitz continuous and non-decreasing;
        \item $g_i(\cdot)$ is $\rho_g$-weakly-convex and $C_g$-Lipschitz continuous. $h_{i,j}(\cdot)$ is differentiable and $C_h$-Lipschitz continuous.
        \item Either $g_i$ is monotone and $h_{i,j}(\cdot)$ is $L_h$-smooth, or $g_i$ is non-decreasing and $h_{i,j}(\cdot)$ is $L_h$-weakly-convex.
        \item Stochastic estimators $h_{i,j}(\w,\xi)$, $\partial h_{i,j}(\w,\xi)$ and $g_i(v_{i,j})$ have bounded variance $\sigma^2$, and $\|h_{i,j}(\w)\|\leq \tilde{C}_h$.
    \end{itemize}
\end{assumption}

With coordinate moving average update, we present the following lemmas of error bounds.
\begin{lemma}\label{lem:20}
    Consider the coordinate moving average update for $\{v_{i,j,t}:(i,j)\in \S_1\times \S_2\}$ in Algorithm~\ref{alg:7}, assume $h_{i,j}(\w)$ is $C_h$-Lipschitz continuous for all $(i,j)\in \S_1\times \S_2$ and $\tau_1\leq 1$, then we have
    \begin{equation*}
    \begin{aligned}
        &\E\left[\frac{1}{n_1n_2}\sum_{i\in \S_1}\sum_{j\in \S_2}\|v_{i,j,t+1}-h_{i,j}(\w_{t+1})\|\right]\\
        &\leq (1-\frac{B_1B_2\tau_1}{4n_1n_2})^{t+1} \frac{1}{n_1n_2}\sum_{i\in \S_1}\sum_{j\in \S_2}\|v_{i,j,0}-h_{i,j}(\w_0)\|+\frac{2\sqrt{2}\tau_1^{1/2}\sigma}{B_3^{1/2}}+\frac{4\sqrt{2}n_1n_2C_h M \eta}{B_1B_2\tau_1},\\
        &\E\left[\frac{1}{n_1n_2}\sum_{i\in \S_1}\sum_{j\in \S_2}\|v_{i,j,t+1}-h_{i,j}(\w_{t+1})\|^2\right]\\
        &\leq (1-\frac{B_1B_2\tau_1}{4n_1n_2})^{2(t+1)} \frac{1}{n_1n_2}\sum_{i\in \S_1}\sum_{j\in \S_2}\|v_{i,j,0}-h_{i,j}(\w_0)\|^2+\frac{8\tau_1\sigma^2}{B_3}+\frac{32n_1^2n_2^2C_h^2 M^2 \eta^2}{B_1^2B_2^2\tau_1^2}.
    \end{aligned}
\end{equation*}
\end{lemma}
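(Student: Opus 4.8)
The proof follows the same template as that of Lemma~\ref{lem:19}, now with the two-level block index $(i,j)$ in place of $i$ and with $h_{i,j}$ in place of $g_i$. The goal is a geometrically contracting one-step recursion for the block-averaged tracking error; it is cleanest to analyze the squared quantity $\Phi_t^2 := \frac{1}{n_1n_2}\sum_{i\in\S_1}\sum_{j\in\S_2}\|v_{i,j,t}-h_{i,j}(\w_t)\|^2$ first and then pass to the $\ell_1$-type bound at the end. Fixing $(i,j)$ and writing $\delta_{i,j,t}:=v_{i,j,t}-h_{i,j}(\w_t)$, I would condition on the iterate history and note that $(i,j)\in\B_1^t\times\B_2^t$ with probability $p:=\tfrac{B_1B_2}{n_1n_2}$. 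On that event the coordinate moving-average update of Algorithm~\ref{alg:7} gives
\begin{equation*}
\delta_{i,j,t+1}=(1-\tau_1)\delta_{i,j,t}+\tau_1\bigl(h_{i,j}(\w_t;\B_{3,i,j}^t)-h_{i,j}(\w_t)\bigr)+\bigl(h_{i,j}(\w_t)-h_{i,j}(\w_{t+1})\bigr),
\end{equation*}
and off that event only the last ``model-drift'' term is present.

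\textbf{Key estimates and the recursion.} I would control the model-drift term via $C_h$-Lipschitz continuity and $\w_{t+1}=\w_t-\eta G_t$, so that $\|h_{i,j}(\w_t)-h_{i,j}(\w_{t+1})\|\le C_h\eta\|G_t\|$ and hence $\E_t\|h_{i,j}(\w_t)-h_{i,j}(\w_{t+1})\|^2\le C_h^2\eta^2 M^2$; the fresh-noise term has zero conditional mean and conditional second moment at most $\sigma^2/B_3$. Squaring, taking expectation over the block draw and over $\B_{3,i,j}^t$, discarding the mean-zero cross term between the contraction part and the fresh noise, and using $p(1-\tau_1)^2+(1-p)\le 1-p\tau_1$ for $\tau_1\le 1$, one arrives at a recursion of the form
\begin{equation*}
\E[\Phi_{t+1}^2]\le(1-p\tau_1)\E[\Phi_t^2]+\O\bigl(\tfrac{p\tau_1^2\sigma^2}{B_3}\bigr)+\O\bigl(C_h^2\eta^2M^2\bigr),
\end{equation*}
where one must additionally invoke Young's inequality at scale $\Theta(p\tau_1)$ to separate the drift term from the contraction--noise part, which multiplies the $C_h^2\eta^2M^2$ term by $\Theta(1/(p\tau_1))$ and mildly degrades the rate (this is the source of the factor $4$ in $\mu_1=\tfrac{B_1B_2\tau_1}{4n_1n_2}$). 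Unrolling the geometric recursion with $\sum_{t}(1-\mu_1)^t\le 1/\mu_1$, the extra $1/\mu_1$ cancels the $p$ in the noise term, yielding the stated $\tfrac{8\tau_1\sigma^2}{B_3}$; the drift term, already carrying one $1/(p\tau_1)$ from Young's, picks up a second $1/\mu_1$, producing the $\tfrac{32n_1^2n_2^2C_h^2M^2\eta^2}{B_1^2B_2^2\tau_1^2}$ term; and $(1-p\tau_1)^{t+1}$ is absorbed into $(1-\mu_1)^{2(t+1)}$. The first ($\ell_1$-type) bound of the lemma then follows from this squared bound by Jensen's inequality, $\E\Phi_t\le(\E\Phi_t^2)^{1/2}$, together with subadditivity of the square root, $\sqrt{a+b+c}\le\sqrt a+\sqrt b+\sqrt c$, which converts the rate $(1-\mu_1)^{2(t+1)}$ into $(1-\mu_1)^{t+1}$ and yields the $2\sqrt2$ and $4\sqrt2$ constants.

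\textbf{Main obstacle.} The difficulty is bookkeeping rather than conceptual: the model-drift term $h_{i,j}(\w_t)-h_{i,j}(\w_{t+1})$ perturbs \emph{every} coordinate of $v$ at every step regardless of whether it is sampled, so it is not damped by the probability $p$, whereas the contraction and the injected fresh noise act only on the $\Theta(p)$-fraction of sampled coordinates; carrying these two different scalings through the geometric sum is what produces the characteristic $n_1^2n_2^2/(B_1^2B_2^2\tau_1^2)$ blow-up (precisely why the coordinate moving average is slower than MSVR). A secondary technical point is that $G_t$, hence the drift, is statistically coupled to the fresh samples $\B_{3,i,j}^t$, so the drift--noise cross term cannot simply be dropped and must instead be absorbed by Young's inequality, which accounts for the extra numerical constants relative to a naive calculation.
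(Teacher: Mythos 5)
Your proposal is correct and follows essentially the same route as the paper: the paper proves Lemma~\ref{lem:20} by repeating the proof of Lemma~\ref{lem:19} with the block index $(i,j)$ and sampling probability $B_1B_2/(n_1n_2)$, namely a one-step squared-error recursion obtained by dropping the mean-zero noise cross term, absorbing the model-drift term $h_{i,j}(\w_t)-h_{i,j}(\w_{t+1})$ via Young's inequality at scale $\Theta(B_1B_2\tau_1/(n_1n_2))$ (the source of the factor $4$ in the contraction rate), unrolling the geometric sum, and passing to the norm bound by Jensen plus subadditivity of the square root. Your accounting of where each factor of $1/(p\tau_1)$ enters matches the paper's constants exactly.
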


\begin{lemma}\label{lem:21}
    Consider the coordinate moving average update for $\{u_{i,t}:i\in \S_1\}$ in Algorithm~\ref{alg:7}, assume $g_{i}(\cdot)$ is $C_g$-Lipschitz continuous for all $i\in \S_1$ and $\tau_2\leq 1$, then we have
    \begin{equation*}
    \begin{aligned}
        &\E\left[\frac{1}{n_1}\sum_{i\in \S_1}\|u_{i,t+1}-\frac{1}{n_2}\sum_{j\in\S_2}g_i(v_{i,j,t+1})\|\right]\\
        &\leq (1-\frac{B_1\tau_2}{4n_1})^{t+1} \frac{1}{n_1}\sum_{i\in \S_1}\|u_{i,0}-\frac{1}{n_2}\sum_{j\in\S_2}g_i(v_{i,j,0})\|+\frac{2\sqrt{2}\tau_2^{1/2}\sigma}{B_2^{1/2}}+\frac{4\sqrt{2}C_g M n_1^{1/2}B_2^{1/2} \tau_1}{B_1^{1/2} n_2^{1/2} \tau_2},\\
        &\E\left[\frac{1}{n_1}\sum_{i\in \S_1}\|u_{i,t+1}-\frac{1}{n_2}\sum_{j\in\S_2}g_i(v_{i,j,t+1})\|^2\right]\\
        &\leq (1-\frac{B_1\tau_2}{4n_1})^{2(t+1)} \frac{1}{n_1}\sum_{i\in \S_1}\|u_{i,0}-\frac{1}{n_2}\sum_{j\in\S_2}g_i(v_{i,j,0})\|^2+\frac{8\tau_2\sigma^2}{B_2}+\frac{32C_g^2 M^2 n_1 B_2 \tau_1^2}{B_1 n_2 \tau_2^2}.
    \end{aligned}
\end{equation*}
\end{lemma}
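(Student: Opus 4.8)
The plan is to prove Lemma~\ref{lem:21} by the same geometric one-step error recursion used for the other estimator bounds (Lemmas~\ref{lem:16},~\ref{lem:14},~\ref{lem:19},~\ref{lem:20}), now for the coordinate moving-average update of $u$ against the \emph{composite, moving} target $\bar{u}_{i,t}:=\frac1{n_2}\sum_{j\in\S_2}g_i(v_{i,j,t})$. I would first establish the squared-norm bound and then deduce the first (norm) bound from it via $\E\|X\|\le(\E\|X\|^2)^{1/2}$ and subadditivity of $\sqrt{\cdot}$; this is exactly what turns the exponent $2(t+1)$ into $t+1$, the residual $\tau_2\sigma^2/B_2$ into $\tau_2^{1/2}\sigma/B_2^{1/2}$, and the drift residual into its square root. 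The only structural novelty over the FCCO case (Lemma~\ref{lem:19}) is that the target is not $g_i(\w_t)$: its per-step change is driven by the $v$-update, which moves each $v_{i,j,t}$ by exactly $\tau_1\big(h_{i,j}(\w_t;\B_{3,i,j}^t)-v_{i,j,t}\big)$. Since $\|h_{i,j}(\w)\|\le\tilde{C}_h$ and $h_{i,j}(\w;\xi)$ has bounded variance, a short induction using convexity of $\|\cdot\|^2$ in the moving-average update gives $\E\|v_{i,j,t}\|^2\le M^2$ (no projection is needed here, unlike in Algorithm~\ref{alg:3}), so the target drift is of order $\tau_1 M$ with \emph{no} dependence on $\eta$ --- which is why Lemma~\ref{lem:21} carries no $\eta$-term, in contrast to the MSVR analogue Lemma~\ref{lem:14}.

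Concretely, fix $i\in\S_1$, condition on the past, and split on whether $i\in\B_1^t$. If $i\notin\B_1^t$ then $u_{i,t+1}=u_{i,t}$ and all $v_{i,j,t+1}=v_{i,j,t}$, so $u_{i,t+1}-\bar{u}_{i,t+1}$ equals $u_{i,t}-\bar{u}_{i,t}$. If $i\in\B_1^t$, setting $\hat{b}_{i,t}:=\frac1{B_2}\sum_{j\in\B_2^t}g_i(v_{i,j,t})$ and $d_{i,t}:=\bar{u}_{i,t+1}-\bar{u}_{i,t}$ I obtain
\[
u_{i,t+1}-\bar{u}_{i,t+1}=(1-\tau_2)(u_{i,t}-\bar{u}_{i,t})+\tau_2(\hat{b}_{i,t}-\bar{u}_{i,t})-d_{i,t},
\]
where $\hat{b}_{i,t}-\bar{u}_{i,t}$ is conditionally mean zero with second moment $\le\sigma^2/B_2$ by the bounded-variance assumption on $g_i(v_{i,j})$, and, by $C_g$-Lipschitzness of $g_i$ together with the $v$-update rule, $\|d_{i,t}\|\le\frac{C_g\tau_1}{n_2}\sum_{j\in\B_2^t}\|h_{i,j}(\w_t;\B_{3,i,j}^t)-v_{i,j,t}\|$, whence $\E[\|d_{i,t}\|^2\mid i\in\B_1^t]=\O\big(C_g^2M^2\tau_1^2B_2^2/n_2^2\big)$ from the a priori moment bound above. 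Applying Young's inequality with a parameter of order $\tau_2$ to fold the cross terms and $d_{i,t}$ into the $(1-\tau_2)^2$ contraction, then averaging over the level-$1$ sampling (which discounts the contraction by the per-index probability $B_1/n_1$) and over $i\in\S_1$, produces a recursion of the form
\[
\E\,\Phi_{t+1}\le\Big(1-\tfrac{B_1\tau_2}{4n_1}\Big)^{2}\E\,\Phi_t+\tfrac{B_1}{n_1}\,\O\!\Big(\tfrac{\tau_2^2\sigma^2}{B_2}+\tfrac{C_g^2M^2\tau_1^2B_2^2}{n_2^2\tau_2}\Big),\qquad \Phi_t:=\tfrac1{n_1}\sum_{i\in\S_1}\|u_{i,t}-\bar{u}_{i,t}\|^2 .
\]
Unrolling with $\sum_{s\ge0}\big(1-\tfrac{B_1\tau_2}{4n_1}\big)^{2s}\le\tfrac{2n_1}{B_1\tau_2}$ gives the squared-norm bound (the two residuals becoming the $\tau_2\sigma^2/B_2$ and $\tau_1^2/\tau_2^2$ terms; this matches the stated bound, and is in fact slightly tighter in the batch-size ratio since $B_2/n_2\le n_1/B_1$), and square-rooting gives the norm bound.

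The main difficulty is bookkeeping rather than conceptual: one must pick the Young parameters and keep track of the level-$1$ and level-$2$ sampling probabilities carefully enough that the target drift enters with exactly the advertised $\tau_1/\tau_2$ scaling and the correct batch-size ratios, and one must verify that the a priori bound $\E\|v_{i,j,t}\|^2\le M^2$ genuinely survives block sampling --- it does, because iterations in which $v_{i,j}$ is not refreshed only help the induction. With these in place the argument runs exactly parallel to the proofs of Lemmas~\ref{lem:19}--\ref{lem:20}.
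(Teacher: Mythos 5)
Your proposal matches the paper's proof essentially step for step: the same one-step recursion against the moving target $\frac{1}{n_2}\sum_{j\in\S_2}g_i(v_{i,j,t})$, the same Young's-inequality split with parameter of order $\frac{B_1\tau_2}{n_1}$ to absorb the target drift (which is bounded exactly as you say, via the $\tau_1$-scaled $v$-update, the level-1/level-2 sampling probabilities, and the $C_g$-Lipschitzness of $g_i$, with no $\eta$-dependence), the same geometric unrolling, and the same passage from the second-moment to the first-moment bound via Jensen and subadditivity of $\sqrt{\cdot}$. The only bookkeeping point to watch is that, to recover the stated initial term $\frac{1}{n_1}\sum_{i\in\S_1}\|u_{i,0}-\frac{1}{n_2}\sum_{j\in\S_2}g_i(v_{i,j,0})\|$ in first power, you should take the square root of the per-index second-moment bound \emph{before} averaging over $i$ (as the paper does), rather than square-rooting the averaged quantity $\Phi_t$; this, and the harmless factor-of-two slip in your geometric-sum estimate, do not affect the argument.
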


Then we have a convergence analysis similar to Theorem~\ref{thm:3}.
\begin{theorem}\label{thm:5}
    Consider non-smooth weakly-convex TCCO problem, under Assumption~\ref{ass:4},  setting $\tau_1= \O\left(\min\left\{B_3\epsilon^4,\frac{B_1^{1/2} n_2^{1/2}}{n_1^{1/2}B_2^{1/2}}B_2\epsilon^6\right\}\right)\leq 1$, $\tau_2 = \O(B_2\epsilon^4)\leq 1$, $ \eta=\O\left(\min\left\{B_3\epsilon^4,\frac{B_1^{1/2} n_2^{1/2}}{n_1^{1/2}B_2^{1/2}}B_2\epsilon^6\right\}\frac{B_1B_2}{n_1n_2}\epsilon^2\right)$, Algorithm~\ref{alg:7} converges to an $\epsilon$-stationary point of the Moreau envelope $F_{1/\bar{\rho}}$ in $T=\O\left(\max\left\{\frac{1}{B_3},\frac{n_1^{1/2}}{B_1^{1/2} B_2^{1/2} n_2^{1/2}}\epsilon^{-2}\right\}\frac{n_1n_2}{B_1B_2}\epsilon^{-8}\right)$ iterations.
\end{theorem}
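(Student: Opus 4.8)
The plan is to mirror the proof of Theorem~\ref{thm:3} essentially line for line, substituting the coordinate moving-average error recursions of Lemma~\ref{lem:20} and Lemma~\ref{lem:21} in place of the MSVR recursions of Lemma~\ref{lem:13} and Lemma~\ref{lem:14}. What makes this work is that the descent estimate for the Moreau envelope in the proof of Theorem~\ref{thm:3} never refers to how $u_{i,t}$ or $v_{i,j,t}$ are updated: the per-iteration inequality
\begin{equation*}
\E_t[F_{1/\bar\rho}(\w_{t+1})] \le F_{1/\bar\rho}(\w_t) - \tfrac{\eta}{2}\|\nabla F_{1/\bar\rho}(\w_t)\|^2 + \tfrac{\eta^2\bar\rho M^2}{2} + C_1\bar\rho\eta\,\mathcal{E}_t
\end{equation*}
is produced purely from (i) the definition of the Moreau envelope together with $\w_{t+1}=\w_t-\eta G_t$, (ii) $\E_t[\|G_t\|^2]\le M^2$, which holds under Assumption~\ref{ass:4} with $M$ depending on $C_f,C_g,C_h,\tilde C_h,\sigma$ and the batch sizes, (iii) the convexity of $f_i$, the monotonicity and Lipschitz continuity of $f_i,g_i$, and the weak convexity of $g_i$, (iv) the $L_h$-smoothness (resp. $L_h$-weak convexity) of $h_{i,j}$ used to linearize $h_{i,j}(\hat\w_t)$ about $\w_t$, and (v) the $(\bar\rho-\rho_F)$-strong convexity of $\w\mapsto F(\w)+\tfrac{\bar\rho}{2}\|\w_t-\w\|^2$, valid because $F$ is $\rho_F$-weakly convex by the argument of Proposition~\ref{prop:2} under Assumption~\ref{ass:4}. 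Here $\mathcal{E}_t$ collects the four tracking residuals $\tfrac1{n_1n_2}\sum_{i,j}\|h_{i,j}(\w_t)-v_{i,j,t}\|$, its square, $\tfrac1{n_1}\sum_i\|\tfrac1{n_2}\sum_j g_i(v_{i,j,t})-u_{i,t}\|$, and its square. So I would keep exactly the same $\bar\rho$ and $C_1$ as in Theorem~\ref{thm:3}, reuse the whole chain down to this inequality, and only change how $\mathcal{E}_t$ is bounded.

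Next I would plug in Lemma~\ref{lem:20} and Lemma~\ref{lem:21}, which give geometric decay at rates $\mu_1=\Theta(B_1B_2\tau_1/(n_1n_2))$ and $\mu_2=\Theta(B_1\tau_2/n_1)$ together with residuals $R_1,\dots,R_4$; telescoping over $t=0,\dots,T-1$, lower-bounding $F_{1/\bar\rho}(\w_T)$ by $\min_\w F(\w)$, and using $\sum_{t=0}^{T-1}(1-\mu_{\min})^t\le 1/\mu_{\min}$ with $\mu_{\min}=\min\{\mu_1,\mu_2\}$ yields
\begin{equation*}
\frac1T\sum_{t=0}^{T-1}\E\big[\|\nabla F_{1/\bar\rho}(\w_t)\|^2\big] = \O\!\Big(\tfrac1T\big(\tfrac1\eta+\tfrac1{\mu_{\min}}\big) + \eta + R_1 + R_2 + R_3 + R_4\Big).
\end{equation*}
The essential difference from Theorem~\ref{thm:3} is that the moving-average residuals carry a factor $n\eta/\tau$ rather than the MSVR factor $n\eta/\tau^{1/2}$, and that propagating the $v$-level error into the $u$-level error contributes the cross term $n_1^{1/2}B_2^{1/2}\tau_1/(B_1^{1/2}n_2^{1/2}\tau_2)$ in $R_3$; the squared residuals $R_2,R_4$ are of lower order ($\O(\epsilon^4)$) under the final parameter choice. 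I would then set $\tau_2=\O(B_2\epsilon^4)$ so that $\tau_2^{1/2}/B_2^{1/2}\le\epsilon^2$; choose $\tau_1$ so that both $\tau_1^{1/2}/B_3^{1/2}\le\epsilon^2$ and the cross term is $\le\epsilon^2$, i.e. $\tau_1=\O(\min\{B_3\epsilon^4,\;(B_1^{1/2}n_2^{1/2}/(n_1^{1/2}B_2^{1/2}))B_2\epsilon^6\})$; and finally pick $\eta$ as small as needed so that $\eta$ and $n_1n_2\eta/(B_1B_2\tau_1)$ are each $\le\epsilon^2$, which gives the stated $\eta=\O(\min\{B_3\epsilon^4,\;(B_1^{1/2}n_2^{1/2}/(n_1^{1/2}B_2^{1/2}))B_2\epsilon^6\}\tfrac{B_1B_2}{n_1n_2}\epsilon^2)$. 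Tracing the binding constraint $T=\Omega(1/(\eta\epsilon^2))$ — which dominates $\Omega(1/(\mu_{\min}\epsilon^2))$ for these choices — then produces $T=\O(\max\{1/B_3,\;(n_1^{1/2}/(B_1^{1/2}B_2^{1/2}n_2^{1/2}))\epsilon^{-2}\}\tfrac{n_1n_2}{B_1B_2}\epsilon^{-8})$, and averaging the returned iterate $\w_{\bar t}$ gives an $\epsilon$-stationary point.

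The main obstacle I expect is Lemma~\ref{lem:21}, the moving-average error recursion for $u_{i,t}$: the quantity it tracks, $\tfrac1{n_2}\sum_{j\in\S_2}g_i(v_{i,j,t})$, is a moving target that drifts with both $\w_t$ and all of the inner estimates $v_{i,j,t}$, while each step only refreshes the coordinates $i\in\B_1^t$ using a mini-batch $\B_2^t$ of the $j$-index. Obtaining the clean geometric-contraction-plus-residual form requires splitting the per-step increment into (a) the deterministic drift of $g_i(v_{i,j,t})$, bounded using $C_g$-Lipschitzness of $g_i$ and the $v$-error control of Lemma~\ref{lem:20}, and (b) the coordinate- and mini-batch-sampling variance, and then solving the resulting coupled two-level recursion so that the $\tau_1$-scale error feeds correctly into the $\tau_2$-scale error. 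This coupling is exactly what forces the extra $\tau_1/\tau_2$ cross term and, ultimately, the $\epsilon^{-8}$ rate (versus $\epsilon^{-6}$ for the variance-reduced SONT); once Lemmas~\ref{lem:20}--\ref{lem:21} are in hand, the descent argument and the parameter bookkeeping are routine adaptations of the proof of Theorem~\ref{thm:3}.
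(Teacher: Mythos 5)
Your proposal follows the paper's proof essentially verbatim: the paper likewise observes that only the update rules for $u_{i,t}$ and $v_{i,j,t}$ change, reuses the entire descent chain from Theorem~\ref{thm:3} with the same $\bar{\rho}$ and $C_1$, substitutes the moving-average error bounds of Lemmas~\ref{lem:20}--\ref{lem:21} (with their $n\eta/\tau$ residuals and the $n_1^{1/2}B_2^{1/2}\tau_1/(B_1^{1/2}n_2^{1/2}\tau_2)$ cross term), and does the same parameter bookkeeping driven by the binding constraint $T=\Omega(1/(\eta\epsilon^2))$. Your final complexity $\O\big(\max\{\tfrac{1}{B_3},\tfrac{n_1^{1/2}}{B_1^{1/2}B_2^{1/2}n_2^{1/2}}\epsilon^{-2}\}\tfrac{n_1n_2}{B_1B_2}\epsilon^{-8}\big)$ matches the theorem statement and is arithmetically consistent with the chosen $\eta$ (the $\epsilon^{-10}$ appearing at the very end of the paper's own proof is a typo).
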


\begin{proof}[Proof of Theorem~\ref{thm:5}]
    Since the only difference between SONT and Algorithm~\ref{alg:7} is the update for $\{u_{i,t}:i\in \S_1\}$ and $\{v_{i,j,t}:(i,j)\in \S_1\times \S_2\}$, the proof of Theorem~\ref{thm:3} still holds with the error bound replaced by Lemma~\ref{lem:20} and Lemma~\ref{lem:21}, i.e., 

\begin{equation*}
    \begin{aligned}
        &\frac{1}{n_1n_2}\sum_{i\in S_1}\sum_{j\in S_2}\E[\|h_{i,j}(\w_t)-v_{i,j,t}\|]\leq (1-\mu_1)^t\frac{1}{n_1n_2}\sum_{i\in S_1}\sum_{j\in S_2}\|h_{i,j}(\w_0)-v_{i,j,0}\|+R_1,\\
        &\frac{1}{n_1n_2}\sum_{i\in S_1}\sum_{j\in S_2}\E[\|h_{i,j}(\w_t)-v_{i,j,t}\|^2]\leq (1-\mu_1)^t\frac{1}{n_1n_2}\sum_{i\in S_1}\sum_{j\in S_2}\|h_{i,j}(\w_0)-v_{i,j,0}\|^2+R_2,\\
        &\frac{1}{n_1}\sum_{i\in S_1}\E\bigg[\bigg\|\frac{1}{n_2}\sum_{j\in S_2}g_i(v_{i,j,t})-u_{i,t}\bigg\|\bigg] \leq (1-\mu_2)^t\frac{1}{n_+}\sum_{i\in S_+}\bigg\|\frac{1}{n_2}\sum_{j\in S_2}g_i(v_{i,j,0})-u_{i,0}\bigg\|+R_3,\\
        &\frac{1}{n_1}\sum_{i\in S_1}\E\bigg[\bigg\|\frac{1}{n_2}\sum_{j\in S_2}g_i(v_{i,j,t})-u_{i,t}\bigg\|^2\bigg] \leq (1-\mu_2)^t\frac{1}{n_+}\sum_{i\in S_+}\bigg\|\frac{1}{n_2}\sum_{j\in S_2}g_i(v_{i,j,0})-u_{i,0}\bigg\|^2+R_4,
    \end{aligned}
\end{equation*}
with
\begin{equation*}
    \begin{aligned}
        &\mu_1 = \frac{B_1B_2\tau_1}{4n_1n_2},\quad \mu_2 = \frac{B_1\tau_2}{4n_1}, \quad  R_1 = \frac{2\sqrt{2}\tau_1^{1/2}\sigma}{B_3^{1/2}}+\frac{4\sqrt{2}n_1n_2C_h M \eta}{B_1B_2\tau_1}\\
        & R_2 = \frac{8\tau_1\sigma^2}{B_3}+\frac{32n_1^2n_2^2C_h^2 M^2 \eta^2}{B_1^2B_2^2\tau_1^2},\quad R_3 = \frac{2\sqrt{2}\tau_2^{1/2}\sigma}{B_2^{1/2}}+\frac{4\sqrt{2}C_g M n_1^{1/2}B_2^{1/2} \tau_1}{B_1^{1/2} n_2^{1/2} \tau_2},\\
        &R_4 = \frac{8\tau_2\sigma^2}{B_2}+\frac{32C_g^2 M^2 n_1 B_2 \tau_1^2}{B_1 n_2 \tau_2^2}
    \end{aligned}
\end{equation*}

%     \begin{equation*}
%     \begin{aligned}
%         &\E\left[\frac{1}{n_1n_2}\sum_{i\in \S_1}\sum_{j\in \S_2}\|v_{i,j,t+1}-h_{i,j}(\w_{t+1})\|\right]\\
%         &\leq (1-\frac{B_1B_2\tau}{4n_1n_2})^{t+1} \|v_{i,j,0}-h_{i,j}(\w_0)\|+\frac{2\sqrt{2}\tau_1^{1/2}\sigma}{B_3^{1/2}}+\frac{4\sqrt{2}n_1n_2C_h M \eta}{B_1B_2\tau_1},\\
%         &\E[\|v_{i,j,t+1}-h_{i,j}(\w_{t+1})\|^2]\\
%         &\leq (1-\frac{B_1B_2\tau}{4n_1n_2})^{2(t+1)} \|v_{i,j,0}-h_{i,j}(\w_0)\|^2+\frac{8\tau_1\sigma^2}{B_3}+\frac{32n_1^2n_2^2C_h^2 M^2 \eta^2}{B_1^2B_2^2\tau_1^2}.
%     \end{aligned}
% \end{equation*}

%     \begin{equation*}
%     \begin{aligned}
%         &\E\left[\frac{1}{n_1}\sum_{i\in \S_1}\|u_{i,t+1}-\frac{1}{n_2}\sum_{j\in\S_2}g_i(v_{i,j,t+1})\|\right]\\
%         &\leq (1-\frac{B_1\tau_2}{4n_1})^{t+1} \frac{1}{n_1}\sum_{i\in \S_1}\|u_{i,0}-\frac{1}{n_2}\sum_{j\in\S_2}g_i(v_{i,j,0})\|+\frac{2\sqrt{2}\tau_2^{1/2}\sigma}{B_2^{1/2}}+\frac{4\sqrt{2}C_g M n_1^{1/2}B_2^{1/2} \tau_1}{B_1^{1/2} n_2^{1/2} \tau_2}
%     \end{aligned}
% \end{equation*}

Then the proof proceeds to
\begin{equation*}
    \begin{aligned}
        &\frac{1}{T}\sum_{t=0}^{T-1}\E[\|\nabla F_{1/\bar{\rho}}(\w_t)\|^2]\\
        &\leq \O\left(\frac{1}{T}(\frac{1}{\eta}+\frac{1}{\mu_{min}})+\eta+R_1+R_2+R_3+R_4\right)\\
        &\leq \O\bigg(\frac{1}{T}(\frac{1}{\eta}+\frac{1}{\mu_{min}})+\eta+\frac{\tau_1^{1/2}\sigma}{B_3^{1/2}}+\frac{\tau_1\sigma^2}{B_3}+\frac{\tau_2^{1/2}\sigma}{B_2^{1/2}} +\frac{\tau_2\sigma^2}{B_2} +\frac{n_1n_2 \eta}{B_1B_2\tau_1} +\frac{n_1^2n_2^2 \eta^2}{B_1^2B_2^2\tau_1^2} \\
        &\quad +\frac{n_1^{1/2}B_2^{1/2} \tau_1}{B_1^{1/2} n_2^{1/2} \tau_2}+\frac{n_1 B_2 \tau_1^2}{B_1 n_2 \tau_2^2}\bigg)\\
        &\leq \O\bigg(\frac{1}{T}(\frac{1}{\eta}+\frac{1}{\mu_{min}})+\frac{\tau_1^{1/2}\sigma}{B_3^{1/2}}+\frac{\tau_2^{1/2}\sigma}{B_2^{1/2}} +\frac{n_1n_2 \eta}{B_1B_2\tau_1} +\frac{n_1^{1/2}B_2^{1/2} \tau_1}{B_1^{1/2} n_2^{1/2} \tau_2}\bigg).
    \end{aligned}
\end{equation*}
Setting
\begin{equation*}
\begin{aligned}
    &\tau_1=\O\left(\min\left\{B_3\epsilon^4,\frac{B_1^{1/2} n_2^{1/2}}{n_1^{1/2}B_2^{1/2}}B_2\epsilon^6\right\}\right), \quad \tau_2 = \O(B_2\epsilon^4), \quad \\
    &\eta = \O\left(\min\left\{B_3\epsilon^4,\frac{B_1^{1/2} n_2^{1/2}}{n_1^{1/2}B_2^{1/2}}B_2\epsilon^6\right\}\frac{B_1B_2}{n_1n_2}\epsilon^2\right),
\end{aligned}
\end{equation*}
then to reach a nearly $\epsilon$-stationary point, Algorithm~\ref{alg:7} need
\begin{equation*}
    T=\O\left(\max\left\{\frac{1}{B_3},\frac{n_1^{1/2}}{B_1^{1/2} B_2^{1/2} n_2^{1/2}}\epsilon^{-2}\right\}\frac{n_1n_2}{B_1B_2}\epsilon^{-10}\right)
\end{equation*}
iterations.
\end{proof}

\section{Details for TPAUC Maximization}\label{app:tpauc}

\subsection{Assumption Verification}\label{sec:tpaucass}
We first present two lemmas about the weak convexity of the objective in the regular learning setting and in the multi-instance learning setting with mean pooling. 
\begin{lemma}\label{lem:17}
    Consider the formulation in problem~(\ref{prob:9}) in the regular learning setting and assume that function $\ell(\cdot)$ is non-decreasing, $C_{\ell}$ Lipschitz continuous and $\rho_{\ell}$-weakly-convex, and function $h_{\w}(X_i)$ is $C_{h}$ Lipschitz continuous and $\rho_{h}$-weakly-convex.
    % {\color{red}$L(\w; X_i, X_j) = \ell(h_\w(X_j) - h_\w(X_i))$ is $L_L$-weakly-convex and $C_L$-Lipschitz continuous}, 
    then the following statements are true: 
    % i) $f_i(s',u)$ is convex and Lipschitz continuous w.r.t. $(s',u)$, and non-decreasing w.r.t. $u$; ii) $\phi_i(\w,s_i)$ is weakly-convex and Lipschitz continuous w.r.t. $(\w,s_i)$; iii) $\frac{1}{n_+}\sum_{i\in \S_+}f_i(s',g_i(\w,s_i))$ is weakly-convex w.r.t. $(s',\w, \frac{\s}{\sqrt{n_+}})$, where $\s$ is the vector consisting of $\{s_i:i\in\S_+\}$.
     \begin{itemize}
         \item $f_i(g,s')$ is convex and $C_f$-Lipschitz continuous w.r.t. $(g,s')$, and non-decreasing w.r.t. $g$.  
         \item $\psi_i(\w,s_i)$ is $\rho_\psi$-weakly-convex w.r.t. $(\w,s_i)$, and the stochastic estimator of the finite sum function value $\psi_i(\w,s_i)$ is $C_\psi$-Lipschitz continuous w.r.t. $(\w,s_i)$.
         \item $\frac{1}{n_+}\sum_{i\in \S_+}f_i(\psi_i(\w,s_i),s')$ is $\rho_F$-weakly-convex w.r.t. $(\w, \s,s')$.
     \end{itemize}
\end{lemma}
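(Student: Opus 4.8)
The plan is to prove the three bullets in sequence, each using the previous one together with two elementary facts about weakly-convex functions that follow from the subgradient characterization of weak convexity recalled above: (a) a nonnegative finite average of $\rho$-weakly-convex functions is $\rho$-weakly-convex; and (b) two composition rules --- if $\phi:\R\to\R$ is convex, $L$-Lipschitz and non-decreasing and $c$ is $\rho$-weakly-convex, then $\phi\circ c$ is $L\rho$-weakly-convex, while if $c$ is $L_c$-smooth with $\|\nabla c\|\le C_c$ and $\phi$ is $\rho_\phi$-weakly-convex and $C_\phi$-Lipschitz, then $\phi\circ c$ is $(C_\phi L_c+\rho_\phi C_c^2)$-weakly-convex --- and in both rules an argument entering affinely imposes no sign condition.

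First I would dispatch the first bullet directly: $f_i(g,s')=s'+\tfrac1\alpha(g-s')_+$ is the sum of a linear term and the composition of the convex, $1$-Lipschitz, non-decreasing map $(\cdot)_+$ with the linear map $(g,s')\mapsto g-s'$, hence convex; a subgradient satisfies $\partial_g f_i\in[0,\tfrac1\alpha]$ and $\partial_{s'} f_i\in[1-\tfrac1\alpha,1]$, so $f_i$ is $C_f$-Lipschitz with $C_f=\sqrt{\alpha^{-2}+\max\{1,|1-\alpha^{-1}|\}^2}$, and $\partial_g f_i\ge 0$ gives monotonicity in $g$.

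The second bullet is the core. Set $u_{ij}(\w):=h_\w(X_j)-h_\w(X_i)$. Using that $h_\w(X)$ is smooth (as stated in the main text), both $h_\w(X_j)$ and $-h_\w(X_i)$ are weakly convex and $u_{ij}$ is smooth with $\|\nabla u_{ij}\|\le 2C_h$; hence by the second composition rule $\ell(u_{ij}(\w))$ is weakly convex with modulus $O(C_\ell\rho_h+\rho_\ell C_h^2)$. Next, $t\mapsto(t-s_i)_+$ is convex, $1$-Lipschitz and non-decreasing with $s_i$ affine, so the first composition rule makes $(\ell(u_{ij}(\w))-s_i)_+$ weakly convex jointly in $(\w,s_i)$ with the same modulus; adding the affine term $s_i$ and averaging over $j\in\S_-$ (fact (a)) yields $\rho_\psi$-weak convexity of $\psi_i(\w,s_i)$ with $\rho_\psi=O(\beta^{-1}(C_\ell\rho_h+\rho_\ell C_h^2))$. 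The Lipschitz claim for the stochastic estimator $\psi_i(\w,s_i;\B)$ (a mini-batch average over $\S_-$ of the same summands, plus $s_i$) follows from the chain rule for Lipschitz constants: each summand is Lipschitz with a constant depending only on $\beta^{-1},C_\ell,C_h$, uniformly over batches $\B$, so $C_\psi$ is their obvious combination.

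For the third bullet I would mirror the proof of Proposition~\ref{prop:1} with $s',\s$ folded into the variables: treat $f_i(\cdot,s')$ as the outer function (convex, $C_f$-Lipschitz, non-decreasing in its first slot, $s'$ affine) and $\psi_i(\w,s_i)$, shown $\rho_\psi$-weakly-convex above, as the inner function; applying the first composition rule coordinatewise shows $f_i(\psi_i(\w,s_i),s')$ is $O(C_f\rho_\psi)$-weakly-convex in $(\w,s_i,s')$ --- no $\rho_f$ term is needed because $f_i$ is genuinely convex here --- and averaging over $i\in\S_+$ (fact (a)) produces the stated $\rho_F$-weak convexity in $(\w,\s,s')$. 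The main obstacle is the second bullet: one must recognize that the inner layer is the \emph{difference} $h_\w(X_j)-h_\w(X_i)$, so mere weak convexity of $h_\w$ does not suffice and the smoothness (two-sided weak-convexity control) of $h_\w$ is genuinely used; and at each nested level one must check that the monotonicity/sign hypothesis of the composition rule matches the weak convexity or smoothness available at the level below, so that nonnegativity of outer subgradients never flips the key inequality. Tracking the constants $\rho_\psi,\rho_F$ through these nested compositions is then routine bookkeeping.
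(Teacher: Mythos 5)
Your proposal is correct and follows essentially the same route as the paper's proof: the first bullet by direct subgradient computation on the hinge, the second by establishing weak convexity of $\ell(h_\w(X_j)-h_\w(X_i))$ and then composing with the convex, Lipschitz, non-decreasing map $(\ell,s_i)\mapsto s_i+(\ell-s_i)_+/\beta$ before averaging over $j$, and the third by one more such composition with $f_i$ followed by averaging over $i$. Your explicit remark that the inner difference $h_\w(X_j)-h_\w(X_i)$ requires two-sided (smoothness-type) control of $h_\w$ rather than one-sided weak convexity is in fact more careful than the paper, whose proof of this step silently invokes a lower bound on $-h_{\tilde\w}(X_i)+h_\w(X_i)$ that does not follow from weak convexity alone (consistent with the main text, which states smoothness of $h_\w$ as the operative hypothesis).
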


\begin{lemma}\label{lem:18} 
    Consider the formulation in  problem~(\ref{prob:9}) in the multi-instance learning setting with mean pooling, and assume that function $h_i(\w) =  \frac{1}{|X_i|}\sum_{\x\in X_i}e(\w_e; \x)^{\top}\w_c$ is $\tilde{L}_h$-smooth and is bounded by $\tilde{C}_h$, and $h_{i}(\w;\xi) =e(\w_e; \xi)^{\top}\w_c $ is $C_h$-Lipschitz continuous and has bounded variance $\sigma^2$, $\ell$ is non-decreasing and $L_{\ell}$-weakly-convex, then the followings are true: 
    \begin{itemize}
    \item $f_i(g,s')$ is convex and $C_f$-Lipschitz-continuous w.r.t. $(g,s')$, and non-decreasing w.r.t. $g$; 
    \item $g_i(v,s_i)  = s_i+\frac{(\ell(v)-s_i)_+}{\beta}$ is $\rho_g$-weakly convex and non-decreasing w.r.t. $v$, convex w.r.t. $s_i$, and $C_g$-Lipschitz continuous w.r.t. $(v,s_i)$;
    \item $h_{i,j}(\w) = h_j(\w)- h_i(\w)$ is $L_h$-weakly-convex, and $h_{i,j}(\w;\xi,\zeta)$ is $C_h$-Lipschitz continuous; 
    \item $ \frac{1}{n_+}\sum_{X_i\in \S_+} f_i(g_i(h_{i,j}(\w),s_i),s')$ is $\rho_F$-weakly-convex w.r.t. $(\w,\s,s')$.
    \end{itemize}
\end{lemma}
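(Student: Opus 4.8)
The plan is to establish the four bullets separately: the first three are routine composition-rule verifications, and the real content is the last one, which reprises the proof of Proposition~\ref{prop:2} (equivalently, the weak-convexity bookkeeping inside the proof of Theorem~\ref{thm:3}) while additionally carrying the auxiliary scalars $s_i$ and $s'$ through the composition. For the first bullet, write $f_i(g,s')=s'+\tfrac1\alpha(g-s')_+$; since $(\cdot)_+$ is convex and non-decreasing and $g-s'$ is affine, $(g-s')_+$ is jointly convex in $(g,s')$, and scaling by $1/\alpha>0$ and adding the linear term $s'$ preserves convexity. A subgradient in $g$ equals $\tfrac1\alpha\I(g>s')\in[0,1/\alpha]$, giving monotonicity in $g$, and the full subgradient has norm bounded by a constant depending only on $\alpha$, giving $C_f$-Lipschitz continuity. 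For the second bullet, write $g_i(v,s_i)=s_i+\tfrac1\beta(\ell(v)-s_i)_+$; monotonicity in $v$ holds because $\ell$ and $(\cdot)_+$ are non-decreasing, and convexity in $s_i$ follows as above. For weak convexity I use the standard rule that a convex, non-decreasing, $\tfrac1\beta$-Lipschitz function composed with an $L_\ell$-weakly-convex function is $\tfrac{L_\ell}{\beta}$-weakly convex; applied to $v\mapsto\ell(v)-s_i$ (which is $L_\ell$-weakly convex jointly in $(v,s_i)$) this yields $\rho_g=L_\ell/\beta$. Lipschitz continuity of $g_i$ then follows from bounding $\partial_v g_i$ by $C_\ell/\beta$ and $\partial_{s_i}g_i$ by a constant; moreover, since $h_{i,j}(\w)=h_j(\w)-h_i(\w)$ is bounded by $2\tilde{C}_h$, $\ell$ is only ever evaluated on a compact interval on which any weakly convex function is automatically Lipschitz, so no separate Lipschitz hypothesis on $\ell$ is needed. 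For the third bullet, $h_i,h_j$ being $\tilde{L}_h$-smooth makes $h_{i,j}=h_j-h_i$ have $2\tilde{L}_h$-Lipschitz gradient, hence $L_h$-weakly convex with $L_h=2\tilde{L}_h$, and $h_{i,j}(\w;\xi,\zeta)=h_j(\w;\zeta)-h_i(\w;\xi)$ is $2C_h$-Lipschitz.

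For the fourth bullet I proceed in two layers. First, I show $\tilde{g}_i(\w,s_i):=\tfrac1{n_-}\sum_{X_j\in\S_-}g_i(h_{i,j}(\w),s_i)$ is $\rho_{\tilde{g}}$-weakly convex in $(\w,s_i)$ with $\rho_{\tilde{g}}=C_gL_h+\rho_gC_h^2$ (up to a harmless $\max$ with $\rho_g$ coming from the $s_i$ slot). Fixing a base point $(\w_0,s_{i,0})$, write the weak-convexity inequality for $g_i$ at $(h_{i,j}(\w_0),s_{i,0})$ evaluated at $(h_{i,j}(\w),s_i)$; bound the quadratic remainder using $\|h_{i,j}(\w)-h_{i,j}(\w_0)\|\le C_h\|\w-\w_0\|$; and, crucially, use that the $v$-component of the subgradient of $g_i$ is nonnegative (monotonicity in $v$) together with the smoothness lower bound $h_{i,j}(\w)\ge h_{i,j}(\w_0)+\langle\nabla h_{i,j}(\w_0),\w-\w_0\rangle-\tfrac{L_h}2\|\w-\w_0\|^2$ to replace $h_{i,j}(\w)-h_{i,j}(\w_0)$ by an affine-in-$(\w,s_i)$ lower bound minus $\tfrac{C_gL_h}2\|\w-\w_0\|^2$; averaging over $j$ preserves this. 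Second, I show $\tfrac1{n_+}\sum_{X_i\in\S_+}f_i(\tilde{g}_i(\w,s_i),s')$ is $\rho_F$-weakly convex in $(\w,\s,s')$: because $f_i$ is jointly convex and non-decreasing in its first argument with $\partial_g f_i\in[0,C_f]$, first apply convexity of $f_i$ (no second-order loss) and then substitute the weak-convexity lower bound for $\tilde{g}_i$ in the first slot and linearity in the $s'$ slot, picking up a loss of only $\tfrac{C_f\rho_{\tilde{g}}}2\|\cdot\|^2$; averaging over $i$ keeps the same modulus (each $s_i$ appears in a single summand), yielding $\rho_F=C_f(C_gL_h+\rho_gC_h^2)$, which is exactly the expression in Proposition~\ref{prop:2} specialized to scalar inner and middle outputs, so that the $\sqrt{d_1},\sqrt{d_2}$ factors drop out and the $\rho_fC_g^2C_h^2$ term vanishes since $f_i$ here is convex.

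The main obstacle is the fourth bullet, and within it the delicate point is keeping every second-order error term in the form $\|\cdot\|^2$ rather than $\|\cdot\|^4$: naively substituting the affine approximation of $h_{i,j}$ into the weak-convexity inequality of $g_i$ would produce a $\|\w-\w_0\|^4$ term. This is avoided precisely by the interplay of (i) expanding $g_i$'s weak convexity against the actual increment $h_{i,j}(\w)-h_{i,j}(\w_0)$ and bounding its square through the Lipschitz constant $C_h$, and (ii) using nonnegativity of the outer subgradients so that the $-\tfrac{L_h}2\|\cdot\|^2$ remainder from smoothness of $h_{i,j}$ is only ever multiplied by a bounded nonnegative scalar; the same two ingredients make the outer composition with $f_i$ lossless at second order. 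All remaining steps (the Lipschitz/variance bookkeeping and checking that averaging over $i,j$ does not inflate the constants) are routine.
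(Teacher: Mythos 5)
Your proposal is correct and follows essentially the same route as the paper: the first three bullets are verified by the same subgradient/composition bookkeeping, and the fourth bullet uses the identical key mechanism — expand convexity of $f_i$, use nonnegativity and boundedness of the outer subgradients to absorb the $L_\ell$-weak-convexity and $L_h$-smoothness remainders of the inner maps, and control the quadratic terms through the Lipschitz constant $C_h$ — yielding the same modulus up to the paper's own loose choice of constants. Your two-layer organization (first the middle composite $\tilde g_i$, then the outer $f_i$) is just the Proposition~\ref{prop:2}-style packaging of the paper's single chain of inequalities, and your side remark that boundedness of $h_{i,j}$ supplies the Lipschitz continuity of $\ell$ is a reasonable patch for a hypothesis the lemma statement leaves implicit.
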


%\begin{lemma}
%The objective~(\ref{prob:9}) is convex in terms of $s'$, and convex in terms of $\s$ when $\ell(\cdot)$ is a convex function. 
%\end{lemma}

% \subsection{Proofs of Lemma~\ref{lem:17} and Lemma~\ref{lem:18}}
\subsubsection{Proof of Lemma~\ref{lem:17}}
\begin{proof}[Proof of Lemma~\ref{lem:17}]
The convexity of $f_i(g,s')$ with respect to $(g,s')$ follows from the convexity definition. With subgradients $\partial_{s'} f_i(g,s')\in [1-\frac{1}{\alpha},1]$, $\partial_g f_i(g,s')\in [0,\frac{1}{\alpha}]$, we can see that $f_i(g,s')$ is $\frac{1}{\alpha}$-Lipschitz continuous w.r.t. $(g,s')$, and non-decreasing w.r.t. $u$.  

We first show that $\ell(h_\w(X_j)-h_\w(X_i))$ is weakly-convex w.r.t. $\w$.
\begin{equation*}
    \begin{aligned}
        &\ell(h_{\tilde{\w}}(X_j)-h_{\tilde{\w}}(X_i))\\
        &\geq \ell(h_\w(X_j)-h_\w(X_i)) +\langle \partial \ell(h_\w(X_j)-h_\w(X_i)), (h_{\tilde{\w}}(X_j)-h_{\tilde{\w}}(X_i))-(h_\w(X_j)-h_\w(X_i))\rangle \\
        &\quad +\frac{\rho_\ell}{2}\|(h_{\tilde{\w}}(X_j)-h_{\tilde{\w}}(X_i))-(h_\w(X_j)-h_\w(X_i))\|^2\\
        &\stackrel{(a)}\geq \ell(h_\w(X_j)-h_\w(X_i)) +\langle \partial \ell(h_\w(X_j)-h_\w(X_i)), \langle \nabla h_\w(X_j) - \nabla h_\w(X_i), \tilde{\w}-\w\rangle\rangle \\
        &\quad +2\rho_\ell C_h^2\|\tilde{\w}-\w\|^2\\
    \end{aligned}
\end{equation*}
where (a) uses the weak-convexity of $h_{\w}(X_i)$ and $h_{\w}(X_j)$,
\begin{equation*}
    \begin{aligned}
        &h_{\tilde{\w}}(X_j)-h_\w(X_j) \geq \langle \nabla h_\w(X_j) , \tilde{\w}-\w\rangle -\frac{\rho_h}{2}\|\tilde{\w}-\w\|^2,\\
        &-h_{\tilde{\w}}(X_i)+h_\w(X_i)\geq - \langle \nabla h_\w(X_i) , \tilde{\w}-\w\rangle +\frac{\rho_h}{2}\|\tilde{\w}-\w\|^2.
    \end{aligned}
\end{equation*}
Thus $\ell(h_\w(X_j)-h_\w(X_i))$ is $4\rho_\ell C_h^2$-weakly-convex w.r.t. $\w$.

By convexity of $(\ell, s_i) \mapsto s_i+\frac{(\ell-s_i)_+}{\beta}$, we have
\begin{equation*}
    \begin{aligned}
        &\psi_i(\tilde{\w},\tilde{s}_i)\\
        &\geq \psi_i(\w,s_i)+\langle \partial_\ell \psi_i(\w,s_i), \ell(h_{\tilde{\w}}(X_j)-h_{\tilde{\w}}(X_i))-\ell(h_\w(X_j)-h_\w(X_i))\rangle + \langle \partial_{s_i} \psi_i(\w,s_i),\tilde{s}_i-s_i\rangle \\
        &\stackrel{(a)}{\geq} \psi_i(\w,s_i)+ \partial_\ell \psi_i(\w,s_i)\bigg[  \partial \ell(h_\w(X_j)-h_\w(X_i)) \langle \nabla h_\w(X_j) - \nabla h_\w(X_i), \tilde{\w}-\w\rangle\rangle-2\rho_\ell C_h^2\|\tilde{\w}-\w\|^2\bigg]\\
        &\quad + \langle \partial_{s_i} \psi_i(\w,s_i),\tilde{s}_i-s_i\rangle \\
        &\stackrel{(b)}{\geq} \psi_i(\w,s_i)+ \partial_\ell \psi_i(\w,s_i)  \partial \ell(h_\w(X_j)-h_\w(X_i)) \langle \nabla h_\w(X_j) - \nabla h_\w(X_i), \tilde{\w}-\w\rangle\rangle\\
        &\quad + \langle \partial_{s_i} \psi_i(\w,s_i),\tilde{s}_i-s_i\rangle -\frac{2\rho_\ell C_h^2}{\beta}\|\tilde{\w}-\w\|^2\\
    \end{aligned}
\end{equation*}
where (a) follows from the monotonicity of $\psi_i$ w.r.t. $\ell$ and weak-convexity of $\ell(h_\w(X_j)-h_\w(X_i))$, and (b) is due to the Lipschitz continuity of $(\ell, s_i) \mapsto s_i+\frac{(\ell-s_i)_+}{\beta}$ w.r.t. $\ell$. Thus $\psi_i$ is $\frac{4\rho_\ell C_h^2}{\beta}$-weakly-convex w.r.t. $(\w,s_i)$.

With a similar argument using the convexity and Lipschitz continuity of $f_i(g,s')$ w.r.t. $(g,s')$ and the weak-convexity of $\psi_i(\w,s_i)$, we can show that $f_i(\psi_i(\w,s_i),s')$ is $\frac{4\rho_\ell C_h^2}{\beta}$-weakly-convex w.r.t. $(\w, s_i,s')$. Thus, $F(\w, s_i,s') $ is $\rho_F = \frac{4\rho_\ell C_h^2}{\beta}$-weakly-convex w.r.t. $(\w, \s,s')$.

Now we show the Lipschitz continuity of $\psi_i(\w,s_i;X_j)$, i.e. an unbiased stochastic estimator of $\psi_i(\w,s_i)$. We have
\begin{equation*}
    \begin{aligned}
        &\|\psi_i(\w,s_i;X_j)-\psi_i(\tilde{\w},\tilde{s}_i;X_j)\|^2\\
        &= \left\|  (s_i+\frac{(\ell(h_{\w}(X_j)-h_{\w}(X_i))-s_i)_+}{\beta})-(\tilde{s}_i+\frac{(\ell(h_{\tilde{\w}}(X_j)-h_{\tilde{\w}}(X_i))-\tilde{s}_i)_+}{\beta})\right\|^2\\
        &\leq 2\|s_i-\tilde{s}_i\|^2+2\left\|\frac{(\ell(h_{\w}(X_j)-h_{\w}(X_i))-s_i)_+}{\beta}-\frac{(\ell(h_{\tilde{\w}}(X_j)-h_{\tilde{\w}}(X_i))-\tilde{s}_i)_+}{\beta}\right\|^2\\
        &\leq 2\|s_i-\tilde{s}_i\|^2+\frac{2}{\beta^2}(8C_\ell^2C_h^2\|\tilde{\w}-\w\|^2+2\|\tilde{s}_i-s_i\|^2)\\
        &\leq (2+\frac{4+16C_\ell^2C_h^2}{\beta^2})(\|\tilde{\w}-\w\|^2+\|\tilde{s}_i-s_i\|^2).
    \end{aligned}
\end{equation*}
Thus $\psi_i(\w,s_i;X_j)$ is $(2+\frac{4+16C_\ell^2C_h^2}{\beta^2})^{1/2}$-Lipschitz continuous w.r.t. $(\w,s_i)$. 
% Now we show the bounded variance:
% \begin{equation*}
%     \begin{aligned}
%         &\E_{X_j\in\S_-}[\|\psi_i(\w,s_i;X_j)-\psi_i(\w,s_i)\|^2]\\
%         & = \frac{1}{n_-}\sum_{X_j\in\S_-}\| (s_i+\frac{(\ell(h_{\w}(X_j)-h_{\w}(X_i))-s_i)_+}{\beta})-\frac{1}{n_-}\sum_{X_j\in\S_-}(s_i+\frac{(\ell(h_{\w}(X_j)-h_{\w}(X_i))-s_i)_+}{\beta})\|^2
%     \end{aligned}
% \end{equation*}
\end{proof}

\subsubsection{Proof of Lemma~\ref{lem:18}}
\begin{proof}[Proof of Lemma~\ref{lem:18}]
    First of all, the convexity of $f_i(u,s')$ w.r.t. $(u,s')$ and the convexity of $g_i(v_{ij},s_i)$ w.r.t. $(\ell,s_i)$ directly follows from the convexity definition. Moreover, one can see from the formulation that $\partial_{s'}f_i(g,s')\in [1-\frac{1}{\alpha},1]$, $\partial_u f_i(g,s')\in [0,\frac{1}{\alpha}]$, $\partial_{\ell}g_i(v_{ij},s_i)\in [1-\frac{1}{\beta},1]$, $\partial_{s_i} g_i(v_{ij},s_i)\in [0,\frac{1}{\beta}]$. Thus $f_i$ is $C_f = \frac{1}{\alpha}$-Lipschitz continuous w.r.t. $(u,s')$ and non-decreasing w.r.t. $u$, $g_i$ is $\frac{1}{\beta}$-Lipschitz continuous w.r.t. $(\ell,s_i)$ and non-decreasing w.r.t. $\ell$. Since $\ell(\cdot)$ is non-decreasing, $g_i(v_{ij},s_i)$ is non-decreasing w.r.t. $v_{ij}$. As a result of Proposition~\ref{prop:1}, $g_i(v_{ij},s_i)$ is $\rho_g =\frac{1}{\beta} L_\ell$-weakly-convex w.r.t. $v_{ij}$. Due to the composition structure and the Lipschitz continuity of $g_i$ and $\ell$, one can see that $g_i(v_{ij},s_i)$ is $C_g=\frac{1}{\beta} C_\ell$-Lipschitz continuous w.r.t. $(v_{ij},s_i)$. 

    The $L_h = 2\tilde{L}_h$-weakly-convexity of $h_{i,j}(\w)$ and $C_h=2\tilde{C}_h$-Lipschitz continuity of $h_{i,j}(\w;\xi,\zeta)$ directly follows from the $\tilde{L}_h$-smoothness of $h_{i}(\w)$ and $\tilde{C}_h$-Lipschitz continuity of $h_{i}(\w;\xi)$. Finally, we show the weakly-convexity of $f_i(g_i(h_{i,j}(\w),s_i),s')$:
    \begin{equation*}
        \begin{aligned}
            &f_i(g_i(h_{i,j}(\tilde{\w}),\tilde{s}_i)\tilde{s}')\\
            &\stackrel{(a)}{\geq} f_i(g_i(h_{i,j}(\w),s_i),s') + \langle \partial_{s'}f_i(g_i(h_{i,j}(\w),s_i),s'), \tilde{s}'-s' \rangle\\
            &\quad +\langle \partial_{u}f_i(g_i(h_{i,j}(\w),s_i),s'), g_i(h_{i,j}(\tilde{\w}),\tilde{s}_i)-g_i(h_{i,j}(\w),s_i) \rangle\\
            &\stackrel{(b)}{\geq}f_i(g_i(h_{i,j}(\w),s_i),s') + \langle \partial_{s'}f_i(g_i(h_{i,j}(\w),s_i),s'), \tilde{s}'-s' \rangle\\
            &\quad +\langle \partial_{u}f_i(g_i(h_{i,j}(\w),s_i),s'), \langle \partial_{\ell} g_i(h_{i,j}(\w),s_i), \ell(h_{i,j}(\tilde{\w}))- \ell(h_{i,j}(\w)) \rangle \rangle\\
            &\quad +\langle \partial_{u}f_i(g_i(h_{i,j}(\w),s_i)s'), \langle \partial_{s_i} g_i(h_{i,j}(\w),s_i), \tilde{s}_i-s_i \rangle \rangle\\
            &\stackrel{(c)}{\geq}f_i(g_i(h_{i,j}(\w),s_i),s') + \langle \partial_{s'}f_i(g_i(h_{i,j}(\w),s_i),s'), \tilde{s}'-s' \rangle\\
            &\quad +\langle \partial_{u}f_i(g_i(h_{i,j}(\w),s_i),s') \partial_{\ell} g_i(h_{i,j}(\w),s_i)\partial  \ell(h_{i,j}(\w)) ,h_{i,j}(\tilde{\w})- h_{i,j}(\w)  \rangle \\
            &\quad- \frac{C_fC_g L_\ell}{2}\|h_{i,j}(\tilde{\w})- h_{i,j}(\w)\|^2 +\langle \partial_{u}f_i(s',g_i(h_{i,j}(\w),s_i)) \partial_{s_i} g_i(h_{i,j}(\w),s_i), \tilde{s}_i-s_i \rangle\\
            &\stackrel{(d)}{\geq}f_i(g_i(h_{i,j}(\w),s_i),s') + \langle \partial_{s'}f_i(g_i(h_{i,j}(\w),s_i),s'), \tilde{s}'-s' \rangle\\
            &\quad +\langle \partial_{u}f_i(g_i(h_{i,j}(\w),s_i),s') \partial_{\ell} g_i(h_{i,j}(\w),s_i)\partial  \ell(h_{i,j}(\w)) \nabla h_{i,j}(\w) ,\tilde{\w}-\w  \rangle \\
            &\quad +\langle \partial_{u}f_i(g_i(h_{i,j}(\w),s_i),s') \partial_{s_i} g_i(h_{i,j}(\w),s_i), \tilde{s}_i-s_i \rangle -( \frac{C_fC_g C_h^2 L_\ell}{2}+ \frac{C_fC_g L_h}{2})\|\tilde{\w}-\w\|^2\\
        \end{aligned}
    \end{equation*}
    where (a) uses the convexity of $f_i$, (b) uses the monotonicity of $f_i$ w.r.t. $u$ and convexity of $g_i(\ell,s_i)$ w.r.t. $(\ell,s_i)$, (c) uses monotonicity of $f_i$ w.r.t. $u$, monotonicity of $g_i$ w.r.t. $\ell$ and $L_\ell$-weak-convexity of $\ell$, (d) uses the smoothness of $h_{i,j}$. Thus $f_i(g_i(h_{i,j}(\w),s_i),s')$ is $\rho_F = ( C_fC_g C_h^2 L_\ell +C_fC_g L_h)$-weakly-convex w.r.t. $(\w,s_i,s')$. Therefore, $\frac{1}{n_+}\sum_{i\in \S+}f_i(g_i(h_{i,j}(\w),s_i),s')$ is $\rho_F$-weakly-convex w.r.t. $(\w,\s,s')$.
\end{proof}
 
\subsection{Algorithms for TPAUC and Multi-instance TPAUC Maximization}\label{subsec:TPAUC}

\begin{algorithm}[h]
\caption {SONX for TPAUC}\label{alg:2}
\begin{algorithmic}[1]
\STATE{Initialization: $\w_0,\{u_{i,0}:i\in \S_+\}, \{s_{i,0}:i\in \S_+\},s'_0$}
\FOR{$t=0,\dots, T-1$}
\STATE Sample batches $\B_1^t\subset S_+$ and $\B_2^t\subset S_-$.
\STATE $  u_{i,t+1}= \begin{cases}(1-\tau)u_{i,t}+\tau  \psi_i(\w_t,s_{i,t}; \B_2^t)+\gamma(\psi_i(\w_t,s_{i,t}; \B_2^t)-\psi_i(\w_{t-1},s_{i,t-1}; \B_2^t)),\, i\in \B_1^t\\ u_{i,t},\quad i\not\in \B_1^t\end{cases}$
\STATE $s_{i,t+1} = \begin{cases}s_{i,t}-\eta \frac{1}{B_1}\partial_s \psi_i(\w_t,s_{i,t}; \B_2^t)  \partial_u f(u_{i,t},s'_t),\quad i\in \B_1^t\\ s_{i,t},\quad i\not\in \B_1^t\end{cases}$
\STATE $s'_{t+1} = s'_t- \eta  \frac{1}{B_1}\sum_{i\in \B_1^t} \partial_{s'} f(u_{i,t},s'_t)$
\STATE Compute $G_t=\frac{1}{B_1}\sum_{i\in \B_1^t} \partial_w \psi_i(\w_t,s_{i,t}; \B_2^t) \partial_u f(u_{i,t},s'_t)$
\STATE Update $ \w_{t+1}= \w_t-\eta G_t$
\ENDFOR
\STATE{\textbf{return} $\w_{\bar{t}}$ with $\bar{t}$ uniformly sampled from $\{0,\dots, T-1\}$.}
\end{algorithmic}
\end{algorithm}

\begin{algorithm}[H]
\caption {SONT for Multi-instance TPAUC}\label{alg:4}
\begin{algorithmic}[1]
\STATE{Initialization: $\w_0,\{u_{i,0}:i\in \S_+\}, \{s_{i,0}:i\in \S_+\},s'_0, \{v_{i,j,0}:(i,j)\in \S_+\times \S_-\}$}
\FOR{$t=0,\dots, T-1$}
\STATE Sample batches $\B_1^t\subset S_+$, $\B_2^t\subset S_-$, and $\B_{3,i}^t\subset X_i$ for $i\in \B_1^t\cup \B_2^t$.
\STATE $  v_{i,t+1}= \begin{cases}\Pi_{\tilde{C}_h}[(1-\tau_1)v_{i,t}+\tau_1 h_i(\w_t;\B_{3,i}^t)+\gamma_1(h_i(\w_t;\B_{3,i}^t)-h_i(\w_{t-1};\B_{3,i}^t))],\,&i\in \B_1^t \\
\Pi_{\tilde{C}_h}[(1-\tau_1)v_{i,t}+\tau_1 h_i(\w_t;\B_{3,i}^t)+\gamma_2(h_i(\w_t;\B_{3,i}^t)-h_i(\w_{t-1};\B_{3,i}^t))],\,&i\in \B_2^t \\v_{i,t},\quad i\not\in\B_1^t \text{ and } i\not\in\B_2^t\end{cases}$
\STATE $u_{i,t+1}=\begin{cases}(1-\tau_2)u_{i,t}+\frac{1}{B_2}\sum_{j\in \B_2^t}[\tau_2  g(v_{j,t} - v_{i,t},s_{i,t})\\
\quad\quad\quad+\gamma_3( g(v_{j,t} - v_{i,t},s_{i,t})- g(v_{j,t-1} - v_{i,t-1},s_{i,t-1}))],\quad &i\in \B_1^t \\u_{i,t}, & i\not\in \B_1^t  \end{cases}$
\STATE $s_{i,t+1} = \begin{cases}s_{i,t}-\eta_1 \frac{1}{B_1} \left[\frac{1}{B_2}\sum_{j\in \B_2^t}\partial_{s_i} g(v_{j,t} - v_{i,t},s_{i,t})\right]\partial_u f(s'_t,u_{i,t}),\quad &i\in \B_1^t\\ s_{i,t},\quad &i\not\in \B_1^t\end{cases}$
\STATE $s'_{t+1} = s'_t- \eta_2  \frac{1}{B_1}\sum_{i\in \B_1^t} \partial_{s'} f(u_{i,t},s'_t)$
\STATE $G_t=\frac{1}{B_1}\sum_{i\in \B_1^t} \partial_u f(u_{i,t},s'_t)$
\STATE \quad \quad $\left[\frac{1}{B_2}\sum_{j\in \B_2^t}\left(\nabla h_j(\w_t;\B_{3,j}^t)-\nabla h_i(\w_t;\B_{3,i}^t)\right)\partial_{v} g(v_{j,t} - v_{i,t},s_{i,t})  \right]$
\STATE Update $ \w_{t+1}= \w_t-\eta G_t$
\ENDFOR
\STATE{\textbf{return} $\w_{\bar{t}}$ with $\bar{t}$ uniformly sampled from $\{0,\dots, T-1\}$.}
\end{algorithmic}
\end{algorithm}

\subsection{TPAUC in MIL with smoothed-max pooling and attention-based pooling}\label{sec:sftatt}
We can extend our results to smoothed-max pooling and attention-based pooling. 

{\bf Smoothed-max Pooling.} The smoothed-max pooling can be written as~\cite{zhu2023mil}:
\begin{align}\label{eqn:softpool}
h_\w(X) = \tau \log\left(\frac{1}{|X|}\sum_{\x\in X}\exp(\phi(\w; \x)/\tau)\right),
\end{align} where $\tau>0$ is a hyperparameter and $\phi(\w;\x)=e(\w_e, \x)^{\top}\w_c$ is the prediction score for instance $\x$.

We can see that $h_\w(X)$ itself is a compositional function. To map the problem into TCCO, we define $h_i(\w) = \frac{1}{|X_i|}\sum_{\x\in X_i}\exp(\phi(\w; \x)/\tau) + C$, where $C>0$ is a constant. Then the objective function becomes
\begin{equation}\label{prob:9-sft}
\begin{aligned}
    &\min_{\w, \s', \s} \frac{1}{n_+}\sum_{X_i\in \S_+} f_i( \psi_i(\w,s_i),s'),\\
    &\text{where } f_i(g, s') = s'+\frac{(g-s')_+}{\alpha},\\
    &\quad \quad \quad \psi_i(\w, s_i) =   \frac{1}{n_-}\sum_{X_j\in \S_-}s_i+\frac{(\ell(\tau\log h_{j}(\w)-\tau \log h_{i}(\w))-s_i)_+}{\beta},
\end{aligned}
\end{equation}
In this case we define  $g_i(\ell(\v),s_i)  = s_i+\frac{(\ell(\tau \log v_1 - \tau \log v_2)-s_i)_+}{\beta}$ and $h_{i,j}(\w) = [h_i(\w), h_j(\w)]$. We can still prove that $g_i(\ell(\v), s_i)$ is monotone w.r.t to each component of $\v$. It is not difficult to prove that $\ell(\tau \log v_1 - \tau \log v_2)$ is weakly convex w.r.t $\v$ because $\tau \log v_1 - \tau \log v_2$ is a smooth mapping of $\v$ due to $\v\geq C$ and $\ell$ is a convex function~\cite{DBLP:journals/mp/DrusvyatskiyP19}.  As a result, since $g_i(\ell, s_i)$ is non-decreasing and convex w.r.t to $\ell$, it is easy to prove that $g_i(\ell(\v), s_i)$ is weakly convex w.r.t $\v$ and is monotone (either non-decreasing or non-increasing) w.r.t to each component of $\v$. Hence, assuming $h_i(\w)$ is a smooth and Lipchitz continuous function, we can prove that $g_i(h_{i,j}(\w),s_i)$ is weakly convex w.r.t. to $\w$.

{\bf Attention-based Pooling.} Attention-based pooling was recently introduced for deep MIL~\cite{pmlr-v80-ilse18a}, which aggregates the feature representations using attention, i.e., 
\begin{align}\label{eqn:attpoolf}
E(\w; X) = \sum_{\x\in X}\frac{\exp(g(\w; \x))}{\sum_{\x'\in X}\exp(g(\w; \x'))}e(\w_e; \x)
\end{align}
where %$g(\cdot):\R^{d_o}\rightarrow\R$ 
$g(\w;\x)$ is a parametric function, e.g., $g(\w; \x)=\w_a^{\top}\text{tanh}(V e(\w_e; \x))+C$, where $V\in\R^{m\times d_o}$ and $\w_a\in\R^m$. Based on the aggregated feature representation, the bag level prediction can be computed by 
\begin{align}\label{eqn:attpool}
h_\w(\w, X) & = (\w_c^{\top}E(\w; X))\\
&= \left(\sum_{\x\in X}\frac{\exp(g(\w; \x))\delta(\w;\x)}{\sum_{\x'\in X}\exp(g(\w; \x'))}\right), \nonumber 
\end{align}
where $\delta(\w;\x) = \w_c^{\top}e(\w_e; \x)$.

We can see that $h_\w(X)$ itself is a compositional function. To map the problem into TCCO, we define $h^1_i(\w) = \frac{1}{|X_i|}\sum_{\x\in X_i}\exp(g(\w; \x))\delta(\w;\x)$, and $h^2_i(\w) = \frac{1}{|X_i|}\sum_{\x'\in X_i}\exp(g(\w; \x'))$. Assume $|\w_a^{\top}\text{tanh}(V e(\w_e; \x))|\leq C_b$ then $h^2_i(\w) \geq \exp(C - C_b)$.  Then the objective function becomes
\begin{equation}\label{prob:9-sft}
\begin{aligned}
    &\min_{\w, \s', \s} \frac{1}{n_+}\sum_{X_i\in \S_+} f_i(\psi_i(\w,s_i),s'),\\
    &\text{where } f_i(g, s') = s'+\frac{(g-s')_+}{\alpha},\quad \psi_i(\w, s_i) =   \frac{1}{n_-}\sum_{X_j\in \S_-}s_i+\frac{(\ell(\frac{h_j^1(\w)}{h_j^2(\w)}-\frac{h_i^1(\w)}{h_i^2(\w)})-s_i)_+}{\beta},
\end{aligned}
\end{equation}
In this case we define  $g_i(\ell(\v),s_i)  = s_i+\frac{(\ell(\frac{v_3}{v_4}-\frac{v_1}{v_2})-s_i)_+}{\beta}$ and $h_{i,j}(\w) = [h^1_i(\w), h^2_i(\w), h_j^1(\w), h_j^2(\w)]$. We can still prove that $g_i(\ell(\v), s_i)$ is monotone w.r.t to each component of $\v$. It is not difficult to prove that $\ell(\frac{v_3}{v_4}-\frac{v_1}{v_2})$ is weakly convex w.r.t $\v$ because $\frac{v_3}{v_4}-\frac{v_1}{v_2}$ is a smooth mapping of $\v$ when $v_2, v_4$ are lower bounded and $\ell$ is a convex function~\cite{DBLP:journals/mp/DrusvyatskiyP19}.  As a result, since $g_i(\ell, s_i)$ is non-decreasing and convex w.r.t to $\ell$, it is easy to prove that $g_i(\ell(\v), s_i)$ is weakly convex w.r.t $\v$ and is monotone (either non-decreasing or non-increasing) w.r.t to each component of $\v$. Hence, assuming $h^1_i(\w), h^2_i(\w)$ are smooth and Lipchitz continuous, we can prove that $g_i(h_{i,j}(\w),s_i)$ is weakly convex w.r.t. to $\w$.

\subsection{Convergence Analysis of TPAUC Maximization}\label{subsec:TPAUC-ana}
\subsubsection{Convergence analysis for Algorithm~\ref{alg:2}}
We first consider TPAUC maximization in the regular learning setting. Define $F(\w,\s,s'):=\frac{1}{n_+}\sum_{X_i\in \S_+}f_i(\psi_i(\w,s_i),s')$. Due to the weak-convexity of $F(\w,\s,s')$ w.r.t. $(\w,\s,s')$, we consider the following Moreau envelope and proximal map defined as
\begin{equation*}
    \begin{aligned}
        &F_\lambda(\w,\s,s') = \min_{\tilde{\w},\tilde{\s},\tilde{s}'}F(\tilde{\w},\tilde{\s},\tilde{s}')+\frac{1}{2\lambda}\left(\|\tilde{\w}-\w\|^2+\|\tilde{\s}-\s\|^2+\|\tilde{s}'-s'\|^2\right),\\
        & \prox_{\lambda F}(\w,\s,s')=\argmin_{\tilde{\w},\tilde{\s},\tilde{s}'}F(\tilde{\w},\tilde{\s},\tilde{s}')+\frac{1}{2\lambda}\left(\|\tilde{\w}-\w\|^2+\|\tilde{\s}-\s\|^2+\|\tilde{s}'-s'\|^2\right).
    \end{aligned}
\end{equation*}

Following the same proof of Lemma~\ref{lem:16}, we have the following error bound
\begin{lemma}\label{lem:22}
    Consider the update for $\{u_{i,t}:X_i\in \S_+\}$ in Algorithm~\ref{alg:2}. Assume $\psi_i(\w,s_i)$ is $C_\psi$-Lipshitz continuous for all $X_i\in \S_+$. Assume $\E_t[\|G_t\|^2]\leq M^2$ and $\E_t[\|\frac{1}{B_1}\sum_{X_i\in \B_1^t}\partial_{s} \psi_i(\w_t,s_{i,t}; \B_2^t)  \partial_u f(u_{i,t},s'_t)e_i\|^2]\leq M^2$, where $e_i$ is the $n_+$-dimensional vector with $1$ at the $i$-th entry and $0$ everywhere else. With $\gamma=\frac{n_+-B_1}{B_1(1-\tau)}+(1-\tau)$ and $\tau\leq \frac{1}{2}$, we have
% \begin{equation*}
%     \begin{aligned}
%         &\E\bigg[\frac{1}{n_+}\sum_{i\in \S_+}\|u_{i,t+1}-\psi_i(\w_{t+1},s_{i,t+1})\|\bigg]\\
%         &\leq (1-\frac{B_1\tau}{2n_1})^{t+1} \frac{1}{n}\sum_{i\in \S_+}\|u_{i,0}-\psi_i(\w_0,s_{i,0})\|+ \frac{2 \tau^{1/2}\sigma}{B_2^{1/2}} +\frac{4 n_+C_\psi M\eta}{B_1 \tau^{1/2}}+\frac{4 n_+^{1/2}C_\psi M\eta}{B_1 \tau^{1/2}}.
%     \end{aligned}
% \end{equation*}
\begin{equation*}
    \begin{aligned}
        &\E\bigg[\frac{1}{n_+}\sum_{X_i\in \S_+}\|u_{i,t+1}-\psi_i(\w_{t+1},s_{i,t+1})\|\bigg]\\
        &\leq (1-\frac{B_1\tau}{2n_1})^{t+1} \frac{1}{n}\sum_{X_i\in \S_+}\|u_{i,0}-\psi_i(\w_0,s_{i,0})\|+ \frac{2 \tau^{1/2}\sigma}{B_2^{1/2}} +\frac{8 n_+C_\psi M\eta}{B_1 \tau^{1/2}}.
    \end{aligned}
\end{equation*}
\end{lemma}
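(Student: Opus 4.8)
The plan is to replicate essentially verbatim the proof of Lemma~\ref{lem:16} (which itself adapts Lemma~1 of \cite{jiang2022multiblocksingleprobe}), now reading $u_{i,t}$ as a multi-block MSVR tracker of the composite inner function $\psi_i(\w,s_i)$ whose ``inner variable'' is the augmented pair $(\w_t,s_{i,t})$ rather than $\w_t$ alone. The single genuinely new ingredient is that $(\w_t,s_{i,t})$ drifts in \emph{both} coordinates between consecutive iterations: $\|\w_{t+1}-\w_t\|=\eta\|G_t\|$ for every $i$, while $\|s_{i,t+1}-s_{i,t}\|$ is nonzero only for $i\in\B_1^t$; both must be absorbed into the one-step error recursion via the $C_\psi$-Lipschitz continuity of $\psi_i$ with respect to $(\w,s_i)$.

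Concretely, I would set $\epsilon_{i,t}:=u_{i,t}-\psi_i(\w_t,s_{i,t})$ and split by whether $i\in\B_1^t$. For $i\notin\B_1^t$ both $u_{i,t}$ and $s_{i,t}$ are frozen, so $\epsilon_{i,t+1}=\epsilon_{i,t}+\big(\psi_i(\w_t,s_{i,t})-\psi_i(\w_{t+1},s_{i,t})\big)$ and the perturbation is bounded by $C_\psi\|\w_{t+1}-\w_t\|$. For $i\in\B_1^t$ the full MSVR step with $\gamma=\frac{n_+-B_1}{B_1(1-\tau)}+(1-\tau)$ applies, and expanding it exactly as in \cite{jiang2022multiblocksingleprobe} --- using the bounded variance $\sigma^2$ of the stochastic estimator $\psi_i(\cdot;\B_2^t)$ (which gives the unchanged term $\tfrac{2\tau^{1/2}\sigma}{B_2^{1/2}}$) and the drift bound $\|\psi_i(\w_{t+1},s_{i,t+1})-\psi_i(\w_t,s_{i,t})\|\le C_\psi\big(\|\w_{t+1}-\w_t\|+\|s_{i,t+1}-s_{i,t}\|\big)$ --- yields a contraction of the averaged squared error with factor $1-\tfrac{B_1\tau}{2n_+}$, plus a drift term of order $\tfrac{n_+^2C_\psi^2}{B_1^2\tau}\big(\E\|\w_{t+1}-\w_t\|^2+\E[\tfrac{1}{n_+}\sum_{i}\|s_{i,t+1}-s_{i,t}\|^2]\big)$. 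This is exactly where the two boundedness hypotheses of the lemma enter: $\E_t\|G_t\|^2\le M^2$ bounds the first drift contribution by $\eta^2M^2$, and $\E_t\|\tfrac{1}{B_1}\sum_{X_i\in\B_1^t}\partial_s\psi_i(\w_t,s_{i,t};\B_2^t)\partial_u f(u_{i,t},s'_t)e_i\|^2\le M^2$ --- which itself follows from $0\le\partial_u f\le C_f$ together with the Lipschitzness of $\psi_i(\cdot;\B_2^t)$ and the orthogonality of the $e_i$'s --- bounds the aggregated $s$-drift; it is precisely this extra $s$-drift source that turns the constant $4$ of Lemma~\ref{lem:16} into $8$ here.

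Finally, unrolling the recursion over $t$ and passing from the squared-error bound to the claimed first-moment bound exactly as the norm bound of Lemma~\ref{lem:16} is derived from its squared-norm counterpart --- using $\sqrt{a+b+c}\le\sqrt a+\sqrt b+\sqrt c$ for nonnegative reals, Jensen's inequality $\E\|\cdot\|\le(\E\|\cdot\|^2)^{1/2}$, and $\sqrt{1-\mu}\le 1-\mu/2$ --- delivers
\[
\E\Big[\tfrac{1}{n_+}\!\!\sum_{X_i\in\S_+}\!\!\|u_{i,t+1}-\psi_i(\w_{t+1},s_{i,t+1})\|\Big]\le\Big(1-\tfrac{B_1\tau}{2n_1}\Big)^{t+1}\tfrac{1}{n}\!\!\sum_{X_i\in\S_+}\!\!\|u_{i,0}-\psi_i(\w_0,s_{i,0})\|+\tfrac{2\tau^{1/2}\sigma}{B_2^{1/2}}+\tfrac{8n_+C_\psi M\eta}{B_1\tau^{1/2}}.
\]
I do not anticipate a genuine obstacle, since this is a transcription of an established estimate; the only point that demands care is the simultaneous bookkeeping of the coupled $(\w,s)$-drift while respecting that both $\{u_{i,t}\}$ and $\{s_{i,t}\}$ are updated only on the sampled block $\B_1^t$, and verifying that a single constant $M$ may be taken to dominate both $\|G_t\|$ and the aggregated $s$-update direction so that the two drift sources combine cleanly.
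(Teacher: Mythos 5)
Your proposal is correct and follows essentially the same route as the paper, which itself proves Lemma~\ref{lem:22} simply by invoking the proof of Lemma~\ref{lem:16} with the augmented variable $(\w_t,s_{i,t})$; your accounting of the two drift sources (the $\w$-drift bounded via $\E_t\|G_t\|^2\le M^2$ and the block-sparse $\s$-drift bounded via the second hypothesis) matches the paper's decomposition, which appears explicitly in the proof of Theorem~\ref{thm:6} as the two terms $\frac{4n_+C_\psi M\eta}{B_1\tau^{1/2}}$ and $\frac{4n_+^{1/2}C_\psi M\eta}{B_1\tau^{1/2}}$ that together are absorbed into the stated constant $8$. The passage from the squared-norm recursion to the first-moment bound via $\sqrt{a+b+c}\le\sqrt a+\sqrt b+\sqrt c$ is likewise exactly the paper's argument.
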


Then we have following convergence guarantee.

\begin{theorem}\label{thm:6}
Under the assumptions given in Lemma~\ref{lem:17}, with $\gamma=\frac{n_+-B_1}{B_1(1-\tau)}+(1-\tau)$, $\tau = \O(B_2\epsilon^4)\leq \frac{1}{2}$, $\eta=\O(\frac{B_1B_2^{1/2}\epsilon^4}{n_+})$, and $\bar{\rho}=\rho_F+\rho_\psi C_f$, Algorithm~\ref{alg:2} converges to an $\epsilon$-stationary point of the Moreau envelope $F_{1/\bar{\rho}}$ in $T=\O(\frac{n_+}{B_1B_2^{1/2}}\epsilon^{-6})$ iterations.
\end{theorem}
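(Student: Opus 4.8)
The plan is to reduce Theorem~\ref{thm:6} to the argument already carried out for Theorem~\ref{thm:1}, by bundling the three blocks of variables into a single decision vector $\z=(\w,\s,s')$ and treating $F(\z)=\frac{1}{n_+}\sum_{X_i\in\S_+}f_i(\psi_i(\w,s_i),s')$ as a NSWC FCCO objective with outer function $f_i$ and inner function $\psi_i$. By Lemma~\ref{lem:17}, $F$ is $\rho_F$-weakly-convex in $\z$, each $f_i$ is convex, $C_f$-Lipschitz, and non-decreasing in its first argument, and each $\psi_i$ is $\rho_\psi$-weakly-convex and $C_\psi$-Lipschitz in $(\w,s_i)$. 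One first observes that Algorithm~\ref{alg:2} is precisely a block-coordinate stochastic subgradient step $\z_{t+1}=\z_t-\eta\widetilde G_t$, where $\widetilde G_t$ stacks $G_t$ in the $\w$-block, $\frac{1}{B_1}\sum_{i\in\B_1^t}\partial_s\psi_i(\w_t,s_{i,t};\B_2^t)\,\partial_u f_i(u_{i,t},s'_t)\e_i$ in the $\s$-block, and $\frac{1}{B_1}\sum_{i\in\B_1^t}\partial_{s'}f_i(u_{i,t},s'_t)$ in the $s'$-block. Taking expectation over $\B_1^t,\B_2^t$, the $\tfrac{1}{B_1}$ factors are exactly what make $\E_t[\widetilde G_t]$ equal to a subgradient of $F$ at $\z_t$ in which every occurrence of $\psi_i(\w_t,s_{i,t})$ has been replaced by the tracking variable $u_{i,t}$; and $\E_t[\|\widetilde G_t\|^2]\le M^2$ by the boundedness hypotheses of Lemma~\ref{lem:22} together with $0\le\partial_u f_i\le C_f$ and the $C_\psi$-Lipschitzness of $\psi_i$.

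Given this reduction, I would follow the proof of Theorem~\ref{thm:1} line by line with $\z$ in place of $\w$. First, Lemma~\ref{lem:4} and the update rule give the Moreau-envelope descent inequality
\begin{equation*}
\E_t[F_{1/\bar{\rho}}(\z_{t+1})]\le F_{1/\bar{\rho}}(\z_t)+\bar{\rho}\eta\langle\hat{\z}_t-\z_t,\E_t[\widetilde G_t]\rangle+\tfrac{\eta^2\bar{\rho}M^2}{2},\qquad \hat{\z}_t:=\prox_{F/\bar{\rho}}(\z_t).
\end{equation*}
Next, I would bound the inner product: the subgradient inequality for the convex $f_i$ (so all $\rho_f$-terms vanish), the $\rho_\psi$-weak-convexity of $\psi_i$ in $(\w,s_i)$, and $0\le\partial_u f_i\le C_f$ yield
\begin{equation*}
\langle\hat{\z}_t-\z_t,\E_t[\widetilde G_t]\rangle\le F(\hat{\z}_t)-F(\z_t)+\frac{1}{n_+}\sum_{X_i\in\S_+}\Big[f_i(\psi_i(\z_t),s'_t)-f_i(u_{i,t},s'_t)-\partial_u f_i(u_{i,t},s'_t)(\psi_i(\z_t)-u_{i,t})+\tfrac{\rho_\psi C_f}{2}\|\hat{\z}_t-\z_t\|^2\Big],
\end{equation*}
where $\psi_i(\z_t)$ abbreviates $\psi_i(\w_t,s_{i,t})$. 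Then the $(\bar{\rho}-\rho_F)$-strong convexity of $\z\mapsto F(\z)+\frac{\bar{\rho}}{2}\|\z_t-\z\|^2$ gives $F(\hat{\z}_t)-F(\z_t)\le(\frac{\rho_F}{2}-\bar{\rho})\|\z_t-\hat{\z}_t\|^2$; choosing $\bar{\rho}=\rho_F+\rho_\psi C_f$ makes the $\|\z_t-\hat{\z}_t\|^2$ terms collapse into $-\frac{\eta}{2}\|\nabla F_{1/\bar{\rho}}(\z_t)\|^2$ via $\|\z_t-\hat{\z}_t\|^2=\bar{\rho}^{-2}\|\nabla F_{1/\bar{\rho}}(\z_t)\|^2$, and the residual term is bounded by $2C_f\|\psi_i(\z_t)-u_{i,t}\|$ using the $C_f$-Lipschitzness of $f_i$. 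Plugging in the MSVR error bound of Lemma~\ref{lem:22}, telescoping from $t=0$ to $T-1$, lower-bounding $\E[F_{1/\bar{\rho}}(\z_T)]$ by $\min_{\z}F(\z)$, and dividing by $\eta T$ gives
\begin{equation*}
\frac{1}{T}\sum_{t=0}^{T-1}\E[\|\nabla F_{1/\bar{\rho}}(\z_t)\|^2]=\O\Big(\tfrac{1}{T}\big(\tfrac{1}{\eta}+\tfrac{n_+}{B_1\tau}\big)+\eta+\tfrac{\tau^{1/2}\sigma}{B_2^{1/2}}+\tfrac{n_+\eta}{B_1\tau^{1/2}}+\tfrac{\tau\sigma^2}{B_2}+\tfrac{n_+^2\eta^2}{B_1^2\tau}\Big);
\end{equation*}
setting $\tau=\O(B_2\epsilon^4)$ and $\eta=\O(B_1B_2^{1/2}\epsilon^4/n_+)$ reduces the right-hand side to $O(\epsilon^2)$ as soon as $T=\O\big(\tfrac{n_+}{B_1B_2^{1/2}}\epsilon^{-6}\big)$, which is the claimed iteration complexity.

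I expect the only nonroutine point to be the reduction in the first paragraph: one must verify that the STORM-like MSVR update of $u_{i,t}$, the block-coordinate SGD step on $\s$, and the plain SGD step on $s'$ all run with a common effective step size $\eta$ and a scaling under which their concatenation $\widetilde G_t$ is an (appropriately biased) estimator of a single subgradient of $F$ over $\z$ with uniformly bounded second moment --- the $\frac{1}{B_1}$ prefactors on the $\s$- and $s'$-updates are exactly what delivers the expected full step and the $O(C_f^2C_\psi^2/B_1)$ second moment for the $\s$-block. Once this is checked, the monotonicity hypothesis is needed only in the first argument of $f_i$, which Lemma~\ref{lem:17} supplies, and everything else is a faithful repetition of the proof of Theorem~\ref{thm:1} with the extra variables $\s,s'$ carried along through the joint convexity of $f_i$ and joint weak-convexity of $\psi_i$.
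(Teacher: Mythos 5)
Your proposal is correct and follows essentially the same route as the paper's own proof: the paper likewise works with the joint proximal point $(\hat{\w}_t,\hat{\s}_t,\hat{s}'_t)=\prox_{F/\bar{\rho}}(\w_t,\s_t,s'_t)$, bounds the sum of the three inner products via the convexity and monotonicity of $f_i$ and the joint weak convexity of $\psi_i$, sets $\bar{\rho}=\rho_F+\rho_\psi C_f$, and finishes with Lemma~\ref{lem:22} and the same parameter choices (the only cosmetic difference is that the paper carries a factor $\tfrac{3\eta^2\bar{\rho}M^2}{2}$ from bounding the three blocks' second moments separately rather than bounding the concatenated estimator once).
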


\begin{proof}[Proof of Theorem~\ref{thm:6}]
Define $(\hat{\w}_t,\hat{\s}_t,\hat{s}'_t):=\prox_{F/\bar{\rho}}(\w_t,\s_t,s'_t)$. For a given $X_i\in\S_+$, we have
\begin{equation*}
\begin{aligned}
    &f_i(\psi_i(\hat{\w}_t,\hat{s}_{i,t}),\hat{s}'_t)- f_i(u_{i,t},s'_t)\\
    &\stackrel{(a)}{\geq} \partial_{s'} f_i(u_{i,t},s'_t) (\hat{s}'_t-s'_t) + \partial_u f_i(u_{i,t},s'_t) (\psi_i(\hat{\w}_t,\hat{s}_{i,t})-u_{i,t})\\
    &\stackrel{(b)}{\geq}\partial_{s'} f_i(u_{i,t},s'_t) (\hat{s}'_t-s'_t) + \partial_u f_i(u_{i,t},s'_t) \bigg[\psi_i(\w_t,s_{i,t})-u_{i,t}+\langle \partial_w \psi_i(\w_t,s_{i,t}),\hat{\w}_t-\w_t\rangle\\
    &\quad-\frac{\rho_{\psi}}{2}\|\hat{\w}_t-\w_t\|^2 +\langle \partial_{s_i} \psi_i(\w_t,s_{i,t}),\hat{s}_{i,t}-s_{i,t}\rangle-\frac{\rho_{\psi}}{2}\|\hat{s}_{i,t}-s_{i,t}\|^2 \bigg]\\
    &\stackrel{(c)}{\geq} \partial_{s'} f_i(u_{i,t},s'_t) (\hat{s}'_t-s'_t) + \partial_u f_i(u_{i,t},s'_t) \big[\psi_i(\w_t,s_{i,t})-u_{i,t}\big]+\langle  \partial_u f_i(u_{i,t},s'_t)\partial_w \psi_i(\w_t,s_{i,t}),\hat{\w}_t-\w_t\rangle\\
    &\quad +\langle \partial_u f_i(u_{i,t},s'_t)\partial_{s_i} \psi_i(\w_t,s_{i,t}),\hat{s}_{i,t}-s_{i,t}\rangle -\frac{\rho_{\psi}C_{f}}{2}\left(\|\hat{\w}_t-\w_t\|^2+\|\hat{s}_{i,t}-s_{i,t}\|^2 \right)
\end{aligned}
\end{equation*}
where (a) follows from the convexity of $f_i$, (b) follows from the monotonicity of $f_i(\cdot,s')$ and weak convexity of $\psi_i$, (c) is due to $0\leq  \partial_u f_i(u_{i,t},s'_t)\leq C_f$. Then it follows
\begin{equation}\label{ineq:11}
    \begin{aligned}
        &\frac{1}{n_+}\sum_{X_i\in S_+} \bigg[\partial_{s'} f_i(u_{i,t},s'_t) (\hat{s}'_t-s'_t) +\langle  \partial_u f_i(u_{i,t},s'_t)\partial_w \psi_i(\w_t,s_{i,t}),\hat{\w}_t-\w_t\rangle\\
        &\quad +\langle \partial_u f_i(u_{i,t},s'_t)\partial_{s_i} \psi_i(\w_t,s_{i,t}),\hat{s}_{i,t}-s_{i,t}\rangle\bigg]\\
        &\leq \frac{1}{n_+}\sum_{X_i\in S_+} \bigg[ f_i(\psi_i(\hat{\w}_t,\hat{s}_{i,t}), \hat{s}'_t)- f_i(u_{i,t},s'_t) - \partial_u f_i(u_{i,t},s'_t) \big[\psi_i(\w_t,s_{i,t})-u_{i,t}\big]\\
        &\quad + \frac{\rho_\psi C_f}{2}\left(\|\hat{\w}_t-\w_t\|^2+\|\hat{s}_{i,t}-s_{i,t}\|^2 \right)\bigg]
    \end{aligned}
\end{equation}

Now we consider the change in the Moreau envelope:
\begin{equation}\label{ineq:12}
    \begin{aligned}
        &\E_t[F_{1/\bar{\rho}}(\w_{t+1},\s_{t+1},s'_{t+1})]\\
        &=\E_t\left[\min_{\tilde{\w},\tilde{\s},\tilde{s}'}F(\tilde{\w},\tilde{\s}_t,\tilde{s}'_t)+\frac{\bar{\rho}}{2}\left(\|\tilde{\w}-\w_{t+1}\|^2+\|\tilde{\s}-\s_{t+1}\|^2+\|\tilde{s}'-\s'_{t+1}\|^2\right)\right]\\
        &\leq \E_t\left[F(\hat{\w}_t,\hat{\s}_t,\hat{s}'_t)+\frac{\bar{\rho}}{2}\left(\|\hat{\w}_t-\w_{t+1}\|^2+\|\hat{\s}_t-\s_{t+1}\|^2+\|\hat{s}'_t-s'_{t+1}\|^2\right)\right]\\
        &=F(\hat{\w}_t,\hat{\s}_t,\hat{s}'_t)+\E_t\bigg[\frac{\bar{\rho}}{2}\big(\|\hat{\w}_t-(\w_t-\eta G_t)\|^2+\|\hat{\s}_t-(\s_t-\eta G^1_t)\|^2\\
        &\quad +\|\hat{s}'_t-(s'_t-\eta G^2_t)\|^2\big)\bigg]\\
        &\leq F(\hat{\w}_t,\hat{\s_t},\hat{s}'_t)+\frac{\bar{\rho}}{2}\left(\|\hat{\w}_t-\w_t\|^2+\|\hat{\s}_t-\s_t\|^2+\|\hat{s}'_t-s'_t\|^2\right)\\
        &\quad +\bar{\rho}\E_t[\eta\langle\hat{\w}_t-\w_t, G_t\rangle+\eta\langle\hat{\s}_t-\s_t, G^1_t\rangle+\eta\langle\hat{s}'_t-s'_t, G^2_t\rangle]+\frac{3\eta^2\bar{\rho}M^2}{2}\\
        &= F_{1/\bar{\rho}}(\w_t,\s_t,s'_t) +\bar{\rho}\E_t[\eta\langle\hat{\w}_t-\w_t, G_t\rangle+\eta\langle\hat{\s}_t-\s_t, G^1_t\rangle+\eta\langle\hat{s}'_t-s'_t, G^2_t\rangle]\\
        &\quad+\frac{3\eta^2\bar{\rho}M^2}{2}\\
    \end{aligned}
\end{equation}

where for simplicity we denote $G^1_t= \frac{1}{B_1}\sum_{X_i\in \B_1^t}\partial_u f_i(u_{i,t},s'_t)\partial_{\s} \psi_i(\w_t,s_{i,t}; \B_2^t)$ and $G^2_t = \frac{1}{B_1}\sum_{X_i\in \B_1^t} \partial_{s'} f_i(u_{i,t},s'_t)$. 
The second inequality in the above derivation uses the bounds of $\E[\|G_t\|^2], \E[\|G_t^1\|^2]$ and $\E[\|G_t^2\|^2]$, which follow from the Lipschitz continuity and bounded variance assumptions and are denoted by $M$. Moreover, we have
\begin{equation*}
    \begin{aligned}
        &\E_t[\eta\langle\hat{\w}_t-\w_t, G_t\rangle+\eta\langle\hat{\s}_t-\s_t, G^1_t\rangle+\eta\langle\hat{s}'_t-s'_t, G^2_t\rangle]\\
        &= \eta\langle\hat{\w}_t-\w_t, \E_t[G_t]\rangle+\eta\langle\hat{\s}_t-\s_t, \E_t[G^1_t]\rangle+\eta\langle\hat{s}'_t-s'_t, \E_t[G^2_t]\rangle,
    \end{aligned}
\end{equation*}
and 
\begin{equation*}
    \begin{aligned}
        &\E_t[G_t] = \frac{1}{n_+}\sum_{X_i\in \S_+}\partial_u f_i(u_{i,t},s'_t)\partial_w \psi_i(\w_t,s_{i,t})\\
        &\E_t[G^1_t] =  \frac{1}{n_+}\sum_{X_i\in \S_+}\partial_u f_i(u_{i,t},s'_t)\partial_{\s} \psi_i(\w_t,s_{i,t})\\
        &\E_t[G^2_t] =  \frac{1}{n_+}\sum_{X_i\in \S_+} \partial_{s'} f_i(u_{i,t},s'_t).
    \end{aligned}
\end{equation*}
% Setting $\eta_1 = \eta$, we have
% \begin{equation*}
%     \begin{aligned}
%         \frac{\eta_1}{n_+}\langle\hat{\s}_t-\s_t, \E_t[G^1_t]\rangle
%         &=\frac{\eta_1}{n_+}\sum_{i\in S_+}\langle\hat{\s}_{i,t}-\s_{i,t}, \E_t[G^1_{t,i}]\rangle\\
%         &=\frac{\eta_1}{n_+}\sum_{i\in S_+}\langle\hat{\s}_{i,t}-\s_{i,t}, \frac{B_1}{n_+}\partial_u f_i(u_{i,t},s'_t)\partial_{s_i} \psi_i(\w_t,s_{i,t})\rangle\\
%         &=\frac{\eta}{n_+}\sum_{i\in S_+}\langle\hat{\s}_{i,t}-\s_{i,t}, \partial_u f_i(u_{i,t},s'_t)\partial_{s_i} \psi_i(\w_t,s_{i,t})\rangle\\
%     \end{aligned}
% \end{equation*}

Combining inequality (\ref{ineq:11}) and (\ref{ineq:12}) yields
\begin{equation}\label{ineq:13}
    \begin{aligned}
        &\E_t[F_{1/\bar{\rho}}(\w_{t+1},\s_{t+1},s'_{t+1})]\\
        &\leq F_{1/\bar{\rho}}(\w_t,\s_t,s'_t) +\frac{3\eta^2\bar{\rho}M^2}{2}+\frac{\bar{\rho}\eta}{n_+}\sum_{X_i\in S_+} \bigg[ f_i(\psi_i(\hat{\w}_t,\hat{s}_{i,t}), \hat{s}'_t)- f_i(u_{i,t},s'_t)\\
        &\quad  - \partial_u f_i(u_{i,t},s'_t) \big[\psi_i(\w_t,s_{i,t})-u_{i,t}\big]+ \frac{\rho_\psi C_f}{2}\left(\|\hat{\w}_t-\w_t\|^2+\|\hat{s}_{i,t}-s_{i,t}\|^2 \right)\bigg]\\
        &\leq F_{1/\bar{\rho}}(\w_t,\s_t,s'_t) +\frac{3\eta^2\bar{\rho}M^2}{2}+\bar{\rho}\eta(F(\hat{\w}_t,\hat{\s}_{t},\hat{s}'_t)-F(\w_t,\s_{t},s'_t)) \\
        &\quad +\frac{\bar{\rho}\eta}{n_+}\sum_{X_i\in S_+} \bigg[f_i(\psi_i(\w_t,s_{i,t}), s'_t) - f_i(u_{i,t},s'_t)- \partial_u f_i(u_{i,t},s'_t) \big[\psi_i(\w_t,s_{i,t})-u_{i,t}\big]\\
        &\quad + \frac{\rho_\psi C_f}{2}\left(\|\hat{\w}_t-\w_t\|^2+\|\hat{s}_{i,t}-s_{i,t}\|^2 \right)\bigg]
    \end{aligned}
\end{equation}
Due to the $\rho_F$-weak convexity of $F(\w,\s,s')$, we have ($\bar{\rho}-\rho_F$)-strong convexity of $(\w,\s,s') \mapsto F(\w,\s,s')+\frac{\bar{\rho}}{2}\|(\w_t,\s_t,s'_t)-(\w,\s,s')\|^2$. Then it follows
\begin{equation}\label{ineq:14}
    \begin{aligned}
        F(\hat{\w}_t,\hat{\s}_{t},\hat{s}'_t)-F(\w_t,\s_{t},s'_t)
        & = \bigg[F(\hat{\w}_t,\hat{\s}_{t},\hat{s}'_t)+\frac{\bar{\rho}}{2}\|(\w_t,\s_{t},s'_t)-(\hat{\w}_t,\hat{\s}_{t},\hat{s}'_t)\|^2\bigg]\\
        &\quad -\bigg[F(\w_t,\s_{t},s'_t)+\frac{\bar{\rho}}{2}\|(\w_t,\s_{t},s'_t)-(\w_t,\s_{t},s'_t)\|^2\bigg]\\
        &\quad -\frac{\bar{\rho}}{2}\|(\w_t,\s_{t},s'_t)-(\hat{\w}_t,\hat{\s}_{t},\hat{s}'_t)\|^2\\
        &\leq (\frac{\rho_F}{2}-\bar{\rho})\|(\w_t,\s_{t},s'_t)-(\hat{\w}_t,\hat{\s}_{t},\hat{s}'_t)\|^2
    \end{aligned}
\end{equation}
Plugging inequality (\ref{ineq:14}) into inequality (\ref{ineq:13}) yields
\begin{equation}
    \begin{aligned}
        &\E_t[F_{1/\bar{\rho}}(\w_{t+1},\s_{t+1},s'_{t+1})]\\
        &\leq \E[F_{1/\bar{\rho}}(\w_t,\s_t,s'_t)] +\frac{3\eta^2\bar{\rho}M^2}{2}+\bar{\rho}\eta(\frac{\rho_F}{2}-\bar{\rho})\|(\w_t,\s_{t},s'_t)-(\hat{\w}_t,\hat{\s}_{t},\hat{s}'_t)\|^2 \\
        &\quad +\frac{\bar{\rho}\eta}{n_+}\sum_{X_i\in S_+} \bigg[f_i(\psi_i(\w_t,s_{i,t}), s'_t) - f_i(u_{i,t},s'_t)- \partial_u f_i(u_{i,t},s'_t) \big[\psi_i(\w_t,s_{i,t})-u_{i,t}\big]\\
        &\quad + \frac{\rho_\psi C_f}{2}\left(\|\hat{\w}_t-\w_t\|^2+\|\hat{s}_{i,t}-s_{i,t}\|^2 \right)\bigg]
    \end{aligned}
\end{equation}

Set $\bar{\rho}=\rho_F+\rho_\psi C_f$. We have

\begin{equation*}
    \begin{aligned}
        &\E_t[F_{1/\bar{\rho}}(\w_{t+1},\s_{t+1},s'_{t+1})]\\
        % &\leq F_{1/\bar{\rho}}(\w_t,\s_t,s'_t) +\frac{3\eta^2\bar{\rho}M^2}{2}+\frac{\bar{\rho}\eta}{n_+}\sum_{X_i\in S_+} \bigg[ -\frac{\bar{\rho}}{2}\|(\w_t,s_{i,t},s'_t)-(\hat{\w}_t,\hat{s}_{i,t},\hat{s}'_t)\|^2\\
        % &\quad +f_i(\psi_i(\w_t,s_{i,t}), s'_t) - f_i(u_{i,t},s'_t)- \partial_u f_i(u_{i,t},s'_t) \big[\psi_i(\w_t,s_{i,t})-u_{i,t}\big]\bigg]\\
        &\leq F_{1/\bar{\rho}}(\w_t,\s_t,s'_t) +\frac{3\eta^2\bar{\rho}M^2}{2} -\frac{\bar{\rho}^2\eta}{2}\|(\w_t,\s_{t},s'_t)-(\hat{\w}_t,\hat{\s}_{t},\hat{s}'_t)\|^2\\
        &\quad +\frac{\bar{\rho}\eta}{n_+}\sum_{X_i\in S_+} \bigg[f_i(\psi_i(\w_t,s_{i,t}), s'_t) - f_i(u_{i,t},s'_t)- \partial_u f_i(u_{i,t},s'_t) \big[\psi_i(\w_t,s_{i,t})-u_{i,t}\big]\bigg]\\
        % &\leq \varphi_{1/\bar{\rho}}(\w_t,\s_t,s'_t) +\frac{(\eta^2+\eta_1^2+\eta_2^2)\bar{\rho}M^2}{2} -\frac{\bar{\rho}^2\eta}{2n_+}\left(\|\w_t-\hat{\w}_t\|^2+\|s'_t-\hat{s}'_t)\|^2+\|\s_t-\hat{\s}_t\|^2\right)\\
        % &\quad +\frac{\bar{\rho}\eta}{n_+}\sum_{X_i\in S_+} \bigg[f_i(\psi_i(\w_t,s_{i,t}), s'_t) - f_i(u_{i,t},s'_t)- \partial_u f_i(u_{i,t},s'_t) \big[\psi_i(\w_t,s_{i,t})-u_{i,t}\big]\bigg]\\
        &\stackrel{(a)}{\leq} F_{1/\bar{\rho}}(\w_t,\s_t,s'_t) +\frac{3\eta^2\bar{\rho}M^2}{2} -\frac{\eta}{2}\|\nabla \varphi_{1/\bar{\rho}}(\w_t,\s_t,s'_t)\|^2\\
        &\quad +\frac{\bar{\rho}\eta}{n_+}\sum_{X_i\in S_+} \bigg[f_i(\psi_i(\w_t,s_{i,t}), s'_t) - f_i(u_{i,t},s'_t)- \partial_u f_i(u_{i,t},s'_t) \big[\psi_i(\w_t,s_{i,t})-u_{i,t}\big]\bigg]\\
    \end{aligned}
\end{equation*}

% ===============================================
% \begin{equation}
%     \begin{aligned}
%         &\E_t[\varphi_{1/\bar{\rho}}(\w_{t+1},\s_{t+1},s'_{t+1})]\\
%         &\leq \varphi_{1/\bar{\rho}}(\w_t,\s_t,s'_t) +\frac{(\eta^2+\eta_1^2+\eta_2^2)\bar{\rho}M^2}{2}+\frac{\bar{\rho}\eta}{n_+}\sum_{X_i\in S_+} \bigg[ -\frac{\bar{\rho}}{2}\|(\w_t,s_{i,t},s'_t)-(\hat{\w}_t,\hat{s}_{i,t},\hat{s}'_t)\|^2\\
%         &\quad +f_i(\psi_i(\w_t,s_{i,t}), s'_t) - f_i(u_{i,t},s'_t)- \partial_u f_i(u_{i,t},s'_t) \big[\psi_i(\w_t,s_{i,t})-u_{i,t}\big]\bigg]\\
%         &\stackrel{(a)}{\leq} \varphi_{1/\bar{\rho}}(\w_t,\s_t,s'_t) +\frac{(\eta^2+\eta_1^2+\eta_2^2)\bar{\rho}M^2}{2}+\frac{\bar{\rho}\eta}{n_+}\sum_{X_i\in S_+} \bigg[ -\frac{1}{2\bar{\rho}}\|\nabla \varphi_{1/\bar{\rho}}(\w_t,\s_t,s'_t)\|^2\\
%         &\quad +f_i(\psi_i(\w_t,s_{i,t}), s'_t) - f_i(u_{i,t},s'_t)- \partial_u f_i(u_{i,t},s'_t) \big[\psi_i(\w_t,s_{i,t})-u_{i,t}\big]\bigg]
%     \end{aligned}
% \end{equation}
where inequality (a) follows from Lemma~\ref{lem:4}.

Using the Lipschitz continuity of $f$, we have
\begin{equation}
    \begin{aligned}
        &\E_t[F_{1/\bar{\rho}}(\w_{t+1},\s_{t+1},s'_{t+1})]\\
        &\leq F_{1/\bar{\rho}}(\w_t,\s_t,s'_t) +\frac{3\eta^2\bar{\rho}M^2}{2}-\frac{\eta}{2}\|\nabla F_{1/\bar{\rho}}(\w_t,\s_t,s'_t)\|^2\\
        &\quad +\frac{\bar{\rho}\eta}{n_+}\sum_{X_i\in S_+}2C_f\|\psi_i(\w_t,s_{i,t})- u_{i,t}\|
    \end{aligned}
\end{equation}

With the error bound from Lemma~\ref{lem:22}, we have
\begin{equation*}
    \E\left[\frac{1}{n_+}\sum_{X_i\in S_+}\|\psi_i(\w_t,s_{i,t})- u_{i,t}\|\right]\leq (1-\mu)^t\frac{1}{n_+}\sum_{X_i\in S_+}\|\psi_i(\w_0,s_{i,0})- u_{i,0}\|+R
\end{equation*}
with $\mu = \frac{B_1\tau}{2n_+}$, $R =  \frac{2 \tau^{1/2}\sigma}{B_2^{1/2}} +\frac{4 n_+C_\psi M\eta}{B_1 \tau^{1/2}}+\frac{4 n_+^{1/2}C_\psi M\eta}{B_1 \tau^{1/2}}$.
Then
\begin{equation}
    \begin{aligned}
        &\E[F_{1/\bar{\rho}}(\w_{t+1},\s_{t+1},s'_{t+1})]\\
        &\leq F_{1/\bar{\rho}}(\w_t,\s_t,s'_t) +\frac{3\eta^2\bar{\rho}M^2}{2}-\frac{\eta}{2}\E[\|\nabla F_{1/\bar{\rho}}(\w_t,\s_t,s'_t)\|^2]\\
        &\quad +2C_f\bar{\rho}\eta\left((1-\mu)^t\frac{1}{n_+}\sum_{X_i\in S_+}\|\psi_i(\w_0,s_{i,0})- u_{i,0}\|+R\right)
    \end{aligned}
\end{equation}
Taking summation from $t=0$ to $T-1$ yields
\begin{equation}
    \begin{aligned}
        &\E[F_{1/\bar{\rho}}(\w_T,\s_T,s'_T)]\\
        &\leq F_{1/\bar{\rho}}(\w_0,\s_0,s'_0) +\frac{3\eta^2\bar{\rho}M^2T}{2}-\frac{\eta}{2}\sum_{t=0}^{T-1}\E[\|\nabla F_{1/\bar{\rho}}(\w_t,\s_t,s'_t)\|^2]\\
        &\quad +2C_f\bar{\rho}\eta\left(\sum_{t=0}^{T-1}(1-\mu)^t\frac{1}{n_+}\sum_{X_i\in S_+}\|\psi_i(\w_0,s_{i,0})- u_{i,0}\|+RT\right)\\
        &\stackrel{(a)}{\leq} F_{1/\bar{\rho}}(\w_0,\s_0,s'_0) +\frac{3\eta^2\bar{\rho}M^2T}{2}-\frac{\eta}{2}\sum_{t=0}^{T-1}\E[\|\nabla F_{1/\bar{\rho}}(\w_t,\s_t,s'_t)\|^2]\\
        &\quad +\frac{4C_f\bar{\rho}\eta}{ \mu}\sum_{X_i\in S_+}\frac{1}{n_+}\|\psi_i(\w_0,s_{i,0})- u_{i,0}\|+2C_f\bar{\rho}\eta RT\\
    \end{aligned}
\end{equation}
where (a) uses $\sum_{t=0}^{T-1}(1-\mu)^t\leq \frac{1}{\mu}$.

Lower bounding the left-hand-side by $\min_{\w,\s,s'}F_{1/\bar{\rho}}(\w,\s,s')$, we obtain
\begin{equation*}
    \begin{aligned}
        &\frac{1}{T}\sum_{t=0}^{T-1}\E[\|\nabla F_{1/\bar{\rho}}(\w_t,\s_t,s'_t)\|^2]\\
        % &\leq \frac{2n_+}{\eta}\bigg[\varphi_{1/\bar{\rho}}(\w_0,\s_0,s'_0) -\E[\varphi_{1/\bar{\rho}}(\w_T,\s_T,s'_T)]+\frac{(\eta^2+\eta_1^2+\eta_2^2)\bar{\rho}M^2T}{2}\\
        % &\quad +\frac{2C_f\bar{\rho}\eta}{n_+\mu}\sum_{X_i\in S_+}\|\psi_i(\w_0,s_{i,0})- u_{i,0}\|+\frac{2C_f\bar{\rho}\eta RT}{n_+}\bigg]\\
        &\leq \frac{2}{\eta T}\bigg[F_{1/\bar{\rho}}(\w_0,\s_0,s'_0) -\min_{\w,\s,s'}F_{1/\bar{\rho}}(\w,\s,s')+\frac{3\eta^2\bar{\rho}M^2T}{2}\\
        &\quad +\frac{4C_f\bar{\rho}\eta}{n_+}\sum_{X_i\in S_+}\|\psi_i(\w_0,s_{i,0})- u_{i,0}\|+2C_f\bar{\rho}\eta RT\bigg]\\
        &\leq \frac{2\Delta}{\eta T}+3\eta\bar{\rho}M^2+\frac{8C_f\bar{\rho}}{\mu Tn_+}\sum_{X_i\in S_+}\|\psi_i(\w_0,s_{i,0})- u_{i,0}\|+4C_f\bar{\rho} R\\
        &\leq \frac{C}{T}(\frac{1}{\eta}+\frac{1}{\mu })+C(\eta+R)
    \end{aligned}
\end{equation*}
where we assume $F_{1/\bar{\rho}}(\w_0,\s_0,s'_0) -\min_{\w,\s,s'}F_{1/\bar{\rho}}(\w,\s,s')\leq \Delta$ and 
\begin{equation*}
    C=\max\{8\Delta, 12\bar{\rho}M^2, 32C_f\bar{\rho}\sum_{X_i\in S_+}\|\psi_i(\w_0,s_{i,0})- u_{i,0}\|, 16C_f\bar{\rho} \}.
\end{equation*}
Plugging the expression of $\mu$ and $R$ yields
\begin{equation*}
    \begin{aligned}
        &\frac{1}{T}\sum_{t=0}^{T-1}\E[\|\nabla F_{1/\bar{\rho}}(\w_t,\s_t,s'_t)\|^2]\\
        % &\leq \O\left(\frac{1}{T}(\frac{1}{\eta}+\frac{n_+}{B_1\tau})+(\eta+ \frac{\tau^{1/2}\sigma}{B_2^{1/2}} +\frac{n_+ \eta}{B_1 \tau^{1/2}}+\frac{n_+^{1/2} \eta}{B_1^{3/2} \tau^{1/2}})\right)\\
        &\leq \O\left(\frac{1}{T}(\frac{1}{\eta}+\frac{n_+}{B_1\tau})+( \frac{\tau^{1/2}\sigma}{B_2^{1/2}} +\frac{n_+ \eta}{B_1 \tau^{1/2}})\right)
    \end{aligned}
\end{equation*}
Setting $\tau=\O(B_2\epsilon^4)$ and $\eta=\O(\frac{B_1B_2^{1/2}}{n_+}\epsilon^4)$, with $T=\O(\frac{n_+}{B_1B_2^{1/2}}\epsilon^{-6})$ iterations, we have 
\begin{equation*}
    \frac{1}{T}\sum_{t=0}^{T-1}\E[\|\nabla F_{1/\bar{\rho}}(\w_t,\s_t,s'_t)\|^2]\leq \epsilon^2.
\end{equation*}

\end{proof}

\subsubsection{Convergence analysis for Algorithm~\ref{alg:4}}
We now consider MIL TPAUC maximization with mean pooling. Define $F(\w,\s,s'):=\frac{1}{n_+}\sum_{X_i\in \S_+}f_i(g_i(h_j(\w)-h_i(\w),s_i),s')$. Due to the weak-convexity of $F(\w,\s,s')$ w.r.t. $(\w,\s,s')$, we consider the following Moreau envelope and proximal map defined as
\begin{equation*}
    \begin{aligned}
        &F_\lambda(\w,\s,s') = \min_{\tilde{\w},\tilde{\s},\tilde{s}'}F(\tilde{\w},\tilde{\s},\tilde{s}')+\frac{1}{2\lambda}\left(\|\tilde{\w}-\w\|^2+\|\tilde{\s}-\s\|^2+\|\tilde{s}'-s'\|^2\right),\\
        & \prox_{\lambda F}(\w,\s,s')=\argmin_{\tilde{\w},\tilde{\s},\tilde{s}'}F(\tilde{\w},\tilde{\s},\tilde{s}')+\frac{1}{2\lambda}\left(\|\tilde{\w}-\w\|^2+\|\tilde{\s}-\s\|^2+\|\tilde{s}'-s'\|^2\right).
    \end{aligned}
\end{equation*}

Following the same proofs of Lemma~\ref{lem:13} and Lemma~\ref{lem:14}, we have the following error bounds
\begin{lemma}\label{lem:11}
Consider the update for $\{v_{i,t}:X_i\in\S_+\cup\S_- \}$ in Algorithm~\ref{alg:4}. Assume $h_i(\w;\xi)$ is $C_h$-Lipshitz for all $X_i\in S_+\cup S_-$, and $\E[\|G_t\|^2]\leq M^2$. With $\gamma_1 = \frac{n_+-B_1}{B_1(1-\tau_1)}+(1-\tau_1)$, $\gamma_2 = \frac{n_--B_2}{B_2(1-\tau_1)}+(1-\tau_1)$ and $\tau_1\leq \frac{1}{2}$, we have
\begin{equation*}
    \begin{aligned}
        &\E\bigg[\frac{1}{n_+}\sum_{X_i\in\S_+}\|v_{i,t+1}-h_i(\w_{t+1})\|\bigg]\leq (1-\frac{B_1\tau_1}{2n_+})^{t+1} \sum_{X_i\in\S_+}\|v_{i,0}-h_i(\w_{t})\|+ 2\tau_1^{1/2}\sigma+\frac{4 n_+C_hM\eta}{B_1 \tau_1^{1/2}}\\
        &\E\bigg[\frac{1}{n_-}\sum_{X_j\in\S_-}\|v_{j,t+1}-h_j(\w_{t+1})\|\bigg]\leq (1-\frac{B_1\tau_1}{2n_-})^{t+1} \frac{1}{n_-}\sum_{X_j\in\S_-}\|v_{j,0}-h_j(\w_{t})\|+ 2\tau_1^{1/2}\sigma+\frac{4 n_-C_hM\eta}{B_1 \tau_1^{1/2}}\\
        &\E\bigg[\frac{1}{n_+}\sum_{X_i\in\S_+}\|v_{i,t+1}-h_i(\w_{t+1})\|^2\bigg]\leq (1-\frac{B_1\tau_1}{2n_+})^{2(t+1)} \frac{1}{n_+}\sum_{X_i\in\S_+}\|v_{i,0}-h_i(\w_{t})\|^2+ 4\tau_1 \sigma^2+\frac{16 n_+^2 C_h^2M^2\eta^2}{B_1^2 \tau_1}\\
        &\E\bigg[\frac{1}{n_-}\sum_{X_j\in\S_-}\|v_{j,t+1}-h_j(\w_{t+1})\|^2\bigg]\leq (1-\frac{B_1\tau_1}{2n_-})^{2(t+1)} \frac{1}{n_-}\sum_{X_j\in\S_-}\|v_{j,0}-h_j(\w_{t})\|^2+ 4\tau_1\sigma^2+\frac{16 n_-^2 C_h^2 M^2\eta^2}{B_1^2 \tau_1}\\
    \end{aligned}
\end{equation*}
\end{lemma}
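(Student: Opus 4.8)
The plan is to reduce Lemma~\ref{lem:11} to the single-block MSVR error bound of \cite{jiang2022multiblocksingleprobe} (Lemma~1), exactly as it was used in the proofs of Lemma~\ref{lem:16} and Lemma~\ref{lem:13}. The key observation is that step~4 of Algorithm~\ref{alg:4} \emph{decouples} into two independent MSVR estimators: one over the block-index set $\S_+$ with mini-batch size $B_1$ and correction parameter $\gamma_1$, and one over $\S_-$ with mini-batch size $B_2$ and parameter $\gamma_2$. Each coordinate $v_{i,t}$ (resp.\ $v_{j,t}$) is modified only on iterations where its own index is sampled, so the two error recursions never interact and may be analyzed separately. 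Moreover, since by Assumption~\ref{ass:2} every target satisfies $\|h_i(\w_{t+1})\|\le\tilde C_h$, the projection $\Pi_{\tilde C_h}[\cdot]$ is onto a convex set containing the target and is therefore non-expansive toward it, so it only shrinks the tracking error and can be dropped from all upper bounds.

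First I would establish the squared-norm bound for the $\S_+$ block. Invoking the MSVR one-step recursion with block count $n_+$, mini-batch $B_1$ and the prescribed $\gamma_1$ gives a contraction of the averaged squared error with rate $\mu_+=\tfrac{B_1\tau_1}{2n_+}$, a constant variance residual of order $\tau_1^2\sigma^2$, and a ``drift'' residual proportional to $\tfrac{n_+^2}{B_1^2}\,\E\|h_i(\w_{t+1})-h_i(\w_t)\|^2/\tau_1$. The drift term is controlled by the $C_h$-Lipschitz continuity of $h_i$ together with the update rule $\w_{t+1}-\w_t=-\eta G_t$ and $\E_t\|G_t\|^2\le M^2$, which give $\E_t\|h_i(\w_{t+1})-h_i(\w_t)\|^2\le C_h^2\eta^2 M^2$. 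Unrolling the contraction and using $\sum_{k\ge0}(1-\mu_+)^k\le 1/\mu_+$ collapses the constant residuals to the steady-state value $4\tau_1\sigma^2+\dfrac{16n_+^2C_h^2M^2\eta^2}{B_1^2\tau_1}$ (after absorbing numerical constants), while the transient carries the factor $(1-\mu_+)^{2(t+1)}$ multiplying the initialization error --- precisely the stated bound. The $\S_-$ bound is obtained verbatim with the positive-block data $(n_+,B_1,\gamma_1)$ replaced by the negative-block data $(n_-,B_2,\gamma_2)$.

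The two un-squared norm bounds then follow from the squared ones without new work: by Jensen's inequality $\tfrac1{n_+}\sum_i\|v_{i,t+1}-h_i(\w_{t+1})\|\le\big(\tfrac1{n_+}\sum_i\|v_{i,t+1}-h_i(\w_{t+1})\|^2\big)^{1/2}$ and $\E[\sqrt{\,\cdot\,}]\le\sqrt{\E[\cdot]}$, and then $\sqrt{a+b}\le\sqrt a+\sqrt b$ splits the resulting square root into the geometric-decay term $(1-\mu_+)^{t+1}\big(\tfrac1{n_+}\sum_i\|v_{i,0}-h_i(\w_0)\|^2\big)^{1/2}$ and the steady-state term $2\tau_1^{1/2}\sigma+\dfrac{4n_+C_hM\eta}{B_1\tau_1^{1/2}}$, which matches the statement (the initialization term displayed in the lemma is $\tfrac1{n_+}\sum_i\|v_{i,0}-h_i(\w_0)\|$, an upper bound for the square root of the averaged square); the $\S_-$ case is identical.

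The one genuinely delicate ingredient --- routine but not to be skipped --- is re-deriving the MSVR one-step recursion with the specific $\gamma_1$: the role of $\gamma_1=\tfrac{n_+-B_1}{B_1(1-\tau_1)}+(1-\tau_1)$ is to make the covariance between the Bernoulli indicator of ``$i$ was sampled'' and the STORM-type correction term vanish in expectation, so that the residual variance is $O(\tau_1^2\sigma^2)$ rather than a non-vanishing $O(\tau_1\sigma^2/B_1)$; this cancellation is exactly the content of Lemma~1 of \cite{jiang2022multiblocksingleprobe}, which we invoke. A secondary point to check is that although $v_{i,t}$ (positives) and $v_{j,t}$ (negatives) both feed into $g_i(v_{j,t}-v_{i,t},s_{i,t})$ downstream, the two error recursions are still independent, because each $v$-coordinate's step~4 update references only its own block's mini-batch and the shared iterate $\w_t$ --- and $\w_t$ enters solely through the common drift quantity $\|\w_{t+1}-\w_t\|$, already bounded by $\eta M$.
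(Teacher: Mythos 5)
Your proposal is correct and follows essentially the same route as the paper, which proves this lemma by the same argument as Lemmas~\ref{lem:13} and \ref{lem:16}: invoke the single-block MSVR recursion of Lemma~1 in \cite{jiang2022multiblocksingleprobe} separately on the decoupled $\S_+$ and $\S_-$ blocks, control the drift via $C_h$-Lipschitzness and $\E\|\w_{t+1}-\w_t\|^2\le\eta^2M^2$, unroll with $\sum_k(1-\mu)^{2k}\le 1/\mu$, and pass from the squared bound to the norm bound by Jensen and $\sqrt{a^2+b^2+c^2}\le a+b+c$. Your treatment of the projection $\Pi_{\tilde C_h}$ as non-expansive toward the target and your use of $(n_-,B_2,\gamma_2)$ for the negative block are the natural (and arguably more careful) versions of what the paper does implicitly.
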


\begin{lemma}\label{lem:12}
Consider update for $\{u_{i,t}:X_i\in \S_+\}$ in Algorithm~\ref{alg:4}. Assume $g_i(v_{ij},s_{i})$ is $C_g$-Lipshitz w.r.t. $(v_{ij},s_i)$ for all $X_i\in S_+$ and $X_j\in S_-$. With $\gamma_3 = \frac{n_+-B_1}{B_1(1-\tau_2)}+(1-\tau_2)$ and $\tau_2\leq \frac{1}{2}$, we have
\begin{equation*}
    \begin{aligned}
        &\E\left[\frac{1}{n_+}\sum_{X_i\in S_+}\|u_{i,t+1}-\frac{1}{n_-}\sum_{X_j\in S_-}g_i(v_{j,t+1}-v_{i,t+1},s_{i,t+1})\|\right]\\
        &\leq (1-\frac{B_1\tau_2}{2n_+})^{t+1} \frac{1}{n_+}\sum_{X_i\in S_+}\|u_{i,0}-\frac{1}{n_-}\sum_{X_j\in S_-}g_i(v_{j,0}-v_{i,0},s_{i,0})\|+ 2\tau_2^{1/2}\sigma\\
        &\quad +C_2\frac{n_+}{B_1}(\frac{B_1^{1/2}}{n_+^{1/2}}+\frac{B_2^{1/2}}{n_-^{1/2}})\frac{\tau_1}{\tau_2^{1/2}}+C_2\frac{n_+}{B_1}(\frac{n_+^{1/2}}{B_1^{1/2}}+\frac{n_-^{1/2}}{B_2^{1/2}})\frac{\eta}{\tau_2^{1/2}}+C_2\frac{n_+^{1/2}\eta}{B_1\tau_2^{1/2}}
    \end{aligned}
\end{equation*}
where $C_2$ is a constant defined in the proof.
\end{lemma}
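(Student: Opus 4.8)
The plan is to follow the template of the MSVR error-bound proofs in the excerpt (Lemma~\ref{lem:16}, Lemma~\ref{lem:14}), viewing $\{u_{i,t}:X_i\in\S_+\}$ as a multi-block variable that tracks the \emph{moving} target $z_{i,t}:=\frac{1}{n_-}\sum_{X_j\in\S_-}g_i(v_{j,t}-v_{i,t},s_{i,t})$. First I would invoke the generic single-probe recursion (Lemma~1 of \cite{jiang2022multiblocksingleprobe}): with the prescribed error-correction parameter $\gamma_3=\frac{n_+-B_1}{B_1(1-\tau_2)}+(1-\tau_2)$ and $\tau_2\le\frac12$, the averaged expected squared error $\mathcal{E}_t:=\frac{1}{n_+}\sum_{X_i\in\S_+}\E\|u_{i,t}-z_{i,t}\|^2$ obeys a contraction of the form
\[
\mathcal{E}_{t+1}\;\le\;\Big(1-\tfrac{B_1\tau_2}{2n_+}\Big)\mathcal{E}_t\;+\;c_1\,\tau_2^2\,\sigma^2\;+\;c_2\,\tfrac{n_+}{B_1\tau_2}\,\mathcal{D}_t^2,
\]
where $\mathcal{D}_t:=\big(\frac{1}{n_+}\sum_{X_i\in\S_+}\E\|z_{i,t+1}-z_{i,t}\|^2\big)^{1/2}$ is the target drift and the $\tau_2^2\sigma^2$ term comes from the batch-variance of $\frac{1}{B_2}\sum_{j\in\B_2^t}g_i(v_{j,t}-v_{i,t},s_{i,t})$ around $z_{i,t}$, controlled by Assumption~\ref{ass:2}. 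Passing from squared errors to norms by Jensen turns this into the stated $\ell_1$-type bound once unrolled.

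The crux is bounding the drift $\|z_{i,t+1}-z_{i,t}\|$. By $C_g$-Lipschitz continuity of $g_i$,
\[
\|z_{i,t+1}-z_{i,t}\|\;\le\;C_g\Big(\|v_{i,t+1}-v_{i,t}\|+\tfrac{1}{n_-}\!\!\sum_{X_j\in\S_-}\!\!\|v_{j,t+1}-v_{j,t}\|+\|s_{i,t+1}-s_{i,t}\|\Big).
\]
I would then bound each per-step increment. A positive $v$-block is updated with probability $\Theta(B_1/n_+)$, a negative one with probability $\Theta(B_2/n_-)$, and when updated the increment equals, after the nonexpansive projection $\Pi_{\tilde{C}_h}$, $\tau_1(h_i(\w_t;\cdot)-v_{i,t})+\gamma_1(h_i(\w_t;\cdot)-h_i(\w_{t-1};\cdot))$ (resp.\ with $\gamma_2$), whose norm is $O(\tau_1\tilde{C}_h+\gamma_1 C_h\eta\|G_{t-1}\|)$ using boundedness of $h_i$ and $v_{i,t}$ together with $C_h$-Lipschitzness; since $\gamma_1=\Theta(n_+/B_1)$, $\gamma_2=\Theta(n_-/B_2)$ and $\E\|G_{t-1}\|^2\le M^2$, the expected drift contributed by the positive (resp.\ negative) $v$-blocks is $O\big(\tfrac{B_1}{n_+}\tau_1\tilde{C}_h+C_h M\eta\big)$ (resp.\ $O\big(\tfrac{B_2}{n_-}\tau_1\tilde{C}_h+C_h M\eta\big)$). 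The $s_i$-increment is $O(\eta_1 C_gC_f/B_1)$ when sampled, so its expected magnitude is $O(\tfrac{\eta_1}{n_+}C_gC_f)$, and I would use that $\eta_1=\Theta(\eta)$ as fixed in the analysis of Algorithm~\ref{alg:4}. Collecting, $\mathcal{D}_t^2$ is a sum of terms of order $\big(\tfrac{B_1}{n_+}+\tfrac{B_2}{n_-}\big)^2\tau_1^2$, $\eta^2$, and $\eta^2/n_+^2$; feeding $\tfrac{n_+}{B_1\tau_2}\mathcal{D}_t^2$ into the recursion and unrolling with $\sum_{k\le t}(1-\mu_2)^k\le 1/\mu_2=\tfrac{2n_+}{B_1\tau_2}$, then taking the square root, yields exactly the three additive terms $C_2\tfrac{n_+}{B_1}\big(\tfrac{B_1^{1/2}}{n_+^{1/2}}+\tfrac{B_2^{1/2}}{n_-^{1/2}}\big)\tfrac{\tau_1}{\tau_2^{1/2}}$, $C_2\tfrac{n_+}{B_1}\big(\tfrac{n_+^{1/2}}{B_1^{1/2}}+\tfrac{n_-^{1/2}}{B_2^{1/2}}\big)\tfrac{\eta}{\tau_2^{1/2}}$ and $C_2\tfrac{n_+^{1/2}\eta}{B_1\tau_2^{1/2}}$, while the variance term contributes $2\tau_2^{1/2}\sigma$.

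Finally I would assemble: unroll the recursion from $\mathcal{E}_0$ to step $t$, split the geometric factor off the initial term to obtain the prefactor $(1-\tfrac{B_1\tau_2}{2n_+})^{t+1}$ on $\frac{1}{n_+}\sum_{X_i\in\S_+}\|u_{i,0}-z_{i,0}\|$, apply $\sum_k(1-\mu_2)^k\le 1/\mu_2$ to the per-step constants, and define $C_2$ to be the largest of the accumulated absolute constants (products of $C_g,C_f,C_h,\tilde{C}_h,M$ with the numerical factors from the recursion). The main obstacle I expect is controlling the coupling between the two MSVR layers: the target $z_{i,t}$ for a fixed positive example depends on \emph{all} negative estimators $v_{j,t}$, so the $u$-layer drift is driven by the average drift of the $v$-layer, and one must verify that the sampling probabilities $B_1/n_+$, $B_2/n_-$ and the correction magnitudes $\gamma_1\sim n_+/B_1$, $\gamma_2\sim n_-/B_2$ combine to precisely the mixed constants $\tfrac{n_+}{B_1}\big(\tfrac{B_1^{1/2}}{n_+^{1/2}}+\tfrac{B_2^{1/2}}{n_-^{1/2}}\big)$ and $\tfrac{n_+}{B_1}\big(\tfrac{n_+^{1/2}}{B_1^{1/2}}+\tfrac{n_-^{1/2}}{B_2^{1/2}}\big)$ rather than merely $\O(\cdot)$ expressions; this bookkeeping is what forces the separate treatment of the positive and negative $v$-blocks in the drift estimate.
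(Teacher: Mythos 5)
Your high-level strategy is the same as the paper's: treat $\{u_{i,t}\}$ as MSVR blocks tracking the moving target $z_{i,t}=\frac{1}{n_-}\sum_{X_j\in\S_-}g_i(v_{j,t}-v_{i,t},s_{i,t})$, invoke the squared-error recursion of Lemma~1 in \cite{jiang2022multiblocksingleprobe}, push the target drift through the $C_g$-Lipschitzness of $g_i$ onto the per-step increments of $v_i$, $v_j$ and $s_i$, handle the positive and negative $v$-blocks separately because of their different sampling rates, unroll, and pass to norms by Jensen. That is exactly the paper's route.

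Two pieces of your bookkeeping, however, would not reproduce the stated bound. First, the recursion is in mean square, so the drift must enter as $\E\|v_{i,t+1}-v_{i,t}\|^2$. For a block updated with probability $B_1/n_+$ whose increment, when it occurs, has magnitude $O(\tau_1 M+\gamma_1 C_h\eta M)$, this second moment is $\frac{B_1}{n_+}\cdot O\big(\tau_1^2M^2+\gamma_1^2C_h^2\eta^2M^2\big)=O\big(\tfrac{B_1\tau_1^2M^2}{n_+}+\tfrac{n_+C_h^2\eta^2M^2}{B_1}\big)$. You instead compute the first moment $O\big(\tfrac{B_1}{n_+}\tau_1+C_hM\eta\big)$ and square it; for the $\tau_1$ piece this is smaller than the true second moment by a factor of $n_+/B_1$ (resp.\ $n_-/B_2$), so it is not a valid surrogate, and it is precisely the linear (not squared) dependence on the sampling probability that produces the mixed constants $\tfrac{n_+}{B_1}\big(\tfrac{B_1^{1/2}}{n_+^{1/2}}+\tfrac{B_2^{1/2}}{n_-^{1/2}}\big)$ after unrolling and taking square roots. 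Second, you put the drift into the recursion with coefficient $c_2\tfrac{n_+}{B_1\tau_2}$; the MSVR recursion with the prescribed $\gamma_3$ carries the drift with coefficient $O\big(\tfrac{n_+C_g^2}{B_1}\big)$ and \emph{no} $1/\tau_2$ — removing that factor is exactly what the error-correction term buys over the plain moving average (compare Lemma~\ref{lem:19}, where the $1/\tau$ does appear). The single $1/\tau_2$ in the final bound arises only once, from $\sum_{k}(1-\mu_2)^k\le 2n_+/(B_1\tau_2)$ when unrolling. Carried out as written, your two steps give drift contributions of order $\tau_1/\tau_2$ and $\tfrac{n_+\eta}{B_1\tau_2}$ rather than the stated $\tfrac{n_+^{1/2}\tau_1}{B_1^{1/2}\tau_2^{1/2}}$-type and $\tfrac{n_+^{3/2}\eta}{B_1^{3/2}\tau_2^{1/2}}$-type terms. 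Bounding the second moments of the block increments directly and using the $\tau_2$-free MSVR drift coefficient repairs both issues and recovers the paper's argument.
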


Then we have the following covnergence guarantee.

\begin{theorem}\label{thm:7}
    Under assumptions given in Lemma~\ref{lem:18}, with $\gamma_1 = \frac{n_1-B_1}{B_1(1-\tau_1)}+(1-\tau_1)$, $\gamma_2 = \frac{n_2-B_2}{B_2(1-\tau_1)}+(1-\tau_1)$, $\gamma_3 = \frac{n_1-B_1}{B_1(1-\tau_2)}+(1-\tau_2)$, $\tau_1 = \O\left(\min\left\{B_3,\frac{B_1}{n_+}\min\{\frac{n_+^{1/2}}{B_1^{1/2}},\frac{n_-^{1/2}}{B_2^{1/2}}\}B_2^{1/2}\right\}\epsilon^4\right)\leq 1/2$, $\tau_2 = \O(B_2\epsilon^4)\leq 1/2$, $\eta = \O\left(\min\left\{\min\{\frac{B_1}{n_+},\frac{B_2}{n_-}\}\min\{B_3^{1/2},\frac{B_1^{1/2}}{n_+^{1/2}}\min\{\frac{n_+^{1/4}}{B_1^{1/4}},\frac{n_-^{1/4}}{B_2^{1/4}}\}B_2^{1/4}\}, \frac{B_1}{n_+}\min\{\frac{B_1^{1/2}}{n_+^{1/2}},\frac{B_2^{1/2}}{n_-^{1/2}}\}B_3^{1/2}\right\}\epsilon^4\right)$, then after
    \begin{equation*}
    T\geq \O\left(\max\left\{\max\{\frac{n_+}{B_1},\frac{n_-}{B_2}\}\max\{\frac{1}{B_3^{1/2}},\frac{n_+^{1/2}}{B_1^{1/2}}\max\{\frac{B_1^{1/4}}{n_+^{1/4}},\frac{B_2^{1/4}}{n_-^{1/4}}\}\frac{1}{B_2^{1/4}}\}, \frac{n_+}{B_1}\max\{\frac{n_+^{1/2}}{B_1^{1/2}},\frac{n_-^{1/2}}{B_2^{1/2}}\}\frac{1}{B_2^{1/2}}\right\}\epsilon^{-6}\right)
    \end{equation*}
    iterations, Algorithm~\ref{alg:4} gives $\epsilon$-stationary point to the Moreau envelope, i.e.,
    \begin{equation*}
    \begin{aligned}
        &\frac{1}{T}\sum_{t=0}^{T-1}\|\nabla F_{1/\bar{\rho}}(\w_t,\s_t,s'_t)\|^2\leq \epsilon^2.
    \end{aligned}
\end{equation*}
where $\bar{\rho} = \rho_F+\rho_g C_f+8\rho_gC_fC_h+C_fC_gL_h$.
\end{theorem}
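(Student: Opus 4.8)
The plan is to follow the template of the proof of Theorem~\ref{thm:3}, enriched with the extra decision variables $\s, s'$ exactly as in the proof of Theorem~\ref{thm:6}, and specialized to the decoupled inner map $h_{i,j}(\w) = h_j(\w) - h_i(\w)$. First I would introduce the joint proximal point $(\hat\w_t, \hat\s_t, \hat s'_t) := \prox_{F/\bar\rho}(\w_t, \s_t, s'_t)$, and, using the definition of the Moreau envelope together with the update rules for $\w_t,\s_t,s'_t$ and the second-moment bounds $\E_t\|G_t\|^2, \E_t\|G^1_t\|^2, \E_t\|G^2_t\|^2 \le M^2$ (with $G^1_t, G^2_t$ the stochastic directions used for $\s$ and $s'$), derive the one-step inequality
\[
\E_t[F_{1/\bar\rho}(\w_{t+1},\s_{t+1},s'_{t+1})] \le F_{1/\bar\rho}(\w_t,\s_t,s'_t) + \bar\rho\eta\,\langle \E_t[G_t],\hat\w_t-\w_t\rangle + \bar\rho\eta\,\langle \E_t[G^1_t],\hat\s_t-\s_t\rangle + \bar\rho\eta\,\langle \E_t[G^2_t],\hat s'_t-s'_t\rangle + \tfrac{3\eta^2\bar\rho M^2}{2}.
\]

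Next I would upper bound the joint inner product by telescoping through the three composed functions, reproducing the chain of inequalities in the proof of Theorem~\ref{thm:3}: apply convexity of $f_i$ in $(g,s')$, then monotonicity of $f_i$ in $g$ together with $\rho_g$-weak convexity and monotonicity of $g_i$ in $v$, and finally $L_h$-weak convexity of $h_{i,j}$. The cross term $A_1 = \partial f_i(u_{i,t})^\top \partial g_i(v_{i,j,t})^\top(h_{i,j}(\hat\w_t)-v_{i,j,t})$ is lower bounded via the weakly-convex inequality for $h_{i,j}$, the sign structure of the subgradients, and the Lipschitz bounds $0\le\partial f_i\le C_f$, $|\partial g_i|\le C_g$, $\|\nabla h_{i,j}\|\le C_h$; what remains is $\langle \E_t[G_t],\hat\w_t-\w_t\rangle$ plus error terms in $\|v_{i,t}-h_i(\w_t)\|$, $\|v_{j,t}-h_j(\w_t)\|$ and $\|u_{i,t}-\tfrac{1}{n_-}\sum_j g_i(v_{j,t}-v_{i,t},s_{i,t})\|$, plus curvature terms proportional to $\|(\hat\w_t,\hat\s_t,\hat s'_t)-(\w_t,\s_t,s'_t)\|^2$. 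Substituting into the one-step inequality, invoking $\rho_F$-weak convexity of $F$ to get $(\bar\rho-\rho_F)$-strong convexity of $(\w,\s,s')\mapsto F(\w,\s,s')+\tfrac{\bar\rho}{2}\|\cdot-(\w_t,\s_t,s'_t)\|^2$, and choosing $\bar\rho$ as in the theorem statement so that every curvature term is absorbed into $-\tfrac{\bar\rho^2\eta}{2}\|(\hat\w_t,\hat\s_t,\hat s'_t)-(\w_t,\s_t,s'_t)\|^2 = -\tfrac{\eta}{2}\|\nabla F_{1/\bar\rho}(\w_t,\s_t,s'_t)\|^2$ (Lemma~\ref{lem:4}), leaves a clean descent inequality driven only by the tracking errors.

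Then I would plug in the MSVR error bounds of Lemma~\ref{lem:11} (for the $v_i$, $i\in\S_+$, and $v_j$, $j\in\S_-$, trackers, with linear rates $\mu_1\asymp B_1\tau_1/n_+$ and $\asymp B_1\tau_1/n_-$) and of Lemma~\ref{lem:12} (for the $u_i$ tracker, rate $\mu_2\asymp B_1\tau_2/n_+$, residual carrying the factors $\tfrac{n_+}{B_1}(\tfrac{B_1^{1/2}}{n_+^{1/2}}+\tfrac{B_2^{1/2}}{n_-^{1/2}})\tfrac{\tau_1}{\tau_2^{1/2}}$ and $\eta/\tau_2^{1/2}$), take full expectation, telescope from $t=0$ to $T-1$, use $\sum_t(1-\mu_{\min})^t\le 1/\mu_{\min}$ with $\mu_{\min}=\min\{\mu_1,\mu_2\}$, and lower bound $\E[F_{1/\bar\rho}(\w_T,\s_T,s'_T)]$ by $\min F$. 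This yields $\frac{1}{T}\sum_t\E\|\nabla F_{1/\bar\rho}(\w_t,\s_t,s'_t)\|^2 = \O\big(\tfrac{1}{T}(\tfrac{1}{\eta}+\tfrac{1}{\mu_{\min}}) + \eta + R_1+R_2+R_3+R_4\big)$, and balancing all residuals at the level $\epsilon^2$ forces $\tau_1,\tau_2,\eta$ to the stated orders and the condition $\tfrac{1}{T}(\tfrac1\eta+\tfrac1{\mu_{\min}})=\O(\epsilon^2)$ gives the claimed $T=\O(\cdots\epsilon^{-6})$.

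The hard part will be the third step. The error of the outer tracker $u_{i,t}$ is not self-contained: through $g_i(v_{j,t}-v_{i,t},\cdot)$ it inherits the errors of both the positive- and negative-bag inner trackers $v_{i,t},v_{j,t}$, and through the stochastic step $\eta G_t$ it inherits the movement of $\w$, so its recursion (Lemma~\ref{lem:12}) is driven simultaneously by $\tau_1$ and $\eta$. Keeping the resulting residuals $R_3,R_4$ with their asymmetric factors $\tfrac{n_+^{1/2}}{B_1^{1/2}}$ versus $\tfrac{n_-^{1/2}}{B_2^{1/2}}$ — arising from the separate block-sampling of $v$ over $\S_+$ and $\S_-$ — and balancing them against $\tau_1,\tau_2,\eta$ to produce the somewhat intricate $\min$/$\max$ expressions in the statement is the delicate bookkeeping; everything else is a routine adaptation of Theorems~\ref{thm:3} and~\ref{thm:6}.
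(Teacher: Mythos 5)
Your proposal follows essentially the same route as the paper's proof: the joint proximal point in $(\w,\s,s')$, the one-step Moreau-envelope descent with the three inner products and the $\tfrac{3\eta^2\bar\rho M^2}{2}$ term, the chain of convexity/monotonicity/weak-convexity inequalities through $f_i$, $g_i$, $h_{i,j}$ with the cross term $A_1$, absorption of curvature via the choice of $\bar\rho$ and Lemma~\ref{lem:4}, and finally the MSVR error bounds of Lemmas~\ref{lem:11} and~\ref{lem:12} telescoped and balanced to the stated $\tau_1,\tau_2,\eta,T$. The only cosmetic difference is that the paper tracks five residual terms ($R_1,\dots,R_5$: norm and squared-norm bounds for the positive- and negative-bag $v$ trackers plus one for $u$) where you group them as four, which does not affect the argument.
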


\begin{proof}[Proof of Theorem~\ref{thm:7}]
Consider the change in the Moreau envelope:
\begin{equation}\label{ineq:28}
    \begin{aligned}
        &\E_t[F_{1/\bar{\rho}}(\w_{t+1},\s_{t+1},s'_{t+1})]\\
        &=\E_t\left[\min_{\tilde{\w},\tilde{\s},\tilde{s}'}F(\tilde{\w},\tilde{\s}_t,\tilde{s}'_t)+\frac{\bar{\rho}}{2}\left(\|\tilde{\w}-\w_{t+1}\|^2+\|\tilde{\s}-\s_{t+1}\|^2+\|\tilde{s}'-\s'_{t+1}\|^2\right)\right]\\
        &\leq \E_t\left[F(\hat{\w}_t,\hat{\s}_t,\hat{s}'_t)+\frac{\bar{\rho}}{2}\left(\|\hat{\w}_t-\w_{t+1}\|^2+\|\hat{\s}_t-\s_{t+1}\|^2+\|\hat{s}'_t-s'_{t+1}\|^2\right)\right]\\
        &=F(\hat{\w}_t,\hat{\s}_t,\hat{s}'_t)+\E_t\bigg[\frac{\bar{\rho}}{2}\big(\|\hat{\w}_t-(\w_t-\eta G_t)\|^2+\|\hat{\s}_t-(\s_t-\eta G^1_t)\|^2\\
        &\quad +\|\hat{s}'_t-(s'_t-\eta G^2_t)\|^2\big)\bigg]\\
        &\leq F(\hat{\w}_t,\hat{\s_t},\hat{s}'_t)+\frac{\bar{\rho}}{2}\left(\|\hat{\w}_t-\w_t\|^2+\|\hat{\s}_t-\s_t\|^2+\|\hat{s}'_t-s'_t\|^2\right)\\
        &\quad +\bar{\rho}\E_t[\eta\langle\hat{\w}_t-\w_t, G_t\rangle+\eta \langle\hat{\s}_t-\s_t, G^1_t\rangle+\eta\langle\hat{s}'_t-s'_t, G^2_t\rangle]+\frac{3\eta^2\bar{\rho}M^2}{2}\\
        &= F_{1/\bar{\rho}}(\w_t,\s_t,s'_t) +\bar{\rho}\E_t[\eta\langle\hat{\w}_t-\w_t, G_t\rangle+\eta\langle\hat{\s}_t-\s_t, G^1_t\rangle+\eta\langle\hat{s}'_t-s'_t, G^2_t\rangle]\\
        &\quad+\frac{3\eta^2\bar{\rho}M^2}{2}\\
    \end{aligned}
\end{equation}

where for simplicity we denote $G^2_t = \frac{1}{B_1}\sum_{i\in \B_1^t} \partial_{s'} f_i(u_{i,t},s'_t)$, and $G^1_t$ is a $n_+$-dimensional vector whose $i$-th coordinate is defined as
\begin{equation*}
    \begin{cases}
         \frac{1}{B_1}\partial_u f_i(u_{i,t},s'_t)\left[\frac{1}{B_2}\sum_{X_j\in \B_2^t}\partial_{s_i} g_i(v_{j,t}-v_{i,t},s_{i,t})\right],\quad &X_i\in \B_1^t\\
         0,&X_i\not\in \B_1^t
    \end{cases}.
\end{equation*}
The second inequality in the above derivation uses the bounds of $\E[\|G_t\|^2], \E[\|G_t^1\|^2]$ and $\E[\|G_t^2\|^2]$, which follow from the Lipschitz continuity and bounded variance assumptions and are denoted by $M$.

Note that
\begin{equation*}
\begin{aligned}
    &\E_t[G_t]\\
    & = \frac{1}{n_+}\sum_{X_i\in S_+} \partial_u f_i(u_{i,t},s'_t)\left[\frac{1}{n_-}\sum_{X_j\in S_-}\partial_{v} g_i(v_{j,t}-v_{i,t},s_{i,t})  \left(\nabla h_i(\w)-\nabla h_j(\w)\right)\right]\\
    &\E_t[G^1_{t}] = \frac{1}{n_+}\sum_{X_i\in \S_+}\partial_u f_i(u_{i,t},s'_t)\left[\frac{1}{n_-}\sum_{X_j\in S_-}\partial_{\s} g_i(v_{j,t}-v_{i,t},s_{i,t})\right]\\
    &\E_t[G^2_t] = \frac{1}{n_+}\sum_{X_i\in S_+} \partial_{s'} f_i(u_{i,t},s'_t)
\end{aligned}
\end{equation*}

Define $(\hat{\w}_t,\hat{\s}_t,\hat{s}'_t):=\prox_{F/\bar{\rho}}(\w_t,\s_t,s'_t)$. For a given $i\in\{1,\dots,m\}$, we have
\begin{equation}\label{ineq:23}
\begin{aligned}
    &f_i(\frac{1}{n_-}\sum_{X_j\in S_-}g_i(h_j(\hat{\w}_t)-h_i(\hat{\w}_t),\hat{s}_{i,t}), \hat{s}'_t)- f_i(u_{i,t},s'_t)\\
    &\stackrel{(a)}{\geq} \partial_{s'} f_i(u_{i,t},s'_t) (\hat{s}'_t-s'_t) + \partial_u f_i(u_{i,t},s'_t) (\frac{1}{n_-}\sum_{X_j\in S_-}g_i(h_j(\hat{\w}_t)-h_i(\hat{\w}_t),\hat{s}_{i,t})-u_{i,t})\\
    &\stackrel{(b)}{\geq}\partial_{s'} f_i(u_{i,t},s'_t) (\hat{s}'_t-s'_t) + \partial_u f_i(u_{i,t},s'_t) \bigg[\frac{1}{n_-}\sum_{X_j\in S_-}g_i(v_{j,t}-v_{i,t},s_{i,t})-u_{i,t}\\
    &\quad +\frac{1}{n_-}\sum_{X_j\in S_-}\langle \partial_{v} g_i(v_{j,t}-v_{i,t},s_{i,t}),(h_j(\hat{\w}_t)-h_i(\hat{\w}_t))-(v_{j,t}-v_{i,t}))\rangle\\
    &\quad-\frac{1}{n_-}\sum_{X_j\in S_-}\frac{\rho_g}{2}\|(h_j(\hat{\w}_t)-h_i(\hat{\w}_t))-(v_{j,t}-v_{i,t})\|^2 \\
    &\quad+\langle \frac{1}{n_-}\sum_{X_j\in S_-}\partial_{s_i} g_i(v_{j,t}-v_{i,t}),s_{i,t},\hat{s}_{i,t}-s_{i,t}\rangle -\frac{\rho_g}{2}\|\hat{s}_{i,t}-s_{i,t}\|^2 \bigg]\\
    &\stackrel{(c)}{\geq} \partial_{s'} f_i(u_{i,t},s'_t) (\hat{s}'_t-s'_t) + \partial_u f_i(u_{i,t},s'_t) \bigg[\frac{1}{n_-}\sum_{X_j\in S_-}g_i(v_{j,t}-v_{i,t},s_{i,t})-u_{i,t}\bigg]\\
    &\quad +\frac{1}{n_-}\sum_{X_j\in S_-}\underbrace{\langle \partial_u f_i(u_{i,t},s'_t)\partial_{v} g_i(v_{j,t}-v_{i,t},s_{i,t}),(h_j(\hat{\w}_t)-h_i(\hat{\w}_t))-(v_{j,t}-v_{i,t}))\rangle}_{A_1} \\
    &\quad +\frac{1}{n_-}\sum_{X_j\in S_-}\langle \partial_u f_i(u_{i,t},s'_t)\partial_{s_i} g_i(v_{j,t}-v_{i,t},s_{i,t}),\hat{s}_{i,t}-s_{i,t}\rangle\\
    &\quad -\frac{1}{n_-}\sum_{X_j\in S_-}\frac{\rho_gC_f}{2}\|(h_j(\hat{\w}_t)-h_i(\hat{\w}_t))-(v_{j,t}-v_{i,t})\|^2-\frac{\rho_gC_f}{2}\|\hat{s}_{i,t}-s_{i,t}\|^2 
\end{aligned}
\end{equation}
where (a) follows from the convexity of $f_i$, (b) follows from the monotonicity of $f_i(\cdot,s')$ and weak convexity of $g_i$, (c) is due to $0\leq  \partial_u f_i(u_{i,t},s'_t)\leq C_f$.

The $L_h$-smoothness assumption of $h_i(\w)-h_j(\w)$ for all $i,\w$ implies
\begin{equation}\label{ineq:22}
\begin{aligned}
    &h_i(\hat{\w}_t)-h_j(\hat{\w}_t) \\
    &\geq h_i(\w_t)-h_j(\w_t) + \langle (\nabla h_i(\w_t)-\nabla h_j(\w_t)), \hat{\w}_t-\w_t\rangle -\frac{L_h}{2}\|\hat{\w}_t-\w_t\|^2
\end{aligned}
\end{equation}
Since $ \partial_u f_i(u_{i,t},s'_t)\partial_{v} g_i(v_{j,t}-v_{i,t},s_{i,t})\geq 0$, we bound $A_1$ as following
\begin{equation*}
    \begin{aligned}
        A_1&=\langle  \partial_u f_i(u_{i,t},s'_t)\partial_{v} g_i(v_{j,t}-v_{i,t},s_{i,t}),(h_j(\hat{\w}_t)-h_i(\hat{\w}_t))-(v_{j,t}-v_{i,t}))\rangle\\
        &\stackrel{(a)}{\geq} \langle  \partial_u f_i(u_{i,t},s'_t)\partial_{v} g_i(v_{j,t}-v_{i,t},s_{i,t}),(h_i(\w_t)-h_j(\w_t))-(v_{j,t}-v_{i,t}))\rangle\\
        &\quad -\langle  \partial_u f_i(u_{i,t},s'_t)\partial_{v} g_i(v_{j,t}-v_{i,t},s_{i,t}),\frac{L_h}{2}\|\hat{\w}_t-\w_t\|^2\rangle\\
        &\quad +\langle  \partial_u f_i(u_{i,t},s'_t)\partial_{v} g_i(v_{j,t}-v_{i,t},s_{i,t}) (\nabla h_i(\w_t)-\nabla h_j(\w_t)), \hat{\w}_t-\w_t\rangle\\
        &\stackrel{(b)}{\geq} -C_fC_g[\|h_i(\w_t)-v_{i,t}\|+\|h_j(\w_t)-v_{j,t}\|]-\frac{C_fC_gL_h}{2}\|\hat{\w}_t-\w_t\|^2\\
        &\quad +\langle  \partial_u f_i(u_{i,t},s'_t)\partial_{\ell} g_i(v_{j,t}-v_{i,t},s_{i,t}) (\nabla h_i(\w_t)-\nabla h_j(\w_t)), \hat{\w}_t-\w_t\rangle\\
    \end{aligned}
\end{equation*}
where inequality (a) follows from inequality (\ref{ineq:22}), (b) follows from the Lipschitz continuity and monotone assumptions on $f_i,g_i,h_i,h_j$. Then plugging the new formulation of $A_1$ back to inequality~(\ref{ineq:23}) yields
\begin{equation*}
\begin{aligned}
    &f_i(\frac{1}{n_-}\sum_{X_j\in S_-}g_i(h_j(\hat{\w}_t)-h_i(\hat{\w}_t),\hat{s}_{i,t}),\hat{s}'_t)- f_i(u_{i,t},s'_t)\\
    &\geq \partial_{s'} f_i(u_{i,t},s'_t) (\hat{s}'_t-s'_t) + \partial_u f_i(u_{i,t},s'_t) \bigg[\frac{1}{n_-}\sum_{X_j\in S_-}g_i(v_{j,t}-v_{i,t},s_{i,t})-u_{i,t}\bigg]\\
    &\quad +\frac{1}{n_-}\sum_{X_j\in S_-} \left[-C_fC_g[\|h_i(\w_t)-v_{i,t}\|+\|h_j(\w_t)-v_{j,t}\|]\right]-\frac{C_fC_gL_h}{2}\|\hat{\w}_t-\w_t\|^2\\
    &\quad +\frac{1}{n_-}\sum_{X_j\in S_-}\langle  \partial_u f_i(u_{i,t},s'_t)\partial_{v} g_i(v_{j,t}-v_{i,t},s_{i,t}) (\nabla h_i(\w_t)-\nabla h_j(\w_t)), \hat{\w}_t-\w_t\rangle \\
    &\quad +\frac{1}{n_-}\sum_{X_j\in S_-}\langle \partial_u f_i(u_{i,t},s'_t)\partial_{s_i} g_i(v_{j,t}-v_{i,t},s_{i,t}),\hat{s}_{i,t}-s_{i,t}\rangle\\
    &\quad -\frac{1}{n_-}\sum_{X_j\in S_-}\frac{\rho_gC_f}{2}\|(h_j(\hat{\w}_t)-h_i(\hat{\w}_t))-(v_{j,t}-v_{i,t})\|^2-\frac{\rho_gC_f}{2}\|\hat{s}_{i,t}-s_{i,t}\|^2 \textbf{}
\end{aligned}
\end{equation*}
Taking average over $i\in S_+$ gives
\begin{equation*}
\begin{aligned}
    &\frac{1}{n_+}\sum_{X_i\in S_+}f_i(\frac{1}{n_-}\sum_{X_j\in S_-}g_i(h_j(\hat{\w}_t)-h_i(\hat{\w}_t),\hat{s}_{i,t}), \hat{s}'_t)- f_i(u_{i,t},s'_t)\\
    &\geq \langle \E_t[G_t^2],\hat{s}'_t-s'_t\rangle +\langle \E_t[G_t], \hat{\w}_t-\w_t\rangle +\langle \E_t[G_t^1],\hat{\s}_{t}-\s_{t}\rangle \\
    &\quad + \frac{1}{n_+}\sum_{X_i\in S_+} \partial_u f_i(u_{i,t},s'_t) \bigg[\frac{1}{n_-}\sum_{X_j\in S_-}g_i(v_{j,t}-v_{i,t},s_{i,t})-u_{i,t}\bigg]\\
    &\quad  -C_fC_g\left[\frac{1}{n_+}\sum_{X_i\in S_+}\|h_i(\w_t)-v_{i,t}\|+\frac{1}{n_-}\sum_{X_j\in S_-}\|h_j(\w_t)-v_{j,t}\|\right]-\frac{C_fC_gL_h}{2}\|\hat{\w}_t-\w_t\|^2\\
    &\quad -\frac{1}{n_+}\sum_{X_i\in S_+}\frac{1}{n_-}\sum_{X_j\in S_-}\frac{\rho_gC_f}{2}\|(h_j(\hat{\w}_t)-h_i(\hat{\w}_t))-(v_{j,t}-v_{i,t})\|^2-\frac{1}{n_+}\sum_{X_i\in S_+}\frac{\rho_gC_f}{2}\|\hat{s}_{i,t}-s_{i,t}\|^2
\end{aligned}
\end{equation*}
It follows
\begin{equation}\label{ineq:24}
\begin{aligned}
    &\langle \E_t[G_t^2],\hat{s}'_t-s'_t\rangle +\langle \E_t[G_t], \hat{\w}_t-\w_t\rangle +\langle \E_t[G_t^1],\hat{\s}_{t}-\s_{t}\rangle\\
    &\leq \frac{1}{n_+}\sum_{X_i\in S_+}\bigg[f_i(\frac{1}{n_-}\sum_{X_j\in S_-}g_i(h_j(\hat{\w}_t)-h_i(\hat{\w}_t),\hat{s}_{i,t}),\hat{s}'_t)- f_i(u_{i,t},s'_t)\\
    &\quad -  \partial_u f_i(u_{i,t},s'_t) \bigg[\frac{1}{n_-}\sum_{X_j\in S_-}g_i(v_{j,t}-v_{i,t},s_{i,t})-u_{i,t}\bigg]\\
    &\quad +\frac{1}{n_-}\sum_{X_j\in S_-}\frac{\rho_gC_f}{2}\|(h_j(\hat{\w}_t)-h_i(\hat{\w}_t))-(v_{j,t}-v_{i,t})\|^2+\frac{\rho_gC_f}{2}\|\hat{s}_{i,t}-s_{i,t}\|^2\bigg]\\
    &\quad  +C_fC_g\bigg[\frac{1}{n_+}\sum_{X_i\in S_+}\|h_i(\w_t)-v_{i,t}\|+\frac{1}{n_-}\sum_{X_j\in S_-}\|h_j(\w_t)-v_{j,t}\|\bigg]+\frac{C_fC_gL_h}{2}\|\hat{\w}_t-\w_t\|^2\\
\end{aligned}
\end{equation}

Combining inequality~(\ref{ineq:28}) and (\ref{ineq:24}) yields
\begin{equation}\label{ineq:26}
    \begin{aligned}
        &\E_t[F_{1/\bar{\rho}}(\w_{t+1},\s_{t+1},s'_{t+1})]\\
        &= F_{1/\bar{\rho}}(\w_t,\s_t,s'_t) +\bar{\rho}\eta\left[\langle\hat{\w}_t-\w_t, \E_t[G_t]\rangle+\langle\hat{\s}_t-\s_t, \E_t[G^1_t]\rangle+\langle\hat{s}'_t-s'_t, \E_t[G^2_t]\rangle\right]\\
        &\quad+\frac{3\eta^2\bar{\rho}M^2}{2}\\
        &\stackrel{(a)}{\leq}  F_{1/\bar{\rho}}(\w_t,\s_t,s'_t)+\frac{3\eta^2\bar{\rho}M^2}{2}+\bar{\rho}\eta\Bigg\{ \frac{1}{n_+}\sum_{X_i\in S_+}\bigg[F_i(\hat{s}'_t,\hat{\w}_t,\hat{s}_{i,t})-F_i(s'_t,\w_t,s_{i,t})\\
        &\quad +C_fC_g\frac{1}{n_-}\sum_{X_j\in S_-}\big[\|h_i(\w_t)-v_{i,t}\|+\|h_j(\w_t))-v_{j,t}\|\big] \\
        &\quad +C_f\bigg\|\frac{1}{n_-}\sum_{X_j\in S_-}g_i(v_{j,t}-v_{i,t},s_{i,t}) - u_{i,t}\bigg\| +C_f \bigg\|\frac{1}{n_-}\sum_{X_j\in S_-}g_i(v_{j,t}-v_{i,t},s_{i,t})-u_{i,t}\bigg\|\\
        &\quad +\frac{1}{n_-}\sum_{X_j\in S_-}\rho_gC_f\big[\|(h_i(\hat{\w}_t)-v_{i,t}\|^2+\|h_j(\hat{\w}_t))-v_{j,t}\|^2\big]+\frac{\rho_gC_f}{2}\|\hat{s}_{i,t}-s_{i,t}\|^2\bigg]\\
        &\quad  +C_fC_g\bigg[\frac{1}{n_+}\sum_{X_i\in S_+}\|h_i(\w_t)-v_{i,t}\|+\frac{1}{n_-}\sum_{X_j\in S_-}\|h_j(\w_t)-v_{j,t}\|\bigg]+\frac{C_f C_g L_h}{2}\|\hat{\w}_t-\w_t\|^2 \Bigg\}\\
        &=  F_{1/\bar{\rho}}(\w_t,\s_t,s'_t)+\frac{3\eta^2\bar{\rho}M^2}{2}+\bar{\rho}\eta(F(\hat{\w}_t,\hat{\s}_{t},\hat{s}'_t)-F(\w_t,\s_{t},s'_t))\\
        &\quad +\bar{\rho}\eta\Bigg\{ \frac{1}{n_+}\sum_{X_i\in S_+}\bigg[\frac{2C_fC_g}{n_-}\sum_{X_j\in S_-}\big[\|h_i(\w_t)-v_{i,t}\|+\|h_j(\w_t))-v_{j,t}\|\big] \\
        &\quad +\frac{2\rho_gC_f}{n_-}\sum_{X_j\in S_-}\big[\|h_i(\w_t)-v_{i,t}\|^2+\|h_j(\w_t))-v_{j,t}\|^2\big] +4\rho_gC_fC_h\|\hat{\w}_t-\w_t\|^2\\
        &\quad +2C_f\bigg\|\frac{1}{n_-}\sum_{X_j\in S_-}g_i(v_{j,t}-v_{i,t},s_{i,t}) - u_{i,t}\bigg\| +\frac{\rho_gC_f}{2}\|\hat{s}_{i,t}-s_{i,t}\|^2\bigg]+\frac{C_f C_g L_h}{2}\|\hat{\w}_t-\w_t\|^2 \Bigg\}
    \end{aligned}
\end{equation}
where (a) follows from the Lipschitz continuity of $f_i,g_i,h_i,h_j$ and inequality~(\ref{ineq:24}).

Due to the $\rho_F$-weak convexity of $F(\w,\s_i,s')$, we have ($\bar{\rho}-\rho_F$)-strong convexity of $(\w,s_i,s') \mapsto F(\w,\s,s')+\frac{\bar{\rho}}{2}\|(\w_t,\s_{t},s'_t)-(\w,\s,s')\|^2$. Then it follows
\begin{equation}\label{ineq:25}
    \begin{aligned}
        F(\hat{\w}_t,\hat{\s}_{t},\hat{s}'_t)-F(\w_t,\s_{t},s'_t)
        & = \bigg[F(\hat{\w}_t,\hat{\s}_{t},\hat{s}'_t)+\frac{\bar{\rho}}{2}\|(\w_t,\s_{t},s'_t)-(\hat{\w}_t,\hat{\s}_{t},\hat{s}'_t)\|^2\bigg]\\
        &\quad -\bigg[F(\w_t,\s_{t},s'_t)+\frac{\bar{\rho}}{2}\|(\w_t,\s_{t},s'_t)-(\w_t,\s_{t},s'_t)\|^2\bigg]\\
        &\quad -\frac{\bar{\rho}}{2}\|(\w_t,\s_{t},s'_t)-(\hat{\w}_t,\hat{\s}_{t},\hat{s}'_t)\|^2\\
        &\leq (\frac{\rho_F}{2}-\bar{\rho})\|(\w_t,\s_{t},s'_t)-(\hat{\w}_t,\hat{\s}_{t},\hat{s}'_t)\|^2
    \end{aligned}
\end{equation}
Plugging inequality~(\ref{ineq:25}) back into (\ref{ineq:26}), we obtain
\begin{equation*}
    \begin{aligned}
        &\E_t[F_{1/\bar{\rho}}(\w_{t+1},\s_{t+1},s'_{t+1})]\\
        &\leq F_{1/\bar{\rho}}(\w_t,\s_t,s'_t)+\frac{3\eta^2\bar{\rho}M^2}{2}+\bar{\rho}\eta\Bigg\{ \frac{1}{n_+}\sum_{X_i\in S_+}\bigg[(\frac{\rho_F}{2}-\bar{\rho})\|(\w_t,\s_{t},s'_t)-(\hat{\w}_t,\hat{\s}_{t},\hat{s}'_t)\|^2\\
        &\quad +\frac{2C_fC_g}{n_-}\sum_{X_j\in S_-}\big[\|h_i(\w_t)-v_{i,t}\|+\|h_j(\w_t)-v_{j,t}\|\big] \\
        &\quad +\frac{2\rho_gC_f}{n_-}\sum_{X_j\in S_-}\big[\|h_i(\w_t)-v_{i,t}\|^2+\|h_j(\w_t)-v_{j,t}\|^2\big] \\
        &\quad +2C_f\bigg\|\frac{1}{n_-}\sum_{X_j\in S_-}g_i(v_{j,t}-v_{i,t},s_{i,t}) - u_{i,t}\bigg\| +\frac{\rho_gC_f}{2}\|\hat{s}_{i,t}-s_{i,t}\|^2\bigg]+(4\rho_gC_fC_h+\frac{C_f C_g L_h}{2})\|\hat{\w}_t-\w_t\|^2 \Bigg\}\\
        &\leq F_{1/\bar{\rho}}(\w_t,\s_t,s'_t)+\frac{3\eta^2\bar{\rho}M^2}{2}+\bar{\rho}\eta\Bigg\{ \frac{1}{n_+}\sum_{X_i\in S_+}\bigg[-\frac{\bar{\rho}}{2}\|(\w_t,\s_{t},s'_t)-(\hat{\w}_t,\hat{\s}_{t},\hat{s}'_t)\|^2\\
        &\quad +\frac{C_1}{n_-}\sum_{X_j\in S_-}\big[\|h_i(\w_t)-v_{i,t}\|+\|h_j(\w_t)-v_{j,t}\|+\|h_i(\w_t)-v_{i,t}\|^2+\|h_j(\w_t)-v_{j,t}\|^2\big]\\
        &\quad +C_1\bigg\|\frac{1}{n_-}\sum_{X_j\in S_-}g_i(v_{j,t}-v_{i,t},s_{i,t}) - u_{i,t}\bigg\| \bigg] \Bigg\}\\
        &\stackrel{(b)}{\leq}F_{1/\bar{\rho}}(\w_t,\s_t,s'_t)+\frac{3\eta^2\bar{\rho}M^2}{2}-\frac{\eta}{2}\|\nabla F_{1/\bar{\rho}}(\w_t,\s_t,s'_t)\|^2\\
        &\quad +\frac{\bar{\rho}\eta C_1}{n_+n_-}\sum_{X_i\in S_+}\sum_{X_j\in S_-}\big[\|h_i(\w_t)-v_{i,t}\|+\|h_j(\w_t)-v_{j,t}\|+\|h_i(\w_t)-v_{i,t}\|^2+\|h_j(\w_t)-v_{j,t}\|^2\big]\\
        &\quad +\frac{\bar{\rho}\eta C_1}{n_+}\sum_{X_i\in S_+}\bigg\|\frac{1}{n_-}\sum_{X_j\in S_-}g_i(v_{j,t}-v_{i,t},s_{i,t}) - u_{i,t}\bigg\|  \\
    \end{aligned}
\end{equation*}
where in inequality (a) we use $\bar{\rho} = \rho_F+\rho_g C_f+8\rho_gC_fC_h+C_fC_gL_h$ and $C_1 = \max\{2C_fC_g,2\rho_gC_f,2C_f\}$, and inequality (b) uses Lemma~\ref{lem:4}.

With general error bounds
\begin{equation*}
    \begin{aligned}
        &\frac{1}{n_+}\sum_{X_i\in S_+}\E[\|h_i(\w_t)-v_{i,t}\|]\leq (1-\mu_1)^t\frac{1}{n_+}\sum_{X_i\in S_+}\|h_i(\w_0)-v_{i,0}\|+R_1,\\
        &\frac{1}{n_-}\sum_{X_j\in S_-}\E[\|h_j(\w_t)-v_{j,t}\|]\leq (1-\mu_2)^t\frac{1}{n_-}\sum_{X_j\in S_-}\|h_j(\w_0)-v_{j,0}\|+R_2,\\
        &\frac{1}{n_+}\sum_{X_i\in S_+}\E[\|h_i(\w_t)-v_{i,t}\|^2]\leq (1-\mu_1)^t\frac{1}{n_+}\sum_{X_i\in S_+}\|h_i(\w_0)-v_{i,0}\|^2+R_3,\\
        &\frac{1}{n_-}\sum_{X_j\in S_-}\E[\|h_j(\w_t)-v_{j,t}\|^2]\leq (1-\mu_2)^t\frac{1}{n_-}\sum_{X_j\in S_-}\|h_j(\w_0)-v_{j,0}\|^2+R_4,\\
        &\frac{1}{n_+}\sum_{X_i\in S_+}\E\bigg[\bigg\|\frac{1}{n_-}\sum_{X_j\in S_-}g_i(v_{j,t}-v_{i,t},s_{i,t}) - u_{i,t}\bigg\|\bigg]\\
        &\quad \leq (1-\mu_3)^t\frac{1}{n_+}\sum_{X_i\in S_+}\bigg\|\frac{1}{n_-}\sum_{X_j\in S_-}g_i(v_{i,0}-v_{j,0},s_{i,0}) - u_{i,0}\bigg\|+R_5,
    \end{aligned}
\end{equation*}
we have
\begin{equation*}
    \begin{aligned}
        &\E[F_{1/\bar{\rho}}(\w_{t+1},\s_{t+1},s'_{t+1})]\\
        &\leq\E[F_{1/\bar{\rho}}(\w_t,\s_t,s'_t)]+\frac{3\eta^2\bar{\rho}M^2}{2}-\frac{\eta}{2}\E[\|\nabla F_{1/\bar{\rho}}(\w_t,\s_t,s'_t)\|^2]\\
        &\quad +\bar{\rho}\eta C_1\bigg[(1-\mu_1)^t\frac{1}{n_+}\sum_{X_i\in S_+}\|h_i(\w_0)-v_{i,0}\|+(1-\mu_2)^t\frac{1}{n_-}\sum_{X_j\in S_-}\|h_j(\w_0)-v_{j,0}\|\\
        &\quad +(1-\mu_1)^t\frac{1}{n_+}\sum_{X_i\in S_+}\|h_i(\w_0)-v_{i,0}\|^2+(1-\mu_2)^t\frac{1}{n_-}\sum_{X_j\in S_-}\|h_j(\w_0)-v_{j,0}\|^2\\
        &\quad +(1-\mu_3)^t\frac{1}{n_+}\sum_{X_i\in S_+}\bigg\|\frac{1}{n_-}\sum_{X_j\in S_-}g_i(v_{i,0}-v_{j,0},s_{i,0}) - u_{i,0}\bigg\| +R_1+R_2+R_3+R_4+R_5\bigg] \\
        &\leq\E[F_{1/\bar{\rho}}(\w_t,\s_t,s'_t)]+\frac{3\eta^2\bar{\rho}M^2}{2}-\frac{\eta}{2}\E[\|\nabla F_{1/\bar{\rho}}(\w_t,\s_t,s'_t)\|^2]\\
        &\quad +\bar{\rho}\eta C_1\bigg[(1-\mu_{min})^t\bigg(\frac{1}{n_+}\sum_{X_i\in S_+}\|h_i(\w_0)-v_{i,0}\|+\frac{1}{n_-}\sum_{X_j\in S_-}\|h_j(\w_0)-v_{j,0}\|\\
        &\quad +\frac{1}{n_+}\sum_{X_i\in S_+}\|h_i(\w_0)-v_{i,0}\|^2+\frac{1}{n_-}\sum_{X_j\in S_-}\|h_j(\w_0)-v_{j,0}\|^2\\
        &\quad +\frac{1}{n_+}\sum_{X_i\in S_+}\bigg\|\frac{1}{n_-}\sum_{X_j\in S_-}g_i(v_{i,0}-v_{j,0},s_{i,0}) - u_{i,0}\bigg\|\bigg) +R_1+R_2+R_3+R_4+R_5\bigg] \\
    \end{aligned}
\end{equation*}
where $\mu_{min} = \min\{\mu_1,\mu_2,\mu_3\}$.

Taking summation from $t=0$ to $T-1$ yields
\begin{equation*}
    \begin{aligned}
        &\E[F_{1/\bar{\rho}}(\w_T,\s_T,s'_T)]\\
        &\leq F_{1/\bar{\rho}}(\w_0,\s_0,s'_0)+\frac{3\eta^2\bar{\rho}M^2T}{2}-\frac{\eta}{2}\sum_{t=0}^{T-1}\E[\|\nabla F_{1/\bar{\rho}}(\w_t,\s_t,s'_t)\|^2]\\
        &\quad +\bar{\rho}\eta C_1\bigg[\sum_{t=0}^{T-1}(1-\mu_{min})^t\bigg(\frac{1}{n_+}\sum_{X_i\in S_+}\|h_i(\w_0)-v_{i,0}\|+\frac{1}{n_-}\sum_{X_j\in S_-}\|h_j(\w_0)-v_{j,0}\|\\
        &\quad +\frac{1}{n_+}\sum_{X_i\in S_+}\|h_i(\w_0)-v_{i,0}\|^2+\frac{1}{n_-}\sum_{X_j\in S_-}\|h_j(\w_0)-v_{j,0}\|^2\\
        &\quad +\frac{1}{n_+}\sum_{X_i\in S_+}\bigg\|\frac{1}{n_-}\sum_{X_j\in S_-}g_i(v_{i,0}-v_{j,0},s_{i,0}) - u_{i,0}\bigg\|\bigg) +T(R_1+R_2+R_3+R_4+R_5)\bigg] \\
        &\leq F_{1/\bar{\rho}}(\w_0,\s_0,s'_0)+\frac{3\eta^2\bar{\rho}M^2T}{2}-\frac{\eta}{2}\sum_{t=0}^{T-1}\|\nabla F_{1/\bar{\rho}}(\w_t,\s_t,s'_t)\|^2\\
        &\quad +\bar{\rho}\eta C_1\bigg[\frac{ \Delta_0}{\mu_{min}} +T(R_1+R_2+R_3+R_4+R_5)\bigg] \\
    \end{aligned}
\end{equation*}
where we use $\sum_{t=0}^{T-1}(1-\mu_{min})^t\leq \frac{1}{\mu_{min}}$ and define constant $\Delta_0$ such that
\begin{equation*}
\begin{aligned}
    &\bigg(\frac{1}{n_+}\sum_{X_i\in S_+}\|h_i(\w_0)-v_{i,0}\|+\frac{1}{n_-}\sum_{X_j\in S_-}\|h_j(\w_0)-v_{j,0}\|\\
        &\quad +\frac{1}{n_+}\sum_{X_i\in S_+}\|h_i(\w_0)-v_{i,0}\|^2+\frac{1}{n_-}\sum_{X_j\in S_-}\|h_j(\w_0)-v_{j,0}\|^2\\
        &\quad +\frac{1}{n_+}\sum_{X_i\in S_+}\bigg\|\frac{1}{n_-}\sum_{X_j\in S_-}g_i(v_{i,0}-v_{j,0},s_{i,0}) - u_{i,0}\bigg\|\bigg) \leq \Delta_0.
\end{aligned}
\end{equation*}
Then it follows
\begin{equation*}
    \begin{aligned}
        &\frac{1}{T}\sum_{t=0}^{T-1}\|\nabla F_{1/\bar{\rho}}(\w_t,\s_t,s'_t)\|^2\\
        &\leq \frac{2}{\eta T} \Bigg[F_{1/\bar{\rho}}(\w_0,\s_0,s'_0)-\E[F_{1/\bar{\rho}}(\w_T,\s_T,s'_T)]+\frac{3\eta^2\bar{\rho}M^2T}{2}\\
        &\quad +\bar{\rho}\eta C_1\bigg[\frac{ \Delta_0}{\mu_{min}} +T(R_1+R_2+R_3+R_4+R_5)\bigg] \Bigg]\\
        &\leq \frac{2 \Delta}{\eta T} + (2+\frac{n_+}{B_1})\eta \bar{\rho}M^2 +\frac{2\bar{\rho} C_1 \Delta_0}{\mu_{min} T} +2\bar{\rho} C_1(R_1+R_2+R_3+R_4+R_5)\\
        &= \O\left( \frac{1}{T}(\frac{1}{\eta }+\frac{1}{\mu_{min}}) + \eta   + R_1+R_2+R_3+R_4+R_5\right)\\
    \end{aligned}
\end{equation*}
where we define constant $\Delta$ such that $F_{1/\bar{\rho}}(\w_0,\s_0,s'_0)-\E[F_{1/\bar{\rho}}(\w_T,\s_T,s'_T)]\leq \Delta$.

With MSVR updates for $v_{i,t}$ and $u_{i,t}$, following from Lemma~\ref{lem:11} and Lemma~\ref{lem:12}, we have
\begin{equation*}
    \begin{aligned}
        &\mu_1 = \frac{B_1\tau_1}{2n_+},\quad \mu_2 = \frac{B_1\tau_1}{2n_-},\quad \mu_3 = \frac{B_1\tau_2}{2n_+}\\
        &R_1 = \frac{2\tau_1^{1/2}\sigma}{B_3^{1/2}}+\frac{4 n_+C_hM\eta}{B_1 \tau_1^{1/2}},\quad R_2 = \frac{2\tau_1^{1/2}\sigma}{B_3^{1/2}}+\frac{4 n_-C_h M\eta}{B_2 \tau_1^{1/2}}\\
        &R_3 = \frac{4\tau_1 \sigma^2}{B_3}+\frac{16 n_+^2 C_h^2M^2\eta^2}{B_1^2 \tau_1},\quad R_4 = \frac{4\tau_1\sigma^2}{B_3}+\frac{16 n_-^2 C_h^2 M^2\eta^2}{B_2^2 \tau_1}\\
        % &R_5 = 2\tau_2^{1/2}\sigma+\frac{8 n_+\sqrt{C_gL_{\ell}}M\tau_1}{B_1 \sqrt{\tau_2}}+\frac{8 n_+\sqrt{C_g}M\eta}{B_1 \sqrt{\tau_2}}\\
        &R_5 = \frac{2\tau_2^{1/2}\sigma}{B_2^{1/2}} +C_2\frac{n_+}{B_1}(\frac{B_1^{1/2}}{n_+^{1/2}}+\frac{B_2^{1/2}}{n_-^{1/2}})\frac{\tau_1}{\tau_2^{1/2}}+C_2\frac{n_+}{B_1}(\frac{n_+^{1/2}}{B_1^{1/2}}+\frac{n_-^{1/2}}{B_2^{1/2}})\frac{\eta}{\tau_2^{1/2}}+C_2\frac{n_+^{1/2}\eta}{B_1\tau_2^{1/2}}\\
        % &\quad  = \frac{2\tau_2^{1/2}\sigma}{B_2^{1/2}} +C_2\frac{n_+}{B_1}(\frac{B_1^{1/2}}{n_+^{1/2}}+\frac{B_2^{1/2}}{n_-^{1/2}})\frac{\tau_1}{\tau_2^{1/2}}+C_2\frac{n_+}{B_1}(\frac{n_+^{1/2}}{B_1^{1/2}}+\frac{n_-^{1/2}}{B_2^{1/2}})\frac{\eta}{\tau_2^{1/2}}+C_2\frac{n_+^{3/2}\eta}{B_1^{3/2}\tau_2^{1/2}}
    \end{aligned}
\end{equation*}
Then we have
\begin{equation*}
    \begin{aligned}
        &\frac{1}{T}\sum_{t=0}^{T-1}\|\nabla F_{1/\bar{\rho}}(\w_t,\s_t,s'_t)\|^2\\
        % &\leq \O\Bigg( \frac{1}{T}(\frac{1}{\eta }+\frac{1}{\mu_{min}}) + \frac{n_+}{B_1}\eta + (\tau_1^{1/2}+\tau_1+\tau_2^{1/2})\sigma\\
        % &\quad+\frac{n_+ \eta}{B_1\tau_1^{1/2}}+\frac{n_- \eta}{B_2\tau_1^{1/2}}+\frac{n_+^2 \eta^2}{B_1^2\tau_1}+\frac{n_-^2 \eta^2}{B_2^2\tau_1}+\frac{n_+}{B_1}(\frac{B_1^{1/2}}{n_+^{1/2}}+\frac{B_2^{1/2}}{n_-^{1/2}})\frac{\tau_1}{\tau_2^{1/2}}+\frac{n_+}{B_1}(\frac{n_+^{1/2}}{B_1^{1/2}}+\frac{n_-^{1/2}}{B_2^{1/2}})\frac{\eta}{\tau_2^{1/2}}+\frac{n_+^{3/2}\eta}{B_1^{3/2}\tau_2^{1/2}}\Bigg)\\
        &\leq \O\Bigg( \frac{1}{T}(\frac{1}{\eta }+\frac{1}{\mu_{min}})  + (\frac{\tau_1^{1/2}}{B_3^{1/2}}+\frac{\tau_2^{1/2}}{B_2^{1/2}})\sigma\\
        &\quad+\frac{n_+ \eta}{B_1\tau_1^{1/2}}+\frac{n_- \eta}{B_2\tau_1^{1/2}}+\frac{n_+}{B_1}\max\{\frac{B_1^{1/2}}{n_+^{1/2}},\frac{B_2^{1/2}}{n_-^{1/2}}\}\frac{\tau_1}{\tau_2^{1/2}}+\frac{n_+}{B_1}\max\{\frac{n_+^{1/2}}{B_1^{1/2}},\frac{n_-^{1/2}}{B_2^{1/2}}\}\frac{\eta}{\tau_2^{1/2}}\Bigg)
    \end{aligned}
\end{equation*}
Setting
\begin{equation*}
\begin{aligned}
    &\tau_1 = \O\left(\min\left\{B_3,\frac{B_1}{n_+}\min\{\frac{n_+^{1/2}}{B_1^{1/2}},\frac{n_-^{1/2}}{B_2^{1/2}}\}B_2^{1/2}\right\}\epsilon^4\right), \quad \tau_2 = \O(B_2\epsilon^4),\\
    &\eta = \O\left(\min\left\{\min\{\frac{B_1}{n_+},\frac{B_2}{n_-}\}\min\{B_3^{1/2},\frac{B_1^{1/2}}{n_+^{1/2}}\min\{\frac{n_+^{1/4}}{B_1^{1/4}},\frac{n_-^{1/4}}{B_2^{1/4}}\}B_2^{1/4}\}, \frac{B_1}{n_+}\min\{\frac{B_1^{1/2}}{n_+^{1/2}},\frac{B_2^{1/2}}{n_-^{1/2}}\}B_3^{1/2}\right\}\epsilon^4\right),
\end{aligned}
\end{equation*}
Then with
\begin{equation*}
    T\geq \O\left(\max\left\{\max\{\frac{n_+}{B_1},\frac{n_-}{B_2}\}\max\{\frac{1}{B_3^{1/2}},\frac{n_+^{1/2}}{B_1^{1/2}}\max\{\frac{B_1^{1/4}}{n_+^{1/4}},\frac{B_2^{1/4}}{n_-^{1/4}}\}\frac{1}{B_2^{1/4}}\}, \frac{n_+}{B_1}\max\{\frac{n_+^{1/2}}{B_1^{1/2}},\frac{n_-^{1/2}}{B_2^{1/2}}\}\frac{1}{B_2^{1/2}}\right\}\epsilon^{-6}\right)
\end{equation*}
iterations, we have
\begin{equation*}
    \begin{aligned}
        &\frac{1}{T}\sum_{t=0}^{T-1}\|\nabla F_{1/\bar{\rho}}(\w_t,\s_t,s'_t)\|^2\leq \epsilon^2.
    \end{aligned}
\end{equation*}
\end{proof}

\section{Proofs of Lemmas and Propositions}\label{app:LP}

\subsection{Additional Proposition}\label{app:monotone}
\begin{proposition}\label{prop:5}
Consider a Lipschitz continuous function $f:O\to\mathbb{R}$ where $O\subset \mathbb{R}^d$ is an open set. Assume $f$ to be non-increasing (resp. non-decreasing) with respect to each element in the input, then all subgradients of $f$ are element-wise non-positive (resp. non-negative).
\end{proposition}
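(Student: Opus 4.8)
The plan is to reduce the statement to a one-dimensional, one-sided difference-quotient computation and to read off the sign of each coordinate directly from the monotonicity hypothesis. I will carry out the argument for the non-decreasing case; the non-increasing case is entirely symmetric (or, equivalently, follows by applying the non-decreasing result to $x\mapsto f(-x)$, which flips the sign of every subgradient coordinate).

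First I would fix an arbitrary point $x\in O$ and an arbitrary general subgradient $v\in\partial f(x)$, and fix a coordinate index $i\in\{1,\dots,d\}$. Since $O$ is open, there is $\delta>0$ with $x+t e_i\in O$ for all $|t|<\delta$, where $e_i$ is the $i$-th standard basis vector; this is the only place openness of $O$ enters. Substituting $y=x+t e_i$ into the defining inequality of a general subgradient from Section~\ref{sec:pre} gives $f(x+t e_i)\ge f(x)+t v_i+o(|t|)$ as $t\to 0$. Next I would restrict to $t\in(-\delta,0)$: since $f$ is non-decreasing in its $i$-th argument and $t<0$, we have $f(x+t e_i)\le f(x)$, hence $0\ge t v_i+o(|t|)$. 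Dividing by the negative number $t$ reverses the inequality to $0\le v_i+o(|t|)/t$, and since $o(|t|)/t\to 0$ as $t\uparrow 0$, letting $t\uparrow 0$ yields $v_i\ge 0$. As $i$, $v$, and $x$ were arbitrary, every subgradient at every point is element-wise non-negative. For the non-increasing case one repeats the computation with $t\in(0,\delta)$, where monotonicity gives $f(x+t e_i)\le f(x)$ and dividing by $t>0$ preserves the inequality, producing $v_i\le 0$.

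The computation is routine; the only point needing a little care is the handling of the $o(|t|)$ remainder when dividing by $t$ and passing to the limit (using that $o(|t|)/t\to 0$ irrespective of the sign of $t$), together with the bookkeeping that keeps the one-sided perturbation inside the open set $O$. If one prefers to work with the limiting subdifferential rather than the regular subgradients captured by the stated definition, the one extra step is to observe that every limiting subgradient is an outer limit $v=\lim_\nu v_\nu$ of regular subgradients $v_\nu$ at points $x_\nu\to x$; each $v_\nu$ is element-wise non-negative (resp. non-positive) by the argument above, so the limit $v$ inherits the sign. Local Lipschitz continuity of $f$ is what guarantees this subdifferential is nonempty, which is why that hypothesis is included in the statement.
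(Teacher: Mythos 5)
Your proof is correct, and it takes a genuinely different and more elementary route than the paper's. The paper's argument invokes Rademacher's theorem (Theorem 9.60 of Rockafellar--Wets) to get a dense set $D$ of differentiability points, observes that monotonicity forces the sign of $\nabla f$ at every point of $D$, and then uses the representation of Theorem 9.61, $\partial f(x)=\mathrm{con}\{v \mid \exists x_k\to x,\ x_k\in D,\ \nabla f(x_k)\to v\}$, so that the element-wise sign passes to limits of gradients and then to convex combinations. You instead argue directly from the defining inequality of a general subgradient in Section~\ref{sec:pre}, restricting $y=x+te_i$ to a one-sided coordinate perturbation and reading the sign of $v_i$ off the one-sided difference quotient; your handling of the $o(|t|)/t\to 0$ step and of the openness of $O$ is correct. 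What your approach buys is independence from Rademacher's theorem and from the gradient-limit characterization of the subdifferential; indeed your sign argument does not use Lipschitz continuity at all (as you note, it only guarantees non-emptiness, whereas in the paper's route it is what makes Theorems 9.60--9.61 applicable). One point of reconciliation: your direct computation applies to the regular subgradients captured by the paper's stated definition, while the paper's Theorem 9.61 route establishes the sign for the larger convexified limiting (Clarke) subdifferential; your closing remark about outer limits of regular subgradients covers the limiting subgradients, and the remaining convex-hull step is immediate because element-wise sign constraints are preserved under convex combinations, so the two conclusions agree.
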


\begin{proof}[Proof of Proposition~\ref{prop:5}]
Let $D$ be the subset of $O$ where $f$ is differentiable. By Theorem 9.60 in \cite{rockafellar2009variational}, a Lipschitz continuous function $f:O\to\mathbb{R}$, where $O\subset \mathbb{R}^d$ is an open set, is differentiable almost everywhere, i.e., $D$ is dense in $O$. Then by Theorem 9.61 in \cite{rockafellar2009variational}, the subdifferential of $f$ at $x$ is defined as
\begin{equation*}
    \partial f(x) = \text{con} \{v | \exists x_k\to x \text{ with } x_k\in D, \nabla f(x_k)\to v\},
\end{equation*}
% $$
% \partial f(x) = \text{con} \{v | \exist x_k\to x \text{ with } x_k\in D, \nable f(x_k)\to v\},
% $$
where con denotes the convex hull. If we assume that $f$ is non-increasing with respect to each element in the input, then $\nabla f(x)\leq 0$ (element-wise) for all differentiable points $x\in D$. It implies that the all vectors in $\{v | \exists x_k\to x \text{ with } x_k\in D, \nabla f(x_k)\to v\}$ are element-wise non-positive. Therefore, all subgradients of $f$ are element-wise non-positive. On the other hand, if we assume that $f$ is non-decreasing, one may follow the same argument and conclude that all subgradients of $f$ are element-wise non-negative.
\end{proof}

For functions $f:O\to\mathbb{R}^m$ where $O\subset \mathbb{R}^d$ is an open set, one may write $f=(f_1,\dots,f_m)$ and apply the above proposition for each $f_k, k=1,\dots,m$.

\subsection{Proofs of Proposition~\ref{prop:1} and Proposition~\ref{prop:2}}
To prove Proposition~\ref{prop:1} and Proposition~\ref{prop:2}, we first present the following proposition on the weak-convexity of composition functions.
\begin{proposition}\label{prop:4}
    Assume $f:\R^d\to\R$ is $\rho_1$-weakly-convex and $C_1$-Lipschitz continuous, $g:\R^{\bar{d}}\to \R^d$ is $C_2$-Lipschitz continuous, and either of the followings holds:
    \begin{enumerate}
        \item $f(\cdot)$ is monotone and $g(\cdot)$ is $L_2$-smooth;
        \item $f(\cdot)$ is non-decreasing and $g(\cdot)$ is $L_2$-weakly-convex,
    \end{enumerate}
    then $f\circ g$ is $\tilde{\rho}$-weakly-convex with $\tilde{\rho}=\sqrt{d}L_2C_1+\rho_1 C_2^2$. 
\end{proposition}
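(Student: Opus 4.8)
The plan is to verify directly the defining minorant inequality of weak convexity and then lift it to the statement. Concretely, for each fixed $x$ I would exhibit a vector $v$ --- namely $v = \partial g(x)^{\top}\partial f(g(x))$, where $\partial g(x)$ is the Jacobian $\nabla g(x)$ in Case~1 (since $L_2$-smoothness makes $g$ continuously differentiable) and a regular Jacobian-of-subgradients in Case~2 --- such that
\begin{equation*}
f(g(y)) \ge f(g(x)) + \langle v, y-x\rangle - \tfrac{\tilde\rho}{2}\|y-x\|^2 \qquad \text{for all } y .
\end{equation*}
Since $f\circ g$ is continuous, the existence of such a $(\tilde\rho$-quadratically corrected) affine minorant at every point is equivalent to convexity of $f\circ g + \tfrac{\tilde\rho}{2}\|\cdot\|^2$, i.e.\ to $\tilde\rho$-weak convexity of $f\circ g$ (this is the characterization underlying Proposition~2.1 of \cite{doi:10.1137/17M1151031} recalled at the start of Appendix~\ref{app:thm}).

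To obtain the displayed inequality I would first invoke the $\rho_1$-weak convexity of $f$ at the point $g(x)$, giving $f(g(y)) \ge f(g(x)) + \langle \partial f(g(x)), g(y)-g(x)\rangle - \tfrac{\rho_1}{2}\|g(y)-g(x)\|^2$, and then bound $\|g(y)-g(x)\|^2 \le C_2^2\|y-x\|^2$ by $C_2$-Lipschitzness of $g$, which contributes the term $\rho_1 C_2^2$ to $\tilde\rho$. For the linear term I would write $g=(g_1,\dots,g_d)$ and expand coordinatewise, $\langle \partial f(g(x)), g(y)-g(x)\rangle = \sum_{k=1}^{d}[\partial f(g(x))]_k\,(g_k(y)-g_k(x))$. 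In Case~1, monotonicity of $f$ combined with Proposition~\ref{prop:5} forces each $[\partial f(g(x))]_k$ to be sign-definite, so for each $k$ I would use the lower side of the two-sided $L_2$-smoothness estimate $g_k(y)\ge g_k(x)+\langle\nabla g_k(x),y-x\rangle-\tfrac{L_2}{2}\|y-x\|^2$ when $[\partial f]_k\ge 0$ and the upper side when $[\partial f]_k\le 0$; multiplying by $[\partial f]_k$ (with the sign flip in the latter case) yields in both cases $[\partial f]_k(g_k(y)-g_k(x)) \ge [\partial f]_k\langle\nabla g_k(x),y-x\rangle - |[\partial f]_k|\tfrac{L_2}{2}\|y-x\|^2$. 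In Case~2, $f$ non-decreasing gives $[\partial f(g(x))]_k\ge 0$ for every $k$, so multiplying the $L_2$-weak-convexity inequality for $g_k$ by the nonnegative scalar $[\partial f]_k$ gives the same bound with $\nabla g_k$ replaced by a subgradient $\partial g_k$. Summing over $k$ produces $\langle v, y-x\rangle - \tfrac{L_2}{2}\|\partial f(g(x))\|_1\|y-x\|^2$, and $\|\partial f(g(x))\|_1 \le \sqrt{d}\,\|\partial f(g(x))\|_2 \le \sqrt{d}\,C_1$ by Cauchy--Schwarz and $C_1$-Lipschitzness of $f$. Collecting the two quadratic contributions gives exactly $\tilde\rho = \sqrt{d}\,L_2 C_1 + \rho_1 C_2^2$.

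The main obstacle is the sign bookkeeping in Case~1: because $f$ is only assumed monotone coordinatewise rather than globally non-decreasing, the sign of $[\partial f(g(x))]_k$ may vary with $k$, and one must appeal to Proposition~\ref{prop:5} to guarantee each coordinate is sign-definite and then select the correct side of the two-sided smoothness estimate for $g_k$ --- this is precisely where full $L_2$-smoothness of $g$ (not merely weak convexity) is essential, whereas in Case~2 the non-decreasing hypothesis renders all coordinates nonnegative so only the one-sided weak-convexity estimate is needed. A secondary technical point is to justify, in Case~2, that the relevant subgradients $\partial g_k(x)$ exist at every $x$ (weakly convex functions have nonempty regular subdifferential) so that the resulting $v$ is well defined; the rest of the argument is routine.
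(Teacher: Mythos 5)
Your proposal is correct and follows essentially the same route as the paper's proof: apply the $\rho_1$-weak-convexity of $f$ at $g(x)$ and absorb the quadratic via $C_2$-Lipschitzness to get the $\rho_1 C_2^2$ term, then linearize $g$ through the one- or two-sided $L_2$ estimate, use the sign-definiteness of $\partial f$ (Proposition~\ref{prop:5}) to pick the correct side, and bound $\|\partial f(g(x))\|_1\le \sqrt{d}\,C_1$ to get the $\sqrt{d}L_2C_1$ term. Your coordinate-wise sign bookkeeping in Case~1 is in fact slightly more careful than the paper's write-up, which only treats the uniformly non-increasing and uniformly non-decreasing cases of the monotonicity assumption.
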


\begin{proof}[Proof of Proposition~\ref{prop:4}]
    The weak convexity of $f$ implies
    \begin{equation*}
    \begin{aligned}
        f(g(y))&\geq f(g(x))+ v^\top (g(y)-g(x))-\frac{\rho_1}{2}\|g(y)-g(x)\|^2\\
        &\geq f(g(x))+v^\top (g(y)-g(x))-\frac{\rho_1 C_2^2}{2}\|x-y\|^2
    \end{aligned}
    \end{equation*}
    where $v\in \partial f(g(x))$. Moreover, due to the smoothness of $g(\cdot)$ (or weakly-convexity of $g(\cdot)$, then only the second inequality holds), we have
    \begin{equation}\label{ineq:37}
    \begin{aligned}
        &g(y)-g(x)\leq  \nabla g(x)^\top (y-x) +\v\left(\frac{L_2}{2}\|x-y\|^2\right),\\
        &g(y)-g(x)\geq \nabla g(x)^\top (y-x) -\v\left(\frac{L_2}{2}\|x-y\|^2\right).
    \end{aligned}
    \end{equation}
    where $\v(e)$ denotes a $d$-dimensional vector with value $e$ on each dimensions. We first assume that $f$ is non-increasing, then we may use the first inequality in (\ref{ineq:37}) and the Lipschitz continuity of $g$ to get
    \begin{equation*}
    \begin{aligned}
        f(g(y))
        &\geq f(g(x))+v^\top \left[\nabla g(x)^\top (y-x) +\v\left(\frac{L_2}{2}\|x-y\|^2\right)\right]-\frac{\rho_1 C_2^2}{2}\|x-y\|^2\\
        &\geq f(g(x))+v^\top \nabla g(x)^\top (y-x)+v^\top \v\left(\frac{L_2}{2}\|x-y\|^2\right)-\frac{\rho_1 C_2^2}{2}\|x-y\|^2\\
        &\geq f(g(x))+\langle v^\top \nabla g(x)^\top (y-x)-\frac{\sqrt{d}L_2C_1+\rho_1 C_2^2}{2}\|x-y\|^2.
    \end{aligned}
    \end{equation*}
    On the other hand, if we assume $f$ is non-decreasing, the same result follows from the second inequality in (\ref{ineq:37}). Thus $f\circ g$ is $\tilde{\rho}$-weakly-convex with $\tilde{\rho}_g=\sqrt{d}L_2C_1+\rho_1 C_2^2$.
\end{proof}

% \subsection{Proof of Proposition~\ref{prop:1}}
% \begin{proof}[Proof of Proposition~\ref{prop:1}]
% Take arbitrary $x,y\in \R^d$. The weak convexity of $g$ implies
%     \begin{equation}\label{ineq:8}
%         g(y)\geq g(x)+v_2^\top (y-x)-\frac{\rho}{2}\|y-x\|^2
%     \end{equation}
% where $v_2 \in \partial g(x)$. Denote $v_1\in \partial f(g(x))$. Since $f$ is non-decreasing, $v_1\geq 0$. It follows from the convexity of $f$ that
% \begin{equation*}
%     \begin{aligned}
%         f(g(y))&\geq f(g(x))+v_1^\top (g(y)-g(x))\\
%         &\stackrel{(a)}{\geq }f(g(x))+ v_1^\top v_2^\top (y-x)-\frac{\rho}{2}\|y-x\|^2\rangle\\
%         &\stackrel{(b)}{\geq }f(g(x))+v_1^\top v_2^\top (y-x)-\frac{C\rho}{2}\|y-x\|^2
%     \end{aligned}
% \end{equation*}
% where (a) uses inequality~\ref{ineq:8} and (b) uses the Lipshcitz continuity of $f$.
% \end{proof}

\begin{proof}[Proof of Proposition~\ref{prop:1}]
    Under Assumption~\ref{ass:1}, Proposition~\ref{prop:4} directly implies the $\rho_F$-weak-convexity of $F(\w)$ with $\rho_F = \sqrt{d_1}\rho_gC_f+\rho_f C_g^2$.
\end{proof}

% \subsection{Proof of Proposition~\ref{prop:2}}
\begin{proof}[Proof of Proposition~\ref{prop:2}]
    Under Assumption~\ref{ass:2}, we first apply Proposition~\ref{prop:4} to the composite function $g_i(h_{i,j}(\cdot))$ and obtain its $\rho_{\tilde{g}}=\sqrt{d_2}L_hC_g+\rho_g C_h^2$-weak-convexity. To show it Lipschitz continuity, we use the Lipschitz continuity of $g_i$ and $h_{i,j}$ to obtain
    \begin{equation*}
        \begin{aligned}
            \|g_i(h_{i,j}(\w))-g_i(h_{i,j}(\tilde{\w}))\|^2\leq C_g^2C_h^2\|\w-\tilde{\w}\|^2.
        \end{aligned}
    \end{equation*}
    Thus $g_i(h_{i,j}(\w))$ is $C_{\tilde{g}}=C_gC_h$-Lipschitz-continuous
    
    Since we assume $f_i(\cdot)$ is non-decreasing, $\rho_f$-weakly-convex and $C_f$-Lipschitz continuous, and $g_i(h_{i,j}(\cdot))$ is $\rho_{\tilde{g}}$-weakly-convex and $C_{\tilde{g}}$-Lipschitz-continuous, we apply Proposition~\ref{prop:4} again to conclude that $F(\cdot)$ is $\rho_F = \sqrt{d_1}\rho_{\tilde{g}}C_f+\rho_f C_{\tilde{g}}^2$-weakly-convex.
\end{proof}

\subsection{Proof of Lemma~\ref{lem:16}}\label{sec:c3}
\begin{proof}[Proof of Lemma~\ref{lem:16}]
With $\gamma=\frac{n_1-B_1}{B_1(1-\tau)}+(1-\tau)$, $\tau\leq \frac{1}{2}$, MSVR update gives recursive error bound  \cite{jiang2022multiblocksingleprobe}
\begin{equation*}
    \begin{aligned}
        &\E[\|u_{i,t+1}-g_i(\w_{t+1})\|^2]\\
        &\leq (1-\frac{B_1\tau}{n_1}) \E[\|u_{i,t}-g_i(\w_t)\|^2]+ \frac{2\tau^2B_1\sigma^2}{n_1B_2} +\frac{8n_1C_g^2}{B_1}\E[\|\w_t-\w_{t+1}\|^2]\\
        &\leq (1-\frac{B_1\tau}{n_1}) \E[\|u_{i,t}-g_i(\w_t)\|^2]+ \frac{2\tau^2B_1\sigma^2}{n_1B_2} +\frac{8n_1C_g^2}{B_1}\eta^2\E[\|G_t\|^2]\\
        &\leq (1-\frac{B_1\tau}{2n_1})^2 \E[\|u_{i,t}-g_i(\w_t)\|^2]+ \frac{2\tau^2B_1\sigma^2}{n_1B_2} +\frac{8 n_1C_g^2M^2\eta^2}{B_1}
    \end{aligned}
\end{equation*}

Applying this inequality recursively, we obtain

\begin{equation*}
    \begin{aligned}
        &\E[\|u_{i,t+1}-g_i(\w_{t+1})\|^2]\\
        &\leq (1-\frac{B_1\tau}{2n_1})^{2(t+1)} \|u_{i,0}-g_i(\w_0)\|^2+ \sum_{j=0}^t (1-\frac{B_1\tau}{2n_1})^{2(t-j)}\bigg(\frac{2\tau^2B_1\sigma^2}{n_1B_2} +\frac{8 n_+C_g^2M^2\eta^2}{B_1}\bigg)\\
        &\leq (1-\frac{B_1\tau}{2n_1})^{2(t+1)} \|u_{i,0}-g_i(\w_0)\|^2+ \frac{4\tau \sigma^2 }{B_2}+\frac{16 n_1^2C_g^2M^2\eta^2}{B_1^2 \tau}\\
    \end{aligned}
\end{equation*}
where we use $\sum_{j=0}^t (1-\frac{B_1\tau}{2n_1})^{2(t-j)}\leq \frac{2n_1}{B_1\tau}$.

It follows
\begin{equation*}
    \begin{aligned}
        &\E\left[\|u_{i,t+1}-g_i(\w_{t+1})\|\right]^2\\
        &\leq\E[\|u_{i,t+1}-g_i(\w_{t+1})\|^2]\\
        &\leq (1-\frac{B_1\tau}{2n_1})^{2(t+1)} \|u_{i,0}-g_i(\w_0)\|^2+ \frac{4\tau \sigma^2}{B_2} +\frac{16 n_1^2C_g^2M^2\eta^2}{B_1^2 \tau}\\
        &\leq \bigg[(1-\frac{B_1\tau}{2n_1})^{t+1} \|u_{i,0}-g_i(\w_0)\|+ \frac{2\tau^{1/2}\sigma}{B_2^{1/2}} +\frac{4 n_1C_gM\eta}{B_1 \tau^{1/2}}\bigg]^2\\
    \end{aligned}
\end{equation*}
Thus
\begin{equation*}
    \begin{aligned}
        &\E\bigg[\|u_{i,t+1}-g_i(\w_{t+1})\|\bigg]\\
        &\leq (1-\frac{B_1\tau}{2n_1})^{t+1} \|u_{i,0}-g_i(\w_0)\|+ \frac{2\tau^{1/2}\sigma}{B_2^{1/2}} +\frac{4 n_1C_gM\eta}{B_1 \tau^{1/2}}
    \end{aligned}
\end{equation*}
Taking summation over $i\in \S$, we obtain the desired result
\begin{equation*}
    \begin{aligned}
        &\E\bigg[\frac{1}{n}\sum_{i\in \S}\|u_{i,t+1}-g_i(\w_{t+1})\|\bigg]\\
        &\leq (1-\frac{B_1\tau}{2n_1})^{t+1} \frac{1}{n}\sum_{i\in \S}\|u_{i,0}-g_i(\w_0)\|+ \frac{2 \tau^{1/2}\sigma}{B_2^{1/2}} +\frac{4 nC_gM\eta}{B_1 \tau^{1/2}}
    \end{aligned}
\end{equation*}
\end{proof}

\subsection{Proof of Lemma~\ref{lem:12}}
\begin{proof}[Proof of Lemma~\ref{lem:12}]
With $\gamma_3 = \frac{n_+-B_1}{B_1(1-\tau_2)}+(1-\tau_2)$ and $\tau_2\leq \frac{1}{2}$, MSVR update gives the following recursive error bound \cite{jiang2022multiblocksingleprobe}
\begin{equation}\label{ineq:27}
    \begin{aligned}
        &\E[\|u_{i,t+1}-\frac{1}{n_-}\sum_{j\in S_-}g_i(v_{j,t+1}-v_{i,t+1},s_{i,t+1})\|^2]\\
        &\leq (1-\frac{B_1\tau_2}{n_+}) \E[\|u_{i,t}-\frac{1}{n_-}\sum_{j\in S_-}g_i(v_{j,t}-v_{i,t},s_{i,t})\|^2]+ \frac{2\tau_2^2B_1\sigma^2}{n_+ B_2} \\
        &\quad+\frac{8n_+C_g^2}{B_1}\E[\|(v_{j,t}-v_{i,t},s_{i,t})-(v_{j,t+1}-v_{i,t+1},s_{i,t+1})\|^2]\\
        % &\leq (1-\frac{B_1\tau_2}{n_+}) \E[\|u_{i,t}-\frac{1}{n_-}\sum_{j\in S_-}g_i(v_{j,t}-v_{i,t},s_{i,t})\|^2]+ \frac{2\tau_2^2B_1\sigma^2}{n_+ B_2} \\
        % &\quad+\frac{8n_+C_g^2}{B_1}\E[\|(\ell(v_{j,t}-v_{i,t}),s_{i,t})-(\ell(v_{j,t+1}-v_{i,t+1}),s_{i,t+1})\|^2]\\
        % &\leq (1-\frac{B_1\tau_2}{n_+}) \E[\|u_{i,t}-\frac{1}{n_-}\sum_{j\in S_-}g_i(v_{j,t}-v_{i,t},s_{i,t})\|^2]+ \frac{2\tau_2^2B_1\sigma^2}{n_+ B_2} \\
        % &\quad+\frac{8n_+C_g^2}{B_1}\E[\|\ell(v_{j,t}-v_{i,t})-\ell(v_{j,t+1}-v_{i,t+1})\|^2+\|s_{i,t}-s_{i,t+1}\|^2]\\
        &\leq (1-\frac{B_1\tau_2}{n_+}) \E[\|u_{i,t}-\frac{1}{n_-}\sum_{j\in S_-}g_i(v_{j,t}-v_{i,t},s_{i,t})\|^2]+ \frac{2\tau_2^2B_1\sigma^2}{n_+ B_2} \\
        &\quad+\frac{16n_+C_g^2}{B_1}\E[\|v_{i,t}-v_{i,t+1}\|^2+\|v_{j,t}-v_{j,t+1}\|^2]+\frac{8C_g^2 M^2\eta^2}{B_1}\\
        % &\leq (1-\frac{B_1\tau_2}{n_+}) \E[\|u_{i,t}-\frac{1}{n_-}\sum_{j\in S_-}g_i(v_{j,t}-v_{i,t},s_{i,t})\|^2]+ \frac{2\tau_2^2B_1\sigma^2}{n_+ B_2} \\
        % &\quad+\frac{16n_+C_g}{B_1}L_{\ell}\E[\|v_{i,t}-v_{i,t+1}\|^2+\|v_{j,t}-v_{j,t+1}\|^2]+\frac{8n_+C_g}{B_1}M^2\eta^2
    \end{aligned}
\end{equation}
It remains to bound $\E[\|v_{i,t}-v_{i,t+1}\|^2]$ and $\E[\|v_{j,t}-v_{j,t+1}\|^2]$. We bound the former, and the latter's bound naturally follows. Consider the update of $v_{i,t+1}$ and we have
\begin{equation*}
    \begin{aligned}
        &\E[\|v_{i,t}-v_{i,t+1}\|^2]\\
        &\leq \E\left[\frac{B_1}{n_+}\|\tau_1 v_{i,t}-\tau_1 h^{(i)}(\w_t;\B_{3,i}^t)-\gamma_1(h^{(i)}(\w_t;\B_{3,i}^t)-h^{(i)}(\w_{t-1};\B_{3,i}^t))\|^2\right]\\
        &\leq \E\left[\frac{2B_1\tau_1^2}{n_+}\| v_{i,t}- h^{(i)}(\w_t;\B_{3,i}^t)\|^2+\frac{2B_1\gamma_1^2}{n_+}\|h^{(i)}(\w_t;\B_{3,i}^t)-h^{(i)}(\w_{t-1};\B_{3,i}^t)\|^2\right]\\
        &\leq \E\left[\frac{2B_1\tau_1^2}{n_+}\| v_{i,t}- h^{(i)}(\w_t;\B_{3,i}^t)\|^2+\frac{2B_1\gamma_1^2C_h}{n_+}\|\w_t-\w_{t-1}\|^2\right]\\
        &\stackrel{(a)}{\leq} \frac{8B_1\tau_1^2M^2}{n_+}+\frac{8n_+ C_h^2 \eta^2M^2}{B_1}
    \end{aligned}
\end{equation*}
where inequality (a) uses $\tau_1\leq 1/2$ and $\gamma_1 = \frac{n_+-B_1}{B_1(1-\tau_1)}+(1-\tau_1)\leq \frac{2n_+}{B_1}$. Plugging the above inequality back into inequality~(\ref{ineq:27}) gives
\begin{equation*}
    \begin{aligned}
        &\E[\|u_{i,t+1}-\frac{1}{n_-}\sum_{j\in S_-}g_i(v_{j,t+1}-v_{i,t+1},s_{i,t+1})\|^2]\\
        &\leq (1-\frac{B_1\tau_2}{n_+}) \E[\|u_{i,t}-\frac{1}{n_-}\sum_{j\in S_-}g_i(v_{j,t}-v_{i,t},s_{i,t})\|^2]+ \frac{2\tau_2^2B_1\sigma^2}{n_+ B_2} \\
        &\quad+\frac{16n_+C_g^2}{B_1}\left(8\tau_1^2M^2(\frac{B_1}{n_+}+\frac{B_2}{n_-})+8 C_h^2 \eta^2M^2(\frac{n_+}{B_1}+\frac{n_-}{B_2})\right)+\frac{8C_g^2M^2\eta^2}{B_1}\\
        &\leq (1-\frac{B_1\tau_2}{n_+}) \E[\|u_{i,t}-\frac{1}{n_-}\sum_{j\in S_-}g_i(v_{j,t}-v_{i,t},s_{i,t})\|^2]+ \frac{2\tau_2^2B_1\sigma^2}{n_+ B_2} \\
        &\quad+128C_g^2M^2\frac{n_+}{B_1}(\frac{B_1}{n_+}+\frac{B_2}{n_-})\tau_1^2+128C_g^2C_h^2 M^2\frac{n_+}{B_1}(\frac{n_+}{B_1}+\frac{n_-}{B_2})\eta^2+\frac{8C_g^2M^2\eta^2}{B_1}
    \end{aligned}
\end{equation*}

Applying this inequality recursively, we obtain

\begin{equation*}
    \begin{aligned}
        &\E[\|u_{i,t+1}-\frac{1}{n_-}\sum_{j\in S_-}g_i(v_{j,t+1}-v_{i,t+1},s_{i,t+1})\|^2]\\
        &\leq (1-\frac{B_1\tau_2}{2n_+})^{2(t+1)} \|u_{i,0}-\frac{1}{n_-}\sum_{j\in S_-}g_i(v_{i,0}-v_{j,0},s_{i,0})\|^2+ \sum_{j=0}^t (1-\frac{B_1\tau_2}{2n_+})^{2(t-j)}\bigg(\frac{2\tau_2^2B_1\sigma^2}{n_+ B_2} \\
        &\quad +128C_g^2 M^2\frac{n_+}{B_1}(\frac{B_1}{n_+}+\frac{B_2}{n_-})\tau_1^2+128C_g^2C_h^2 M^2\frac{n_+}{B_1}(\frac{n_+}{B_1}+\frac{n_-}{B_2})\eta^2+\frac{8C_g^2M^2\eta^2}{B_1}\bigg)\\
        &\leq (1-\frac{B_1\tau_2}{2n_+})^{2(t+1)} \|u_{i,0}-\frac{1}{n_-}\sum_{j\in S_-}g_i(v_{i,0}-v_{j,0},s_{i,0})\|^2+ \frac{4\tau_2\sigma^2}{B_2}\\
        &\quad +256C_g^2M^2\frac{n_+^2}{B_1^2}(\frac{B_1}{n_+}+\frac{B_2}{n_-})\frac{\tau_1^2}{\tau_2}+256C_g^2C_h^2 M^2\frac{n_+^2}{B_1^2}(\frac{n_+}{B_1}+\frac{n_-}{B_2})\frac{\eta^2}{\tau_2}+\frac{16n_+C_g^2M^2\eta^2}{B_1^2\tau_2}\\
        &\leq (1-\frac{B_1\tau_2}{2n_+})^{2(t+1)} \|u_{i,0}-\frac{1}{n_-}\sum_{j\in S_-}g_i(v_{i,0}-v_{j,0},s_{i,0})\|^2+ \frac{4\tau_2\sigma^2}{B_2} +C_2^2\frac{n_+^2}{B_1^2}(\frac{B_1}{n_+}+\frac{B_2}{n_-})\frac{\tau_1^2}{\tau_2}\\
        &\quad+C_2^2\frac{n_+^2}{B_1^2}(\frac{n_+}{B_1}+\frac{n_-}{B_2})\frac{\eta^2}{\tau_2}+C_2^2\frac{n_+\eta^2}{B_1^2\tau_2}
    \end{aligned}
\end{equation*}
where we use $\sum_{j=0}^t (1-\frac{B_1\tau_2}{2n_+})^{2(t-j)}\leq \frac{2n_+}{B_1\tau_1}$ and denotes $C_2^2=2\max\{256C_g^2M^2,256C_g^2C_h^2 M^2,16C_g^2M^2\}$. Taking average over $i\in S_+$ gives the squared-norm error bound.

To derive the norm error bound, we derive
\begin{equation*}
    \begin{aligned}
        &\E[\|u_{i,t+1}-\frac{1}{n_-}\sum_{j\in S_-}g_i(v_{j,t+1}-v_{i,t+1},s_{i,t+1})\|]^2\\
        &\leq\E[\|u_{i,t+1}-\frac{1}{n_-}\sum_{j\in S_-}g_i(v_{j,t+1}-v_{i,t+1},s_{i,t+1})\|^2]\\
        &\leq (1-\frac{B_1\tau_2}{2n_+})^{2(t+1)} \|u_{i,0}-\frac{1}{n_-}\sum_{j\in S_-}g_i(v_{i,0}-v_{j,0},s_{i,0})\|^2+ \frac{4\tau_2\sigma^2}{B_2}+C_2^2\frac{n_+^2}{B_1^2}(\frac{B_1}{n_+}+\frac{B_2}{n_-})\frac{\tau_1^2}{\tau_2}\\
        &\quad+C_2^2\frac{n_+^2}{B_1^2}(\frac{n_+}{B_1}+\frac{n_-}{B_2})\frac{\eta^2}{\tau_2}+C_2^2\frac{n_+\eta^2}{B_1^2\tau_2}\\
        &\leq \bigg[(1-\frac{B_1\tau_2}{2n_+})^{t+1} \|u_{i,0}-\frac{1}{n_-}\sum_{j\in S_-}g_i(v_{i,0}-v_{j,0},s_{i,0})\|+ \frac{2\tau_2^{1/2}\sigma}{B_2^{1/2}} +C_2\frac{n_+}{B_1}(\frac{B_1^{1/2}}{n_+^{1/2}}+\frac{B_2^{1/2}}{n_-^{1/2}})\frac{\tau_1}{\tau_2^{1/2}}\\
        &\quad+C_2\frac{n_+}{B_1}(\frac{n_+^{1/2}}{B_1^{1/2}}+\frac{n_-^{1/2}}{B_2^{1/2}})\frac{\eta}{\tau_2^{1/2}}+C_2\frac{n_+^{1/2}\eta}{B_1\tau_2^{1/2}}\bigg]^2\\
    \end{aligned}
\end{equation*}
Thus
\begin{equation*}
    \begin{aligned}
        &\E[\|u_{i,t+1}-\frac{1}{n_-}\sum_{j\in S_-}g_i(v_{j,t+1}-v_{i,t+1},s_{i,t+1})\|]\\
        &\leq (1-\frac{B_1\tau_2}{2n_+})^{t+1} \|u_{i,0}-\frac{1}{n_-}\sum_{j\in S_-}g_i(v_{i,0}-v_{j,0},s_{i,0})\|+ \frac{2\tau_2^{1/2}\sigma}{B_2^{1/2}} +C_2\frac{n_+}{B_1}(\frac{B_1^{1/2}}{n_+^{1/2}}+\frac{B_2^{1/2}}{n_-^{1/2}})\frac{\tau_1}{\tau_2^{1/2}}\\
        &\quad+C_2\frac{n_+}{B_1}(\frac{n_+^{1/2}}{B_1^{1/2}}+\frac{n_-^{1/2}}{B_2^{1/2}})\frac{\eta}{\tau_2^{1/2}}+C_2\frac{n_+^{1/2}\eta}{B_1\tau_2^{1/2}}
    \end{aligned}
\end{equation*}
Taking average over $i\in S_+$, we obtain the norm error bound
\begin{equation*}
    \begin{aligned}
        &\E\left[\frac{1}{n_+}\sum_{i\in S_+}\|u_{i,t+1}-\frac{1}{n_-}\sum_{j\in S_-}g_i(v_{j,t+1}-v_{i,t+1},s_{i,t+1})\|\right]\\
        &\leq (1-\frac{B_1\tau_2}{2n_+})^{t+1} \frac{1}{n_+}\sum_{i\in S_+}\|u_{i,0}-\frac{1}{n_-}\sum_{j\in S_-}g_i(v_{i,0}-v_{j,0},s_{i,0})\|+ \frac{2\tau_2^{1/2}\sigma}{B_2^{1/2}}\\
        &\quad +C_2\frac{n_+}{B_1}(\frac{B_1^{1/2}}{n_+^{1/2}}+\frac{B_2^{1/2}}{n_-^{1/2}})\frac{\tau_1}{\tau_2^{1/2}}+C_2\frac{n_+}{B_1}(\frac{n_+^{1/2}}{B_1^{1/2}}+\frac{n_-^{1/2}}{B_2^{1/2}})\frac{\eta}{\tau_2^{1/2}}+C_2\frac{n_+^{1/2}\eta}{B_1\tau_2^{1/2}}
    \end{aligned}
\end{equation*}
\end{proof}

\newpage

\subsection{Proof of Lemma~\ref{lem:13}}
\begin{proof}[Proof of Lemma~\ref{lem:13}]
With $\gamma_1 = \frac{n_1n_2-B_1B_2}{B_1B_2(1-\tau_1)}+(1-\tau_1)$ and $\tau_1\leq \frac{1}{2}$, MSVR update has the following recursive error bound  \cite{jiang2022multiblocksingleprobe}\cite{jiang2022multiblocksingleprobe}
\begin{equation*}
    \begin{aligned}
        &\E[\|v_{i,j,t+1}-h_{i,j}(\w_{t+1})\|^2]\\
        &\leq (1-\frac{B_1B_2\tau_1}{n_1n_2}) \E[\|v_{i,j,t}-h_{i,j}(\w_t)\|^2]+ \frac{2\tau_1^2B_1B_2\sigma^2}{n_1n_2B_3} +\frac{8n_1n_2C_h^2}{B_1B_2}\E[\|\w_t-\w_{t+1}\|^2]\\
        &\leq (1-\frac{B_1B_2\tau_1}{2n_1n_2})^2 \E[\|v_{i,j,t}-h_{i,j}(\w_t)\|^2]+ \frac{2\tau_1^2B_1B_2\sigma^2}{n_1n_2B_3} +\frac{8 n_1n_2C_h^2M^2\eta^2}{B_1B_2}
    \end{aligned}
\end{equation*}

Applying this inequality recursively, we obtain

\begin{equation*}
    \begin{aligned}
        &\E[\|v_{i,j,t+1}-h_{i,j}(\w_{t+1})\|^2]\\
        &\leq (1-\frac{B_1B_2\tau_1}{2n_1n_2})^{2(t+1)} \|v_{i,j,0}-h_{i,j}(\w_0)\|^2+ \sum_{j=0}^t (1-\frac{B_1B_2\tau_1}{2n_1n_2})^{2(t-j)}(\frac{2\tau_1^2B_1B_2\sigma^2}{n_1n_2B_3} +\frac{8 n_1n_2C_h^2M^2\eta^2}{B_1B_2})\\
        &\leq (1-\frac{B_1B_2\tau_1}{2n_1n_2})^{2(t+1)} \|v_{i,j,0}-h_{i,j}(\w_0)\|^2+ \frac{4\tau_1 \sigma^2}{B_3} +\frac{16 n_1^2n_2^2C_h^2M^2\eta^2}{B_1^2B_2^2 \tau_1}\\
    \end{aligned}
\end{equation*}
where we use $\sum_{j=0}^t (1-\frac{B_1B_2\tau_1}{2n_1n_2})^{2(t-j)}\leq \frac{2n_1n_2}{B_1B_2\tau_1}$. Taking average over $(i,j)\in S_1\times S_2$ gives the squared-norm error bound.

To derive the norm error bound, we derive
\begin{equation*}
    \begin{aligned}
        &\E[\|v_{i,j,t+1}-h_{i,j}(\w_{t+1})\|]^2\\
        &\leq\E[\|v_{i,j,t+1}-h_{i,j}(\w_{t+1})\|^2]\\
        &\leq (1-\frac{B_1B_2\tau_1}{2n_1n_2})^{2(t+1)} \|v_{i,j,0}-h_{i,j}(\w_0)\|^2+ \frac{4\tau_1 \sigma^2}{B_3} +\frac{16 n_1^2n_2^2C_h^2M^2\eta^2}{B_1^2B_2^2 \tau_1}\\
        &\leq \bigg[(1-\frac{B_1B_2\tau_1}{2n_1n_2})^{t+1} \|v_{i,j,0}-h_{i,j}(\w_0)\|+ \frac{2\tau_1^{1/2}\sigma}{B_3^{1/2}} +\frac{4 n_1n_2C_h M\eta}{B_1B_2 \tau_1^{1/2}}\bigg]^2\\
    \end{aligned}
\end{equation*}
Thus
\begin{equation*}
    \begin{aligned}
        &\E[\|v_{i,j,t+1}-h_{i,j}(\w_{t+1})\|]\\
        &\leq (1-\frac{B_1B_2\tau_1}{2n_1n_2})^{t+1} \|v_{i,j,0}-h_{i,j}(\w_0)\|+ \frac{2\tau_1^{1/2}\sigma}{B_3^{1/2}} +\frac{4 n_1n_2C_h M\eta}{B_1B_2 \tau_1^{1/2}}
    \end{aligned}
\end{equation*}
Taking average over $(i,j)\in \S_1\times \S_2$, we obtain the norm error bound
\begin{equation*}
    \begin{aligned}
        &\E\bigg[\frac{1}{n_1}\sum_{i\in \S_1}\frac{1}{n_2}\sum_{j\in \S_2}\|v_{i,j,t+1}-h_{i,j}(\w_{t+1})\|\bigg]\\
        &\leq (1-\frac{B_1B_2\tau_1}{2n_1n_2})^{t+1} \frac{1}{n_1}\sum_{i\in \S_1}\frac{1}{n_2}\sum_{j\in \S_2}\|v_{i,j,0}-h_{i,j}(\w_0)\|+ \frac{2\tau_1^{1/2}\sigma}{B_3^{1/2}} +\frac{4 n_1n_2C_h M\eta}{B_1B_2 \tau_1^{1/2}}.
    \end{aligned}
\end{equation*}
\end{proof}

\subsection{Proof of Lemma~\ref{lem:14}}
\begin{proof}[Proof of Lemma~\ref{lem:14}]
With $\gamma_2 = \frac{n_1-B_1}{B_1(1-\tau_2)}+(1-\tau_2)$ and $\tau_2\leq \frac{1}{2}$, MSVR update has the following recursive error bound \cite{jiang2022multiblocksingleprobe}
\begin{equation}\label{ineq:35}
    \begin{aligned}
        &\E[\|u_{i,t+1}-\frac{1}{n_2}\sum_{j\in \S_2}g_i(v_{i,j,t+1})\|^2]\\
        &\leq (1-\frac{B_1\tau_2}{n_1}) \E[\|u_{i,t}-\frac{1}{n_2}\sum_{j\in \S_2}g_i(v_{i,j,t})\|^2]+ \frac{2\tau_2^2B_1\sigma^2}{n_1B_2}+\frac{8n_1C_g^2}{B_1}\E[\|v_{i,j,t+1}-v_{i,j,t}\|^2]\\
    \end{aligned}
\end{equation}
It remains to bound $\E[\|v_{i,j,t+1}-v_{i,j,t}\|^2]$, which is done as following
\begin{equation*}
    \begin{aligned}
        &\E[\|v_{i,j,t+1}-v_{i,j,t}\|^2]\\
        &\leq \E\left[\frac{B_1B_2}{n_1n_2}\|\tau_1 v_{i,j,t}-\tau_1 h_{i,j}(\w_t;\B_{3,i,j}^t)-\gamma_1(h_{i,j}(\w_t;\B_{3,i,j}^t)-h_{i,j}(\w_{t-1};\B_{3,i,j}^t))\|^2\right]\\
        &\leq \E\left[\frac{2B_1B_2\tau_1^2}{n_1n_2}\| v_{i,j,t}- h_{i,j}(\w_t;\B_{3,i,j}^t)\|^2+\frac{2B_1B_2\gamma_1^2}{n_1n_2}\|h_{i,j}(\w_t;\B_{3,i,j}^t)-h_{i,j}(\w_{t-1};\B_{3,i,j}^t)\|^2\right]\\
        &\leq \E\left[\frac{2B_1B_2\tau_1^2}{n_1n_2}\| v_{i,j,t}- h_{i,j}(\w_t;\B_{3,i,j}^t)\|^2+\frac{2B_1B_2\gamma_1^2C_h}{n_1n_2}\|\w_t-\w_{t-1}\|^2\right]\\
        &\stackrel{(a)}{\leq} \frac{8B_1B_2\tau_1^2M^2}{n_1n_2}+\frac{8n_1n_2 C_h^2 \eta^2M^2}{B_1B_2}
    \end{aligned}
\end{equation*}
where inequality (a) uses $\tau_1\leq 1/2$ and $\gamma_1 = \frac{n_1n_2-B_1B_2}{B_1B_2(1-\tau_1)}+(1-\tau_1)\leq \frac{2n_1n_2}{B_1B_2}$. Plugging the above inequality back into inequality~(\ref{ineq:35}) gives
\begin{equation*}
    \begin{aligned}
        &\E[\|u_{i,t+1}-\frac{1}{n_2}\sum_{j\in \S_2}g_i(v_{i,j,t+1})\|^2]\\
        &\leq (1-\frac{B_1\tau_2}{n_1}) \E[\|u_{i,t}-\frac{1}{n_2}\sum_{j\in \S_2}g_i(v_{i,j,t})\|^2]+ \frac{2\tau_2^2B_1\sigma^2}{n_1B_2}+\frac{8n_1C_g^2}{B_1}\left(\frac{8B_1B_2\tau_1^2M^2}{n_1n_2}+\frac{8n_1n_2 C_h^2 \eta^2M^2}{B_1B_2}\right)\\
        &\leq (1-\frac{B_1\tau_2}{n_1}) \E[\|u_{i,t}-\frac{1}{n_2}\sum_{j\in \S_2}g_i(v_{i,j,t})\|^2]+ \frac{2\tau_2^2B_1\sigma^2}{n_1B_2}+\frac{64B_2\tau_1^2M^2C_g^2}{n_2}+\frac{64n_1^2n_2 C_h^2 \eta^2M^2C_g^2}{B_1^2B_2}\\
    \end{aligned}
\end{equation*}

Applying this inequality recursively, we obtain

\begin{equation*}
    \begin{aligned}
        &\E[\|u_{i,t+1}-\frac{1}{n_2}\sum_{j\in \S_2}g_i(v_{i,j,t+1})\|^2]\\
        &\leq (1-\frac{B_1\tau_2}{2n_1})^{2(t+1)} \|u_{i,0}-\frac{1}{n_2}\sum_{j\in \S_2}g_i(v_{i,j,0})\|^2+\sum_{j=0}^t(1-\frac{B_1\tau_2}{2n_1})^{t-j} \bigg(\frac{2\tau_2^2B_1\sigma^2}{n_1B_2}+\frac{64B_2\tau_1^2M^2C_g^2}{n_2}\\
        &\quad +\frac{64n_1^2n_2 C_h^2 \eta^2M^2C_g^2}{B_1^2B_2}\bigg)\\
        &\leq (1-\frac{B_1\tau_2}{2n_1})^{2(t+1)} \|u_{i,0}-\frac{1}{n_2}\sum_{j\in \S_2}g_i(v_{i,j,0})\|^2+ \frac{4\tau_2\sigma^2}{B_2}+\frac{128n_1B_2\tau_1^2M^2C_g^2}{B_1n_2\tau_2}+\frac{128n_1^3n_2 C_h^2 \eta^2M^2C_g^2}{B_1^3B_2 \tau_2}
    \end{aligned}
\end{equation*}
where we use $\sum_{j=0}^t (1-\frac{B_1\tau_2}{2n_1})^{2(t-j)}\leq \frac{2n_1}{B_1\tau_1}$. Taking average over $i\in \S_1$ gives the squared-norm error bound.

To derive the norm error bound, we derive
\begin{equation*}
    \begin{aligned}
        &\E[\|u_{i,t+1}-\frac{1}{n_2}\sum_{j\in \S_2}g_i(v_{i,j,t+1})\|]^2\\
        &\leq \E[\|u_{i,t+1}-\frac{1}{n_2}\sum_{j\in \S_2}g_i(v_{i,j,t+1})\|^2]\\
        &\leq (1-\frac{B_1\tau_2}{2n_1})^{2(t+1)} \|u_{i,0}-\frac{1}{n_2}\sum_{j\in \S_2}g_i(v_{i,j,0})\|^2+ \frac{4\tau_2\sigma^2}{B_2}+\frac{128n_1B_2\tau_1^2M^2C_g^2}{B_1n_2\tau_2}+\frac{128n_1^3n_2 C_h^2 \eta^2M^2C_g^2}{B_1^3B_2 \tau_2}\\
        &\leq \bigg[(1-\frac{B_1\tau_2}{2n_1})^{t+1} \|u_{i,0}-\frac{1}{n_2}\sum_{j\in \S_2}g_i(v_{i,j,0})\|+ \frac{2\tau_2^{1/2}\sigma}{B_2^{1/2}}+\frac{8\sqrt{2}n_1^{1/2}B_2^{1/2}\tau_1M C_g}{B_1^{1/2}n_2^{1/2}\tau_2^{1/2}}+\frac{8\sqrt{2}n_1^{3/2}n_2^{1/2} C_h \eta M C_g}{B_1^{3/2}B_2^{1/2} \tau_2^{1/2}}\bigg]^2
    \end{aligned}
\end{equation*}
% Thus
% \begin{equation*}
%     \begin{aligned}
%         &\E[\|u_{i,t+1}-\frac{1}{n_2}\sum_{j\in S_2}g_i(v_{i,j,t+1})\|]\\
%         &\leq (1-\frac{B_1\tau_2}{2n_1})^{t+1} \|u_{i,0}-\frac{1}{n_2}\sum_{j\in S_2}g_i(v_{i,j,0})\|+ 2\tau_2^{1/2}\sigma+\frac{8\sqrt{2}n_1^{1/2}\tau_1M\sqrt{C_g}}{B_1^{1/2}\tau_2^{1/2}}+\frac{8\sqrt{2}n_1^{3/2}n_2 \sqrt{C_h} \eta M\sqrt{C_g}}{B_1^{3/2}B_2 \tau_2^{1/2}}
%     \end{aligned}
% \end{equation*}
Taking squared root on both sides and taking average over $i\in S_1$, we obtain the norm error bound
\begin{equation*}
    \begin{aligned}
        &\E\left[\frac{1}{n_1}\sum_{i\in \S_1}\|u_{i,t+1}-\frac{1}{n_2}\sum_{j\in \S_2}g_i(v_{i,j,t+1})\|\right]\\
        &\leq (1-\frac{B_1\tau_2}{2n_1})^{t+1} \frac{1}{n_1}\sum_{i\in \S_1}\|u_{i,0}-\frac{1}{n_2}\sum_{j\in \S_2}g_i(v_{i,j,0})\|+ \frac{2\tau_2^{1/2}\sigma}{B_2^{1/2}}+\frac{C_2n_1^{1/2}B_2^{1/2}\tau_1}{B_1^{1/2}n_2^{1/2}\tau_2^{1/2}}+\frac{C_2 n_1^{3/2}n_2^{1/2} \eta }{B_1^{3/2}B_2^{1/2} \tau_2^{1/2}}
    \end{aligned}
\end{equation*}
where $C_2 = \max\{8\sqrt{2}MC_g, 8\sqrt{2}C_h MC_g\}$.
\end{proof}

\subsection{Proof of Lemma~\ref{lem:19}}
\begin{proof}[Proof of Lemma~\ref{lem:19}]
Define
\begin{equation*}
    \tilde{u}_{i,t} = (1-\tau)u_{i,t}+\tau  g_i(\w_t; \B_{2,i}^t)
\end{equation*}
Then we have
\begin{equation*}
    \begin{aligned}
        &\E_{\B_{2,i}^t}[\|\tilde{u}_{i,t}-g_i(\w_t)\|^2]\\
        & =\E_{\B_{2,i}^t}[\|(1-\tau)(u_{i,t}-g_i(\w_t))+\tau  (g_i(\w_t; \B_{2,i}^t)- g_i(\w_t))\|^2]\\
        &=\E_{\B_{2,i}^t}[(1-\tau)^2\|u_{i,t}-g_i(\w_t)\|^2+\tau^2  \|g_i(\w_t; \B_{2,i}^t)- g_i(\w_t)\|^2\\
        &\quad +2(1-\tau)\tau\langle u_{i,t}-g_i(\w_t),  g_i(\w_t; \B_{2,i}^t)- g_i(\w_t)\rangle]\\
        &\leq (1-\tau)^2\|u_{i,t}-g_i(\w_t)\|^2+\frac{\tau^2\sigma^2}{B_2}\\
    \end{aligned}
\end{equation*}
It follows
\begin{equation*}
    \begin{aligned}
        &\E_{\B_{2,i}^t}\E_{\B_1^t}[\|u_{i,t+1}-g_i(\w_t)\|^2]\\
        & = \frac{B_1}{n_1}\E_{\B_{2,i}^t}[\|\tilde{u}_{i,t}-g_i(\w_t)\|^2]+(1-\frac{B_1}{n_1})\|u_{i,t}-g_i(\w_t)\|^2\\
        &\leq \frac{B_1}{n_1}(1-\tau)^2\|u_{i,t}-g_i(\w_t)\|^2+\frac{B_1\tau^2\sigma^2}{n_1 B_2}+(1-\frac{B_1}{n_1})\|u_{i,t}-g_i(\w_t)\|^2\\
        &\leq (1-\frac{B_1\tau}{2n_1})^2\|u_{i,t}-g_i(\w_t)\|^2+\frac{B_1\tau^2\sigma^2}{n_1 B_2}
    \end{aligned}
\end{equation*}
where we use
\begin{equation*}
\begin{aligned}
    \frac{B_1}{n_1}(1-\tau)^2+(1-\frac{B_1}{n_1}) &=\frac{B_1}{n_1}(1-2\tau+\tau^2)+1-\frac{B_1}{n_1}\\
    &=1-2\tau\frac{B_1}{n_1}+\tau^2\frac{B_1}{n_1}\\
    &\leq 1-\tau\frac{B_1}{n_1}\\
    &\leq 1-\tau\frac{B_1}{n_1}+(\frac{\tau B_1}{2n_1})^2=(1-\frac{\tau B_1}{2n_1})^2
\end{aligned}
\end{equation*}
Then
\begin{equation*}
    \begin{aligned}
        &\E_t[\|u_{i,t+1}-g_i(\w_{t+1})\|^2]\\
        &\leq  \E_t\left[(1+\frac{B_1\tau}{4n_1}) \|u_{i,t+1}-g_i(\w_t)\|^2 +(1+\frac{4n_1}{B_1\tau})\|g_i(\w_t)-g_i(\w_{t+1})\|^2 \right]\\
        &\leq (1+\frac{B_1\tau}{4n_1}) (1-\frac{B_1\tau}{2n_1})^2\|u_{i,t}-g_i(\w_t)\|^2+(1+\frac{B_1\tau}{4n_1}) \frac{B_1\tau^2\sigma^2}{n_1B_2} \\
        &\quad +(1+\frac{4n_1}{B_1\tau})C_g^2\E_t\|\w_t-\w_{t+1}\|^2]\\
        &\leq (1-\frac{B_1\tau}{4n_1})^2 \|u_{i,t}-g_i(\w_t)\|^2+ \frac{2B_1\tau^2\sigma^2}{n_1B_2} +\frac{8n_1}{B_1\tau}C_g^2\E_t\|\w_t-\w_{t+1}\|^2]\\
        % &\leq (1-\frac{B_1\tau}{4n_1})^2 \|u_{i,t}-g_i(\w_t)\|^2+ \frac{2B_1\tau^2\sigma^2}{n_1B_2} +\frac{8n_1}{B_1\tau}C_g^2\eta^2\E_t[\|G_t\|^2]\\
        &\leq (1-\frac{B_1\tau}{4n_1})^2 \|u_{i,t}-g_i(\w_t)\|^2+ \frac{2B_1\tau^2\sigma^2}{n_1B_2} +\frac{8n_1C_g^2M^2 \eta^2}{B_1\tau}
    \end{aligned}
\end{equation*}
where we use $\frac{B_1\tau}{4n_1}\leq 1$. Applying this inequality recursively, we obtain
\begin{equation*}
    \begin{aligned}
        &\E[\|u_{i,t+1}-g_i(\w_{t+1})\|^2]\\
        &\leq (1-\frac{B_1\tau}{4n_1})^2 \E[\|u_{i,t}-g_i(\w_t)\|^2]+ \frac{2B_1\tau^2\sigma^2}{n_1B_2} +\frac{8n_1C_g^2M^2 \eta^2}{B_1\tau}\\
        &\leq (1-\frac{B_1\tau}{4n_1})^{2(t+1)} \|u_{i,0}-g_i(\w_0)\|^2+ \sum_{j=0}^t(1-\frac{B_1\tau}{4n_1})^{2(t-j)}\bigg[\frac{2B_1\tau^2\sigma^2}{n_1B_2}  +\frac{8n_1C_g^2M^2 \eta^2}{B_1\tau}\bigg]\\
        &\leq (1-\frac{B_1\tau}{4n_1})^{2(t+1)} \|u_{i,0}-g_i(\w_0)\|^2+\frac{8\tau\sigma^2}{B_2}+\frac{32n_1^2C_g^2M^2 \eta^2}{B_1^2\tau^2}\\
    \end{aligned}
\end{equation*}
where we use $\sum_{j=0}^t(1-\frac{B_1\tau}{4n_1})^{2(t-j)}\leq \frac{4n_1}{B_1\tau}$.

To obtain the absolute bound, we derive
\begin{equation*}
    \begin{aligned}
        \E[\|u_{i,t+1}-g_i(\w_{t+1})\|]^2
        &\leq \E[\|u_{i,t+1}-g_i(\w_{t+1})\|^2]\\
        &\leq (1-\frac{B_1\tau}{4n_1})^{2(t+1)} \|u_{i,0}-g_i(\w_0)\|^2+\frac{8\tau\sigma^2}{B_2}+\frac{32n_1^2C_g^2M^2 \eta^2}{B_1^2\tau^2}\\
        &\leq \left[(1-\frac{B_1\tau}{4n_1})^{t+1} \|u_{i,0}-g_i(\w_0)\|+\frac{2\sqrt{2}\tau^{1/2}\sigma}{B_2^{1/2}}+\frac{4\sqrt{2}n_1C_g M \eta}{B_1\tau}\right]^2
    \end{aligned}
\end{equation*}
The desired result follows by taking squared root on both sides.
\end{proof}

\subsection{Proof of Lemma~\ref{lem:20}}
\begin{proof}[Proof of Lemma~\ref{lem:20}]
The proof of Lemma~\ref{lem:20} is the same as Lemma~\ref{lem:19}.
\end{proof}

\subsection{Proof of Lemma~\ref{lem:21}}
\begin{proof}[Proof of Lemma~\ref{lem:21}]
Define
\begin{equation*}
    \tilde{u}_{i,t} = (1-\tau_2)u_{i,t}+\tau_2 \frac{1}{B_2}\sum_{j\in \B_{2,i}^t} g_i(v_{i,j,t})
\end{equation*}
Then we have
\begin{equation*}
    \begin{aligned}
        &\E_{\B_2^t}[\|\tilde{u}_{i,t}-\frac{1}{n_2}\sum_{j\in\S_2}g_i(v_{i,j,t})\|^2]\\
        & =\E_{\B_2^t}[\|(1-\tau_2)(u_{i,t}-\frac{1}{n_2}\sum_{j\in\S_2}g_i(v_{i,j,t}))+\tau_2  (\frac{1}{B_2}\sum_{j\in \B_2^t}g_i(v_{i,j,t})- \frac{1}{n_2}\sum_{j\in\S_2}g_i(v_{i,j,t}))\|^2]\\
        &=\E_{\B_2^t}[(1-\tau_2)^2\|u_{i,t}-\frac{1}{n_2}\sum_{j\in\S_2}g_i(v_{i,j,t})\|^2+\tau_2^2  \|\frac{1}{B_2}\sum_{j\in \B_2^t}g_i(v_{i,j,t})- \frac{1}{n_2}\sum_{j\in\S_2}g_i(v_{i,j,t})\|^2\\
        &\quad +2(1-\tau_2)\tau_2\langle u_{i,t}-\frac{1}{n_2}\sum_{j\in\S_2}g_i(v_{i,j,t}),  \frac{1}{B_2}\sum_{j\in \B_2^t}g_i(v_{i,j,t})- \frac{1}{n_2}\sum_{j\in\S_2}g_i(v_{i,j,t})\rangle]\\
        &\leq (1-\tau_2)^2\|u_{i,t}-\frac{1}{n_2}\sum_{j\in\S_2}g_i(v_{i,j,t})\|^2+\frac{\tau_2^2\sigma^2}{B_2}\\
    \end{aligned}
\end{equation*}
It follows
\begin{equation*}
    \begin{aligned}
        &\E_{\B_{2,i}^t}\E_{\B_1^t}[\|u_{i,t+1}-\frac{1}{n_2}\sum_{j\in\S_2}g_i(v_{i,j,t})\|^2]\\
        & = \frac{B_1}{n_1}\E_{\B_2^t}[\|\tilde{u}_{i,t}-\frac{1}{n_2}\sum_{j\in\S_2}g_i(v_{i,j,t})\|^2]+(1-\frac{B_1}{n_1})\|u_{i,t}-\frac{1}{n_2}\sum_{j\in\S_2}g_i(v_{i,j,t})\|^2\\
        &\leq \frac{B_1}{n_1}(1-\tau_2)^2\|u_{i,t}-\frac{1}{n_2}\sum_{j\in\S_2}g_i(v_{i,j,t})\|^2+\frac{B_1\tau_2^2\sigma^2}{n_1 B_2}+(1-\frac{B_1}{n_1})\|u_{i,t}-\frac{1}{n_2}\sum_{j\in\S_2}g_i(v_{i,j,t})\|^2\\
        &\leq (1-\frac{B_1\tau_2}{2n_1})^2\|u_{i,t}-\frac{1}{n_2}\sum_{j\in\S_2}g_i(v_{i,j,t})\|^2+\frac{B_1\tau^2\sigma^2}{n_1 B_2}
    \end{aligned}
\end{equation*}
where we use
\begin{equation*}
\begin{aligned}
    \frac{B_1}{n_1}(1-\tau_2)^2+(1-\frac{B_1}{n_1}) \leq (1-\frac{\tau_2 B_1}{2n_1})^2
\end{aligned}
\end{equation*}
Then
\begin{equation*}
    \begin{aligned}
        &\E_t[\|u_{i,t+1}-\frac{1}{n_2}\sum_{j\in\S_2}g_i(v_{i,j,t+1})\|^2]\\
        &\leq  \E_t\left[(1+\frac{B_1\tau_2}{4n_1}) \|u_{i,t+1}-\frac{1}{n_2}\sum_{j\in\S_2}g_i(v_{i,j,t})\|^2 +(1+\frac{4n_1}{B_1\tau_2})\|\frac{1}{n_2}\sum_{j\in\S_2}g_i(v_{i,j,t})-\frac{1}{n_2}\sum_{j\in\S_2}g_i(v_{i,j,t+1})\|^2 \right]\\
        &\leq (1+\frac{B_1\tau_2}{4n_1}) (1-\frac{B_1\tau_2}{2n_1})^2\|u_{i,t}-\frac{1}{n_2}\sum_{j\in\S_2}g_i(v_{i,j,t})\|^2+(1+\frac{B_1\tau_2}{4n_1}) \frac{B_1\tau_2^2\sigma^2}{n_1B_2} \\
        &\quad +(1+\frac{4n_1}{B_1\tau_2})C_g^2\E_t[\frac{1}{n_2}\sum_{j\in \S_2}\|v_{i,j,t}-v_{i,j,t+1}\|^2]\\
        % &\leq (1-\frac{B_1\tau_2}{4n_1})^2 \|u_{i,t}-\frac{1}{n_2}\sum_{j\in\S_2}g_i(v_{i,j,t})\|^2+ \frac{2B_1\tau_2^2\sigma^2}{n_1B_2} +\frac{8n_1}{B_1\tau_2}C_g^2\frac{B_1B_2\tau_1^2 M^2}{n_1n_2}\\
        &\leq (1-\frac{B_1\tau_2}{4n_1})^2 \|u_{i,t}-\frac{1}{n_2}\sum_{j\in\S_2}g_i(v_{i,j,t})\|^2+ \frac{2B_1\tau_2^2\sigma^2}{n_1B_2} +\frac{8C_g^2M^2 B_2 \tau_1^2}{n_2\tau_2}
    \end{aligned}
\end{equation*}
where we use $\frac{B_1\tau_2}{4n_1}\leq 1$, and
\begin{equation*}
    \begin{aligned}
        \E_t[\|v_{i,j,t}-v_{i,j,t+1}\|^2] &= \frac{B_1B_2}{n_1n_2}\E_{\B_{3,i,j}^t}\|\tau_1 v_{i,j,t}-\tau_1 h_{i,j}(\w_t;\B_{3,i,j}^t)\|^2\leq \frac{B_1B_2\tau_1^2 M^2}{n_1n_2}.
    \end{aligned}
\end{equation*}

Applying this inequality recursively, we obtain
\begin{equation*}
    \begin{aligned}
        &\E[\|u_{i,t+1}-\frac{1}{n_2}\sum_{j\in\S_2}g_i(v_{i,j,t+1})\|^2]\\
        &\leq (1-\frac{B_1\tau_2}{4n_1})^2 \E[\|u_{i,t}-\frac{1}{n_2}\sum_{j\in\S_2}g_i(v_{i,j,t})\|^2]+ \frac{2B_1\tau_2^2\sigma^2}{n_1B_2} +\frac{8C_g^2M^2 B_2 \tau_1^2}{n_2\tau_2}\\
        &\leq (1-\frac{B_1\tau_2}{4n_1})^{2(t+1)} \|u_{i,0}-\frac{1}{n_2}\sum_{j\in\S_2}g_i(v_{i,j,0})\|^2+ \sum_{j=0}^t(1-\frac{B_1\tau_2}{4n_1})^{2(t-j)}\bigg[\frac{2B_1\tau_2^2\sigma^2}{n_1B_2}  +\frac{8C_g^2M^2 B_2 \tau_1^2}{n_2\tau_2}\bigg]\\
        &\leq (1-\frac{B_1\tau_2}{4n_1})^{2(t+1)} \|u_{i,0}-\frac{1}{n_2}\sum_{j\in\S_2}g_i(v_{i,j,0})\|^2+\frac{8\tau_2\sigma^2}{B_2}+\frac{32C_g^2M^2 n_1B_2 \tau_1^2}{B_1n_2\tau_2^2}\\
    \end{aligned}
\end{equation*}
where we use $\sum_{j=0}^t(1-\frac{B_1\tau_2}{4n_1})^{2(t-j)}\leq \frac{4n_1}{B_1\tau_2}$.

To obtain the absolute bound, we derive
\begin{equation*}
    \begin{aligned}
        &\E[\|u_{i,t+1}-\frac{1}{n_2}\sum_{j\in\S_2}g_i(v_{i,j,t+1})\|]^2\\
        &\leq \E[\|u_{i,t+1}-\frac{1}{n_2}\sum_{j\in\S_2}g_i(v_{i,j,t+1})\|^2]\\
        &\leq (1-\frac{B_1\tau_2}{4n_1})^{2(t+1)} \|u_{i,0}-\frac{1}{n_2}\sum_{j\in\S_2}g_i(v_{i,j,0})\|^2+\frac{8\tau_2\sigma^2}{B_2}+\frac{32C_g^2M^2 n_1B_2 \tau_1^2}{B_1n_2\tau_2^2}\\
        &\leq \left[(1-\frac{B_1\tau_2}{4n_1})^{t+1} \|u_{i,0}-\frac{1}{n_2}\sum_{j\in\S_2}g_i(v_{i,j,0})\|+\frac{2\sqrt{2}\tau_2^{1/2}\sigma}{B_2^{1/2}}+\frac{4\sqrt{2}C_g M n_1^{1/2}B_2^{1/2} \tau_1}{B_1^{1/2} n_2^{1/2} \tau_2}\right]^2
    \end{aligned}
\end{equation*}
The desired result follows by taking squared root on both sides.
\end{proof}

\section{Group Distributionally Robust Optimization}\label{app:gdro}
NSWC FCCO finds an important application in group distributionally robust optimization (group DRO), particularly valuable in addressing distributional shift~\cite{sagawa2020distributionally}.  Consider $N$ groups with different distributions. Each group $k$ has an averaged loss $L_k(w)=\frac{1}{n_k}\sum_{i=1}^{n_k}\ell (f_w(x^k_i),y^k_i)$, where $w$ is the the model parameter and $(x^k_i, y^k_i)$ is a data point. For robust optimization, we assign different weights to different groups and form the following robust loss minimization problem:
\begin{equation*}
\min_w \max_{p\in \Omega} \sum_{k=1}^N p_k L_k(w),
\end{equation*}
% $$
% \min_w \max_{p\in \Delta} \sum_{k=1}^K p_k L_k(w)- R(p),
% $$
where $\Omega\subset\Delta$ and $\Delta$ denotes a simplex. A common choice for $\Omega$ is $\Omega=\{\p\in\Delta, p_i\leq 1/K\}$ where $K$ is an integer, resulting in the so-called CVaR losses, i.e., average of top-K group losses. Consequently, the above problem can be equivalently reformulated as~\cite{1999Optimization}:
\begin{equation*}
\min_w \min_{s} F(w,s)=\frac{1}{K}\sum_{k=1}^N [L_k(w)-s]_+ + s.
\end{equation*}
This formulation can be mapped into non-smooth weakly-convex FCCO when the loss function $\ell(\cdot,\cdot)$ is weakly convex in terms of $w$. In comparison to directly solving the min-max problem, solving the above FCCO problem avoids the need of dealing with the projection onto the constraint $\Omega$ and expensive sampling as in existing works~\cite{NEURIPS2020_0b6ace9e}.

\section{More Information for Experiments}\label{sec:mexp}
\subsection{Dataset Statistics}

\begin{table}[h]
    \caption{Datasets Statistics. The percentage in parenthesis represents the proportion of positive samples.}
    \label{tab:molecule-data-stats}
    \centering
    \resizebox{0.7\textwidth}{!}{
    \begin{tabular}{lllll}
       Dataset & Train& Validation & Test \\
        \hline
        moltox21(t0)  & 5834 (4.25\%)  & 722 (4.01\%) & 709 (4.51\%)\\
        molmuv(t1)  & 11466 (0.18\%)  & 1559 (0.13\%) & 1709 (0.35\%) \\
        molpcba(t0) & 120762 (9.32\%) & 19865 (11.74\%)  & 20397 (11.61\%) \\
    \end{tabular}}
\end{table}

\begin{table}[h]
  \centering
  \caption{Data statistics for the MIL datasets. $D_+/D_-$ is the positive/negative bag number.}
\resizebox{0.7\columnwidth}{!}{
    \begin{tabular}{llrrrr}
    \bottomrule
    \multicolumn{1}{l}{Data Format} & Dataset & \multicolumn{1}{l}{$D_+$} & \multicolumn{1}{l}{$D_-$} & \multicolumn{1}{l}{\makecell{average\\ bag size}} & \multicolumn{1}{l}{\#features} \\
    \hline
              Tabular  & MUSK2 & 39    & 63    & 64.69 & 166 \\
          & Fox   & 100   & 100   & 6.6   & 230 \\
          \hline
          
    Histopathological & Lung & 100     & 1000    & 256   & 32x32x3 \\
         Image  & Lung & 100  & 1000  & 256   & 32x32x3 \\
          \bottomrule
    \end{tabular}}
  \label{tab:mil-data-stats}%
\end{table}%

\subsection{Illustration for Histopathology Dataset on MIL Task}
\begin{figure}[hbpt]
    \centering
        \scalebox{.4}{\includegraphics{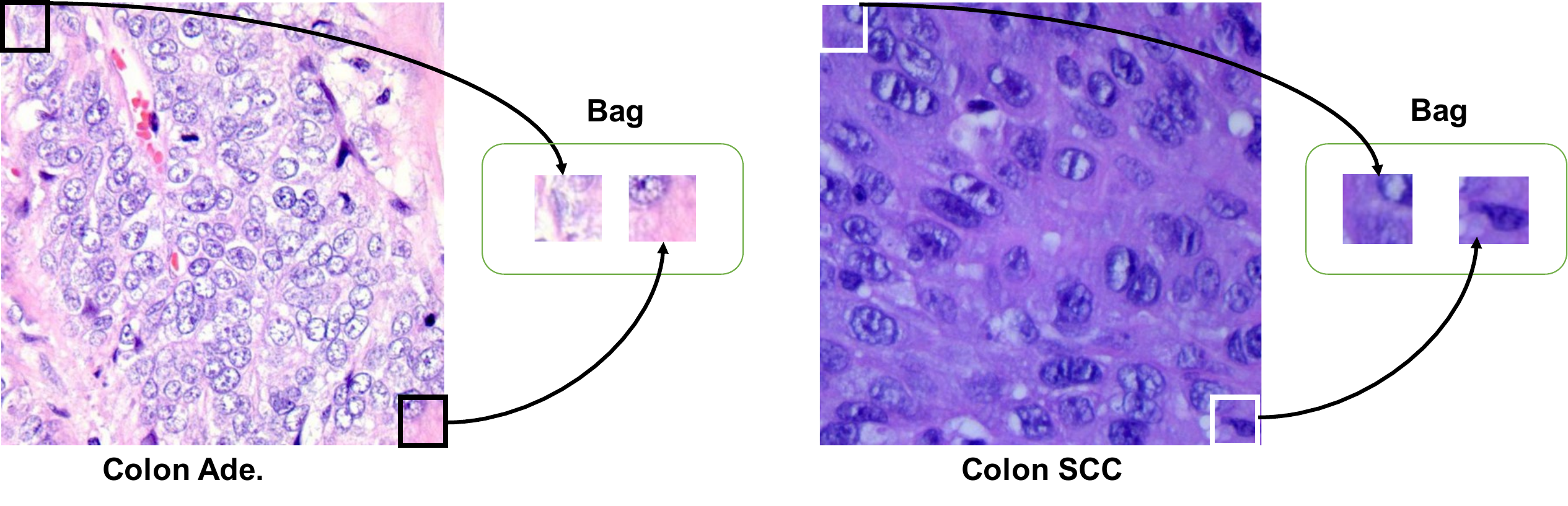}}

    \caption{Illustration for Histopathology Dataset on MIL Task. Ade. is abbreviated for adenocarcinoma and SCC is short for squamous cell carcinoma. In this work, each RGB image is separated by 32$\times$32 non-overlapped patches, which constitute the bag.}
    \label{fig:medical-image}
\end{figure}

\end{document}